\documentclass[11pt]{amsart}
\usepackage{amsthm,amssymb,mathrsfs,mathtools,empheq}
\usepackage[shortlabels]{enumitem}
\usepackage[T1]{fontenc}
\usepackage{hyperref}
\usepackage{dsfont}
\usepackage[toc,page]{appendix}
\usepackage{setspace}
%\onehalfspacing
%\doublespacing

% For plots
\usepackage{pgfplots}
\pgfplotsset{compat=newest}
\usepgfplotslibrary{polar}
\usepgflibrary{shapes.geometric}
\usetikzlibrary{calc}

\newcommand{\eps}{\ensuremath{\varepsilon}}
 %{1/{\eps}}

\setlength {\marginparwidth }{2cm}

\mathtoolsset{showonlyrefs}
\usepackage{fullpage,color}

\usepackage{xcolor}
\usepackage{verbatim}
\usepackage{todonotes}

\allowdisplaybreaks

\newtheorem{theorem}{Theorem}[section]
\newtheorem*{theorem*}{Theorem}
\newtheorem{corollary}[theorem]{Corollary}

\newtheorem{lemma}[theorem]{Lemma}
\newtheorem{proposition}[theorem]{Proposition}

\newtheorem*{proposition*}{Proposition}

\newtheorem*{conjecture*}{Conjecture}

\theoremstyle{definition}
\newtheorem{definition}[theorem]{Definition}
\newtheorem{remark}[theorem]{Remark}

\newtheorem{assumption}[theorem]{Assumption}

\numberwithin{equation}{section}

\usepackage{scalerel}
\usepackage{stackengine,wasysym}

%{\tilde{\hspace{0.03truecm}#1} }

\newcommand{\tr}{\mathrm{tr}}
\newcommand{\toup}{\nearrow}

\newcommand{\id}{\mathrm{id}}

\newcommand{\UP}{V}

\newcommand{\rK}{\mathrm{K}}%\bR_+\times \bR
\newcommand{\oK}{\mathrm{K_0}}%\bR
\newcommand{\loc}{\mathrm{loc}}%\bR
\newcommand{\Leb}{\mathrm{Leb}}
\newcommand{\then}{\Rightarrow}

%%%%%%%%%%%%%%%%%%%%%%%%%%%%%%%%%%%%%%MACROS

\def\bN {\mathbb{N}}

\newcommand \bR {\mathbb{R}}
\newcommand \lL {\mathbf{L}}

\def\bE {\mathbb{E}}

\newcommand \bK {\mathbb{K}}

\def\bB {\mathbb{B}}
\def\bD {\mathbb{D}}
\newcommand \bU {\mathbf{U}}
\newcommand \bV {\mathbf{V}}
\newcommand \bF {\mathbf{F}}

\def\cL {\mathcal{L}}
\def\cM{\mathrm{M}}
\def\cTM{\mathrm{TM}}

\def\cR {\mathcal{R}}

\newcommand{\rb}{)]}

%%%%%%%%%%%%%%%%%%%%%%%%%%%%%%%%%%%%COMMANDES

\newcommand{\tx}[1]{\mathrm{#1}}

\newcommand{\sh}[1]{#1^\sharp}

\newcommand{\supp}{\operatorname{supp}}

\newcommand{\ud}{\mathrm{\,d}}
\newcommand{\ut}{\mathrm{t}}

%\renewcommand{\dot}[1]{\accentset{\mbox{\large .}}{#1}}

%%%%%%%%%%%%%%%%%%%%%%%%% AD-HOC MACROS

\newcommand{\lin}{_{\textsc{l}}}

\newcommand{\threepartdef}[6]
{
	\left\{
	\begin{array}{lll}
		#1 & \mbox{if } #2 \\
		#3 & \mbox{if } #4 \\
		#5 & \mbox{if } #6
	\end{array}
	\right.
}

%%%%%%%%%%%%%%%%%%%%%%%%%%%%%%%%%%%%%%%%%%%%%%%%%%%%%%%%%%%%%%%%%%%%%%%%%%%%%%%%%%%%%%%%%%%%%%%%%%%%%%%%%

\begin{document}

	\title{Wave maps in dimension $1+1$ with an external forcing}
	
	\author{Zdzis{\l}aw Brze\'zniak}
	\address{Department of Mathematics\\
		University of York\\
		Heslington, York YO10 5DD\\
		UK}
	\email{zdzislaw.brzezniak@york.ac.uk}
	
	\author{Jacek Jendrej}
	\keywords{forced wave maps; Riemannian geometry; weak solution;  scattering}
	\address{CNRS \& LAGA, Universit\'e Sorbonne Paris Nord}
	\email{jendrej@math.univ-paris13.fr}
	
	\author{Nimit Rana}
	\address{Department of Mathematics\\
		University of York\\
		Heslington, York YO10 5DD\\
		UK}
	\email{nimit.rana@york.ac.uk}
	
	\date{\today}

	\begin{abstract}
		This paper aims to establish the local and global well-posedness theory in $L^1$, inspired by the approach of Keel and Tao \cite{KT98}, for the forced wave map equation in the ``external'' formalism. In this context, the target manifold is treated as a submanifold of a Euclidean space. As a corollary, we reprove Zhou's uniqueness result from \cite{Zhou99}, leading to the uniqueness of weak solutions with locally finite energy. Additionally, we achieve the scattering of such solutions through a conformal compactification argument.
	\end{abstract}
	
	\maketitle
	\tableofcontents
	
	%-------------------------INTRODUCTION------------------------------------------%

	\section{Introduction}\label{sec-Introduction}

	In this paper we consider the following forced wave  map equation (forced WME) on a trapezoid $K \subset \bR^{1+1}$ with values in a compact manifold $\cM$ (isometrically embedded into some Euclidean space $\bR^n$) with a general forcing term
	\begin{equation}
		\label{eqn-wave map forced-intro}
		\partial_t^2 u_i - \partial_x^2 u_i = \sum_{j,k=1}^n \Gamma_{ijk}(u)(\partial_t u_j\partial_t u_k - \partial_x u_j \partial_x u_k)
		+ \sum_{ j=1}^n P_{ij}(u)f_{j},\;\; 1 \leq i \leq n,
	\end{equation}
	where $\Gamma_{ijk}, P_{ij}: \cM \to \bR$ and $f_j: K \to \bR$, for $1 \leq i, j, k \leq n$, sufficiently regular functions for the equation above to make sense.
	When the forcing  $(f_{j})_{ j=1}^n$ is equal to $0$, equation \eqref{eqn-wave map forced-intro} is called wave  map equation (WME). A forced wave map is  a function $u: K \to \cM$ that is regular enough and satisfies the forced WME.  Similarly, a wave map is a function $u: K \to \cM$  that is sufficiently regular and satisfies the WME.
	Wave maps  have been a subject of extensive research in the past 50 years due to several reasons. Firstly, because the equation \eqref{eqn-wave map forced-intro} without the forcing term represents the most basic geometric  wave equation  providing  an excellent setup to study the non-linear wave interactions within a manifold setting.  Additionally, wave maps become pertinent in the analysis of more intricate hyperbolic Yang-Mills equations \cite{CSTZ98, Chiang13}, where they appear as special cases.
	
	\medskip

	The most natural question associated to forced WME \eqref{eqn-wave map forced-intro} is to study the well-posedness of the associated Cauchy problem with initial data
	\begin{equation}\label{eqn-init-data-intro}
		(u,\partial_t u)_{t=0} = (u_0,v_0)
	\end{equation}
	where $(u_0,v_0): \bR \to \cTM$ with  $\cTM$ being  the tangent bundle of the manifold $\cM$.
	
	\medskip

	One  of the most frequently  studied questions  has been  to find the range of $s \in \mathbb{R}$ such that the Cauchy problem for WME   with initial data $(u_0,v_0) \in H^s_{\loc}\times H^{s-1}_{\loc}$ is locally or globally well posed.
	It has been  established,  see \cite{KT98} and \cite{MNT10}, that the Cauchy problem for WME is locally well-posed  if  $s > \frac{3}{4}$.  It has also been proved, see  \cite{Tao00} for details,
	that  it  is ill-posed in the critical space $\dot{H}^{\frac{1}{2}} \times \dot{H}^{-\frac{1}{2}}$.  For $s \geq 1$,  the combination of local solution theory with energy conservation immediately yields  global well-posedness. This global solution theory extends to $\frac{3}{4} < s<1$ in the scenario when the target manifold is a sphere, see \cite[Theorem 1.3]{KT98}. Independently Zhou in \cite{Zhou99} confirms the global well-posedness result of Keel and Tao \cite{KT98} for $s \geq 1$. Zhou demonstrates  that finite energy weak solutions to the Cauchy problem for the WME  are unique and, in fact, are global strong solutions. For a more comprehensive understanding of the wave map equation, we recommend referring to the insightful surveys \cite{ShSt00, Strauss89, Tataru04}.
	
	\medskip

	The primary objective of the present  paper is to establish the existence and uniqueness of global-in-time solutions for the manifold-valued forced geometric wave equation \eqref{eqn-wave map forced-intro}. The solutions are considered with an arbitrary initial data \eqref{eqn-init-data-intro} within the scale-invariant $L^{1,1}$-space. The analysis is conducted on the Minkowski $1+1$-space and includes the external forcing $f$ from the $L_t^1 L^1_x$-space. For the detailed existence results, we  refer to Theorems \ref{thm-M-globalwave-BndTrap} and \ref{thm-M-global wave-unBndTrap} in Section \ref{sec:M valued cauchy}. The paper also provides a complete proof of the uniqueness of such solutions, as stated in Theorems \ref{thm-uniqueness-wave-map} and \ref{thm-uniqueness-wave-map-unBndTrap}. Notably, our approach to proving the uniqueness draws inspiration from Zhou's methodology in \cite{Zhou99}, but offers an independent and self-contained proof of his results. Additionally, through a conformal compactification argument from \cite[Section 9.4]{KT98}, we establish a scattering result, as presented in Theorem \ref{thm:scat-largedata}. This scattering result can be somewhat surprising in view of the fact that
	solutions do not decay in time. Nevertheless, the special null-structure of the non-linearity
	leads to scattering for all initial data.

	\medskip

	The motivation for considering forced  wave  map equations stems from the necessity to analyse the solution theory to the associated deterministic controlled equation, known as \textit{skeleton equation} in the literature, while establishing the Large Deviations Principle (LDP) for a stochastic version of these equations through a weak convergence approach developed in \cite{BD00} and \cite{BDM08}. These methods rely on variational representations of infinite-dimensional Wiener processes. As an initial step, defining an appropriate action functional with compact level sets is imperative. This functional is derived based on the minimum energy required to generate a solution of the skeleton equation.  The analysis of large deviations for stochastic non-linear wave equations for manifold-valued processes remains relatively unexplored. To the best of  our knowledge, the LDP for solutions to the stochastic wave map equation (SWME) has only been established in \cite{BGOR22}. In this work, the authors considered the Cauchy problem for both  forced and stochastic WME with $s=2$. The existence of a unique global strong solution to stochastic WME has been  established in \cite{BO07}.
	
	\medskip
	
	Another direction of research is to study WME for initial data given by the Gibbs measure, which are too rough to be covered by the deterministic theory resumed above. 	We mention that the existence of (appropriately defined) global weak solutions of WME with values in the sphere was established in \cite{BJ22}, see also \cite{BLS24} for a local well-posedness result and \cite{B23} for global well-posedness in the case of the target being a Lie group.
	
	\medskip
	
	Since we plan to establish  a  stochastic version  of the current paper in the near future,  we refrain ourselves to talk  further about this topic.  This forthcoming work will not only encompass a comprehensive proof of the existence and uniqueness of WMEs subjected to stochastic forcing but also present results regarding the LDP. An earlier study by the first author and Ondrej\'{a}t in \cite{BO11} addresses the existence of solutions to SWMEs within a framework akin to ours. However, it employs Sobolev spaces based on the Lebesgue space $L^2$, while our future work, like the present study, will employ Sobolev spaces based on the Lebesgue space $L^1$.
	
    \medskip
	
    Let us point out that Section 2.2 of the paper \cite{BJ22} by the first two named authors,  contain results related to the present paper but for the so called Goursat problem, which can be seen as the wave map equation \eqref{eqn-wave map forced} in the null coordinates:
    \begin{equation}
        \label{eq:wm-cauchy}
    \begin{gathered}
        \partial_u\partial_v \phi(u, v) = -(\partial_u\phi(u, v)\cdot\partial_v\phi(u, v))\phi(u, v), \qquad\text{for all }(u, v) \in (0, \infty)^2, \\
        \phi(u, 0) = \phi_+(u), \quad \phi(0, v) = \phi_-(v), \qquad\text{for all }u, v \in [0, \infty),
        \end{gathered}
    \end{equation}
where the boundary data  $\bigl(\phi_+,\phi_-\bigr)$, is assumed to be continuous and
the  compatibility condition $\phi_+(0) = \phi_-(0)$ is given along the  characteristic.

	\medskip

	The organisation of the paper is as follows.
	In Section \ref{sec:notation}, we present the essential notation and define the function spaces used throughout the whole paper. Section \ref{sec:linear-wave} delves into the classical properties of the linear wave equation with $L^1_{\loc}$-forcing, along with pertinent properties of solutions to transport equations that become relevant later in the paper.
	Section \ref{sec:framework} provides an in-depth discussion on the framework adopted for the well-posedness of manifold-valued wave equation  in this paper.
	In Section \ref{sec:Rn-valued-WE-Cauchy}, we establish the local well-posedness of the $\bR^n$-valued wave equation \eqref{eqn-wave map forced-intro} with small initial data $(u_0,v_0)$ and forcing $f$, see  Theorems \ref{thm-nonlin-cauchy-bigData-bndTrap} and \ref{thm-nonlin-cauchy-bigData-UnBndTrap}. The proof relies on a continuity Lemma \ref{lem-continuity}, presented in Appendix \ref{sec:continuity-lem}. Despite the absence of a well-established global solution theory for large data $\bR^n$-valued solutions to \eqref{eqn-wave map forced-intro}, Theorem \ref{thm:nonlin-scattering} suggests that if such a global solution exists, scattering of $\bR^n$-valued solutions holds true.
	Section \ref{sec:M valued cauchy} focuses on the global well-posedness  theory for the forced WME \eqref{eqn-wave map forced-intro},  with arbitrary  initial data and forcing. The proofs of the key results, i.e. Theorems \ref{thm-nonlin-cauchy-L1}, \ref{thm-M-globalwave-BndTrap} and \ref{thm-M-global wave-unBndTrap},   rely on a specific ``energy'' method described in Proposition \ref{prop-noncon} and a special  density argument, see Theorem \ref{lem-density}  in Appendix \ref{app-geom}. Furthermore, Theorem \ref{thm:scat-largedata} establishes a scattering result.
	In Section \ref{sec:generalisation-L1loc} we extend the findings from subsections  \ref{sec:wave-map-M-global-soln-existence} and \ref{sec:wave-map-M-global-soln-uniq} to accommodate weaker assumptions on the data. The paper concludes in Section \ref{sec-Zhou}, where we deduce the Zhou uniqueness result \cite[Theorem 1.3]{Zhou99} from Theorems~ \ref{thm-M-global wave-unBndTrap} and \ref{thm-uniqueness-wave-map-unBndTrap}. 
	
	\medskip

	\section{Notation}\label{sec:notation}
	When we write $L>0$ we mean that $L$ is a positive real number.
	All functions considered in this article are $\bR^n$-valued. On $\bR^n$ we always consider the $\ell^1$-norm defined by
	\begin{equation}\label{eqn-R^n-l^1-norm}
		\vert z \vert\coloneqq  \sum_{i=1}^n \vert z_i\vert, \;\; z=(z_1,\ldots,z_n) \in \bR^n.
	\end{equation}
	For a (Lebesgue) measurable function $f=(f_1,\ldots,f_n):\Omega \to \bR^n$, where $\Omega \subset \bR^m$ is a Lebesgue measurable set,  we put
	\begin{equation}\label{eqn-L^1-norm}
		\| f \|_{L^1(\Omega)}\coloneqq  \int_\Omega \sum_{i=1}^n \vert f_i(x) \vert\, \ud x,
	\end{equation}
	where $\ud x$ denotes the integration w.r.t. the Lebesgue measure on $\bR^m$.
	
	By $L^1_{\loc}(\Omega;\bR^n)$ we mean the metrizable topological vector space equipped with a natural countable family of seminorms defined by
	\begin{equation}
		p_j (u) \coloneqq  \| u\|_{L^1(B_j \cap \Omega)}, \qquad u \in  L^1_{\loc}(\Omega;\bR^n), j \in \bN,
	\end{equation}
	where $(B_j)_{j \in \bN}$ are open balls in $\bR^m$ such that $\cup_{j \in \bN} \overline{B_j \cap \Omega} = \Omega$.
	
	Let us put, for $T \in (0,\infty]$,
	\begin{equation}\label{eqn-interval}
		[0,T\rb\coloneqq \begin{cases}[0,T], & \mbox{ if } T \in (0,\infty), \\
			[0,\infty), & \mbox{ if } T =\infty,
		\end{cases} \quad \textrm{ and } \quad
		(0,T\rb\coloneqq \begin{cases}(0,T], & \mbox{ if } T \in (0,\infty), \\
			(0,\infty), & \mbox{ if } T =\infty.
		\end{cases}
	\end{equation}
	For $T \in (0,\infty]$ and a Banach space $E$, we write $C_b([0,T)];E)$ for the space of all $E$-valued bounded and continuous function defined on interval $[0,T)]$. Moreover, for an open subset $\Omega \subset \bR^m$ and $k \in \bN$ we write $C^k(\Omega;E)$ for the collection of all $E$-valued $k$-times differentiable functions such that the $k$-th derivative is continuous. If $f \in C^k(\Omega;E)$ for every $k \in \bN$, then we say that  $f \in C^\infty(\Omega;E)$ or $f$ is $E$-valued $C^\infty$-class function. We will write $C_0^\infty(\Omega;E)$ to denote the space of $E$-valued smooth functions with compact support in $\Omega$.
	
	\begin{remark}
		When the target space of a function is clear from the context we will not write it explicitly. For example we write  $L^1(\Omega)$ instead $L^1(\Omega;\bR^n)$.
	\end{remark}
	
	\begin{proposition}\label{prop-local-integrable}
		If $T\in (0,\infty]$, the  function
		\begin{equation*}
			u: [0,T\rb\times \bR  \to \bR^n
		\end{equation*}
		is  locally  integrable  if and only if  for every compact trapezoid $K \subset [0,T\rb\times \bR$, the restriction of $u$ to $K$ is integrable.
	\end{proposition}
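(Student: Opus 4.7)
The plan is to reduce the proposition to an elementary geometric fact: every compact subset of $\Omega := [0, T\rb\times \bR$ is contained in some compact trapezoid $K \subset \Omega$. To see this, observe that a compact $C \subset \Omega$ is bounded and satisfies $T_0 := \sup_{(t,x) \in C} t < T$, so $C$ lies inside a rectangle $[0, T_0] \times [x_0, x_1]$, which in turn is contained in the compact trapezoid with horizontal sides at $t = 0$ and $t = T_0$ and null sides $\{x - t = x_0 - T_0\}$ and $\{x + t = x_1 + T_0\}$; this trapezoid is a subset of $\Omega$ precisely because $T_0 < T$. I would first record this geometric lemma explicitly.

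For the forward direction, suppose $u \in L^1_{\loc}(\Omega;\bR^n)$ and let $K \subset \Omega$ be a compact trapezoid. Since $K$ is compact and $\cup_{j} \overline{B_j \cap \Omega} = \Omega$, a standard finite-subcover argument applied to the open cover of $K$ obtained from the balls $B_j$ (possibly after enlarging a finite subfamily slightly to ensure it is an open cover of $K$) yields indices $j_1, \ldots, j_N$ with $K \subset \bigcup_{k=1}^N B_{j_k} \cap \Omega$, so that $\|u\|_{L^1(K)} \leq \sum_{k=1}^N p_{j_k}(u) < \infty$.

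For the converse, assume $u$ is integrable on every compact trapezoid contained in $\Omega$. Fix $j \in \bN$; the defining condition $\cup_j \overline{B_j \cap \Omega} = \Omega$ forces each $\overline{B_j \cap \Omega}$ to be a compact subset of $\Omega$ (else the union would contain points of $\overline\Omega \setminus \Omega$). Applying the geometric lemma, enclose $\overline{B_j \cap \Omega}$ in a compact trapezoid $K_j \subset \Omega$. Monotonicity of the integral then yields
\begin{equation*}
p_j(u) = \|u\|_{L^1(B_j \cap \Omega)} \leq \|u\|_{L^1(K_j)} < \infty,
\end{equation*}
and since $j \in \bN$ was arbitrary we conclude $u \in L^1_{\loc}(\Omega;\bR^n)$.

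I do not anticipate any real obstacle. The content of the proposition is simply the observation that rectangles in $\Omega$ can always be enclosed in compact null trapezoids, so the two conditions are two equivalent ways of expressing integrability on every compact piece of $\Omega$; the result should be viewed as a bookkeeping lemma reconciling the ball-based definition of $L^1_{\loc}$ with the trapezoid-based integrability that is natural for the $1+1$-dimensional wave equation.
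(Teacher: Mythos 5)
The paper states Proposition \ref{prop-local-integrable} without proof, so there is no argument of the authors' to compare against; your proof is the natural one and is correct in substance. Two small points of hygiene. First, your geometric lemma is right, but the justification "$T_0 := \sup_{(t,x)\in C} t < T$" can fail when $T$ is finite, since $[0,T\rb = [0,T]$ is closed and a compact $C$ may touch the line $t=T$; what you actually need, and what always holds, is $T_0 \le T$ (with $T_0 < \infty$ forced by compactness when $T=\infty$), and that already guarantees the enclosing trapezoid of height $T_0$ lies in $[0,T\rb\times\bR$. One should also allow a strictly positive height when $C \subset \{0\}\times\bR$, since Definition \ref{def-trapezoid} requires $t_0>0$; this is trivial to arrange. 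Second, in the forward direction the hypothesis on the balls is only $\cup_j \overline{B_j\cap\Omega}=\Omega$, which is a cover by closed sets, so compactness of $K$ does not immediately produce a finite subcover; your parenthetical "after enlarging a finite subfamily slightly" papers over this. The clean fix is either to read the definition with the standard choice $B_j=B(0,j)$ (an exhausting open cover, for which the finite-subcover step is immediate), or to argue via the equivalent characterization that $u\in L^1_{\loc}$ iff every point of $\Omega$ has a relatively open neighbourhood on which $u$ is integrable, and then cover $K$ by finitely many such neighbourhoods. With these touch-ups the argument is complete, and your closing remark correctly identifies the proposition as a bookkeeping reconciliation between the ball-based seminorms and trapezoid-based integrability.
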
	
	\begin{definition} \label{def-W11-notation}
		Let $I \subset \bR$ be an open interval. We say that a measurable function $v: I \to \bR^n$ belongs to  $\dot{W}^{1,1} (I;\bR^n)$  if and only if  it is  weakly differentiable and its weak derivative $Dv$ is integrable over $I$.\\
		Equivalently, a function $v$ belongs to  $\dot{W}^{1,1} (I;\bR^n)$ if and only if  it is locally absolutely continuous, so that it is almost everywhere differentiable,
		and the derivative function $v^\prime$   is integrable over $I$ and $Dv=v^\prime$ almost everywhere on $I$.
	\end{definition}
	\begin{remark}
		Assume that $v\in \dot{W}^{1,1} (I;\bR^n)$ for some $I = (a,b)$. We can take $(a,b) = (-\infty, \infty)$ for example. In view of \cite[Theorem 7.20]{Rudin74RCA}, $v(y)-v(x)=\int_x^y Dv(s)\, ds$, for all $x<y \in I$. In particular, $v$ is bounded and the following two limits
		\begin{align}
			v(a)&\coloneqq  \lim_{x \to a} v(x), \\
			v(b)&\coloneqq  \lim_{x \to b} v(x),
		\end{align}
		exist.  Moreover,  the function
		$\tilde{v}(x)=\int_a^x Dv(s)\, \ud s$  differs from $v$ by a constant (and $ \lim_{x \to a}\tilde{v}(x)=0$).
	\end{remark}
	\noindent The space $\dot{W}^{1,1} (I; \bR^n)$ is a vector space endowed with a seminorm
	\begin{equation}\label{eqn-dotW^11}
		\Vert v  \Vert_{\dot{W}^{1,1}(I)}\coloneqq  \Vert Dv \Vert_{L^1(I)}.
	\end{equation}
	Moreover, the space
	\begin{equation}\label{eqn-L^infty cap W^11}
		(L^\infty \cap \dot{W}^{1,1})(I;\bR^n) \coloneqq L^\infty(I;\bR^n) \cap \dot{W}^{1,1} (I;\bR^n)
	\end{equation}
	is a Banach space endowed with the norm
	\begin{equation}\label{eqn-L^infty cap W^11-norm}
		\Vert u \Vert_{(L^\infty \cap \dot{W}^{1,1})(I)}=\Vert u \Vert_{ L^\infty(I)}+ \Vert Du \Vert_{L^{1} (I)}.
	\end{equation}
	Let us note that
	\begin{equation}\label{eqn-L^infty cap W^11-1}
		(L^\infty \cap \dot{W}^{1,1})(I;\bR^n) = C_b(I;\bR^n) \cap \dot{W}^{1,1} (I;\bR^n),
	\end{equation}
	where $C_b(I;\bR^n)$ is the space of all $\bR^n$-valued bounded and continuous function defined on $I$.

	\subsection{Trapezoids}
	In this subsection we provide definition of an abstract trapezoid, on which we consider the wave maps equation later, and some examples of it. The following definition is motivated from the notion of ``initial set'' defined in \cite{Norris95}.
	\begin{definition}\label{def-abstract trapezoid} A closed,  convex and of positive Lebesgue measure set $K \subseteq [0,\infty) \times \bR$ is said to be a (-n abstract) trapezoid if and only if for every $(t,x) \in K$, the closed triangle $T_{(t,x)}$ with vertices $(t,x), (0,x-t)$ and $(0,x+t)$ is a subset of $K$. \\
		In this setting we further define, for any $t \in (0,\infty)$,
		\begin{equation}\label{eqn-K_t}
			K_t \coloneqq  \{x \in \bR: (t,x) \in K \} \cong  K \cap (\{t\} \times \bR).
		\end{equation}
	\end{definition}	

	Obviously, every set $K_t$ is convex and, possibly empty. If $K_t \not= \emptyset$ for some $t>0$ and $s\in [0,t]$, then $K_s \not= \emptyset$. Note that $\{t\} \times K_t \subset K$.
	If the set $\{ t\in [0,\infty): K_t \not= \emptyset\}$ is unbounded above, then we define the height of $K$ to be equal to $\infty$. Otherwise, we define the height of $K$ to be equal the maximum of that set. Note that if $K^1\subset K^2$ are trapezoids, then   for every $t \geq 0$, $K^1_t\subset K^2_t$.

	Let us see the following three natural examples of an abstract trapezoid.
	\begin{definition}\label{def-trapezoid}
		A compact  trapezoid is a subset  $K$ of $[0,\infty)\times \bR$  such that there exist $x_0 \in \bR$, $L>0$ and $t_0 \in (0,L]$, see Figure \ref{TrapK}, which give
		\begin{equation}\label{eqn-trapezoid}
			K =  \{(t,x) \in [0,t_0]\times \bR: x \in [  x_0 - L + t,x_0 + L - t] \}.
		\end{equation}
		The points  $(0, x_0 - L)$, $(0, x_0 + L)$, $(t_0, x_0 + L - t_0)$ and $(t_0, x_0 - L + t_0)$ are called the vertices of $K$.
		The number  $L$ is equal to the \textbf{half-length} of $K$ and $t_0$ will be called the \textbf{height} of $K$.
		The set
		\begin{equation}\label{eqn-K_0}
			K_0=[x_0 - L, x_0+L]
		\end{equation}
		will be called 	the \textbf{base} of $K$.

		Note that  $x_0$ the middle point of $K_0$ and that   $K_0$ is the ``left bottom'' boundary of $K$, i.e.
		the set  $\{0\}\times K_0$ is the projection of $K$ onto line $\{0\}\times \bR$.
		By the left boundary of $K$ we will understand the set $\{0\} \times K_0$.
		
	\end{definition}

	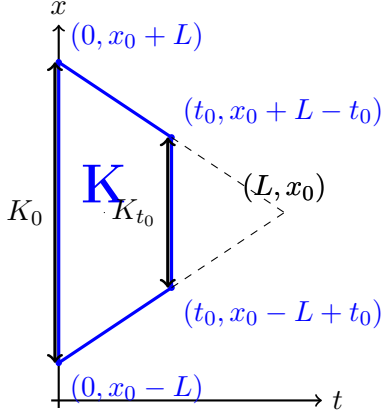
\begin{figure}
		\begin{tikzpicture}[scale=0.50]
			\draw[->,  thick] (-0.2,0)--(7,0) node[right]{$t$};	
			\draw[->, thick] (0,-0.2)--(0,10) node[above]{$x$};
			\draw[blue,very thick] (0,1) node[anchor=north west]{$(0,x_0-L)$}
			-- (0,9) node[anchor=south west]{$(0,x_0+L)$}
			-- (3,7) node[anchor=south west]{$(t_0,x_0+L-t_0)$}
			-- (3,3) node[anchor=north west]{$(t_0,x_0-L+t_0)$}
			-- cycle;
			\draw[dashed]  (1.2,5)  -- (1.2,5)node[anchor=south]{\textbf{\textcolor{blue}{\huge{K}}}};
			\filldraw[blue] (0,1) circle (2pt);
			\filldraw[blue] (0,9) circle (2pt);
			\filldraw[blue] (3,7) circle (2pt);
			\filldraw[blue] (3,3) circle (2pt);
			\draw[dashed]  (3,3)  -- (6,5)node[anchor=south]{$(L,x_0)$};
			\draw[dashed]  (3,7)  -- (6,5)node[anchor=south]{$(L,x_0)$};
			\coordinate (v1) at (-0.1,1);
			\coordinate (v2) at (-0.1,9); \draw[<->,very thick] (v1) -- node[left] {$K_0$} (v2);
			\coordinate (v1) at (2.9,3);
			\coordinate (v2) at (2.9,7); \draw[<->,very thick] (v1) -- node[left] {$K_{t_0}$} (v2);
		\end{tikzpicture}
		\caption{A bounded  trapezoid $K$ with vertices $(0, x_0 - L)$, $(0, x_0 + L)$, $(t_0, x_0 + L - t_0)$ and $(t_0, x_0 - L + t_0)$ for some $x_0>0$ and $0 < t_0 \leq L$.}
		\label{TrapK}
	\end{figure}

	\begin{remark}\label{rem-max-trapezoid}
		If $t_0=L$, then $K$, the trapezoid in the Definition \ref{def-trapezoid},   is a triangle with vertices $(0, x_0 - L)$, $(0, x_0 + L)$, $(L, x_0)$,
		i.e. see Figure \ref{TrapK},
		\begin{equation*}
			K= \{(t,x) \in [0,L]\times \bR: x \in [x_0 - L + t, x_0 + L - t] \}.
		\end{equation*}
		Such a triangle $K$ will be called the ``maximal trapezoid'' with base $K_0$ and height $L$.	
	\end{remark}

	The next two examples consist of unbounded and semi-bounded closed trapezoids.
	\begin{definition}\label{def-trapezoid unbounded}
		An unbounded closed trapezoid is a set $K$ of the form $[0,t_0\rb \times \bR$ for some $t_0 \in (0,\infty]$. The height and base of such a trapezoid is $t_0$ and $\bR$, respectively. In this setup, the half-length $L$ of $K$ is, by definition, equal to $\infty$.
	\end{definition}

	\begin{definition}\label{def-trapezoid semi bounded}
		
		A semi-bounded closed trapezoid is a set of the following form
		\begin{align}\label{eqn-semi-bounded-up}
			K^+(b,t_0)\coloneqq  & \{ (t,x)  \in [0,t_0\rb \times \bR: x \geq b+t\},
			\\
			\label{eqn-semi-bounded-down}
			K^-(a,t_0)\coloneqq  & \{ (t,x)  \in [0,t_0\rb \times \bR: x \leq a-t\},
		\end{align}
		for some $a,b \in \bR$ and $t_0 \in (0,\infty]$.
		The number $t_0$ is equal to the height of the trapezoids $K^+(b,t_0)$ and $K^-(a,t_0)$, while the base of $K^+(b,t_0)$ is the set $K_0=[b,\infty)$ and
		the base of $K^-(a,t_0)$ is the set $K_0=(-\infty,a]$. In both cases, the half-length $L$ of $K$ is, by definition, equal to $\infty$.
	\end{definition}

	\begin{remark}
		If a statement is valid for any abstract trapezoid in the sense of Definition \ref{def-abstract trapezoid}, then we will just write trapezoid without mentioning the words abstract or bounded or unbounded or semi-bounded.
	\end{remark}

	The following invariance property is well known. It plays a fundamental (somehow hidden) role in our considerations.
	\begin{proposition}\label{prop-trapezoid}
		Assume that $K$ is a trapezoid with base $K_0$. Then the following holds.
		\begin{equation}\label{eqn-trapezoid-1}
			\mbox{ If } (t,x) \in K \mbox{ then } (s,x+t-s), (s,x-t+s) \in K \mbox{ for all } s\in [0,t].
		\end{equation}
		In particular, if $(t,x) \in K$ then $x+t,x-t\in K_0$.
	\end{proposition}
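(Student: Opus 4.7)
The plan is to unpack the definition of an abstract trapezoid directly; the proposition is essentially a tautology once the definition is opened.

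First, given $(t,x) \in K$, I would invoke Definition \ref{def-abstract trapezoid} to conclude that the closed triangle $T_{(t,x)}$ with vertices $(t,x)$, $(0,x-t)$ and $(0, x+t)$ is a subset of $K$. Since a triangle contains its vertices, in particular $(0, x-t), (0, x+t) \in K$, which by the defining formula \eqref{eqn-K_t} for $K_0$ translates to $x \pm t \in K_0$. This yields the ``in particular'' statement.

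For the main assertion, I would observe that the two non-horizontal edges of $T_{(t,x)}$ are precisely the parameterized segments
\begin{equation}
\{(s, x+t-s) : s \in [0,t]\} \quad \text{and} \quad \{(s, x-t+s) : s \in [0,t]\},
\end{equation}
the first joining $(0, x+t)$ to $(t,x)$ and the second joining $(0, x-t)$ to $(t,x)$. Since $T_{(t,x)} \subset K$, both segments sit inside $K$, giving exactly the two inclusions in the claim.

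There is essentially no obstacle here beyond the bookkeeping check that the parameterizations indeed match the characteristic edges of $T_{(t,x)}$; the convexity and triangle-containment built into Definition \ref{def-abstract trapezoid} do all the real work. Alternatively one may first extract $(0, x \pm t) \in K$ from the triangle inclusion and then invoke the convexity of $K$ to obtain the segments as convex combinations of these endpoints with $(t,x)$, but this is a matter of style rather than substance.
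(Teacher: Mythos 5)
Your proof is correct. The paper states this proposition without proof (labelling it "well known"), and your argument — that the two segments $\{(s,x+t-s):s\in[0,t]\}$ and $\{(s,x-t+s):s\in[0,t]\}$ are precisely the characteristic edges of the triangle $T_{(t,x)}$ guaranteed to lie in $K$ by Definition \ref{def-abstract trapezoid}, with the ``in particular'' clause following from the vertices $(0,x\pm t)$ and the definition \eqref{eqn-K_t} of $K_0$ — is exactly the intended one.
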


	In what follows we will introduce  somehow unusual notation.
	\begin{definition}\label{def-C0inftyK}
		Let $K$ be a trapezoid. Define the double of $K$ to be
		$$ \tilde{K} \coloneqq  K \cup (-K) \subset \bR\times \bR,$$
		where $-K= \{(-t,x): (t,x) \in K  \}$. Then  the space $C_0^\infty((K))$ is defined as
		$$C_0^\infty((K))\coloneqq  \{ \varphi|_K : \varphi \in C_0^\infty(\textrm{int}(\tilde{K})) \}, $$
		where $\textrm{int}(A)$ denotes the interior of set $A$. For $T \in (0,\infty)$, we say that a $C^\infty$-class function $\phi: [0,T]\times \bR \to  \bR^n$  belongs to a class
		$C_0^\infty([0,T]\times \bR;\bR^n)$  if and only if
		there exist compact trapezoids $\widetilde{K}$ and $\widetilde{\widetilde{K}}$ such that $\widetilde{K} \subset \textrm{int}(\widetilde{\widetilde{K}})$, $\supp(\phi) \subset \widetilde{K}$  and $\phi|_{\widetilde{\widetilde{K}}}$ is of $C^\infty$-class.
	\end{definition}		
	
	\begin{remark}
		Note that if $K$ is a compact trapezoid, as in Definition \ref{def-trapezoid}, and $\phi \in  C_0^\infty((K))$, then $\phi$ equal to $0$ in a neighbourhood of every point $(t,x) \in \partial(K) \setminus (\{0\} \times K_0)$ but not for points belonging to the left boundary, i.e.
		$(t,x) \in \{0\} \times K_0$.
	\end{remark}

	\begin{definition}\label{def-space H(K)}
		If $K$ is a trapezoid with basis $K_0$ of height $T_0 \in (0,\infty]$,
		then  by $\mathscr{H}(K)$ we denote  the collection of all  continuous  functions $u:K \to \bR^n$
		for which  the weak derivatives $\partial_t u$ and $\partial_x u$ in $L^1(\textrm{int}(K))$ exist and satisfy
		\begin{align}
			& \partial_x u \in C_b([0,T_0)]; L^1(K_\cdot)), \label{eqn-H(K)} \\
			& \partial_t u \in C_b([0,T_0)]; L^1(K_\cdot)). \label{eqn-H(K)-2}
		\end{align}

		Let us recall that a measurable function $v: K \to \bR^n$ belongs to $C_b([0,T_0)], L^1(K_\cdot))$  if and only if  the function $\tilde{v}$ which is the extension of $v$
		to the rectangle $[0,T_0)] \times K_0$ by $0$ in $([0,T_0)] \times K_0) \setminus K$,
		belongs to $C_b([0,T_0)];L^1(K_0))$.
	\end{definition}
	
	Let us point out that the continuity property of function $u$ in Definition \ref{def-space H(K)} is not needed as it follows from \eqref{eqn-H(K)}-\eqref{eqn-H(K)-2}. Indeed, the conditions \eqref{eqn-H(K)}-\eqref{eqn-H(K)-2} imply that
	$$ u \in C([0,T_0)]; W^{1,1}(K_\cdot)) \cap W^{1,\infty}([0,T_0); L^1(K_\cdot)).$$
	It is easy to prove that $\mathscr{H}(K)$ is a separable Banach space with norm
	\begin{equation}\label{eqn-H(K)-norm}
		\Vert u \Vert_{\mathscr{H}(K)}\coloneqq \sup_{(t,x)\in K}  |u(t,x)| +\sup_{t \in [0,T_0)]}  \int_{K_t}  \left(|\partial_x u(t,x)| +|\partial_t u(t,x)| \right) \ud x.
	\end{equation}
	Let us point out that if $T_0$ is finite, then
	$$C_b([0,T_0)]; L^1(K_\cdot))=C([0,T_0]; L^1(K_\cdot)). $$
	
	\begin{remark}\label{rem-H spaces}
		In the two special cases of a trapezoid $[0,\infty)\times \bR$ or $[0,T]\times \bR$, for $T>0$, the spaces $\mathscr{H}(K)$ defined in the Definition \eqref{def-space H(K)} can characterized in the following simpler way:
		\begin{align}\label{eqn-H_infty}
			\mathscr{H}([0,\infty)\times \bR)
			&\coloneqq  \Bigl\{u: [0,\infty)\times \bR \to \bR^n:
			\\
			& \qquad u \in  C_b([0,\infty); L^\infty(\bR \cap \dot{W}^{1,1}(\bR)),  \partial_t u \in C_b([0,\infty);L^1(\bR) \bigr\},
		\end{align}
		and
		\begin{align}\label{eqn-H_T}
			\mathscr{H}([0,T]\times \bR)& \coloneqq  \bigl\{ u: [0,T]\times \bR \to \bR^n:
			\\
			&  \qquad
			u \in C([0,T]; L^\infty(\bR) \cap \dot{W}^{1,1}(\bR)), \partial_t u \in C([0,T],L^1(\bR)) \bigr\}.
		\end{align}
		Note that  $\mathscr{H}([0,\infty)\times \bR)$  and $\mathscr{H}([0,T]\times \bR)$ are  Banach spaces.
	\end{remark}
	
	The definition above implies the  following simple version of a trace theorem whose proof follows by standard arguments.
	\begin{theorem}
		\label{thm-trace}
		In the framework of Definition \ref{def-space H(K)}, if $u \in \mathscr{H}(K)$, then for every $s \in [0,T_0)]$,
		the ``restrictions'' $\bigl(  u(s, \cdot),\partial_t u(s,\cdot)\bigr)$ exist and belong to the space $  (L^\infty \cap \dot{W}^{1,1}(K_s))\times L^1(K_s)$. Moreover, the  corresponding linear map
		\begin{equation}\label{eqn-trace maps}
			\begin{split}
				\tr_s&:\mathscr{H}(K) \ni u \mapsto  \bigl( u(s, \cdot), \partial_t u(s,\cdot) \bigr)   \in  (L^\infty \cap \dot{W}^{1,1}(K_s))\times L^1(K_s)
			\end{split}
		\end{equation}
		is a contraction.
		Moreover, if $\phi \in C_0^\infty(\textrm{int}(K_0))$ then the map
		\begin{align}\label{eqn-trace-maps-integral}
			\Upsilon_{s,\phi}&:\mathscr{H}(K) \ni u \mapsto 
\int_{K_s} \varphi(x) ~ u(s,x) \cdot \partial_t u(s,x) \, \ud x
		\end{align}
		is continuous. 
	\end{theorem}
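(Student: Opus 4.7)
The proof splits naturally into three parts: well-definedness of the trace, the contraction bound, and the continuity of the bilinear form $\Upsilon_{s,\phi}$.

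For the trace at time $s \in [0,T_0)]$, note first that $u(s,\cdot)$ is well-defined pointwise by the continuity of $u$ on $K$, while $\partial_t u(s,\cdot) \in L^1(K_s)$ and $\partial_x u(s,\cdot) \in L^1(K_s)$ are well-defined by the assumptions \eqref{eqn-H(K)}--\eqref{eqn-H(K)-2}. The nontrivial point is that $u(s,\cdot)$ lies in $\dot{W}^{1,1}(K_s)$ with weak derivative equal to $\partial_x u(s,\cdot)$. To see this, I would pick a mollifier $\rho_n(t)$ supported in a shrinking neighbourhood of $s$ (and shifted as needed near the endpoints of $[0,T_0)]$) and, for any $\psi \in C_0^\infty(\mathrm{int}(K_s))$, test the spatial weak-derivative identity on $K$ against $\rho_n(t)\psi(x)$. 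The left-hand side converges to $\int_{K_s} u(s,x)\psi'(x)\,\ud x$ by continuity of $u$, and the right-hand side converges to $-\int_{K_s}\partial_x u(s,x)\psi(x)\,\ud x$ by the continuity of the map $t\mapsto \partial_x u(t,\cdot) \in L^1$ combined with the boundedness of $\psi$. This identifies the weak derivative.

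The contraction estimate then follows directly from the definition of the norms: we have
\begin{equation*}
\|u(s,\cdot)\|_{L^\infty(K_s)} + \|\partial_x u(s,\cdot)\|_{L^1(K_s)} + \|\partial_t u(s,\cdot)\|_{L^1(K_s)} \leq \sup_{K}|u| + \sup_{t\in [0,T_0)]}\int_{K_t}\bigl(|\partial_x u| + |\partial_t u|\bigr)\,\ud x,
\end{equation*}
and the right-hand side is exactly $\|u\|_{\mathscr{H}(K)}$.

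For the continuity of $\Upsilon_{s,\phi}$, which is bilinear rather than linear, I would use the product estimate
\begin{equation*}
|\Upsilon_{s,\phi}(u) - \Upsilon_{s,\phi}(v)| \leq \|\varphi\|_{L^\infty}\bigl(\|u(s,\cdot)-v(s,\cdot)\|_{L^\infty}\|\partial_t u(s,\cdot)\|_{L^1} + \|v(s,\cdot)\|_{L^\infty}\|\partial_t u(s,\cdot)-\partial_t v(s,\cdot)\|_{L^1}\bigr),
\end{equation*}
and then apply the contraction bound from the previous step to each of the three factors involving $u-v$ and $v$. This gives continuity on bounded subsets of $\mathscr{H}(K)$, hence continuity as a map on the whole Banach space.

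The main technical obstacle is really the first step, namely rigorously identifying $\partial_x u(s,\cdot)$ as the weak derivative of $u(s,\cdot)$ on $K_s$; everything else is a routine norm estimate. A secondary subtlety is handling endpoint values of $s$ (in particular $s=0$) where the mollification in $t$ must be one-sided, but this is standard and causes no real difficulty given the $C_b$ regularity in $t$.
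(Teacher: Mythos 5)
Your proposal is correct, and in fact the paper offers no proof at all for this theorem (it only remarks that the statement "follows by standard arguments"), so your sketch supplies exactly the intended standard argument: the mollification-in-$t$ identification of $\partial_x u(s,\cdot)$ as the weak derivative of $u(s,\cdot)$, the immediate norm comparison giving the contraction property, and the bilinear difference estimate for $\Upsilon_{s,\phi}$ are all sound. The only point worth keeping in mind is the endpoint $s=0$ (and $s=T_0$ when $T_0<\infty$), where instead of a one-sided mollifier one may simply pass to the limit $s_n\downarrow 0$ using the continuity of $u$ and of $t\mapsto\partial_x u(t,\cdot)$ in $L^1$, since weak derivatives are stable under such limits; this is the routine subtlety you already flag.
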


	\section{Some classical properties of solutions of the linear wave equations}
	\label{sec:linear-wave}
	
	In this section we study some classical properties of linear wave equation with $L^1_{\loc}$-forcing.
	
	\subsection{Weak/Mild solutions and their equivalence}
	We start with the following definitions of weak/mild solution to problem \eqref{eqn-linear-wave-nonhom}.

	\begin{definition}
		\label{def:nonhom-wave-weak}
		Assume that $K$ is a trapezoid with base $K_0$. Assume that  $h \in L^1_{\loc}(K;\bR^n)$ and $u_0, v_0 \in L^1_{\loc}(K_0;\bR^n)$.
		A locally integrable function $u: K \to \bR^n$ is called a \emph{weak solution} in $K$  of the following initial value problem
		\begin{equation}
			\label{eqn-linear-wave-nonhom}
			\begin{gathered}
				\partial_t^2 u - \partial_x^2 u = h, \\
				u(0, \cdot) = u_0, \quad \partial_t u(0, \cdot) = v_0,
			\end{gathered}
		\end{equation}
		if and only if  for every $\phi \in  C_0^\infty((K))$
		\begin{equation}\label{eq:wave-sol-weak-in K}
			\iint_{K}	u(\partial_t^2 \phi - \partial_x^2 \phi)\ud x\ud t = \iint_{K} \phi h\ud x\ud t
			- \int_{K_0} u_0(x)\partial_t \phi(0,x)\ud x + \int_{K_0} v_0(x)\phi(0,x)\ud x.
		\end{equation}
	\end{definition}
	
	We have the following trivial consequence of our definition.
	\begin{proposition}\label{prop-weak solutions}
		Assume that $K$ is a trapezoid with base $K_0$.  Assume that  $u_0, v_0: K_0 \to \bR^n$ and $h: K \to \bR^n$ are locally integrable functions.
		If  a locally  integrable function $u: K  \to \bR^n$
		is  a weak solution  of the problem \eqref{eqn-linear-wave-nonhom} in $K$, then for every
		bounded trapezoid $\widetilde{K} \subset K$ with base $\widetilde{K}_0 \subset K_0$,  the restriction $u_{\vert \widetilde{K}}$
		is  a weak solution in $\widetilde{K}$   of  problem
		\eqref{eqn-linear-wave-nonhom} with the initial data and forcing being the restrictions of $(u_0, v_0)$ and $ h$ to $\widetilde{K}_0$.
	\end{proposition}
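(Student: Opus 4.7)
The plan is to verify the weak-solution identity \eqref{eq:wave-sol-weak-in K} for the restriction $u\vert_{\widetilde{K}}$ directly from the identity already satisfied by $u$ on $K$. Concretely, given any test function $\phi \in C_0^\infty((\widetilde{K}))$ on the smaller trapezoid, I would lift it to a test function $\bar\phi \in C_0^\infty((K))$ on the larger trapezoid, chosen so that $\bar\phi$ coincides with $\phi$ on $\widetilde{K}$ and vanishes identically on $K\setminus\widetilde{K}$. The weak formulation satisfied by $u$ on $K$, tested against $\bar\phi$, will then collapse to exactly the weak formulation required for $u\vert_{\widetilde{K}}$ tested against $\phi$, because every contribution coming from $K\setminus\widetilde{K}$ and from $K_0\setminus\widetilde{K}_0$ is annihilated by $\bar\phi$.

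To construct $\bar\phi$, by Definition \ref{def-C0inftyK} I write $\phi = \psi\vert_{\widetilde{K}}$ for some $\psi \in C_0^\infty(\textrm{int}(\widetilde{K}\cup(-\widetilde{K})))$. The geometric step to verify is the inclusion
\[
\textrm{int}\bigl(\widetilde{K}\cup(-\widetilde{K})\bigr) \subseteq \textrm{int}\bigl(K\cup(-K)\bigr),
\]
which follows from $\widetilde{K}\subseteq K$ together with the hypothesis $\widetilde{K}_0\subseteq K_0$; the convexity and light-cone constraints in Definition \ref{def-abstract trapezoid} guarantee that every interior point of the smaller double admits a full two-dimensional open neighbourhood still contained in the larger double. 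Extending $\psi$ by zero to $\textrm{int}(K\cup(-K))$ then produces a function $\bar\psi \in C_0^\infty(\textrm{int}(K\cup(-K)))$, and $\bar\phi \coloneqq \bar\psi\vert_{K}$ is an admissible test function in $C_0^\infty((K))$ satisfying $\bar\phi\vert_{\widetilde{K}} = \phi$ and $\bar\phi \equiv 0$ on $K\setminus\widetilde{K}$.

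With $\bar\phi$ in hand, I would substitute it into \eqref{eq:wave-sol-weak-in K} for $u$ on $K$. Because $\bar\phi$ and its derivatives vanish outside $\widetilde{K}$, the double integrals over $K$ reduce to integrals over $\widetilde{K}$; and because $\bar\phi(0,\cdot)$ and $\partial_t\bar\phi(0,\cdot)$ vanish off $\widetilde{K}_0$, the boundary integrals over $K_0$ reduce to integrals over $\widetilde{K}_0$. What remains is precisely the identity \eqref{eq:wave-sol-weak-in K} on $\widetilde{K}$ with data $\bigl(u_0\vert_{\widetilde{K}_0},\, v_0\vert_{\widetilde{K}_0},\, h\vert_{\widetilde{K}}\bigr)$, so by Definition \ref{def:nonhom-wave-weak} the restriction $u\vert_{\widetilde{K}}$ is a weak solution on $\widetilde{K}$.

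The only step demanding a moment of care is the interior-of-double inclusion above, which must be examined separately for the bounded, semi-bounded, and unbounded cases of Definitions \ref{def-trapezoid}--\ref{def-trapezoid semi bounded}. In each case the trapezoid property \eqref{eqn-trapezoid-1} provides enough room around every interior point, so this verification is a bookkeeping exercise consistent with the author's qualification of the statement as a ``trivial consequence'' of the definitions.
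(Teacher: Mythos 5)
Your argument is correct and is exactly the paper's one-line proof, which simply observes that $C_0^\infty((\widetilde{K})) \subset C_0^\infty((K))$; your extension-by-zero construction and the localization of the integrals is just that inclusion spelled out in detail. (The interior-of-double inclusion you flag is immediate from monotonicity of the interior under set inclusion, so no case analysis over the three types of trapezoid is needed.)
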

	\begin{proof}[Proof of Proposition \ref{prop-weak solutions}]
		Trivial because $C_0^\infty((\widetilde{K})) \subset C_0^\infty((K))$.
	\end{proof}

	\begin{definition}
		\label{def:nonhom-wave-mild}
		Assume that $K$ is a trapezoid with base $K_0$. Assume that  $h \in L^1_{\loc}(K;\bR^n)$ and $u_0, v_0 \in L^1_{\loc}(K_0;\bR^n)$.
		A locally integrable function  $u:K \to \bR^n$   is a \emph{mild solution}  of the problem \eqref{eqn-linear-wave-nonhom}
		if and only if  for every $(t,x) \in K$
		\begin{equation}
			\label{eq:wave-sol-mild}
			u(t, x)= \frac 12 \big(u_0(x + t) + u_0(x - t)\big) + \frac 12\int_{x-t}^{x+t}v_0(y)\ud y + \frac 12 \int_0^t\int_{x - t + \tau}^{x + t - \tau}h(\tau, y)\ud y\ud\tau.
		\end{equation}
	\end{definition}

	In order to show that the  Definition \ref{def:nonhom-wave-mild} is meaningful we claim that under the conditions on the data $(u_0,v_0,h)$ listed above, the function $u$ defined by formula
	\eqref{eq:wave-sol-mild} is locally integrable in $K$. First note  that $u(t,x)$ is well defined for every point $(t,x) \in K$. Next, we establish that $u$ is continuous on $K$ and thereby establishing the claim.
	For this aim, without loss of generality it is sufficient  assume  that $K$ is not only closed but also bounded.  Let us choose and fix $\eps>0$. Then by \cite[Theorem 6.11]{Rudin74RCA}, we can find $\eta>0$ such that for every Lebesgue measurable  subset $A \subset K$,
	$\int_A \vert h\vert \ud y\ud\tau\leq \eps$, provided $\Leb(A) \leq \eta$. It is obvious geometrically that we can find $\delta>0$ such that if $(t_i,x_i) \in K$, $i=1,2$ and $\vert (t_2,x_2)-(t_1,x_1)\vert_1\leq \delta$, then $\Leb (K_{t_2x_2}\diamond K_{t_1x_1})\leq \eta$, where $A\diamond B=(A\setminus B)\cup (B\setminus A)$ is the symmetric difference between $A$ and $B$.
	This implies that $\vert u(t_2,x_2)-u(t_1,x_1)\vert \leq \eps $  provided $ (t_i,x_i) \in K$, $i=1,2$ and $\vert (t_2,x_2)-(t_1,x_1)\vert_1\leq \delta$.

	\begin{remark}
		For every $i \in \{1, \ldots, n\}$, the $i$-th component $u_i$ of $u$ satisfies
		\begin{equation}
			\label{eq:wave-sol-comp}
			u_i(t, x)= \frac 12 \big(u_{0,i}(x + t) + u_{0,i}(x - t)\big) + \frac 12\int_{x-t}^{x+t}v_{0,i}(y)\ud y + \frac 12 \int_0^t\int_{x - t + \tau}^{x + t - \tau}h_i(\tau, y)\ud y\ud\tau.
		\end{equation}
	\end{remark}

	The following is a version of Proposition \ref{prop-weak solutions} in the framework of mild solutions.
	
	\begin{proposition}\label{prop-mild solutions}
		Assume that trapezoids $K$ and data $(u_0,v_0,h)$ are as in Proposition \ref{prop-weak solutions}. If  $u: K  \to \bR^n$ is  a mild solution  of the problem \eqref{eqn-linear-wave-nonhom} in $K$, then for every
		bounded trapezoid $\widetilde{K} \subset K$ with base $\widetilde{K}_0 \subset K_0$,  the restriction $u_{\vert \widetilde{K}}$
		is  a mild solution in $\widetilde{K}$   of  problem
		\eqref{eqn-linear-wave-nonhom} with data $(u_0|_{\widetilde{K}_0}, v_0|_{\widetilde{K}_0}, h|_{\widetilde{K}_0})$.
	\end{proposition}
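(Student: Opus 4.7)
The proof should be essentially a direct verification, since a mild solution is defined by an explicit formula whose domain of dependence is a backward characteristic triangle. The plan is to show that for every $(t,x) \in \widetilde{K}$, the right-hand side of \eqref{eq:wave-sol-mild} is unchanged when we replace $(u_0, v_0, h)$ by their restrictions to $\widetilde{K}_0$ and $\widetilde{K}$; this will be automatic once we know the triangle of dependence sits inside $\widetilde{K}$.

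First I would fix $(t,x) \in \widetilde{K}$ and apply Proposition \ref{prop-trapezoid} to the trapezoid $\widetilde{K}$: this gives $x+t, x-t \in \widetilde{K}_0$, and furthermore for every $\tau \in [0,t]$ the points $(\tau, x + t - \tau)$ and $(\tau, x - t + \tau)$ lie in $\widetilde{K}$. By convexity of $\widetilde{K}_\tau$, it follows that the entire segment $[x-t+\tau,\, x+t-\tau] \subset \widetilde{K}_\tau$, and hence the closed backward triangle $T_{(t,x)}$ with vertices $(t,x)$, $(0,x-t)$, $(0,x+t)$ is contained in $\widetilde{K}$. In particular, $[x-t,x+t] \subset \widetilde{K}_0$.

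Consequently, in the formula
\begin{equation*}
u(t,x) = \tfrac{1}{2}\bigl(u_0(x+t) + u_0(x-t)\bigr) + \tfrac{1}{2}\int_{x-t}^{x+t} v_0(y)\,\ud y + \tfrac{1}{2}\int_0^t\!\!\int_{x-t+\tau}^{x+t-\tau} h(\tau,y)\,\ud y\,\ud\tau,
\end{equation*}
every evaluation point of $u_0$ and $v_0$ lies in $\widetilde{K}_0$, and the entire domain of integration of $h$ is a subset of $\widetilde{K}$. Therefore replacing $u_0, v_0, h$ by their restrictions to $\widetilde{K}_0$ and $\widetilde{K}$ does not change the value of the right-hand side, and $u|_{\widetilde{K}}$ satisfies \eqref{eq:wave-sol-mild} with the restricted data. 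Combined with the local integrability of $u$ on $\widetilde{K}$ (which is inherited from local integrability on $K$), this is precisely the statement that $u|_{\widetilde{K}}$ is a mild solution on $\widetilde{K}$.

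There is no real obstacle beyond the invariance property of trapezoids; the whole content of the proposition is that the domain of dependence of the d'Alembert formula is compatible with the trapezoidal structure, which is exactly what Proposition \ref{prop-trapezoid} encodes.
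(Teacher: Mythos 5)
Your proof is correct and is precisely the argument the paper intends: it cites Proposition \ref{prop-trapezoid} and leaves the details (containment of the backward characteristic triangle in $\widetilde{K}$, hence invariance of the d'Alembert formula under restriction of the data) as an easy verification, which you have carried out faithfully.
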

	\begin{proof}[Proof of Proposition \ref{prop-mild solutions}]  This follows easily from Proposition \ref{prop-trapezoid}.
	\end{proof}
	
	Next result shows that a mild solution to problem \eqref{eqn-linear-wave-nonhom}, in the sense of Definition \ref{def:nonhom-wave-mild},  is also a weak solution to it, in the sense of Definition \ref{def:nonhom-wave-weak}.
	\begin{theorem}\label{thm-mild solution is weak}
		Assume that $K$ is a trapezoid with base $K_0$. Assume that  $h \in L^1_{\loc}(K;\bR^n)$ and $u_0, v_0 \in L^1_{\loc}(K_0;\bR^n)$. A mild solution $u: K \to \bR^n$ to the initial value problem \eqref{eqn-linear-wave-nonhom} is also a  weak solution in $K$.
	\end{theorem}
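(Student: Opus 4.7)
\bigskip

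\noindent\textbf{Proof plan.} The plan is to reduce to the case of a compact trapezoid and smooth data, where the identity reduces to classical integration by parts, and then to pass to the limit by density.

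\medskip

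First, I would observe that by Proposition~\ref{prop-weak solutions} and Proposition~\ref{prop-mild solutions}, together with the fact that any test function $\phi \in C_0^\infty((K))$ has support contained in some compact trapezoid $\widetilde{K} \subset K$ with base $\widetilde{K}_0 \subset K_0$, it suffices to establish the identity \eqref{eq:wave-sol-weak-in K} when $K$ itself is a compact trapezoid (so that $u_0, v_0 \in L^1(K_0)$ and $h \in L^1(K)$).

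\medskip

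Next, I would approximate the data by smooth functions. Extending $u_0, v_0$ by zero to $\bR$ and $h$ by zero to $\bR^2$ and mollifying, one obtains sequences $(u_0^\eps, v_0^\eps, h^\eps)$ of smooth, compactly supported functions such that $u_0^\eps \to u_0$ and $v_0^\eps \to v_0$ in $L^1(K_0)$ and $h^\eps \to h$ in $L^1(K)$. Let $u^\eps$ denote the mild solution associated by formula \eqref{eq:wave-sol-mild} to the regularized data. For smooth data the right-hand side of \eqref{eq:wave-sol-mild} is the classical d'Alembert formula, so $u^\eps \in C^2$ on a neighbourhood of $K$ and satisfies $\partial_t^2 u^\eps - \partial_x^2 u^\eps = h^\eps$ together with $u^\eps(0,\cdot) = u_0^\eps$, $\partial_t u^\eps(0,\cdot) = v_0^\eps$ in the classical sense. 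Two standard integrations by parts in $t$ (using that $\phi$ vanishes near the top and lateral boundary of $K$, while only boundary terms at $t = 0$ survive) then yield
\begin{equation*}
\iint_K u^\eps(\partial_t^2 \phi - \partial_x^2 \phi)\,\ud x\,\ud t = \iint_K \phi\, h^\eps\,\ud x\,\ud t - \int_{K_0} u_0^\eps(x)\,\partial_t\phi(0,x)\,\ud x + \int_{K_0} v_0^\eps(x)\,\phi(0,x)\,\ud x.
\end{equation*}

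\medskip

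Finally, I would pass to the limit $\eps \to 0$. The three integrals on the right converge to their unmollified counterparts directly from the $L^1$ convergence of the data and the boundedness of $\phi$ and $\partial_t\phi(0,\cdot)$. For the left-hand side, I would use the explicit mild formula to estimate $\|u^\eps - u\|_{L^1(K)}$ by $\|u_0^\eps - u_0\|_{L^1(K_0)} + \|v_0^\eps - v_0\|_{L^1(K_0)} + \|h^\eps - h\|_{L^1(K)}$, using Fubini on the iterated integral for the $h$-term and the triangle inequality / boundedness of $K$ for the others; since $\partial_t^2\phi - \partial_x^2\phi$ is bounded, the left-hand side also converges to the desired limit.

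\medskip

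The only nontrivial point is the $L^1$-convergence $u^\eps \to u$ on $K$, which is where one has to carry out the Fubini argument carefully to show that the triple integral against $h^\eps - h$ is controlled by $\|h^\eps - h\|_{L^1(K)}$ times a constant depending only on the height of $K$; this is the main (routine) obstacle, and everything else is a direct consequence of d'Alembert's formula and integration by parts.
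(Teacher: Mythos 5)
Your proposal is correct and follows essentially the same route as the paper's proof: mollify the data, observe that for smooth data the d'Alembert formula gives a classical solution for which two integrations by parts yield the weak identity, and then pass to the limit using the $L^1$ convergence of the mollified data. Your version is in fact slightly more careful than the paper's at the limit step (you make explicit the $L^1(K)$ estimate on $u^\eps - u$ via Fubini, where the paper simply invokes dominated convergence on triangles $T(t,x)$), but this is a refinement of the same argument rather than a different approach.
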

	\begin{proof}[Proof of Theorem \ref{thm-mild solution is weak}]
		Let $\varphi_1 \in C_0^\infty(K_0)$  and $\varphi_2 \in C_0^\infty((K))$ are two mollifiers. Let us also set $\varphi_1^\eps \coloneqq  \eps^{-1}\varphi(x/\eps)$ and $\varphi_2^\eps \coloneqq  \eps^{-2}\varphi(t/\eps,x/\eps)$,  for $\eps>0$. Then  consider the following approximations,
		$$u_0^\eps \coloneqq  \varphi_1^\eps \ast u_0, \quad v_0^\eps \coloneqq  \varphi_1^\eps \ast v_0, \quad h^\eps \coloneqq  \varphi_2^\eps \ast h. $$
		Then it is well-known that $u_0^\eps \to u_0, v_0^\eps \to v_0$ and $h^\eps \to h$ as $\eps \to 0$, respectively, in $L^1_{\loc}(K_0)$ and $L^1_{\loc}(K)$, see for e.g.  \cite{Rudin91FA}.  Let us further denote the smooth mild solution of problem \eqref{eqn-linear-wave-nonhom}, which is given by \eqref{eq:wave-sol-mild}, with initial data $(u_0^\eps,u_1^\eps)$ and external force $h^\eps$, by $u^\eps $.
		
		Let us take an arbitrary $\phi \in C_0^\infty((K))$.  Then by applying the integration by parts formula twice  \cite[Theorem 2 in Section C.2]{Evans10} we get
		\begin{align}\label{eq:IBP-wave}
			\iint_K u^\eps (\partial^2_t \phi - \partial_x^2\phi) \ud x \ud t & = \iint_K \phi  (\partial^2_t u^\eps - \partial_x^2u^\eps) \ud x \ud t  + \int_{\partial K}  u^\eps  (\nu\cdot (\partial_t \phi,-\partial_x \phi)) \ud S \nonumber \\
			& \quad + \int_{\partial K} \phi  (\nu\cdot (-\partial_t u^\eps,\partial_x u^\eps)) \ud S,
		\end{align}
		where $\nu$ is the outward pointing normal vector field on $\partial K$ and $\ud S$ denotes the integration w.r.t the surface measure. Since $\phi \in C_0^\infty((K))$,
		\begin{align}
			\int_{\partial K}  & u^\eps  (\nu\cdot (\partial_t \phi,-\partial_x \phi)) \ud S \quad + \int_{\partial K} \phi  (\nu\cdot (-\partial_t u^\eps,\partial_x u^\eps)) \ud S \nonumber\\
			& = -  \int_{ K_0}  u_0^\eps(x)  \partial_t \phi(0,x) \ud x + \int_{ K_0}   \phi(0,x)  v_0^\eps(x) \ud x.
		\end{align}
		Substituting back into \eqref{eq:IBP-wave} together with $\partial_t^2 u^\eps - \partial_x^2 u^\eps = h^\eps$ gives
		\begin{align}\label{eq:weak-u-eps}
			\iint_K u^\eps (\partial^2_t \phi - \partial_x^2\phi) \ud x \ud t & = \iint_K \phi  h \ud x \ud t  -  \int_{ K_0}  u_0^\eps  \partial_t \phi(0) \ud x + \int_{ K_0}   \phi(0)  v_0^\eps \ud x.
		\end{align}

		Now let any $(t,x) \in K$ and consider the associated bounded and closed triangle $T(t,x) \subset K$ with vertices $(t,x), (0,x-t), (0,x+t)$. Since $T(t,x)$ is compact, $u^\eps, u \in L^1_{\loc}(K)$  and $u_0^\eps \to u_0, v_0^\eps \to v_0$ in $L_{\loc}^1(K_0)$ and $h^\eps \to h$ in $L_{\loc}^1(K)$ as $\eps \to 0$, by the Lebesgue Dominated Convergence Theorem we can pass to the limit $\eps \to 0$ in \eqref{eq:weak-u-eps} and get \eqref{eq:wave-sol-weak-in K}. Since the point $(t,x) \in K$ was chosen arbitrary, we deduce the claim and complete the proof  of Theorem \ref{thm-mild solution is weak}.
	\end{proof}

	The next theorem is the main result of this subsection which demonstrates the equivalence between the notions of weak and mild solutions for the linear non-homogeneous initial value problem \eqref{eqn-linear-wave-nonhom}.

	\begin{theorem}\label{thm-weak soluntions uniqueness}
		Assume that $K$ is a trapezoid with base $K_0$. Assume that  $h \in L^1_{\loc}(K;\bR^n)$ and $u_0, v_0 \in L^1_{\loc}(K_0;\bR^n)$. Then
		\begin{trivlist}
			\item[(i)]  Problem \eqref{eqn-linear-wave-nonhom}
			has a unique weak solution which  is given by \eqref{eq:wave-sol-mild}. In particular, a weak solution  is a mild solution.
			\item[(ii)]
			Moreover, if $\widetilde{K} \subset K$ is a trapezoid with base $\widetilde{K_0}$ and two triples of
			locally integrable functions $(u_0, v_0,h)$ and $(\tilde{u}_0, \tilde{v}_0,\tilde{h})$ coincide on $\widetilde{K_0}$, then
			the corresponding solutions $u$ and $\tilde{u}$ coincide on $\widetilde{K}$.
		\end{trivlist}
	\end{theorem}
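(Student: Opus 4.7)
Existence is already established by Theorem \ref{thm-mild solution is weak}, which shows that the mild solution defined by \eqref{eq:wave-sol-mild} is itself a weak solution; so once uniqueness of weak solutions on $K$ is proved, every weak solution must coincide with this mild one, which gives (i). Assertion (ii) then follows directly from the explicit formula \eqref{eq:wave-sol-mild}: the value of the mild solution at any $(t,x) \in \widetilde{K}$ depends on $(u_0,v_0,h)$ only through the closed past triangle with vertices $(t,x)$, $(0,x-t)$, $(0,x+t)$, and this triangle lies inside $\widetilde{K}$ by Proposition \ref{prop-trapezoid}.

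For uniqueness I would argue by duality. Let $u_1, u_2$ be two weak solutions of \eqref{eqn-linear-wave-nonhom} with the same data and set $w \coloneqq u_1 - u_2 \in L^1_{\loc}(K)$. Linearity together with the cancellation of the initial-data terms in \eqref{eq:wave-sol-weak-in K} gives
\begin{equation*}
\iint_K w (\partial_t^2 \phi - \partial_x^2 \phi) \ud x \ud t = 0 \qquad \text{for every } \phi \in C_0^\infty((K)).
\end{equation*}
It therefore suffices to show that every $\psi \in C_0^\infty(\mathrm{int}(K))$ is of the form $\psi = (\partial_t^2 - \partial_x^2)\phi$ on $K$ for some $\phi \in C_0^\infty((K))$; once this is done, $\iint_K w \psi \ud x \ud t = 0$ for all such $\psi$, whence $w = 0$ almost everywhere on $K$.

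The natural candidate is the retarded potential
\begin{equation*}
\Phi(t,x) \coloneqq \frac{1}{2}\int_t^{+\infty} \int_{x-(\tau-t)}^{x+(\tau-t)} \psi(\tau,y) \ud y \ud\tau,
\end{equation*}
which is smooth on $\bR^2$, satisfies $(\partial_t^2 - \partial_x^2)\Phi = \psi$ pointwise, and vanishes for $t \geq \sup_\tau \supp \psi$. The main obstacle is that $\Phi$ is not compactly supported for $t < 0$, so the restriction $\Phi|_K$ need not be the trace of an element of $C_0^\infty(\mathrm{int}(\widetilde{K}))$. I would repair this by multiplying by a time cutoff $\chi(t)$ equal to $1$ on $[0,\infty)$ and supported in $[-\delta,\infty)$ for a sufficiently small $\delta > 0$. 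Since $\chi \equiv 1$ on $K$, the product $\chi\Phi$ agrees with $\Phi$ together with all its derivatives on $K$, so the identity $(\partial_t^2 - \partial_x^2)(\chi\Phi) = \psi$ still holds on $K$. The parameter $\delta$ must be chosen small enough that the extended past light cone of $\supp \psi$, intersected with $\{-\delta \leq t \leq 0\}$, stays inside the reflected slice $\widetilde{K}_t = K_{|t|}$; this is possible because the hypothesis $\supp \psi \subset \mathrm{int}(K)$ keeps $\supp \psi$ at positive distance from $\partial K$, and the slices $K_s$ of the trapezoid depend continuously on $s$. With such $\delta$, the function $\chi\Phi$ lies in $C_0^\infty(\mathrm{int}(\widetilde{K}))$, and its restriction to $K$ provides the required test function. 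The only delicate step is this geometric compatibility of the cutoff with the trapezoid; everything else is routine.
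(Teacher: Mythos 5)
Your argument is correct and follows essentially the same route as the paper: existence via Theorem \ref{thm-mild solution is weak} plus uniqueness, uniqueness by testing against the advanced solution $\Phi$ of $(\partial_t^2-\partial_x^2)\Phi=\psi$ with vanishing data in the future of $\supp\psi$, and (ii) from the domain-of-dependence structure of formula \eqref{eq:wave-sol-mild}. Your extra step of cutting $\Phi$ off in time so that it genuinely defines an element of $C_0^\infty((K))$ (i.e.\ extends to a compactly supported smooth function on the interior of the double $\tilde K$) is a legitimate refinement of a point the paper asserts without detail, and your geometric justification for the choice of $\delta$ is sound.
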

	\begin{proof}[Proof of Theorem \ref{thm-weak soluntions uniqueness}] Without loss of generality, we can assume $n = 1$.
		\begin{proof}[Proof of item (i)] The existence follows by approximating $u_0, v_0$ and $h$ by smooth functions and passing to the limit in the formula defining weak solutions.
			In order to prove uniqueness,  let $u\coloneqq  u_1-u_2$ be the difference of two weak solutions $u_1$ and $u_2$ of \eqref{eqn-linear-wave-nonhom} with same initial and external data.  Then $u$ satisfies
			\begin{equation}\label{eqn-de Bois Raymond-assumption}
				\iint_{K} u(\partial_t^2 \phi - \partial_x^2 \phi)\ud x\ud t =0,\qquad\text{for all }\phi \in C_0^\infty((K)).
			\end{equation}
			We will to show that $u = 0$ Lebesgue  almost everywhere  on $K$. Such an assertion can be called a generalization of the du Bois-Raymond Lemma.
			
			It suffices to prove that
			\begin{equation}\label{eqn-de Bois Raymond-assertion}
				\iint_{K} u \psi \ud x\ud t =0,\qquad\text{for every } \psi \in C_0^\infty(\bR\times \bR): \supp \psi \subset \textrm{int}(K).
			\end{equation}
			For this aim,  let us choose and fix $\psi \in C_0^\infty(\bR\times \bR)$ such that  $\supp \psi \subset \textrm{int}(K)$. Then we can find
			$T_0>0$ such that \[ \supp~{\psi} \subset ((-\infty,T_0)\times \bR) \cap \textrm{int}(K).\]
			Now set $\widetilde{\phi} \in C^\infty(\bR\times \bR)$ be the unique classical solution to the following boundary value problem on $\bR\times \bR$:
			\begin{equation}
				\label{eqn-linear-wave-nonhom-bdry}
				\begin{gathered}
					\partial_t^2  \widetilde{\phi}(t, x) - \partial_x^2  \widetilde{\phi}(t, x) = \psi(t, x), \\
					\widetilde{\phi}(T_0, \cdot) = 0, \quad \partial_t  \widetilde{\phi}(T_0, \cdot) = 0.
				\end{gathered}
			\end{equation}
			In other words, $\widetilde{\phi}$ is given by
			\[
			\widetilde{\phi}(t,x)  =\begin{cases} \frac{1}{2}\int_{T_0}^{t} \int_{x-t+\tau}^{x+t-\tau}  \psi(\tau,y)\, dy\, d\tau, & \mbox{ if } (t,x) \in [T_0,\infty)\times \bR, \\
				\frac{1}{2}\int_{t}^{T_0} \int_{x+t-\tau}^{x-t+\tau}  \psi(\tau,y)\, dy\, d\tau, & \mbox{ if }(t,x) \in (-\infty, T_0] \times \bR.
			\end{cases}
			\]
			Since $ \psi=0$ on $[T_0,\infty)\times\bR$,  we deduce that $ \widetilde{\phi}=0$ on $[T_0,\infty)\times\bR$. Moreover, function $\phi\coloneqq  \widetilde{\phi}|_{K}$ belongs to $C_0^\infty((K))$.   Consequently, due to \eqref{eqn-de Bois Raymond-assumption}, we have
			\[\iint_{K}  u\psi\ud x\ud t = \iint_{K}  u \bigl( \partial_t^2  \widetilde{\phi} - \partial_x^2  \widetilde{\phi} \bigr) \ud x\ud t
			=\iint_{K}  u \bigl( \partial_t^2  \phi - \partial_x^2  \phi \bigr) \ud x\ud t=0.
			\]
			By arbitrariness of $\psi$ we infer that $u = 0$ Lebesgue  almost everywhere  in $K$ and   the proof of the  uniqueness part is complete.
			Lastly,  since from Theorem \ref{thm-mild solution is weak} a (locally) integrable function $u$ given by \eqref{eq:wave-sol-mild} is a weak solution,  the last claim of part (i) follows by uniqueness.
		\end{proof}
		
		\begin{proof}[Proof of item (ii)]	
			
			The part (ii) follows from the formula \eqref{eq:wave-sol-mild} and the uniqueness assertion from part (i).  
		\end{proof}
            Hence we complete the proof of Theorem \ref{thm-weak soluntions uniqueness}. 
	\end{proof}

	\begin{corollary}\label{cor-weak=mild sol}
		In the framework of Theorem  \ref{thm-weak soluntions uniqueness}  the notions of  weak and mild solutions to problem \eqref{eqn-linear-wave-nonhom} are equivalent.
	\end{corollary}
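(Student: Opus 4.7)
The plan is to observe that the corollary is almost immediate from the two preceding results and amounts to just packaging the two implications together. I would first note that one direction, namely that every mild solution is a weak solution, has already been established in Theorem \ref{thm-mild solution is weak} by a standard mollification-and-integration-by-parts argument.

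For the converse direction, suppose $u: K \to \bR^n$ is a weak solution of \eqref{eqn-linear-wave-nonhom} in the sense of Definition \ref{def:nonhom-wave-weak}. Let $\tilde u$ denote the function on $K$ defined by the mild formula \eqref{eq:wave-sol-mild}. By Theorem \ref{thm-mild solution is weak}, $\tilde u$ is also a weak solution of the same problem with the same data $(u_0, v_0, h)$. Hence by the uniqueness part of Theorem \ref{thm-weak soluntions uniqueness}(i), the weak solution is unique, so $u = \tilde u$ Lebesgue almost everywhere on $K$. In particular, $u$ agrees almost everywhere with the function given by formula \eqref{eq:wave-sol-mild}, so $u$ is a mild solution.

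Combining the two implications yields the equivalence. No step is really an obstacle here, since the real work was done in Theorem \ref{thm-mild solution is weak} (mild $\Rightarrow$ weak) and in the du~Bois--Raymond-type uniqueness argument inside the proof of Theorem \ref{thm-weak soluntions uniqueness}(i); the corollary is just a convenient restatement.
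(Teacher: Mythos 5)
Your proposal is correct and matches the paper's intent exactly: Theorem \ref{thm-mild solution is weak} gives mild $\Rightarrow$ weak, and Theorem \ref{thm-weak soluntions uniqueness}(i) already states that the unique weak solution is given by \eqref{eq:wave-sol-mild}, so the converse follows at once. The only (harmless) caveat is that the identification $u=\tilde u$ holds almost everywhere, which is the intended sense for $L^1_{\loc}$ solutions.
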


	We can now formulate the following classical apriori estimate result about the solution to problem
	\eqref{eqn-linear-wave-nonhom} on trapezoids.
	\begin{proposition} \label{prop-W^1,1 estimates}
		Assume that $K$ is a  trapezoid  of height $T \in (0,\infty]$ and base $K_0$.
		Let $h \in L^1_{\loc}(K;\bR^n)$ and $u_0, v_0 \in L^1_{\loc}(K_0;\bR^n)$ such that $D u_0 $ exists and is locally integrable.  Assume that a  locally  integrable function  $u:K \to \bR^n$ is a weak solution of \eqref{eqn-linear-wave-nonhom} on $K$.
		If  subinterval $\widehat{K}_0 \Subset K_0$, with $\widehat{K}_t$ and $\widehat{K}$ defined appropriately, is such that  $v_0, D u_0 \in L^1(K_0)$ and $h \in L^1(\widehat{K})$
		then  $u$ satisfies the following inequality,
		\begin{align}\label{eq:transport-bound-trapezoid}
			\nonumber \max & \Bigl\{
			\sup_{ t\in [0,T)] } \Bigl[  \int_{\widehat{K}_t}|\partial_x u(t, x)|\ud x\Bigr],\;\; \sup_{ t\in [0,T)] }  \Bigl[\int_{\widehat{K}_t}|\partial_t u(t, x)|\ud x \Bigr]\Bigr\} \\
			\qquad & \leq \int_{\widehat{K}_0}|D u_0(x)|\ud x + \int_{\widehat{K}_0}|v_0(x)|\ud x +\int_{\widehat{K}}|h(s, x)|\ud x\ud s.
		\end{align}
		
		In particular, if $v_0, D u_0 \in L^1(K_0)$ and $h \in L^1({K})$, then $u \in \mathscr{H}(K)$.
	\end{proposition}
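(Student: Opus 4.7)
The plan is to invoke Corollary \ref{cor-weak=mild sol} so that $u$ is given by the explicit d'Alembert--Duhamel formula \eqref{eq:wave-sol-mild}, and then estimate $\partial_x u$ and $\partial_t u$ on each slice $\widehat{K}_t$ by differentiating along null characteristics. By componentwise linearity it suffices to treat $n=1$. To justify differentiating the mild formula under only $L^1_{\loc}$ regularity, I would first carry out the whole argument assuming the data $u_0,v_0,h$ are smooth, and then transfer the final inequality to the general case by applying the smooth-case bound to the difference $u - u^\varepsilon$, where $u^\varepsilon$ is the mild solution associated to a mollified triple $(u_0^\varepsilon,v_0^\varepsilon,h^\varepsilon)\to(u_0,v_0,h)$ in $L^1_{\loc}$.

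In the smooth regime, a direct Leibniz computation on \eqref{eq:wave-sol-mild} (noting that at $\tau=t$ the $y$-interval in the forcing term collapses, killing the boundary contribution) gives the null-derivative identities
\begin{align*}
(\partial_t+\partial_x)u(t,x) &= Du_0(x+t)+v_0(x+t)+\int_0^t h(\tau,x+t-\tau)\,\ud\tau, \\
(\partial_t-\partial_x)u(t,x) &= -Du_0(x-t)+v_0(x-t)+\int_0^t h(\tau,x-t+\tau)\,\ud\tau.
\end{align*}
Writing $\widehat{K}_0=[\hat a,\hat b]$ (so $\widehat{K}_t=[\hat a+t,\hat b-t]$), for the initial-data pieces the substitutions $y=x\pm t$ give $\int_{\widehat K_t}|Du_0(x\pm t)|\,\ud x\leq\int_{\widehat K_0}|Du_0|$ and similarly for $v_0$. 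For the forcing piece, Fubini followed by $y=x+t-\tau$ at fixed $\tau$ yields
\[
\int_{\widehat K_t}\!\int_0^t |h(\tau,x+t-\tau)|\,\ud\tau\,\ud x =\int_0^t\!\int_{\hat a+2t-\tau}^{\hat b-\tau}|h(\tau,y)|\,\ud y\,\ud\tau \leq \int_{\widehat K}|h|,
\]
where the key inclusion $[\hat a+2t-\tau,\hat b-\tau]\subset [\hat a+\tau,\hat b-\tau]=\widehat K_\tau$ (valid since $\tau\leq t$) is exactly the characteristic invariance of Proposition \ref{prop-trapezoid}; the mirrored substitution $y=x-t+\tau$ handles the other null derivative. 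Hence each of $(\partial_t\pm\partial_x)u$ has $L^1(\widehat K_t)$-norm bounded by $\int_{\widehat K_0}|Du_0|+\int_{\widehat K_0}|v_0|+\int_{\widehat K}|h|$, and \eqref{eq:transport-bound-trapezoid} follows from the pointwise triangle estimates $|\partial_x u|,|\partial_t u|\leq \tfrac12\bigl(|(\partial_t+\partial_x)u|+|(\partial_t-\partial_x)u|\bigr)$.

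For the \emph{in particular} part, applying \eqref{eq:transport-bound-trapezoid} with $\widehat K=K$ delivers the uniform-in-$t$ $L^1$ bound on $\partial_x u$ and $\partial_t u$. Boundedness of $u$ is immediate from \eqref{eq:wave-sol-mild}: since $Du_0\in L^1(K_0)$ and $u_0\in L^1_{\loc}(K_0)$, the function $u_0$ is bounded on $K_0$, and the $v_0$ and $h$ integrals are controlled by $\|v_0\|_{L^1(K_0)}$ and $\|h\|_{L^1(K)}$. The remaining point is the continuity of $t\mapsto \bigl(\partial_x u(t,\cdot),\partial_t u(t,\cdot)\bigr)$ into $L^1(K_t)$ (extended by zero as in Definition \ref{def-space H(K)}); this comes again from the density argument, since the claim is obvious for smooth data and the a priori estimate applied to differences provides uniform-in-$t$ $L^1$-convergence. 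The main technicality in the whole argument is the geometric bookkeeping in the characteristic substitutions—checking that the pushed-forward integration variables remain inside $\widehat K_\tau$—which is precisely where the convexity/trapezoidal structure encoded in Proposition \ref{prop-trapezoid} is decisive.
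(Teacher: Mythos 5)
Your proposal is correct and follows essentially the route the paper itself indicates: pass to the mild representation, read off the null derivatives $(\partial_t\pm\partial_x)u$ as solutions of transport equations (the identities \eqref{eqn-u_t}--\eqref{eqn-u_x}), and apply the $L^1$ transport bound of Lemma \ref{lem:transport} -- your characteristic change-of-variables computation is precisely a re-derivation of that lemma. The extra care you take with mollification and with the continuity of $t\mapsto(\partial_x u(t,\cdot),\partial_t u(t,\cdot))$ for the membership $u\in\mathscr{H}(K)$ is sound and fills in details the paper leaves implicit.
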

	\begin{proof}[Proof of Proposition \ref{prop-W^1,1 estimates}]
		This is a classical result. One can also give another proof by  observing that $\partial_t u$, respectively $\partial_x u$,  satisfies  integral  equation \eqref{eqn-u_t}, respectively \eqref{eqn-u_x}, below  and then applying Lemma \ref{lem:transport} formulated in the next section.
	\end{proof}

	\subsection{Zhou's estimates}\label{sec:Zhou-trick}
	In \cite{Zhou99}, Zhou's proofs  are based on properties of solutions of transport equations, which can be solved in a manner
	somewhat similar to formula \eqref{eq:wave-sol-mild}.
	\begin{lemma}
		\label{lem:transport}
		Assume $g: \bR \to \bR^n$ and $f: [0, \infty)\times \bR \to \bR^n$ are locally integrable functions. Let a function $v : [0, \infty) \times \bR \to \bR^n$ be defined by
		\begin{equation}
			\label{eq:transport}
			v(t, x) = g(x - t) + \int_0^t f(\tau, x - t + \tau)\ud \tau,
		\end{equation}
		for all $t \geq 0$ and almost all (``almost'' depending on $t$) $x \in \bR$. \\
		Then for all   $x_0 \in \bR$, $L>0$ and $t_0  \in [0,L]$
		\begin{equation}
			\label{eq:transport-bound}
			\int_{x_0 - L + t_0}^{x_0 + L - t_0}|v(t_0, x)|\ud x \leq \int_{x_0 - L}^{x_0 + L}|g(x)|\ud x + \int_0^{t_0}\int_{x_0 - L + t}^{x_0 + L - t}|f(t, x)|\ud x\ud t.
		\end{equation}
		The same estimate holds if $v$ is defined by
		\begin{equation}
			\label{eq:transport-neg}
			v(t, x) = g(x + t) + \int_0^t f(\tau, x + t - \tau)\ud \tau.
		\end{equation}
	\end{lemma}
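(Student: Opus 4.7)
The plan is to apply the triangle inequality pointwise to \eqref{eq:transport} and reduce the statement to two elementary changes of variables combined with Fubini's theorem. First I would note that for every $t_0$ and almost every $x$,
\[
|v(t_0, x)| \le |g(x - t_0)| + \int_0^{t_0} |f(\tau, x - t_0 + \tau)| \ud \tau,
\]
and then integrate this inequality over $x \in [x_0 - L + t_0, x_0 + L - t_0]$, treating the two resulting terms separately.

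For the $g$-contribution, the substitution $y = x - t_0$ carries $[x_0 - L + t_0, x_0 + L - t_0]$ bijectively onto $[x_0 - L, x_0 + L]$ with unit Jacobian, yielding exactly the first summand on the right-hand side of \eqref{eq:transport-bound}. For the $f$-contribution I would invoke Fubini's theorem to swap the order of integration (legitimate because $f$ is locally integrable and we are working over a compact region) and then, for each fixed $\tau \in [0, t_0]$, apply the substitution $y = x - t_0 + \tau$. This maps $[x_0 - L + t_0, x_0 + L - t_0]$ onto $[x_0 - L + \tau, x_0 + L - 2t_0 + \tau]$, which, since $\tau \le t_0$, is contained in $[x_0 - L + \tau, x_0 + L - \tau]$. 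Enlarging the interval of integration and then integrating over $\tau \in [0, t_0]$ produces the second summand in \eqref{eq:transport-bound}.

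The argument for the formula \eqref{eq:transport-neg} is completely symmetric: one would use the substitutions $y = x + t_0$ and $y = x + t_0 - \tau$ and check once again that the image intervals lie inside $[x_0 - L, x_0 + L]$ and $[x_0 - L + \tau, x_0 + L - \tau]$ respectively (using $0 \le \tau \le t_0 \le L$). I do not foresee any real obstacle in this proof; the only point that merits a moment of attention is the geometric observation that the characteristics $\tau \mapsto x - t_0 + \tau$ (or $\tau \mapsto x + t_0 - \tau$) emanating from the top base of the trapezoid never exit the corresponding time-slice, which is essentially the content of Proposition \ref{prop-trapezoid}.
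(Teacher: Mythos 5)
Your proposal is correct and follows essentially the same route as the paper: triangle inequality, then a unit-Jacobian shift for the $g$-term and Fubini plus a shift for the $f$-term, enlarging the image intervals using $\tau \le t_0 \le L$. One small imprecision: the substitution $y = x - t_0$ carries $[x_0 - L + t_0,\, x_0 + L - t_0]$ onto $[x_0 - L,\, x_0 + L - 2t_0]$, not onto all of $[x_0 - L,\, x_0 + L]$; since this image is contained in $[x_0 - L,\, x_0 + L]$ the estimate is unaffected.
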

	We can easily extend the above  result to the case $L=\infty$ but we don't do it here because we never use such extension in the article.
	\begin{remark}\label{rem-transport equation}
		The functions $v$, defined in \eqref{eq:transport} and \eqref{eq:transport-neg}, respectively, are the unique weak/mild solution of the following linear evolution equation on the Banach space $L^1(\bR)$ or Fr\'echet space $L_{\loc}^1(\bR)$:
		\begin{align}\label{eqn-transport equation}
			\partial_t v +\partial_x v & = f(t,x),\;\;t>0,\;x \in \bR,
			\\
			v(0,x)&=g(x), \;\; x \in \bR,
			\label{eqn-transport equation-IC}
		\end{align}
		and
		\begin{align}\label{eqn-transport equation-neg}
			\partial_t v - \partial_x v & = f(t,x),\;\;t>0,\;x \in \bR,
			\\
			v(0,x)&=g(x), \;\; x \in \bR.
			\label{eqn-transport equation-neg-IC}
		\end{align}
		We ask the reader to refer \cite{Ball77} for more details.
	\end{remark}

	\begin{remark}\label{rem-wave equation}
		The  above mentioned transport equations appear in the context of wave equation in the following way.  Suppose that $u$ solves the linear non-homogeneous wave equation \eqref{eqn-linear-wave-nonhom} with $h=f$ and  initial data $(u_0,u_1)$.  Define two new functions
		\begin{align}
			\label{eqn-v+}
			v_+(t,x) & = \partial_t u(t,x)-\partial_x u(t,x), \\
			v_-(t,x) & = \partial_t u(t,x)+\partial_x u(t,x).
			\label{eqn-v-}
		\end{align}
		Since by definition, $u \in \mathscr{H}(K)$ for some trapezoid $K$, we infer that  $v_+$ and $v_-$ are  locally integrable and are  weak solutions to the transport equation  \eqref{eqn-transport equation}, respectively \eqref{eqn-transport equation-neg},  with initial conditions
		\begin{align*}
			v_+(0,x)&=u_1(x)-Du_0(x),
		\end{align*}
		respectively,
		\begin{align*}
			v_-(0,x)&=u_1(x)+Du_0(x).
		\end{align*}
		Similar assertion holds for function defined by wave map equation \eqref{eq:transport-neg}, see Remark \ref{rem-v+ and v- for wave maps}.
	\end{remark}

	\begin{remark}
		Note that by the Fubini Theorem, for all $t \geq 0$ equation \eqref{eq:transport} defines $v(t, \cdot) \in L^1_\tx{loc}(\bR)$.
		Again by the Fubini Theorem and the change of measure theorem, applied to the locally integrable function $F(t, x, \tau) \coloneqq  f(\tau, x - t + \tau)$,
		the function $v$ is locally integrable. We also have $v \in C([0, \infty); L^1_\tx{loc}(\bR))$,
		by the Lebesgue Dominated Convergence Theorem.
		
		We notice that $v$ is a weak solution of the transport equation $(\partial_t + \partial_x)v = f$ in $[0,\infty) \times \bR$
		if and only if there exists $g$ such that \eqref{eq:transport} holds.
		Similarly, $v$ is a weak solution of $(\partial_t - \partial_x)v = f$ in $[0,\infty) \times \bR$
		if and only if there exists $g$ such that \eqref{eq:transport-neg} holds. This  follows from the uniqueness of weak solutions to the transport equations.
	\end{remark}
	\begin{proof}[Proof of Lemma~\ref{lem:transport}]
		Without loss of generality, we can assume that $x_0 = 0$. Let us choose and fix $L>0$ and $t_0 \in [0,L]$. The formula \eqref{eq:transport} yields
		\begin{equation}
			\begin{aligned}
				\int_{- L + t_0}^{L - t_0}|v(t_0, x)|\ud x &= \int_{- L + t_0}^{L - t_0}\Big|g(x - t_0) + \int_0^{t_0} f(\tau, x - t_0 + \tau)\ud \tau\Big|\ud x \\
				& \leq \int_{- L}^{L - 2t_0}|g(x)|\ud x + \int_0^{t_0}\int_{- L + t_0}^{L - t_0}|f(\tau, x - t_0 + \tau)|\ud x\ud\tau \\
				&= \int_{- L}^{L - 2t_0}|g(x)|\ud x + \int_0^{t_0}\int_{- L + \tau}^{L - 2t_0 + \tau}|f(\tau, x)|\ud x\ud\tau.
				\label{eq:transport-bound-stronger}
			\end{aligned}
		\end{equation}
		Since $-L \leq L - 2t_0 \leq L$ and $-L+\tau \leq L -2t_0 +\tau \leq L -\tau$, we obtain \eqref{eq:transport-bound}.
		The proof of \eqref{eq:transport-neg}  is similar.
	\end{proof}

	\begin{lemma}
		\label{lem:zhou-lemma}
		Let $g_+, g_-: \bR \to \bR^n$ and $f_+, f_-: \bR_+ \times \bR\to\bR^n$ be locally integrable functions, and let $v_+$ and $v_-$ be given for $t \geq 0$ by, for almost all $x \in \bR$,
		\begin{equation}
			\label{eq:zhou-lemma-hyp}
			\begin{aligned}
				v_+(t, x) &= g_+(x - t) + \int_0^t f_+(\tau, x - t + \tau)\ud \tau, \\
				v_-(t, x) &= g_-(x + t) + \int_0^t f_-(\tau, x + t - \tau)\ud\tau.
			\end{aligned}
		\end{equation}
		Then for all $x_0 \in \bR$, $L>0$ and $t_0  \in [0,L]$,
		\begin{equation}
			\label{eq:zhou-lemma}
			\begin{aligned}
				& \int_0^{t_0}\int_{x_0 - L + t}^{x_0 + L - t}|v_+(t, x)||v_-(t, x)|\ud x\ud t  \\
				&\leq \frac{1}{2}
				\bigg(\int_{x_0 - L}^{x_0 + L}|g_+(x)|\ud x+\int_0^{t_0}\int_{x_0 - L + t}^{x_0 + L - t}|f_+(t, x)|\ud x\ud t\bigg)\times \\
				&\qquad\times\bigg(\int_{x_0 - L}^{x_0 + L}|g_-(x)|\ud x+\int_0^{t_0}\int_{x_0 - L + t}^{x_0 + L - t}|f_-(t, x)|\ud x\ud t\bigg).
			\end{aligned}
		\end{equation}
	\end{lemma}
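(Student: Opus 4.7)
The plan is to pass to null (characteristic) coordinates and then separate variables via Fubini, turning the bilinear estimate into a product of one-dimensional bounds. First I would set $\xi \coloneqq x - t$ and $\eta \coloneqq x + t$, so that $\ud x\,\ud t = \tfrac12\,\ud\xi\,\ud\eta$ and the integration region
\[
R_{t_0} \coloneqq \{(t,x) : 0 \leq t \leq t_0,\ x_0 - L + t \leq x \leq x_0 + L - t\}
\]
is mapped bijectively onto
\[
\wt{R}_{t_0} \coloneqq \{(\xi,\eta) : x_0 - L \leq \xi \leq \eta \leq x_0 + L,\ \eta - \xi \leq 2t_0\} \subset [x_0 - L,\,x_0 + L]^2.
\]

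The second step is to dominate each factor $|v_\pm|$ by a function of a single characteristic variable. Applying the triangle inequality to \eqref{eq:zhou-lemma-hyp} and observing that for every $(t,x) \in R_{t_0}$ and $\tau \in [0,t]$ the points $(\tau,\xi+\tau)$ and $(\tau,\eta-\tau)$ lie in $R_{t_0}$, I would introduce
\begin{align*}
G_+(\xi) &\coloneqq |g_+(\xi)| + \int_0^{t_0} |f_+(\tau,\xi+\tau)|\,\mathbf{1}_{[x_0-L+\tau,\,x_0+L-\tau]}(\xi+\tau)\,\ud\tau, \\
G_-(\eta) &\coloneqq |g_-(\eta)| + \int_0^{t_0} |f_-(\tau,\eta-\tau)|\,\mathbf{1}_{[x_0-L+\tau,\,x_0+L-\tau]}(\eta-\tau)\,\ud\tau,
\end{align*}
so that, pointwise on $R_{t_0}$, one has $|v_+(t,x)| \leq G_+(\xi)$ and $|v_-(t,x)| \leq G_-(\eta)$.

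Combining the two steps, using non-negativity of $G_\pm$ together with the inclusion $\wt{R}_{t_0} \subset [x_0 - L,\,x_0 + L]^2$, and finally applying Fubini to the product $G_+(\xi)G_-(\eta)$ on the rectangle, I would obtain
\[
\iint_{R_{t_0}} |v_+(t,x)||v_-(t,x)|\,\ud x\,\ud t \leq \tfrac12 \iint_{\wt{R}_{t_0}} G_+(\xi)G_-(\eta)\,\ud\xi\,\ud\eta \leq \tfrac12 \Bigl(\int_{x_0-L}^{x_0+L} G_+(\xi)\,\ud\xi\Bigr)\Bigl(\int_{x_0-L}^{x_0+L} G_-(\eta)\,\ud\eta\Bigr).
\]
The remaining task is to identify each one-dimensional factor with the corresponding bracket in \eqref{eq:zhou-lemma}. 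This is a Fubini plus the substitution $x = \xi + \tau$ (respectively $x = \eta - \tau$) in the $f_\pm$ contribution to $G_\pm$: after the substitution the indicator collapses the $x$-range to exactly $[x_0-L+\tau, x_0+L-\tau]$, so that $\int_{x_0-L}^{x_0+L} G_\pm = \int_{x_0-L}^{x_0+L}|g_\pm| + \int_0^{t_0}\!\int_{x_0-L+t}^{x_0+L-t}|f_\pm|\,\ud x\,\ud t$.

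I do not expect a genuine obstacle: the factor $\tfrac12$ in the conclusion arises solely from the Jacobian of the characteristic change of variables, and the only delicate bookkeeping lies in carrying the indicators in $G_\pm$ through the substitutions so that, after changing variables, one recovers precisely the integrals over $R_{t_0}$ rather than over a shifted rectangle. The hypothesis $t_0 \leq L$ is used only to ensure that $R_{t_0}$ is an honest trapezoid, i.e.\ that the top segment $\{t = t_0\}$ has non-negative length.
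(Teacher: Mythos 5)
Your proof is correct. It rests on the same engine as the paper's — null coordinates, the Jacobian factor $\tfrac12$, and separation of variables via Fubini — but it is organized differently: the paper first reduces to non-negative scalar functions extended by zero, then expands the product $v_+v_-$ into four terms $I+II+III+IV$ and estimates each one by the basic bound $\int\!\!\int g_+(x-t)g_-(x+t)\,\ud x\,\ud t \le \tfrac12(\int g_+)(\int g_-)$ obtained in the coordinates $a=x+t$, $b=x-t$ (the hardest term $IV$ being handled by freezing $\tau,\sigma$ and viewing $f_+(\tau,\cdot-t+\tau)$, $f_-(\sigma,\cdot+t-\sigma)$ as functions of the single variables $x-t$, $x+t$). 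You instead majorize each factor once and for all by a function of a single characteristic variable, $|v_+|\le G_+(\xi)$ and $|v_-|\le G_-(\eta)$, and then apply Fubini a single time on the rectangle $[x_0-L,x_0+L]^2$; your indicator bookkeeping, which you correctly identify as the only delicate point, does the job of the paper's zero-extensions. Your version avoids the four-term expansion and is arguably cleaner; the paper's version makes the intermediate estimate \eqref{eq:zhou-gg-est} explicit, which it reuses elsewhere. Both yield exactly the stated constant $\tfrac12$.
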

	\begin{remark}
		In particular, the left hand side of \eqref{eq:zhou-lemma} is finite, which is not obvious apriori. We do not claim that \eqref{eq:zhou-lemma} holds when $|v_+(t, x)||v_-(t, x)|$ is replaced by $|v_+(t, x)|^2$.
	\end{remark}
	\begin{proof}[Proof of Lemma~\ref{lem:zhou-lemma}]
		First observe that, by triangle inequality we have
		\begin{equation}
			\label{eq:zhou-lemma-hyp-triangle}
			\begin{aligned}
				|v_+(t, x)| &\leq |g_+(x - t)| + \int_0^t |f_+(\tau, x - t + \tau)|\ud \tau, \\
				|v_-(t, x)| &\leq  |g_-(x + t)| + \int_0^t |f_-(\tau, x + t - \tau)|\ud\tau.
			\end{aligned}
		\end{equation}
		Thus it is enough to prove \eqref{eq:zhou-lemma} in the case when $n=1$ and all the functions $g_+,g_-, f_+$ and $f_-$ are being non-negative.
		
		Without loss of generality, let us choose and fix $x_0 = 0$.  Let us extend $g_+,g_-$ by $0$ for $|x|\geq L$ and $f_+,f_-$ by $0$ for $t<0$ or $t>t_0$ or $|x| > L-t$. Then observe that the product
		$$ v_+(t,x) v_-(t,x) =0 \textrm{ on }  ([0,\infty) \times \bR)\setminus \{(t,x): t \in [0,L], x \leq L-t, x \geq -L+t \}$$
		and it suffices to prove
		\begin{equation}
			\label{eq:zhou-lemma-R}
			\begin{aligned}
				\int_0^{t_0}\int_{\bR} & v_+(t, x)v_-(t, x)\ud x\ud t  \\
				&\leq \frac{1}{2}
				\bigg(\int_{\bR}g_+(x) \ud x+\int_0^{t_0}\int_{\bR}f_+(t, x)\ud x\ud t\bigg)\bigg(\int_{\bR}g_-(x)\ud x+\int_0^{t_0}\int_{\bR}f_-(t, x)\ud x\ud t\bigg).
			\end{aligned}
		\end{equation}
		By substituting the expression from \eqref{eq:zhou-lemma-hyp}, LHS in \eqref{eq:zhou-lemma-R} can be rewritten as
		\begin{align*}
			\int_0^{t_0}\int_{\bR} & v_+(t, x)v_-(t, x)\ud x\ud t = \int_0^{t_0}\int_{\bR}g_+(x-t)g_-(x+t)\ud x\ud t \\
			& \quad + \int_0^{t_0}\int_{\bR}g_+(x-t) \int_0^t f_-(\tau, x + t - \tau)\ud \tau  \ud x\ud t \\
			& \quad + \int_0^{t_0}\int_{\bR}g_-(x+t) \int_0^t f_+(\tau, x - t + \tau)\ud \tau  \ud x\ud t \\
			& \quad + \int_0^{t_0}\int_{\bR}\int_0^t \int_0^t f_+(\tau, x - t + \tau) f_-(\sigma, x + t - \sigma) \ud \tau \ud \sigma  \ud x\ud t \\
			& \quad =: I + II + III + IV.
		\end{align*}
		To estimate these four terms we will work with the null coordinates $(a,b)$ defined by
		\begin{equation}\label{eq:defn-null-coordinates}
			a \coloneqq x+t,\quad b = x-t.
		\end{equation}
		Using this transformation we have for $0 \leq s \leq t_0$ we have
		\begin{align}\label{eq:zhou-gg-est}
			\int_s^{t_0}\int_{\bR} & g_+(x-t)g_-(x+t)\ud x\ud t = \frac{1}{2} \int_{\bR} \int_{b+2s}^{b+2t_0} g_-(a)g_+(b) \ud a \ud b \nonumber\\
			& \leq \frac{1}{2} \left(\int_{\bR} g_-(a) \ud a \right)\left(\int_{\bR} g_+(b) \ud b \right) .
		\end{align}
		Now we estimate the most difficult term $IV$ and leave the terms $II$ and $III$ to a reader. Let us set $g_+^\tau(x-t)\coloneqq f_+(\tau, x - t + \tau) $ and $g_-^\sigma(x+t)\coloneqq f_-(\sigma, x + t -\sigma) $ and observe that by \eqref{eq:zhou-gg-est} we get
		\begin{align}
			& \int_0^{t_0}\int_{\bR}\int_0^t \int_0^t f_+(\tau, x - t + \tau) f_-(\sigma, x + t - \sigma) \ud \tau \ud \sigma  \ud x\ud t \nonumber\\
			&\qquad = \int_0^{t_0} \int_0^{t_0} \left[ \int_{\tau \vee \sigma}^{t_0} \int_{\bR}  g_+^\tau(x-t) g_-^\sigma(x+t) \ud x\ud t  \right] \ud \tau \ud \sigma   \nonumber \\
			&\qquad \leq \frac{1}{2} \int_0^{t_0} \int_0^{t_0} \left(\int_{\bR} g_-^\sigma(a) \ud a \right)\left(\int_{\bR} g_+^\tau(b) \ud b \right)   \ud \tau \ud \sigma   \nonumber \\
			&\qquad \leq \frac{1}{2} \left( \int_0^{t_0}\int_{\bR}f_-(\sigma, a - \sigma) \ud a \ud \sigma \right)\left( \int_0^{t_0}\int_{\bR}f_+(\tau, b + \tau) \ud b \ud \tau \right) .
		\end{align}
		Hence the result follows.
	\end{proof}
	
	\begin{definition}\label{def-Q form}
		Let $K$ be any trapezoid. If $u:K \to \bR^n$ and $ \sh u:K \to \bR^n$, then we define  \[Q(u,\sh u) \coloneqq   [Q_{j,k}(u,\sh u)]_{j,k=1}^n: K \to \bR^{n \times n},\] where
		\begin{equation}\label{eqn-Q}
			Q_{jk}(u, \sh u) \coloneqq  \partial_t u_j\partial_t \sh u_k - \partial_x u_j \partial_x \sh u_k, \;\; j,k=1,\ldots,n,
		\end{equation}
		and $\bR^{n\times n}$ is  the vector space of all real valued $n\times n$ matrices endowed with the $\ell^1$-norm.
	\end{definition}

	\begin{proposition}\label{prop:zhou-estimates}
		Let $h \in L^1_{\loc}(K;\bR^n)$ and $u_0, v_0, L^1_{\loc}(K_0;\bR^n)$. Assume that a locally integrable function  $u:K \to \bR^n$ is a mild solution of linear wave equation \eqref{eqn-linear-wave-nonhom} on $K$ such that  $D u_0 \in L_{\loc}^1(K_0;\bR^n)$.	
		Then $u \in \mathscr{H}(K)$ and
		$Q_{jk}(u, u) \in L_{\loc}^1(K;\bR^n)$, for all $ j,k=1,\ldots,n$,  and, for every $(t_0,x_0) \in K$ we have
		\begin{equation}\label{eqn-Q(u,u)-estimate}
			\begin{split}		
				\iint_{T_{(t_0,x_0)}} |Q(u, u)|\ud x\ud t \leq
				\bigg(\int_{T_0} |v_0(x)-Du_0(x)| \ud x + \iint_{T_{(t_0,x_0)}} |h|\ud x\ud t\bigg)
				\\
				 \times \bigg(\int_{T_0}|v_0(x)+Du_0(x)| \ud x + \iint_{T_{(t_0,x_0)}} |h|\ud x\ud t\bigg),
		\end{split}		\end{equation}
		where $T_{(t_0,x_0)} \subset K$ is the triangle with vertices $(t_0,x_0), (0,x_0-t_0)$ and $(0,x_0+t_0)$,  and  $T_0 =[x_0-t_0, x_0+t_0] $.
		
		More generally, if $\sh u$ is another  solution (with initial data $\sh u_0, \sh v_0: K_0 \to \bR^n$ and forcing $\sh h: K \to \bR^n$ satisfying the above assumptions), then
		\begin{equation}\label{eq:Qestimate}
			\begin{aligned}
				2 \iint_{T_{(t_0,x_0)}} |Q(u, \sh u)|\ud x\ud t &
				\leq\bigg(\int_{T_0}|v_0(x)-Du_0(x)| \ud x + \iint_{T_{(t_0,x_0)}} |h|\ud x\ud t\bigg)
				\\
				& \times \bigg(\int_{T_0} |\sh v_0(x)+D\sh u_0(x)| \ud x + \iint_{T_{(t_0,x_0)}} |\sh h|\ud x\ud t\bigg)\\
				& + \bigg(\int_{T_0}|v_0(x)+Du_0(x)| \ud x + \iint_{T_{(t_0,x_0)}} |h|\ud x\ud t\bigg)
				\\
				& \times \bigg(\int_{T_0} |\sh v_0(x)-D \sh u_0(x)| \ud x + \iint_{T_{(t_0,x_0)}} | \sh h|\ud x\ud t\bigg).
			\end{aligned}
		\end{equation}
	\end{proposition}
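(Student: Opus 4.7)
The plan is to use the factorization $\partial_t^2 - \partial_x^2 = (\partial_t - \partial_x)(\partial_t + \partial_x)$ to reduce the study of $Q(u, \sh u)$ to products of solutions of one-dimensional transport equations, and then invoke Lemma \ref{lem:zhou-lemma}. As a preliminary step, since $u$ is a mild solution of \eqref{eqn-linear-wave-nonhom} with $v_0, Du_0 \in L^1_{\loc}(K_0)$ and $h \in L^1_{\loc}(K)$, Proposition \ref{prop-W^1,1 estimates} applied on any bounded sub-trapezoid yields $\partial_t u, \partial_x u \in L^1_{\loc}(K)$ and $u \in \mathscr{H}(\widehat K)$ for every compact $\widehat K \subset K$; in particular each component $Q_{jk}(u, \sh u)$ is measurable. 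The same applies to $\sh u$.

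Following Remark \ref{rem-wave equation}, I would introduce the null-direction combinations
\begin{equation*}
v_+ \coloneqq \partial_t u - \partial_x u, \qquad v_- \coloneqq \partial_t u + \partial_x u,
\end{equation*}
and the analogous $v_+^\sharp, v_-^\sharp$ built from $\sh u$. By that remark, $v_+$ is a weak solution of $(\partial_t + \partial_x) v_+ = h$ with initial value $v_0 - Du_0$, and $v_-$ is a weak solution of $(\partial_t - \partial_x) v_- = h$ with initial value $v_0 + Du_0$. Hence the Duhamel formulas \eqref{eq:transport} and \eqref{eq:transport-neg} apply with
\begin{equation*}
(g_+, f_+) = (v_0 - Du_0,\, h), \qquad (g_-, f_-) = (v_0 + Du_0,\, h),
\end{equation*}
and similarly for $v_\pm^\sharp$ with the $\sh$-data.

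The key algebraic observation is that using $\partial_t u = (v_+ + v_-)/2$ and $\partial_x u = (v_- - v_+)/2$ (and likewise for $\sh u$), a direct expansion gives
\begin{equation*}
Q_{jk}(u, \sh u) = \tfrac{1}{2}\bigl(v_{+,j}\, v_{-,k}^\sharp + v_{-,j}\, v_{+,k}^\sharp\bigr),
\end{equation*}
so that $|Q(u, \sh u)| \leq \tfrac{1}{2}\bigl(|v_+|\,|v_-^\sharp| + |v_-|\,|v_+^\sharp|\bigr)$ pointwise. I would then apply Lemma \ref{lem:zhou-lemma} with $L = t_0$ to each of the two products $|v_+||v_-^\sharp|$ and $|v_-||v_+^\sharp|$ over the triangle $T_{(t_0,x_0)}$, feeding in the Duhamel data listed above. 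The two resulting bilinear bounds sum, after multiplication by $2$, to exactly the right-hand side of \eqref{eq:Qestimate}. For \eqref{eqn-Q(u,u)-estimate} I would take $\sh u = u$, which collapses the two cross terms and gives $|Q(u,u)| \leq |v_+|\,|v_-|$, so a single application of Lemma \ref{lem:zhou-lemma} suffices. Local integrability of each entry $Q_{jk}(u,u)$ on $K$ then follows by applying the same bound to arbitrary triangles inside $K$.

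There is no substantial obstacle; the argument is essentially bookkeeping once the null-coordinate factorization is observed. The only point requiring care is matching the sign conventions of Remark \ref{rem-wave equation} with the hypothesis of Lemma \ref{lem:zhou-lemma}, so that $v_\pm$ carries the correct initial datum $v_0 \mp Du_0$, and verifying that the restrictions of $(u_0, v_0, h)$ and $(\sh u_0, \sh v_0, \sh h)$ to the compact triangle $T_{(t_0,x_0)}$ do place us in the framework of that lemma.
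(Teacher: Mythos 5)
Your proposal is correct and follows essentially the same route as the paper: the paper likewise writes $2Q_{jk}(u,\sh u)=(\partial_t u_j-\partial_x u_j)(\partial_t \sh u_k+\partial_x \sh u_k)+(\partial_t u_j+\partial_x u_j)(\partial_t \sh u_k-\partial_x \sh u_k)$, identifies $v_\pm$ with data $(v_0\mp Du_0,h)$ via the d'Alembert formulas, and applies Lemma \ref{lem:zhou-lemma} to the two cross products. The only cosmetic difference is that you invoke Proposition \ref{prop-W^1,1 estimates} for the membership of $u$ in $\mathscr{H}$, whereas the paper reads it off directly from the explicit formulas for $\partial_t u$ and $\partial_x u$.
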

	\begin{proof}[Proof of Proposition \ref{prop:zhou-estimates}]
		Let us choose and fix a trapezoid $K$ and two locally integrable functions $u, \sh u:K \to \bR^n$ which are  mild solutions of linear wave equation \eqref{eqn-linear-wave-nonhom} on $K$ such that  $D u_0, D \sh u_0  \in L_{\loc}^1(K_0;\bR^n)$.	Let us take any arbitrary $(t_0,x_0) \in K$.  By Definition \ref{def-abstract trapezoid}, the closed triangle $T_{(t_0,x_0)}$ with vertices $(t_0,x_0), (0,x_0-t_0)$ and $(0,x_0+t_0)$ is a subset of $K$. Let us denote the base of $T_{(t_0,x_0)}$ by $T_0$.
		
		Then, from the representation formula \eqref{eq:wave-sol-mild} we infer that, on $T_{(t_0,x_0)}$, the following formulae hold in the weak sense,
		\begin{equation}\label{eqn-u_t}
			\begin{aligned}
				\partial_t u(t, x) &= \frac 12 (D u_0(x + t) - D u_0(x - t)) + \frac 12(v_0(x + t) + v_0(x - t)) \\
				&+ \frac 12 \int_0^t(h(\tau, x + t - \tau) + h(\tau, x - t + \tau))\ud\tau, \\
			\end{aligned}
		\end{equation}
		\begin{equation}\label{eqn-u_x}
			\begin{aligned}
				\partial_x u(t, x) &= \frac 12 (D u_0(x + t) + D u_0(x - t)) + \frac 12(v_0(x + t) - v_0(x - t)) \\
				&+ \frac 12 \int_0^t(h(\tau, x + t - \tau) - h(\tau, x - t + \tau))\ud\tau.
			\end{aligned}
		\end{equation}
Let us observe now, that the RHSs of the above two identities define respectively functions belonging to spaces
$C_b([0,T_0)]; L^1(K_\cdot))$, respectively, $C_b([0,T_0)]; L^1(K_\cdot))$. Hence we infer that $u \in \mathscr{H}(K)$.
	
Moreover,  form these last two identities we deduce that, in the weak sense,
		\begin{equation}
			\begin{aligned}
				\partial_t u(t, x) - \partial_x u(t, x) &= -D u_0(x - t) + v_0(x - t) + \int_0^t h(\tau, x - t + \tau)\ud\tau, \\
				\partial_t u(t, x) + \partial_x u(t, x) &= D u_0(x + t) + v_0(x + t) + \int_0^t h(\tau, x + t - \tau)\ud\tau.
			\end{aligned}
		\end{equation}
		In other words equations \eqref{eq:zhou-lemma-hyp} are satisfied with
		\begin{align*}
			v_{+} &\coloneqq  (\partial_t - \partial_x)u, \;\; v_{-} \coloneqq  (\partial_t + \partial_x)u,\\
			g_{+} &\coloneqq  - D u_0 + v_0, \;\;\; g_{-} \coloneqq  + D u_0 + v_0  \ \mbox{  and }\  f_+=f_- \coloneqq  h.
		\end{align*}
		Analogously, \eqref{eq:zhou-lemma-hyp} holds with
		\begin{align*}
			v_{+} &\coloneqq  (\partial_t - \partial_x)\sh u, \;\; v_{-} \coloneqq  (\partial_t + \partial_x)\sh u, \\
			g_{+} &\coloneqq  - D \sh u_0 + \sh v_0, \;\;\; g_{-} \coloneqq  + D \sh u_0 + \sh v_0  \ \mbox{  and }\  f_+=f_- \coloneqq  \sh h.
		\end{align*}
		Now we also choose and fix   $j,k \in \{1,\ldots,n\}$. Then we have
		\begin{equation}\label{eqn-rel-Q_jk}
			\begin{split}
				2Q_{jk}(u, \sh u) &= (\partial_t u_j - \partial_x u_j)(\partial_t \sh u_k + \partial_x \sh u_k) + (\partial_t u_j + \partial_x u_j)(\partial_t \sh u_k - \partial_x \sh u_k)
				\\ &=:  Q_{jk}^1(u, \sh u)+Q_{jk}^2(u, \sh u).
			\end{split}
		\end{equation}
		Then by applying the scalar  inequality \eqref{eq:zhou-lemma} with $t_0=L$ from Lemma \ref{lem:zhou-lemma}, we obtain
		\begin{align}
			\iint_{T_{(t_0,x_0)}} |Q^1(u, \sh u)| \ud x \ud t &= \sum_{i,j=1}^n \iint_{T_{(t_0,x_0)}} |Q_{jk}^1(u, \sh u)| \ud x \ud t
			\leq    \iint_{T_{(t_0,x_0)}}  \vert \partial_t u - \partial_x u \vert  \vert \partial_t \sh u + \partial_x \sh u \vert  \ud x \ud t
			\\ &\leq \bigg(\int_{T_0} |v_{0}(x)-Du_{0}(x)|  \ud x+\iint_{T_{(t_0,x_0)}}  |h|\ud x\ud t\bigg)  \\
			&\times \bigg(\int_{T_0} | \sh v_{0}(x) +D \sh u_{0}(x)| \ud x+ \iint_{T_{(t_0,x_0)}} |\sh h|\ud x\ud t\bigg).
		\end{align}
		Similarly, we get
		
		\begin{align}
			\iint_{T_{(t_0,x_0)}} |Q^2(u, \sh u)| \ud x \ud t &= \sum_{i,j=1}^n \iint_{T_{(t_0,x_0)}} |Q_{jk}^2(u, \sh u)| \ud x \ud t
			\leq    \iint_{T_{(t_0,x_0)}}  \vert \partial_t u + \partial_x u \vert  \vert \partial_t \sh u - \partial_x \sh u \vert  \ud x \ud t
			\\
			& \leq   \bigg(\int_{T_0} |v_{0}(x)+Du_{0}(x)|  \ud x+\iint_{T_{(t_0,x_0)}} |h|\ud x\ud t\bigg) \\
			& \times \bigg(\int_{T_0} | \sh v_{0}(x) -D \sh u_{0}(x)| \ud x+ \iint_{T_{(t_0,x_0)}} |\sh h|\ud x\ud t\bigg).
		\end{align}

		Consequently,   we have  \begin{align*}
			2  \iint_{T_{(t_0,x_0)}} |Q(u, \sh u)| \ud x \ud t &  \leq \bigg(\int_{T_0} |v_0(x)-Du_0(x)|  \ud x+\iint_{T_{(t_0,x_0)}} |h|\ud x\ud t\bigg) \\
			&\qquad \times \bigg(\int_{T_0} | \sh v_0(x) +D \sh u_0(x)| \ud x+ \iint_{T_{(t_0,x_0)}} |\sh h|\ud x\ud t\bigg) \\
			& +  \bigg(\int_{T_0} |v_0(x)+Du_0(x)|  \ud x+\iint_{T_{(t_0,x_0)}} |h|\ud x\ud t\bigg) \\
			&\qquad \times \bigg(\int_{T_0} | \sh v_0(x) -D \sh u_0(x)| \ud x+ \iint_{K} |\sh h|\ud x\ud t\bigg),
		\end{align*}
		and we complete the proof of Proposition \ref{prop:zhou-estimates}.
	\end{proof}
	
	\begin{remark}
		It is important to note that the results of Lemmata \ref{lem:transport}, \ref{lem:zhou-lemma} and Proposition \ref{prop:zhou-estimates} are valid for every norm,  e.g.  the Euclidean and the $\ell^1$,  on $\bR^n$.
	\end{remark}

	\section{Definitions and framework for wave map equation}\label{sec:framework}
	We consider  a compact  $m$-dimensional Riemannian manifold $\cM$. By the Nash embedding theorem \cite{Nash56} there exists an isometric embedding of $\cM$ into $\bR^n$ for some $n\in \bN$.
	In this note, we identify $\cM$ with the image of this embedding, so that from now on we assume that  $\cM \subset \bR^n$  and the Riemannian metric on $\cM$ is induced by the Euclidean  metric on $\bR^n$.
	
	We consider the following wave map equation with values in $\cM$ with a general forcing term
	\begin{equation}
		\label{eqn-wave map classical forced}
		\partial_t^2 u(t, x) - \partial_x^2 u(t, x) - f(t, x) \perp T_{u(t, x)}\cM, \qquad\text{for all }(t, x) \in \Omega,
	\end{equation}
	where $\Omega \subset \bR^{1+1}$ is an open set and $f: \Omega \to \bR^n$ is sufficiently regular for the equation above to make sense. A classical forced wave map is a sufficiently regular function $u: \Omega \to \cM$ such that \eqref{eqn-wave map classical forced} holds.
	
	The equation \eqref{eqn-wave map classical forced} can be written in the following explicit form
	\begin{equation}
		\label{eqn-wave map forced}
		\partial_t^2 u_i - \partial_x^2 u_i = \sum_{j,k=1}^n \Gamma_{ijk}(u)(\partial_t u_j\partial_t u_k - \partial_x u_j \partial_x u_k)
		+ \sum_{ j=1}^n P_{ij}(u)f_{j},\;\; 1 \leq i \leq n,
	\end{equation}
	for some   Lipschitz functions
	\begin{equation}
		\label{eqn-Gamma,P}
		\Gamma_{ijk}, P_{ij}: \cM \to \bR, \;\;\; 1 \leq i, j, k \leq n.
	\end{equation}
	We extend $\Gamma_{ijk}, P_{ij}$ as  bounded and  Lipschitz functions from $\bR^n$ to $\bR$.
	\begin{remark}\label{rem-v+ and v- for wave maps}
		If $u$ is a classical solution of equation \eqref{eqn-wave map classical forced} and  $\bR^n$-valued functions $v_+$ and $v_-$ are defined by formulae
		\eqref{eqn-v+} and \eqref{eqn-v-} respectively, then these functions satisfy
		\begin{align}
			\label{eqn- transport classical forced}
			\partial_t v_{\pm}(t, x) \pm \partial_x  v_{\pm}(t, x) - f(t, x) &\perp T_{u(t, x)}\cM, \qquad\text{for all }(t, x) \in \Omega.
		\end{align}
		
		Similarly, if $u$ is a classical solution of equation \eqref{eqn-wave map forced},  then the functions $v_+$ and $v_-$ satisfy the following non-linear transport equations,
		for $(t, x) \in \Omega$ and $1 \leq i \leq n$,
		\begin{align}\label{eqn-transport equation-nonlinear}
			\partial_t v_{\pm}^i& \pm \partial_x v_{\pm}^i=
			\sum_{1 \leq j,k\leq n}\Gamma_{ijk}(u) R_{jk}(v_+,v_-)
			+ \sum_{1\leq j\leq n}P_{ij}(u)f_{j},
		\end{align}
		where $R_{jk}$ is a bilinear map  defined by
		\begin{align}\label{eqn-R_jk}
			\bigl[ R_{jk}(v_+,v_-)\bigr](t, x)\coloneqq  \frac12 \bigl[ v_+^j(t, x)v_-^k(t, x)+ v_-^j(t, x)v_+^k(t, x) \bigr],\;\; (t, x) \in \Omega.
		\end{align}		
	\end{remark}
	
	Note that the system \eqref{eqn-wave map forced}  of $n$ equations can be equivalently written in the following vector form
	\begin{equation}\label{eq:fwm-vec}
		\partial_t^2 u - \partial_x^2 u = \sum_{j,k=1}^n\Gamma_{jk}(u)(\partial_t u_j\partial_t u_k - \partial_x u_j \partial_x u_k)
		+ P(u)f,
	\end{equation}
	with  Lipschitz   functions
	\begin{equation}\label{eqn-Gamma,P-vector}
		\Gamma_{jk}, P_{j}: \cM \to \bR^n, \;\;  j, k=1,\cdots, n, \quad \textrm{ and } \quad
		P: \cM \to \cL(\bR^n),
	\end{equation}
	where $\cL(\bR^n)$ is the set of all linear maps on $\bR^n$.

	Below  we summarize  the assumptions that we need to have. 

\begin{assumption}\label{assump-A1}
		The extensions of functions $\Gamma_{jk}$ and $P$  to $\bR^n$ are  bounded and globally Lipschitz functions  with values in  $\bR^n$ and $\cL(\bR^n)$ respectively.  We denote by $\gamma$ the supremum of all these functions. We denote by $L$ the common Lipschitz constant  of all these functions.\\
Let $\pi^\perp$ be the complimentary projection map of the map $\pi$ introduced in equation  \eqref{eqn-pi} of Lemma \ref{lem-projm-2}, i.e. 
for $p\in M$,    $\pi_p:\bR^n\to T_pM$ be the orthogonal projection.  Then the corresponding map
			\begin{equation}\label{eqn-pi^perp}
\pi_p^\perp=\id-\pi_p \in \cL(\bR^n). 
\end{equation}
By the same symbol we denote the extension of that map to a $C_0^\infty$-map defined on  the whole $\bR^n$.
	\end{assumption}

	In order to interpret equation \eqref{eq:fwm-vec} for functions $u$ which are not twice differentiable,
	we use the theory presented in Section~\ref{sec:linear-wave}.

	\begin{definition}[\textbf{Manifold valued weak solution}]
		\label{def-sol-weak}
		Let $K$ be  a trapezoid   with base $K_0$ with height $T \in (0,\infty]$.
		Let us assume that the initial data  $u_0 \in C(K_0;\cM)$ is an absolutely continuous function such that   $Du_0 \in L^1_{\loc}(K_0;\bR^n)$.
		Let us also assume  that $v_0 \in L^1_{\loc}(K_0;\bR^n)$ is such that
		\begin{align}\label{eqn-compatibility}
			v_0(x) \in T_{u_0(x)}\cM \mbox{  for almost all } x \in K_0.
		\end{align}
		Assume finally that $f\in L^1_{\loc}(K;\bR^n)$.
		
		A continuous function $u: K \to \cM$  is called a \emph{weak} solution of equation \eqref{eqn-wave map forced} on $K$,    with the initial condition
		\begin{align}\label{eqr-IC}
			u(0,\cdot)&=u_0, \;\;\;  \partial_t u(0,\cdot)=v_0
		\end{align}
		on $K_0$,  if and only if  the following conditions are satisfied.
		\begin{trivlist}
			\item[(i)]   the weak derivatives  $ \partial_x u, \partial_t u$ exist in the weak $L^1_{\loc}(K)$-sense;  in particular they are   measurable functions;
			\item[(ii)] for every  $t \in [0,T)]$,
                \begin{align}\label{eqn-compatibility-t}
			\partial_t u(t,x) \in T_{u(t,x)}\cM \mbox{  for almost every } x \in K_t.
		\end{align}
			\item[(iii)]  for every compact trapezoid $\widetilde{K} \subseteq K$ with base  $\widetilde{K}_0 $, the restriction of the right hand side of equation \eqref{eq:fwm-vec}  to  $ \widetilde{K}$ is integrable, and
			equation \eqref{eq:fwm-vec} holds in the weak sense on $\widetilde{K}$, i.e. if $t_0$ is the height of $\widetilde{K}$ then for every $T  \in (0,t_0)$ and  $\varphi \in  C_0^\infty((\widetilde{K}))$
			\begin{equation}\label{eqn-fwm-vector-in K-vector}
				\begin{split}
					&\int_{\widetilde{K}_T} u(T,x)\partial_t \varphi(T,x)\ud x - \int_{\widetilde{K}_T} \partial_t u(T,x)\varphi(T,x)\ud x
					\\		&+\iint_{\widetilde{K}}	u(\partial_t^2 \varphi - \partial_x^2 \varphi)\ud x\ud t =  \int_{\widetilde{K}_0} u_{0}(x)\partial_t \varphi(0,x)\ud x - \int_{\widetilde{K}_0} v_{0}(x)\varphi(0,x)\ud x\\
					&+  \iint_{\widetilde{K} } \bigl[ \sum_{j,k=1}^n \Gamma_{jk}(u)(\partial_t u_j\partial_t u_k - \partial_x u_j \partial_x u_k)
					+ \sum_{j=1}^nP_{j}(u)f_{j}\Bigr]  \varphi(t,x) \ud x\ud t .
			\end{split}\end{equation}
			
		\end{trivlist}
	\end{definition}

	Let us observe the following result.
	\begin{proposition}
		Condition \eqref{eqn-compatibility}, resp. \eqref{eqn-compatibility-t},   		 is equivalent to
		\begin{align}\label{eqn-compatibility-2}
			\Upsilon^M_{\varphi}(u_0,v_0) := \int_{K_0}  \pi^\perp_{u_0(x)}(v_0(x)) \;\;\varphi(x) \, dx=0 \mbox{  for every } \varphi \in C_0^\infty(\textrm{int}(K_0)).
		\end{align}
		respectively, to 
		\begin{align}\label{eqn-compatibility-t-2}
			\Upsilon^M_{\varphi}(u(t),\partial_t u(t)) := \int_{K_t}  \pi^\perp_{u(t,x)}(\partial_t u(t,x)) \;\;\varphi(x) \, dx=0 \mbox{  for every } \varphi \in C_0^\infty(\textrm{int}(K_0)).
		\end{align}
\end{proposition}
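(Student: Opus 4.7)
The plan is to reduce each equivalence to the fundamental lemma of calculus of variations (du Bois-Reymond) applied componentwise to the measurable $\bR^n$-valued integrand $g(x) := \pi^\perp_{u_0(x)}(v_0(x))$ (respectively $g(x) := \pi^\perp_{u(t,x)}(\partial_t u(t,x))$).

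First I would record the pointwise characterization. By Lemma~\ref{lem-projm-2}, for each $p \in \cM$ the map $\pi_p$ is the orthogonal projection $\bR^n \to T_p\cM$, so the complementary map $\pi_p^\perp = \id - \pi_p$ is the orthogonal projection onto $(T_p\cM)^\perp$. In particular, for $p \in \cM$ and $w \in \bR^n$ one has $w \in T_p\cM$ if and only if $\pi_p^\perp(w) = 0$. Since $u_0(x) \in \cM$ for every $x \in K_0$ (and $u(t,x) \in \cM$ for $(t,x) \in K$), the a.e.\ tangency condition \eqref{eqn-compatibility} is pointwise equivalent to $\pi^\perp_{u_0(x)}(v_0(x)) = 0$ for a.e.\ $x \in K_0$, and similarly for \eqref{eqn-compatibility-t}.

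Second, I would check that $g \in L^1_{\loc}(K_0;\bR^n)$. By Assumption~\ref{assump-A1}, the extended map $p \mapsto \pi^\perp_p$ is bounded and (at least) continuous on $\bR^n$; combined with continuity of $u_0$ and $v_0 \in L^1_{\loc}(K_0;\bR^n)$, we get measurability of $g$ together with the pointwise bound $|g(x)| \leq C\,|v_0(x)|$, hence local integrability. The analogous statement for $(u(t,\cdot),\partial_t u(t,\cdot))$ follows from the trace theorem (Theorem~\ref{thm-trace}), which gives continuity of $u(t,\cdot)$ on $K_t$ and $\partial_t u(t,\cdot) \in L^1_{\loc}(K_t;\bR^n)$ for every $t \in [0,T)]$.

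With this in hand, the ``$\Rightarrow$'' direction of the equivalence is immediate: if $g = 0$ a.e., then $\int_{K_0} g\,\varphi\,\ud x = 0$ for every test function. For ``$\Leftarrow$'', suppose that $\int_{K_0} g(x)\varphi(x)\,\ud x = 0$ for every $\varphi \in C_0^\infty(\textrm{int}(K_0))$. Applying the du Bois-Reymond lemma componentwise (i.e.\ the standard fact that an $L^1_{\loc}$ function annihilating all smooth compactly supported test functions vanishes a.e.), we infer $g = 0$ a.e.\ on $K_0$, and the first step then gives the desired a.e.\ tangency. The $t$-parameter version follows by the identical argument, the only bookkeeping point being that one restricts/extends $\varphi \in C_0^\infty(\textrm{int}(K_0))$ to $\textrm{int}(K_t) \subseteq \textrm{int}(K_0)$, which does not affect the conclusion since both families of test functions separate points of $L^1_{\loc}$ on the relevant interval. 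There is no real obstacle here; the only mildly technical step is the integrability check for $g$, which is handled by the compactness of $\cM$ and the global boundedness of the extended $\pi^\perp$.
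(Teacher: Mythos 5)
Your proof is correct. The paper states this proposition without proof, and your argument is precisely the standard one it implicitly relies on: the pointwise identification $w \in T_p\cM \Leftrightarrow \pi_p^\perp(w)=0$ for $p\in\cM$ (via Lemma \ref{lem-projm-2}), the bound $|\pi^\perp_{u_0(x)}(v_0(x))|\leq C|v_0(x)|$ from \eqref{eqn-pi_p-uniform} to get local integrability, and the du Bois--Reymond lemma for the converse — the same lemma the paper already invokes in the proof of Theorem \ref{thm-weak soluntions uniqueness}. Your handling of the $t$-dependent version (zero-extending test functions from $\textrm{int}(K_t)$ to $\textrm{int}(K_0)$, using $K_t\subseteq K_0$ from Proposition \ref{prop-trapezoid} and convexity) is the right bookkeeping.
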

	
	\begin{remark}
		If $u$ is a $\cM$-valued classical  solution of our problem (for example if  $u$ belongs to  $C([0,T]; H_{\loc}^2(K_0;\cM))$ see e.g. \cite{BGOR22}),  then
		by employing the Stokes Theorem \cite[Theorem I.1.2]{Temam79}, one can show that $u$ is also a weak solution in the sense
		of Definition \ref{def-sol-weak}.
	\end{remark}

	\begin{remark}\label{rem-weak solution}
		Our definition of the weak solution is different than  \cite[Definition 1.1]{Zhou99} as it involves the ``final'' time  $T$. Hence apparently it is stronger. But we believe that both definitions are equivalent. We need this apparently stronger version of a definition of a weak solution because of the next result which we will use later on.
	\end{remark}

	\begin{definition}[\textbf{Manifold valued mild solution}]
		\label{def-sol-mild}
		Assume that	$K$,    $(u_0,v_0)$ and $f$ are as in   Definition \ref{def-sol-weak}.
		A continuous function $u: K \to \cM$   is called a \emph{mild} solution of equation \eqref{eq:fwm-vec} with the initial condition \eqref{eqr-IC}  if and only if
		the conditions (i)-(ii) from Definition \ref{def-sol-weak}  and the following condition (iv) hold.
		\begin{trivlist}
			\item[(iv)]
			equation \eqref{eq:fwm-vec} holds in the following mild (i.e. integral)   sense:  for every $(t,x) \in K$
			\begin{align}\label{eqn-fwm-mild-in K}
				u(t, x)& = \frac 12 \big(u_0(x + t) + u_0(x - t)\big) + \frac 12\int_{x-t}^{x+t}v_0(y)\ud y \nonumber\\
				& + \frac 12 \int_0^t\int_{x - t + \tau}^{x + t - \tau}  \bigg[\sum_{j,k=1}^n \Gamma_{jk}(u)(\partial_t u_j\partial_t u_k - \partial_x u_j \partial_x u_k)
				+ \sum_{j=1}^n P_{j}(u)f_{j} \bigg] \ud y\ud\tau.
			\end{align}
		\end{trivlist}
	\end{definition}
	
	\begin{remark}\label{rem-M-mild=weak}
		In view of Theorems \ref{thm-mild solution is weak} and \ref{thm-weak soluntions uniqueness}, a function $u : K \to M$ is a weak solution in the  sense of Definition \ref{def-sol-weak}  if and only if it is a mild solution in the sense of Definition \ref{def-sol-mild}.
	\end{remark}
	
	We finish this section with the following result which will be used later on.
	\begin{proposition}\label{prop-restriction-mildSoln}
		If a function $u: K \to \cM$  is a weak/mild solution of equation  \eqref{eq:fwm-vec} in $K$ with the initial data $(u_0,v_0)$ and external force $f$,
		and $\widetilde{K}$ is another trapezoid such that
		$\widetilde{K} \subset K$, then the restriction $\tilde{u}$ of $u$ to $\widetilde{K}$,
		is a weak/mild solution of equation  \eqref{eq:fwm-vec} in $\widetilde{K}$ with the initial data $(\tilde{u}_0,\tilde{v}_0)$ and external force $\tilde{f}$ defined appropriately.
	\end{proposition}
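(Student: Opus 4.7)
The plan is to reduce to the mild formulation, where the statement becomes essentially a consequence of the domain-of-dependence property encoded in Proposition \ref{prop-trapezoid} and the argument of Proposition \ref{prop-mild solutions}. By Remark \ref{rem-M-mild=weak}, the notions of weak and mild $\cM$-valued solutions coincide, so it is enough to verify that $\tilde u \coloneqq u|_{\widetilde K}$ is a mild solution on $\widetilde K$ with data $\tilde u_0 \coloneqq u_0|_{\widetilde K_0}$, $\tilde v_0 \coloneqq v_0|_{\widetilde K_0}$ and $\tilde f \coloneqq f|_{\widetilde K}$, in the sense of Definition \ref{def-sol-mild}.

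First I would check conditions (i) and (ii) of Definition \ref{def-sol-weak} (which are shared with Definition \ref{def-sol-mild}). Since $\widetilde K \subset K$ and both are trapezoids, the distributional derivatives $\partial_t u, \partial_x u \in L^1_{\loc}(K)$ restrict to distributional derivatives of $\tilde u$ lying in $L^1_{\loc}(\widetilde K)$, so condition (i) is immediate. For condition (ii), note that for each $t$ in the height range of $\widetilde K$, we have $\widetilde K_t \subset K_t$, hence the tangency property $\partial_t u(t,x) \in T_{u(t,x)}\cM$ for a.e.\ $x \in K_t$ restricts to the same property on $\widetilde K_t$.

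The heart of the argument is condition (iv), the mild formula \eqref{eqn-fwm-mild-in K}. Let $(t,x) \in \widetilde K$ be arbitrary. By Definition \ref{def-abstract trapezoid}, the closed triangle $T_{(t,x)}$ with vertices $(t,x)$, $(0,x-t)$, $(0,x+t)$ is contained in $\widetilde K$; in particular, by Proposition \ref{prop-trapezoid}, the base points $x\pm t$ lie in $\widetilde K_0$ and for every $\tau \in [0,t]$ the segment $[x-t+\tau, x+t-\tau]\times\{\tau\}$ is contained in $\widetilde K$. Therefore the right-hand side of \eqref{eqn-fwm-mild-in K} at the point $(t,x)$ is unchanged when $(u_0,v_0,f)$ are replaced by $(\tilde u_0, \tilde v_0, \tilde f)$, because only values on $\widetilde K_0$ and $\widetilde K$ are used. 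Since $u$ satisfies the original mild formula on $K$, and the integrand in the nonlinear part is computed from $u$, $\partial_t u$, $\partial_x u$ on $T_{(t,x)} \subset \widetilde K$, the restriction $\tilde u$ satisfies \eqref{eqn-fwm-mild-in K} on $\widetilde K$ with the tilded data. This is exactly the argument used in Proposition \ref{prop-mild solutions} for the linear problem, applied here with $h$ replaced by the right-hand side of \eqref{eq:fwm-vec}, which is itself locally integrable on $\widetilde K$ by Assumption \ref{assump-A1} (boundedness of $\Gamma_{jk}$ and $P$) together with $|Q(u,u)| \in L^1_{\loc}(K)$ (Proposition \ref{prop:zhou-estimates}).

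There is no real obstacle: the statement is a consequence of the finite propagation speed built into the mild representation. The only mild subtlety is to observe that the nonlinear integrand in \eqref{eqn-fwm-mild-in K} depends pointwise on $u$ and its first derivatives over the backward light cone $T_{(t,x)}$, so restriction to $\widetilde K$ does not alter the integral at any point of $\widetilde K$. The alternative route, via the weak formulation \eqref{eqn-fwm-vector-in K-vector}, is equally straightforward: any $\varphi \in C_0^\infty((\widetilde K))$ extended by zero lies in $C_0^\infty((K))$, so the testing identity on $K$ descends to the same identity on $\widetilde K$.
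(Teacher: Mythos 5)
Your proposal is correct and takes essentially the same route as the paper, which simply declares the result obvious by analogy with the linear versions (Propositions \ref{prop-weak solutions} and \ref{prop-mild solutions}): the mild formula at $(t,x)\in\widetilde K$ only sees data on the backward triangle $T_{(t,x)}\subset\widetilde K$, and test functions in $C_0^\infty((\widetilde K))$ embed into $C_0^\infty((K))$. Your write-up just supplies the details (conditions (i), (ii), and local integrability of the nonlinearity) that the paper leaves implicit.
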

	\begin{proof}[Proof of Proposition \ref{prop-restriction-mildSoln}]
		The proof is obvious, see also   the proof  of the linear version, i.e. Proposition \ref{prop-weak solutions}.
	\end{proof}
	
	Before proving the existence of mild solutions to equation \eqref{eq:fwm-vec} with the initial condition \eqref{eqr-IC} which satisfy \eqref{eqn-compatibility}, in the next section we first develop the Cauchy theory for $\bR^n$-valued solution to the same problem.

	\section{Cauchy theory for $\bR^n$-valued solutions to wave map equation}\label{sec:Rn-valued-WE-Cauchy}
	In this section we develop a theory of $\bR^n$-valued solutions to equation \eqref{eq:fwm-vec} with suitable initial data $(u_0,v_0)$ and forcing $f$. The results of this section have nothing to do with  manifold $\cM$. In particular, we not assume that $u_0$ is $\cM$-valued,  neither that $(u_0,v_0)$ satisfy the compatibility condition \eqref{eqn-compatibility} nor that  the solution takes values in $\cM$.
	
	To shorten the notation and complexity in writing for interval $I \subseteq \bR$ we set
	\begin{equation}\label{eqn-space-L11}
		L^{1,1}(I;\bR^n) \coloneqq  (L^\infty(I;\bR^n) \cap \dot{W}^{1,1}(I;\bR^n)) \times L^1(I;\bR^n),
	\end{equation}
	and
	\begin{equation}\label{eqn-space-L11-norm}
		\| (f,g)\|_{L^{1,1}(I)}\coloneqq  \| f\|_{L^\infty(I)} + \| f' \|_{L^1(I)} + \| g\|_{L^1(I)}.
	\end{equation}
	
	First we give the definition of an $\bR^n$-valued mild solution to equation \eqref{eq:fwm-vec} with the initial condition \eqref{eqr-IC}.
	\begin{definition}[\textbf{$\bR^n$-valued mild solution}]
		\label{def-sol-mild-Rn}
		Let $K$ be  a trapezoid   with base $K_0$. Let us assume that the initial data  $(u_0,v_0) \in L^{1,1}(K_0;\bR^n)$ and forcing $f \in L^1(I;\bR^n)$.  A continuous function $u: K \to \bR^n$   is called a \emph{mild} solution of equation \eqref{eq:fwm-vec} with the initial condition \eqref{eqr-IC},  if and only if
		the condition (i)  from Definition \ref{def-sol-weak}  and $u$ satisfy  \eqref{eqn-fwm-mild-in K} for every $(t,x) \in K$.
	\end{definition}

	\subsection{Small data}\label{sec:small_data_cauchy}
	
	We begin our investigation with the small initial data and forcing global (in time) existence of $\bR^n$-valued solutions for equation \eqref{eq:fwm-vec}.
	\begin{remark}In the theorem below we assume that all the coefficients are  bounded and  Lipschitz. Had we assumed them to be only locally bounded and locally Lipschitz, we would have to replace the sentence ``Then there exists $\eta > 0$ such that the following is true.'' by 	``Then for any $\rho > 0$ there exists $\eta > 0$ such that the following is true.'' and add the following condition to conditions (\ref{eqn-smallness of initial condition}-\ref{bound-Forcing}):
		\begin{align}
			& \|u_0\|_{L^\infty} \leq \rho. \label{bound-InitPosn}
		\end{align}
		But such generalization will not be used in the present paper.
	\end{remark}
	
	\begin{theorem}[\textbf{$\bR^n$-valued global solution for small data on an arbitrary trapezoid}]
		\label{thm-global existence small data}
		Let $K$ be a trapezoid with base $K_0$ and let us assume that Assumption \ref{assump-A1} holds.
		Then there exists a number $\eta > 0$ such that the following is true.
		For every  initial data  $(u_0,v_0) \in L^{1,1}(K_0;\bR^n) $
		and for every external force $f \in L^1( K;\bR^n)$
		such that
		\begin{equation}\label{eqn-smallness of initial condition}
			\|D u_0 + v_0\|_{L^1(K_0)} \leq \eta,  \quad  \quad\|D u_0 - v_0\|_{L^1(K_0)} \leq \eta,
		\end{equation}
		
		and
		\begin{equation}\label{bound-Forcing}
			\| f\|_{L^1(K)} \leq \eta,
		\end{equation}
		there exists a function $u$ belonging to the Banach space $\mathscr{H}(K)$ introduced in  \eqref{def-space H(K)},
		such that $u$ is a mild solution, in the sense of Definition \ref{def-sol-mild-Rn}, to  equation \eqref{eq:fwm-vec} with  initial data $(u_0,v_0)$.
		\\
		Moreover, there exists a number $R>0$  such that the solution $u$ satisfying  the following condition
		\begin{equation}\label{eqn-u-space}
			\Vert \Gamma(u)(\partial_t u,\partial_t u) -  \Gamma(u) (\partial_x u, \partial_x u)
			+  P(u)f \Vert_{L^1(K)} \leq R,
		\end{equation}
		can be found  and this solution is  unique within the  class of elements of $ \mathscr{H}(K)$ satisfying  \eqref{eqn-u-space}.
		
		Finally, our problem is well posed in the Hadamard sense, i.e.  the map
		\begin{equation}\label{eqn-continuity}
			\mathscr{O}_{\eta}\ni (u_0,v_0,f) \mapsto u \in \mathscr{H}(K)
		\end{equation}
		is continuous, where $u$ satisfies \eqref{eqn-u-space} and $\mathscr{O}_{\eta}$ is a closed  subset of $L^{1,1}(K_0;\bR^n)  \times L^1( K; \bR^n)$ defined by
		conditions \eqref{eqn-smallness of initial condition} and \eqref{bound-Forcing}.
	\end{theorem}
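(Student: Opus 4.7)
The plan is to set up a contraction mapping argument in a closed ball of $\mathscr{H}(K)$ parameterized by the $L^1(K)$-norm of the nonlinear forcing, exploiting the null structure of the nonlinearity as captured by Zhou's estimate (Proposition~\ref{prop:zhou-estimates}). Specifically, for every locally integrable $w:K\to\bR^n$, define the nonlinear forcing
\[
    \cN(w) \coloneqq \sum_{j,k=1}^n \Gamma_{jk}(w)\bigl(\partial_t w_j\partial_t w_k - \partial_x w_j\partial_x w_k\bigr) + P(w) f,
\]
and let $\Phi(w)$ denote the mild solution of the linear wave equation with forcing $\cN(w)$ and initial data $(u_0,v_0)$, as given by the representation formula \eqref{eq:wave-sol-mild}. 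For a constant $R>0$ to be chosen, let $\cB_R$ consist of those $w\in\mathscr{H}(K)$ that are mild solutions of a linear wave equation with initial data $(u_0,v_0)$ and some forcing $h_w\in L^1(K)$ with $\|h_w\|_{L^1(K)}\le R$. Endow $\cB_R$ with the distance $d(w_1,w_2):=\|h_{w_1}-h_{w_2}\|_{L^1(K)}$; this is a complete metric by Theorem~\ref{thm-weak soluntions uniqueness}.

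The self-mapping step uses Proposition~\ref{prop:zhou-estimates} applied to $\sh u = u = w$ with forcing $h_w$: since $\|Dw_0\pm v_0\|_{L^1(K_0)}\le\eta$ and $\|h_w\|_{L^1(K)}\le R$, one obtains $\|Q(w,w)\|_{L^1(K)}\le(\eta+R)^2$ (taking a limit over nested triangles $T_{(t_0,x_0)}\subset K$ when $K$ is unbounded, using Fatou/monotone convergence). Combined with the bound $|\Gamma_{jk}|+|P|\le\gamma$ from Assumption~\ref{assump-A1} and $\|f\|_{L^1(K)}\le\eta$, this gives $\|\cN(\Phi(w))\|_{L^1(K)}\le\gamma(\eta+R)^2+\gamma\eta$. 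Choosing, for instance, $R:=2\gamma\eta$ and then $\eta$ so small that $\gamma(\eta+R)^2+\gamma\eta\le R$ (which holds for $\eta\le\eta_0(\gamma)$) ensures $\Phi(\cB_R)\subset\cB_R$. Proposition~\ref{prop-W^1,1 estimates} simultaneously yields $\Phi(w)\in\mathscr{H}(K)$.

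The contraction step is where Zhou's bilinear estimate \eqref{eq:Qestimate} is used in earnest. For $w_1,w_2\in\cB_R$, write
\[
\cN(w_1)-\cN(w_2)=\sum_{j,k}\bigl[\Gamma_{jk}(w_1)-\Gamma_{jk}(w_2)\bigr]Q_{jk}(w_1,w_1)+\sum_{j,k}\Gamma_{jk}(w_2)\bigl[Q_{jk}(w_1,w_1)-Q_{jk}(w_2,w_2)\bigr]+\bigl[P(w_1)-P(w_2)\bigr]f,
\]
and split $Q(w_1,w_1)-Q(w_2,w_2)=Q(w_1-w_2,w_1)+Q(w_2,w_1-w_2)$. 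The crucial point is that $w_1-w_2$ solves the linear wave equation with zero initial data and forcing $h_{w_1}-h_{w_2}$, so by the general bilinear version of Proposition~\ref{prop:zhou-estimates} (applied to $u=w_1-w_2$ and $\sh u=w_i$), one obtains
\[
\|Q(w_1-w_2,w_i)\|_{L^1(K)}\le\bigl(\|h_{w_1}-h_{w_2}\|_{L^1(K)}\bigr)\bigl(\eta+R\bigr).
\]
The Lipschitz terms $\Gamma(w_1)-\Gamma(w_2)$ and $P(w_1)-P(w_2)$ are controlled by $L\|w_1-w_2\|_{L^\infty(K)}$, which in turn is bounded by $d(w_1,w_2)$ via the representation formula \eqref{eq:wave-sol-mild}. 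Putting everything together yields $d(\Phi(w_1),\Phi(w_2))\le C(\gamma,L)(\eta+R)\,d(w_1,w_2)$, which is strictly contractive once $\eta$ is small enough. The Banach fixed-point theorem then produces a unique $u\in\cB_R$, and the Lipschitz dependence on $(u_0,v_0,f)$ of both the linear solution operator and $\cN$ immediately yields the continuity of the map \eqref{eqn-continuity} by the usual parametric fixed-point argument.

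The main obstacle is that $\partial_t u,\partial_x u$ individually only lie in $C_b\bigl(L^1_x\bigr)$, so the quadratic terms in $\cN(u)$ are not a priori in $L^1(K)$; only the specific null combinations $Q_{jk}$ are, which is the content of Zhou's estimate. All the bookkeeping above must therefore be done at the level of the $L^1$-norm of $\cN(\cdot)$, never at the level of $\|\partial u\|_{L^2}$ or similar. Two minor technicalities remain: extending the triangle estimates of Proposition~\ref{prop:zhou-estimates} to unbounded trapezoids (done by monotone convergence over an exhaustion by triangles or by the explicit version \eqref{eq:zhou-lemma} of Lemma~\ref{lem:zhou-lemma} with $L=\infty$), and verifying that the fixed point $u$ actually satisfies \eqref{eqn-u-space} with the chosen $R$, which is automatic from $u\in\cB_R$; uniqueness in the larger class of $\mathscr{H}(K)$-solutions satisfying \eqref{eqn-u-space} follows from the same contraction estimate applied to any two such solutions.
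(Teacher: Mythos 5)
Your proposal is correct and is essentially the paper's own argument: the paper runs the Banach fixed-point iteration directly on the ball $\bB(0;R)\subset L^1(K)$ of forcings $h$, with the map $h\mapsto\cN(u_h)$, while you run it on the corresponding set of linear solutions with the metric $d(w_1,w_2)=\|h_{w_1}-h_{w_2}\|_{L^1}$ — an isometric reformulation with the same fixed point. The self-mapping and contraction steps via Proposition~\ref{prop:zhou-estimates} and estimate \eqref{eq:Qestimate}, the membership $u\in\mathscr{H}(K)$ via Proposition~\ref{prop-W^1,1 estimates}, and the Hadamard continuity via a parametric fixed-point lemma all match the paper's proof (modulo the harmless slip that your displayed bound should read $\|\cN(w)\|_{L^1(K)}$, i.e.\ $\|h_{\Phi(w)}\|_{L^1(K)}$, rather than $\|\cN(\Phi(w))\|_{L^1(K)}$).
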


	\begin{remark}\label{rem-global existence small data-uniqueness}
		The above result contains the following uniqueness result. If $u$ and $\sh u$ are two solutions to  equation \eqref{eq:fwm-vec} with  initial data $(u_0,v_0)$ and external force $f$ satisfying conditions \eqref{eqn-smallness of initial condition}-\ref{bound-Forcing}), then $u=\sh u$,  provided  condition \eqref{eqn-u-space} is satisfied by both $u $ and $\sh u$.
		
		In Theorem \ref{thm-unique-strongest} we will prove the following  stronger uniqueness result. If $u$ satisfies condition \eqref{eqn-u-space} and is  a solution to  equation \eqref{eq:fwm-vec} with  initial data $(u_0,v_0)$ and external force $f$ satisfying conditions \eqref{eqn-smallness of initial condition}-\ref{bound-Forcing}),  then $u=\sh u$,  provided $\sh u$ is a weak solution of equation \eqref{eq:fwm-vec} with the same initial data $(u_0,v_0)$
		external force $f$ such that
		\begin{equation}\label{eqn-u-space-tilde}
			\sh{u} \in L^\infty([0,\infty);L^\infty(K_\cdot) \cap \dot{W}^{1,1}(K_\cdot)) \mbox{ such that  } \partial_t \sh{u} \in L^\infty([0,\infty);L^1(K_\cdot)).
		\end{equation}
	\end{remark}

	\begin{remark}
		For large initial data, in general one can have blow-up in finite time.
		An example of an equation exhibiting this behaviour is $\partial_t^2 u - \partial_x^2 u = (\partial_t u)^2 - (\partial_x u)^2$.
		For wave maps with values in a compact Riemannian manifold, for initial data of finite energy there is no blow-up.
		Even for absolutely continuous initial data there is no blow-up, and in fact scattering holds.
		All this was proved by Keel and Tao in \cite{KT98}. Note however, that the uniqueness of solutions with finite energy was proved by Zhou in \cite{Zhou99}.
		We extend their findings to  \textbf{forced} wave maps with values in a compact Riemannian manifold.
	\end{remark}
	\begin{proof}[Proof of Theorem~\ref{thm-global existence small data}]
		We will use the Banach Fixed Point Theorem. Usually, one finds a suitable norm for the set of possible  solutions and proves that the natural map constructed from the equation in question, in such a way that its  fixed point is a solution,  turns out is a strict contraction. Here we follow a different path.
		We consider a suitable norm on the RHS of our equation \eqref{eq:fwm-vec} and construct a different  natural map from that space into itself so that its fixed point correspond to a solution of equation \eqref{eq:fwm-vec}. We also prove that this map is a strict contraction provided the initial data and the forcing satisfy conditions (\ref{eqn-smallness of initial condition}-\ref{bound-Forcing}) for suitably small positive  $\eta$.

		To pursue the idea, we choose and fix initial data $(u_0,v_0)\in L^{1,1}(K_0;\bR^n)$ and the external forcing $f\in L^1( \rK; \bR^n)$. We define a mapping
		\begin{equation}\label{eqn-Phi_u_0v_0f}
			\Phi=\Phi_{u_0,v_0,f} :L^1(\rK; \bR^n) \to L^1(\rK; \bR^n)
		\end{equation}
		as follows. For $h \in L^1(\rK;\bR^n)$ and the corresponding $u$ given by
		Theorem \ref{thm-weak soluntions uniqueness}, see formula \eqref{eq:wave-sol-mild},  we put
		\begin{align}
			\Phi(h)(t, x)  & = F(t, x, u, \partial_t u, \partial_x u) \\
			& \coloneqq  \sum_{jk}\Gamma_{jk}(u)(\partial_t u_j\partial_t u_k - \partial_x u_j \partial_x u_k)
			+ \sum_{j}P_{j}(u)f_{j}, \;\; (t, x) \in \rK.
			\label{eqn-Phi-def}
		\end{align}
		Note that once we get the unique fixed point $h$ of the map $\Phi$, a  mild  solution to \eqref{eq:fwm-vec} with the given initial data $(u_0,v_0)$ and forcing $f$ will be given by formula \eqref{eq:wave-sol-mild}.

		First, we show that the map $\Phi$ is well-defined. Let us choose and fix $h \in L^1(\rK;\bR^n)$.  Let $u$ be the  corresponding unique  mild  solution of \eqref{eqn-linear-wave-nonhom} in $\rK$, which is  given by formula \eqref{eq:wave-sol-mild}. Thus, $u_0,v_0,h$ and $u$ satisfy the assumptions of Proposition \ref{prop:zhou-estimates} and therefore, by applying inequality \eqref{eqn-Q(u,u)-estimate} from Proposition \ref{prop:zhou-estimates}, we get that
		\begin{align}\label{nonlin-scat-eq1}
			\| \Phi(h) \|_{L^1(\rK)}  & =  \iint_{K}  \bigg[\sum_{j,k=1}^n |\Gamma_{jk}(u) Q_{jk}(u, u)|
			+ \sum_{j=1}^n |P_{j}(u)f_{j}| \bigg] \ud x \ud t \nonumber\\
			& \quad \leq \gamma \bigg[\bigg(\int_{\bR}\big(|v_{0} -D u_{0}|\big)\ud x+ \iint_{K} |h|\ud x\ud t\bigg)
			\\
			& \quad \times \bigg(\int_{\bR}\big(|v_{0} + D u_{0}|\big)\ud x+\iint_{K} |h|\ud x\ud t\bigg)
			+ \iint_{K}\sum_{j=1}^n |f_{j}| \ud x\ud t \bigg],
		\end{align}
		where $\gamma$ is the bound of the extensions  of $\Gamma_{jk}, P_{j}$ from Assumption \ref{assump-A1}.
		Thus, we infer that  the map $\Phi$ satisfies  the following inequality,  for every suitable  data $(u_0,v_0;f)$,
		\begin{align}\label{nonlin-scat-eq2}
			\| \Phi(h) \|_{L^1(K)}
			& \leq \gamma \big[ ( \|v_0+D u_0\|_{L^1(K_0)} + \| h \|_{L^1(K)} )  ( \|v_0 - Du_0\|_{L^1(K_0)} \\
			& \quad + \| h \|_{L^1(K)} ) + \| f \|_{L^1(K)} \big]. \nonumber
		\end{align}
		Hence the map $\Phi$ is well-defined. It is important to note that we will use again   the computations from  \eqref{nonlin-scat-eq1} in the next paragraph, where we will prove the invariance of a closed ball in $L^1( K; \bR^n)$ of sufficiently small radius  under the map $\Phi$.%

		Let us set $\bB(0;R)\coloneqq  \bB_{L^1(K;\bR^n)}(0;R)$ as a closed ball of radius $R$, whose value will be set later, centered at the origin in $L^1( K;\bR^n)$. \\
		Assume that  the initial data and forcing satisfy conditions  (\ref{eqn-smallness of initial condition}-\ref{bound-Forcing}). Then for any $h \in \bB(0;R)$ by  inequality \eqref{nonlin-scat-eq2}  we have
		$$\| \Phi(h) \|_{L^1(K)}    \leq \gamma \big[ (  \eta + R )^2 + \eta \big] .$$
		Hence, if  $\eta>0$ and $R>0$ are such that
		\begin{equation}\label{eqn-eta and R}
			( \eta + R )^2 + \eta \leq   \frac{R}{\gamma},
		\end{equation}
		then  the ball $\bB(0;R)$ is invariant with respect to  the map $\Phi$.
		\begin{remark}\label{rem-eta and R exist}
			Let us first observe that it is sufficient to find $R, \eta$ such that
			\begin{equation}\label{nonlin-scat-rCond}
				2 \eta^2 +2R^2 +\eta \leq   \frac{R}{\gamma}.
			\end{equation}
			First we find $R>0$ small enough such that $4R^2 \leq   \frac{R}{\gamma}$. After that we find $\eta>0$ small enough such that $4\eta^2+2\eta \leq   \frac{R}{\gamma}$.
			We infer that the so found pair $(R,\eta)$
			satisfies condition \eqref{nonlin-scat-rCond} and therefore
			the function $\Phi$  maps the ball  $\bB(0;R)$ into itself.
		\end{remark}			
		As the next step we show that there exists $R>0$ and $\eta >0$ satisfying the condition \eqref{nonlin-scat-rCond} and such that
		the restriction of  map $\Phi$ on $\bB(0;R)$ is a  $\frac{1}{2}$-contraction. Let us recall that $f \in L^1(K;\bR^n)$ is given. Let us choose and fix  $h,g \in \bB(0;R)$  and let   $u$ and $v$ given by \eqref{eq:wave-sol-mild} with $h$ and $g$, respectively but with the same initial data, i.e.,
		\begin{equation}\label{nonlin-scat-initDataAssump}
			u_0 = v_0 \quad \textrm{ and } \quad   u_1 = v_1.
		\end{equation}
		Thus we observe that the Lipschitz and boundedness  properties of the extensions $\Gamma_{jk}$ and $P$ followed by inequality \eqref{eq:Qestimate} from Proposition \ref{prop:zhou-estimates} give
		\begin{align}\label{nonlin-scat-3}
			& \| \Phi(h) - \Phi(g)\|_{L^1( K)} \nonumber \\
			&\leq\iint_{K} \bigg|  \sum_{jk}   \{ \Gamma_{jk}(u) - \Gamma_{jk}(v) \}Q_{jk}(u, u)  \bigg| \ud x \ud t + \iint_{K} \bigg|  \sum_{jk}  \Gamma_{jk}(v)  Q_{jk}(u-v, u)   \bigg| \ud x \ud t
			\nonumber\\
			& \quad +  \iint_{K} \bigg|  \sum_{jk}  \Gamma_{jk}(v)  Q_{jk}(v, u-v)  \bigg| \ud x \ud t
			+ \iint_{K}\bigg|  \sum_{j} \{ P_{j}(u) - P_{j}(v) \} f_{j}  \bigg| \ud x \ud t \nonumber\\
			& \leq  L \iint_{K} \frac{1}{2} \bigg\vert \int_0^t \int_{x-t+\tau}^{x+t-\tau} h(\tau,y) -g(\tau,y) \ud y \ud \tau \bigg\vert \sum_{jk}   |Q_{jk}(u, u)|   \ud x \ud t \nonumber\\
			& + \gamma \iint_{K}  \sum_{jk}  |Q_{jk}(u-v, u) | \ud x \ud t   +  \gamma \iint_{K}  \sum_{jk} |Q_{jk}(v, u-v) | \ud x \ud t
			\nonumber \\
			& \quad + L \iint_{K} \frac{1}{2} \bigg\vert \int_0^t \int_{x-t+\tau}^{x+t-\tau} h(\tau,y) -g(\tau,y) \ud y \ud \tau \bigg\vert \sum_{j} |f_{j} | \ud x \ud t \nonumber\\
			& \leq  \frac{1}{2}\bigg[ 2 L \| h-g\|_{L^1(K)} \left\{ ( \| u_1 + Du_0 \|_{L^1(K_0)} +  \| h\|_{L^1(K)} )   ( \| u_1 - Du_0 \|_{L^1(K_0)} +  \| h\|_{L^1(K)} ) \right\}  \nonumber\\
			& \quad + \gamma  \|h-g\|_{L^1(K)} \left\{   \| u_1 + Du_0 \|_{L^1(K_0)} +  2 \| h\|_{L^1(K)}   +     \| u_1 - Du_0 \|_{L^1(K_0)}\right\}  \nonumber\\
			& \quad + \gamma  \|h-g\|_{L^1(K)} \left\{   \| v_1 - Dv_0 \|_{L^1(K_0)} +  2 \| g\|_{L^1(K)}   +     \| v_1 + Dv_0 \|_{L^1(K_0)}\right\}  \nonumber\\
			&  \quad + L \| h-g\|_{L^1(K)} \| f \|_{L^1(K)}  \bigg].
		\end{align}
		Hence,  if the initial data and forcing satisfy conditions (\ref{eqn-smallness of initial condition}-\ref{bound-Forcing}),  then  for any $h,g \in L^1(K; \bR^n)$
		\begin{align}%\label{nonlin-scat-4}
			\| \Phi(h) - \Phi(g)\|_{L^1( K)}
			& =  \| h-g\|_{L^1(K)}  [ L ( \eta + R )^2  + 5 \gamma \eta+4 \gamma R  ]. \nonumber
		\end{align}
		Thus, by choosing  numbers $\eta>0$  and $R>0$ sufficiently small such that \eqref{nonlin-scat-rCond} holds and
		\begin{equation}\label{nonlin-scat-r-eta-Cond}
			L ( \eta + R )^2  + 5 \gamma \eta+4 \gamma R    < \frac{1}{2},
		\end{equation}
		we infer that the map $\Phi_{|\bB(0;R)}$ maps the ball $\bB(0;R)$  into itself and moreover is a $\frac{1}{2}$-contraction. In view of the Banach Fixed Point Theorem, since the initial data and forcing $(u_0,v_0,f)$ satisfying conditions  (\ref{eqn-smallness of initial condition}) and (\ref{bound-Forcing}),  we can find a unique element    $h \in \bB(0;R)$  such that $\Phi_{(u_0,v_0,f)}(h)=h$.  As in the construction above of the map $\Phi_{(u_0,v_0,f)}$,
		let $u$ be the corresponding function given  by Theorem \ref{thm-weak soluntions uniqueness}. Because $h$  a fixed point of $\Phi_{u_0,v_0,f}$, by definition  \eqref{eqn-Phi-def} of $\Phi_{u_0,v_0,f}$, we infer that
		\begin{align*}
			h(t, x)  &  = \sum_{jk}\Gamma_{jk}(u)(\partial_t u_j\partial_t u_k - \partial_x u_j \partial_x u_k)
			+ \sum_{j}P_{j}(u)f_{j}, \;\; (t, x) \in \rK.
		\end{align*}
		Then, by Theorem \ref{thm-weak soluntions uniqueness}, in particular  formula \eqref{eq:wave-sol-mild}, we deduce that for every $(t, x) \in \rK$, $u(t,x)$ satisfies equality \eqref{eqn-fwm-mild-in K}.
		
		Hence, $u$  is a mild solution to problem \eqref{eq:fwm-vec}. Moreover, such a solution $u$  belongs to $\mathscr{H}(K)$ because of Proposition \ref{prop-W^1,1 estimates} or  Proposition \ref{prop:zhou-estimates}.   This completes the first part of the Theorem  \ref{thm-global existence small data}.

		\begin{remark}\label{rem-stronger statement}
			For the later reference, let us note that above in the we proved the following fact.
			Under the assumptions of Theorem \ref{thm-global existence small data}, there exists $\eta > 0$ and $R>0$ such that
			for every  $\bR^n$-valued initial data  $(u_0,v_0)$, defined on $K_0$ satisfying condition \eqref{eqn-smallness of initial condition} and every  $\bR^n$-valued  integrable function $f$ defined on $K$ satisfying  \eqref{bound-Forcing}, the map $\Phi_{(u_0,v_0,f)}$ defined in
			\eqref{eqn-Phi_u_0v_0f} is an $\frac12$-contraction.
		\end{remark}

		Let us note that in view of Proposition \ref{prop-W^1,1 estimates}  the  constructed   mild  solution   $u$  to problem   \eqref{eq:fwm-vec},   satisfies condition \eqref{eqn-u-space}. The uniqueness of such solution, i.e., which satisfy \eqref{eqn-u-space}, follows immediately as a consequence of  Banach fixed point theorem for the map $\Phi_{|\bB(0;R)}$.

		Now we move to the last part of this theorem, i.e., the continuity w.r.t. the initial data and the external forcing. For this aim let us consider  an $L^{1,1}(\oK;\bR^n)  \times L^1(\rK;\bR^n)$-valued  sequence $(u_{0,m},v_{0,m},f_m)_{m=1}^\infty $ and an element $(u_0,v_0,f) \in L^{1,1}(\oK;\bR^n)  \times L^1(\rK;\bR^n)$ such that
		$$(u_{0,m},v_{0,m}, f_m) \to (u_0,v_0, f) \textrm{ in } L^{1,1}(\oK;\bR^n)  \times L^1(\rK;\bR^n) \quad \textrm{ as } \quad m\to\infty.$$
		We further assume that $D u_{0,m}, D u_0, v_{0,m}, v_0 \in L^1(K_0;\bR^n)$ and $f_m,f \in L^1(K;\bR^n)$ are such that the bounds (\ref{eqn-smallness of initial condition}-\ref{bound-Forcing}) hold true for all $m$ with $\eta$ independent of $m$.

		Let the maps $\Phi_m, \Phi: L^1(K;\bR^n) \to L^1(K;\bR^n)$ defined, as \eqref{eqn-Phi_u_0v_0f}, based on $(u_{0,m},v_{0,m})$ and $(u_0,v_0)$, respectively,  and forcing $f_m$ and $f$, respectively.

		Thus, in view of the Remark \ref{rem-stronger statement}, we infer that there exists $R>0$,   a sequence $h_m \in L^1(K;\bR^n)$ and $h\in L^1(K;\bR^n)$ such that $\Phi_m(h_m) = h_m \in \bB(0;R)$ and $\Phi(h) = h \in \bB(0;R)$. We need to show  that $h_m \to h$ in $L^1(K;\bR^n)$ as $m \to \infty$.  But this will follow from Lemma \ref{lem-continuity} once we show that for every $h \in L^1(K; \bR^n)$,
		\begin{equation}\label{eqn-nonlin-scat-4}
			\| \Phi_m(h) - \Phi(h)\|_{L^1(K)}  \to 0 \quad \textrm{ as } \quad m \to \infty.
		\end{equation}
		\begin{proof}[Proof of \eqref{eqn-nonlin-scat-4}] Let us choose and fix $h \in L^1(K; \bR^n)$.  Recall that
			\begin{align}
				\Phi(h) & = \sum_{jk} \Gamma_{jk}(u) Q_{jk}(u, u) +  P(u) f, \nonumber
			\end{align}
			where $u$ is given by \eqref{eq:wave-sol-mild}.
			Similarly, we can define $\Phi_m(h)(t, x)$ and $u_m(t,x)$ by replacing $u_0, v_0,f $ by $u_{0,m}, v_{0,m}, f_m$, respectively.

			Consequently, by following the computation in \eqref{nonlin-scat-3}, we obtain
			\begin{align}%\label{nonlin-scat-5}
				& \| \Phi_m(h) - \Phi(h)\|_{L^1(K)}  \leq  L  \iint_{K}   \bigg| \frac{1}{2} \bigg\{ u_{0,m}(x + t)- u_{0}(x + t) + u_{0,m}(x - t) - u_{0}(x - t) \\
				&     +\int_{x - t + \tau}^{x + t - \tau}v_{0,m}(y) - v_0 (y) \ud y    \bigg\}  \bigg| \sum_{jk}  |Q_{jk}(u_{m}, u_{m})|   \ud x \ud t  \\
				&  + \gamma \iint_{K}  \sum_{jk}  |Q_{jk}(u_{m}-u, u_{m}) | \ud x \ud t +  \gamma \iint_{K}  \sum_{jk} |Q_{j,k}(u, u_{m}-u) | \ud x \ud t \\
				&  +L  \iint_{K} \bigg| \frac{1}{2} \Bigl\{ u_{0,m}(x + t)- u_{0}(x + t) + u_{0,m}(x - t) - u_{0}(x - t)
				\\
				& \hspace{2truecm} +\int_{x - t + \tau}^{x + t - \tau} \bigl(v_{0,m}(y) - v_0 (y)\bigr) \ud y    \Bigr\} \bigg| \\
				& \times \sum_{j} |f_{m,j} | \ud x \ud t  + \gamma \|f_m-f\|_{L^1}  \\
				& \leq  \frac{L}{2}  \{ 2 \| u_{0,m} - u_0 \|_{L^\infty(K_0)} + \| v_{0,m}-v_0\|_{L^1(K_0)} \}   \{ \| v_{0,m} - Du_{0,m} \|_{L^1(K_0)} + \| h\|_{L^1(K)} \}
				\\& \hspace{2truecm}\times
				\{ \|v_{0,m}+ D u_0 \|_{L^1(K_0)} + \| h\|_{L^1(K)} \}   \\
				&  +  \frac{\gamma}{2} \big[ \{\| v_{0,m} - v_0  - D (u_{0,m} -u_0) \|_{L^1(K_0)}  \} \{\| v_{0,m}  + D u_{0,m}  \|_{L^1} + \|h\|_{L^1(K_0)} \}    \\
				& \quad  + \{\| v_{0,m} - v_0  + D (u_{0,m} -u_0) \|_{L^1(K_0)}  \} \{\| v_{0,m}  - D u_{0,m}  \|_{L^1(K_0)} + \|h\|_{L^1(K)} \}  \big] \\
				&  +  \frac{\gamma}{2} \big[ \{\| v_0  - D u_0 \|_{L^1(K_0)} + \|h\|_{L^1(K)} \} \{\| v_{0,m} - v_0  + D (u_{0,m} -u_0) \|_{L^1(K_0)}  \}    \\
				& \quad  + \{\| v_0  + D u_0 \|_{L^1(K_0)} + \|h\|_{L^1(K)} \} \{\| v_{0,m} - v_0  - D (u_{0,m} -u_0) \|_{L^1(K_0)}  \}  \big] \\
				&  + \frac{L}{2}  \{ 2 \| u_{0,m} - u_0 \|_{L^\infty(K_0)} + \| v_{0,m}-v_0\|_{L^1(K_0)} \}   \| f_m \|_{L^1(K)} +  \gamma \|f_m-f\|_{L^1(K)}  .
			\end{align}
			Thus $\lim_{m \to \infty} \| \Phi_m(h) - \Phi(h)\|_{L^1(\rK)} = 0$, since $u_{0,m}\to u_0$ in $L^\infty(K_0;\bR^n) \cap \dot{W}^{1,1}(K_0;\bR^n)$, $v_{0,m} \to v_0$ in $L^1(K_0;\bR^n)$  and $f_m \to f$ in $L^1(\rK;\bR^n)$  as $m \to \infty$.
			
			Hence,  $h_m \to h$ in $L^1(K; \bR^n)$ as $m \to \infty$ due to Lemma \ref{lem-continuity}.  This proves \eqref{eqn-nonlin-scat-4}.
		\end{proof}
		Consequently, from \eqref{eqn-nonlin-scat-4}, due to \eqref{eq:wave-sol-mild},  we deduce that  $u_n \to u$ in $L^\infty(\rK;\bR^n)$. The proof of Theorem~\ref{thm-global existence small data} is complete.
	\end{proof}

	Let us now formulate the following  uniqueness result which is stronger than the uniqueness part of Theorem \ref{thm-global existence small data} because the smallness assumptions \eqref{eqn-smallness of initial condition}-\ref{bound-Forcing}) are
	not required.
	
	\begin{theorem}\label{thm-unique-strongest}
		Assume that $T_0 \in (0,\infty]$. Assume that $(u_0,v_0) \in L^{1,1}(\bR;\bR^n)$ and  $f \in L^1( [0,T_0)] \times \bR;\bR^n)$.
		Assume that  $u, \sh{u} \in \mathscr{H}([0,T_0)] \times \bR)$ satisfying
		\begin{equation}\label{eqn-u-space-2}
			\begin{split}
				&\Vert \Gamma(u)(\partial_t u,\partial_t u) -  \Gamma(u) (\partial_x u, \partial_x u)
				+  P(u)f \Vert_{L^1([0,T_0)] \times \bR)} <\infty,
				\\
				&\Vert \Gamma(\sh{u})(\partial_t \sh{u}\partial_t \sh{u} - \partial_x \sh{u} \partial_x \sh{u})
				+  P(\sh{u})f \Vert_{L^1([0,T_0)] \times \bR)} <\infty,
			\end{split}
		\end{equation}
		are   mild solutions to  equation \eqref{eq:fwm-vec} with  the same initial data $(u_0,v_0)$ and the same external force $f$. Then $u= \sh u$.
	\end{theorem}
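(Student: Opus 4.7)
My plan is to combine the domain-of-dependence Proposition~\ref{prop-restriction-mildSoln} with the small-data uniqueness furnished by Theorem~\ref{thm-global existence small data}, via a continuity argument in time. First I would observe that, by Proposition~\ref{prop-restriction-mildSoln}, it suffices to show $u = \sh{u}$ on every compact triangle $T_{(t_0, x_0)} \subset [0, T_0)] \times \bR$. Fixing such a triangle, I set
\[
\tau^* := \sup\bigl\{ s \in [0, t_0] : u = \sh{u} \text{ on } T_{(t_0, x_0)} \cap ([0, s] \times \bR) \bigr\}.
\]
Since $u$ and $\sh{u}$ share initial data, $\tau^* \geq 0$. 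Both solutions lie in $\mathscr{H}([0,T_0)] \times \bR)$, so they are continuous in $(t,x)$, and the maps $t \mapsto (u(t,\cdot), \partial_t u(t,\cdot)) \in (L^\infty \cap \dot{W}^{1,1}) \times L^1$ are continuous by Remark~\ref{rem-H spaces}. Hence at $\tau^*$ the traces of $u$ and $\sh{u}$ coincide on the slice $T_{(t_0,x_0)} \cap (\{\tau^*\} \times \bR)$. The goal is to rule out $\tau^* < t_0$.

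Suppose toward contradiction that $\tau^* < t_0$. I aim to find $\delta > 0$ such that $u = \sh{u}$ on the sub-slab $S := T_{(t_0, x_0)} \cap ([\tau^*, \tau^* + \delta] \times \bR)$, contradicting maximality. For an arbitrary point $(t^*, x^*) \in S$, I examine its backward light cone
\[
T^* := \bigl\{ (t, x) : \tau^* \leq t \leq t^*,\ |x - x^*| \leq t^* - t \bigr\},
\]
a compact triangle with base of length $2(t^* - \tau^*) \leq 2\delta$ at time $\tau^*$ and vertex $(t^*, x^*)$. Translating time by $-\tau^*$, Proposition~\ref{prop-restriction-mildSoln} yields that both $u|_{T^*}$ and $\sh{u}|_{T^*}$ are mild solutions in $\mathscr{H}(T^*)$ of~\eqref{eq:fwm-vec} sharing the initial data $(u(\tau^*, \cdot), \partial_t u(\tau^*, \cdot))$ restricted to the base of $T^*$ and the same forcing.

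I would then choose $\delta$ so small that, uniformly over all $(t^*, x^*) \in S$, the triangle $T^*$ satisfies the smallness hypotheses~\eqref{eqn-smallness of initial condition}--\eqref{bound-Forcing} of Theorem~\ref{thm-global existence small data} and both $u|_{T^*}$, $\sh{u}|_{T^*}$ obey the a-priori bound~\eqref{eqn-u-space} with the constant $R$ provided by that theorem. Invoking the uniqueness assertion of Theorem~\ref{thm-global existence small data} then forces $u = \sh{u}$ on $T^*$, and since $(t^*, x^*) \in S$ was arbitrary, $u = \sh{u}$ on all of $S$. To secure the required uniform smallness I would invoke absolute continuity of the Lebesgue integral for four global-in-space $L^1$ functions: the Riemann invariants $\partial_x u(\tau^*, \cdot) \pm \partial_t u(\tau^*, \cdot) \in L^1(\bR)$ (coming from $u \in \mathscr{H}$), the external forcing $f \in L^1([0,T_0)] \times \bR)$, and the two nonlinear forcings appearing in~\eqref{eqn-u-space-2}. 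For each such $g \in L^1$, there exists $\delta' > 0$ such that $\int_E |g| < \varepsilon$ for every measurable $E$ of Lebesgue measure at most $\delta'$; this is uniform over translates and so controls the $L^1$ mass on every short interval at time $\tau^*$ and every triangle of height $\leq \delta$ inside $S$.

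The main obstacle I anticipate is the uniformity in $x^*$: because $S$ is a slab of infinite spatial extent, the smallness hypotheses must hold simultaneously for the backward triangle of every point $(t^*, x^*) \in S$, not merely for one. This uniformity is a consequence of the global integrability of the four functions above and of the translation-uniform absolute continuity of $L^1$ functions, but it must be stated carefully. Beyond this, only routine checks remain: that after the time shift $\tau^* \mapsto 0$ the data and forcing meet the regularity hypotheses of Theorem~\ref{thm-global existence small data} (initial data in $L^{1,1}$ of the base interval, forcing in $L^1$ of the shifted triangle), both of which follow immediately from $u, \sh{u} \in \mathscr{H}$ and $f \in L^1$.
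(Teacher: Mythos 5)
Your proposal is correct and follows essentially the same route as the paper, whose proof of this theorem is explicitly deferred to the argument for Theorem~\ref{thm-uniqueness-wave-map}: a continuation-in-time argument that covers a thin time slab by small trapezoids/triangles, invokes the small-data uniqueness of Theorem~\ref{thm-global existence small data} on each, and uses continuity of the traces (Theorem~\ref{thm-trace}) to restart at the supremum time. You also correctly identify the one point where the hypothesis~\eqref{eqn-u-space-2} is needed, namely to guarantee via absolute continuity of the integral that both restricted solutions satisfy the a-priori bound~\eqref{eqn-u-space} on the small triangles, which is the class in which Theorem~\ref{thm-global existence small data} gives uniqueness.
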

	\begin{proof}[Proof of Theorem \ref{thm-unique-strongest}]
		The  proof  follows the lines of the proof of Theorem \ref{thm-uniqueness-wave-map}. Note that both Theorems \ref{thm-unique-strongest} and \ref{thm-uniqueness-wave-map} are concerned with the
		uniqueness result without the existence. But the proof of the latter result depends on the existence of solutions for small data as formulated in Theorem \ref{thm-unique-strongest}. In a similar fashion,
		the omitted  proof of the present Theorem \ref{thm-unique-strongest} depends on the existence of solutions for small data as formulated in Theorem \ref{thm-global existence small data}.	
	\end{proof}

	The following result is essential in our proof of  the finite speed of propagation for large data  formulated later in Corollary \ref{cor-finiteSOP}.
	
	\begin{corollary}\label{cor-finite speed of propagation}
		Let $K$ be a trapezoid with base $K_0$ and assume that Assumption \ref{assump-A1} holds.
		Assume that numbers $\eta>0$ and $R>0$  are as in Theorem \ref{thm-global existence small data}.
		Let us assume that  $f:K \to \bR^n$ is an integrable function, that  $(u_0,v_0):K_0 \to \bR^n \times \bR^n$  satisfy  the  smallness conditions
		\eqref{eqn-smallness of initial condition}-\ref{bound-Forcing}).
		Let $\widetilde{K} \subset K$ is another trapezoid, and $(\tilde{u}_0,\tilde{v}_0,\tilde{f})$ be another choice of the initial data  and the forcing which coincides on $\widetilde{K}_0$ and $\widetilde{K}$, respectively, with the initial data and the forcing $(u_0,v_0,f)$.  Then the corresponding  mild  solutions $u$ and $\tilde{u}$ to problem \eqref{eq:fwm-vec}, which exist by Theorem \ref{thm-global existence small data}, are equal on $\widetilde{K}$.
	\end{corollary}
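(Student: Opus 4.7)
The plan is to show that the restriction $u|_{\widetilde{K}}$ is itself a mild solution on $\widetilde{K}$ with the restricted data, and then invoke the uniqueness part of Theorem \ref{thm-global existence small data} on $\widetilde{K}$ to conclude that $\tilde{u} = u|_{\widetilde{K}}$.

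First, since $\widetilde{K} \subset K$, the invariance property of trapezoids (Proposition \ref{prop-trapezoid}) implies that for every $(t,x)\in\widetilde{K}$, the backward triangle $T_{(t,x)}$ with vertices $(t,x)$, $(0,x-t)$, $(0,x+t)$ is contained in $\widetilde{K}$, and in particular the points $x\pm t$ lie in $\widetilde{K}_0$. Because $(u_0,v_0)$ agrees with $(\tilde{u}_0,\tilde{v}_0)$ on $\widetilde{K}_0$ and $f$ agrees with $\tilde{f}$ on $\widetilde{K}$, the integral representation \eqref{eqn-fwm-mild-in K} for $u$ at any point $(t,x)\in\widetilde{K}$ involves only values of the data on $\widetilde{K}_0$ and $\widetilde{K}$. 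Hence $u|_{\widetilde{K}}$ satisfies \eqref{eqn-fwm-mild-in K} on $\widetilde{K}$ with data $(\tilde{u}_0,\tilde{v}_0,\tilde{f})$, i.e.\ $u|_{\widetilde{K}}$ is a mild solution on $\widetilde{K}$ in the sense of Definition \ref{def-sol-mild-Rn}.

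Next I would verify that the restricted data and the restricted solution satisfy the smallness and size conditions required by Theorem \ref{thm-global existence small data} on $\widetilde{K}$. Since $\widetilde{K}_0\subset K_0$ and $\widetilde{K}\subset K$, the $L^1$ norms only decrease upon restriction, so
\[
\|D\tilde{u}_0 \pm \tilde{v}_0\|_{L^1(\widetilde{K}_0)} \leq \|Du_0 \pm v_0\|_{L^1(K_0)} \leq \eta,
\qquad \|\tilde{f}\|_{L^1(\widetilde{K})} \leq \|f\|_{L^1(K)}\leq \eta,
\]
so conditions \eqref{eqn-smallness of initial condition}--\eqref{bound-Forcing} hold with the same $\eta$. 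For the same reason, \eqref{eqn-u-space} for $u$ on $K$ with constant $R$ implies \eqref{eqn-u-space} for $u|_{\widetilde{K}}$ on $\widetilde{K}$ with the same $R$.

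Finally, Theorem \ref{thm-global existence small data} applied on $\widetilde{K}$ with data $(\tilde{u}_0,\tilde{v}_0,\tilde{f})$ produces a mild solution $\tilde{u} \in \mathscr{H}(\widetilde{K})$ satisfying \eqref{eqn-u-space}, and asserts that it is the \emph{unique} such mild solution. Since $u|_{\widetilde{K}}$ is a mild solution on $\widetilde{K}$ with the same data and satisfies the same bound \eqref{eqn-u-space}, uniqueness forces $\tilde{u} = u|_{\widetilde{K}}$. There is no real obstacle here beyond carefully checking that the restriction inherits all the required structural bounds from the full solution on $K$; the essential input is the domain-of-dependence property encoded in the mild formula \eqref{eqn-fwm-mild-in K} together with Proposition \ref{prop-trapezoid}.
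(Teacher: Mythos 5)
Your proposal is correct. The paper proves this corollary at the level of the fixed-point maps: it establishes a commutation lemma (Lemma \ref{lem-Phi-Phi`}) stating that $\Phi(h)|_{\widetilde{K}}=\widetilde{\Phi}(h|_{\widetilde{K}})$, deduces that the restriction of the fixed point of $\Phi$ is the fixed point of $\widetilde{\Phi}$, and then passes back to the solutions via Theorem \ref{thm-weak soluntions uniqueness}(ii). You instead work directly at the level of solutions: the restriction $u|_{\widetilde{K}}$ is a mild solution on $\widetilde{K}$ with the restricted data (this is exactly Proposition \ref{prop-restriction-mildSoln}, combined with the observation that the backward triangle of any point of $\widetilde{K}$ lies in $\widetilde{K}$), the smallness conditions and the bound \eqref{eqn-u-space} survive restriction with the same $\eta$ and $R$ because $L^1$ norms only decrease, and then the uniqueness clause of Theorem \ref{thm-global existence small data} within the class \eqref{eqn-u-space} forces $\tilde{u}=u|_{\widetilde{K}}$. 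The two arguments rest on the same mechanism (domain of dependence of the mild formula plus Banach fixed-point uniqueness), but your packaging is somewhat more streamlined in that it bypasses the explicit comparison of the two contraction maps; what the paper's version buys is that the identity $h|_{\widetilde{K}}=\tilde h$ between the fixed points is made explicit, which is occasionally convenient elsewhere. No gap.
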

	\begin{proof}[Proof of Corollary \ref{cor-finite speed of propagation}] Let us assume the assumptions of the Corollary.  It is obvious  that   $(\tilde{u}_0,\tilde{v}_0,\tilde{f})$ also satisfy the
		smallness conditions  \eqref{eqn-smallness of initial condition}-\ref{bound-Forcing}).

		Let $\Phi:L^1(K) \to L^1(K)$ as in the proof of Theorem \ref{thm-global existence small data} for the trapezoid $K$. Let  $\widetilde{\Phi}:L^1(\widetilde{K}) \to L^1(\widetilde{K})$ be the corresponding map for the trapezoid $\widetilde{K}$.
		
		The following important relationship between the maps $\Phi$
		and 			$\widetilde{\Phi}$ play a crucial role in our analysis.
		\begin{lemma}\label{lem-Phi-Phi`}
			If $h \in L^1(K;\bR^n)$, then
			\[
			\Phi(h)_{|\widetilde{K}}=\widetilde{\Phi}(h_{|\widetilde{K}}) \mbox{ in } L^1(\widetilde{K};\bR^n).
			\]
		\end{lemma}
		\begin{proof}[Proof of Lemma \ref{lem-Phi-Phi`}]
			Let us begin by recalling the definition of the map $\Phi:L^1(K) \to L^1(K)$ for fixed data $(u_0,v_0,f)$.
			For $h \in L^1(K)$ and the corresponding $u\in \mathscr{H}(K)$, which is given by Theorem \ref{thm-weak soluntions uniqueness} and Proposition \ref{prop-W^1,1 estimates},  we put
			\begin{align*}
				\Phi(h)(t, x)
				& = \sum_{jk}\Gamma_{jk}(u)(\partial_t u_j\partial_t u_k - \partial_x u_j \partial_x u_k)
				+ \sum_{j}P_{j}(u)f_{j}.
			\end{align*}
			The definition of $\widetilde{\Phi}$ is analogous but with data $(\tilde{u}_0,\tilde{v}_0,\tilde{f})$. \\
			Let us choose and fix  $h \in L^1(K)$ and put $\tilde{h}=f_{|\widetilde{K}} \in L^1(\widetilde{K})$. Since the restrictions of the functions
			$(u_0,v_0,f)$ to $\widetilde{K}_0 $ and $\widetilde{K}$ as appropriate, by the second part of
			Theorem \ref{thm-weak soluntions uniqueness} we infer that the corresponding solutions $u$ and $\tilde{u}$ are such that
			\[u_{|\widetilde{K}}=\tilde{u}.\]
			Hence, we infer that the restriction of the function $\sum_{jk}\Gamma_{jk}(u)(\partial_t u_j\partial_t u_k - \partial_x u_j \partial_x u_k)
			+ \sum_{j}P_{j}(u)f_{j}$, defined on $K$, to the set $\widetilde{K}$ is equal to
			$\sum_{jk}\Gamma_{jk}(\tilde{u})(\partial_t u_j^\prime\partial_t u_k^\prime - \partial_x u_j^\prime \partial_x u_k^\prime)
			+ \sum_{j}P_{j}(\tilde{u})f_{j}^\prime$. This implies that
			\[
			\Phi(h)_{|\widetilde{K}}=\widetilde{\Phi}(\tilde{h}).
			\]
			The proof of the Lemma \ref{lem-Phi-Phi`} is complete.
		\end{proof}
		Let $h$, respectively $\tilde{h}$ be the unique fixed point of the  map $\Phi$, respectively
		$\widetilde{\Phi}$. %Put $v=u_{|\widetilde{K}} \in L^1(\widetilde{K})$.
		By Lemma \ref{lem-Phi-Phi`} we infer easily that $h_{|\widetilde{K}} \in L^1(\widetilde{K})$ is also a fixed point of $\widetilde{\Phi}$. By the uniqueness of the fixed point we infer that $h_{|\widetilde{K}}=\tilde{h}$. Hence, by the second part of
		Theorem \ref{thm-weak soluntions uniqueness} we infer that the corresponding solutions $u$ and $\tilde{u}$
		to problem \eqref{eq:fwm-vec} satisfy $u_{|\widetilde{K}}= \tilde{u}$.
		The proof of Corollary \ref{cor-finite speed of propagation} is complete.		
	\end{proof}

	\subsection{Large data}\label{sec:big_data_cauchy}
	In the whole subsection we  assume that Assumption \ref{assump-A1} holds. This subsection is about the local existence and uniqueness of $\bR^n$-valued solutions to problem \eqref{eq:fwm-vec}  with arbitrary initial data and arbitrary external forcing.  We first formulate (and prove) the result for bounded trapezoids (in the sense of Definition \ref{def-trapezoid} together with Remark \ref{rem-max-trapezoid}).
	
	\begin{theorem}[\textbf{$\bR^n$-valued local solution for large data on a compact trapezoid}]
		\label{thm-nonlin-cauchy-bigData-bndTrap}
		Assume that $L >0$ and $\widetilde{K}$ is the compact trapezoid  of height $L$ and base $K_0$ of half-length $L$. Assume that the  initial data $(u_0, v_0) \in L^{1,1}(K_0;\bR^n)$ and the forcing $f \in L^1(\widetilde{K};\bR^n)$. Then  there exists $T_0  \in (0,L]$ such that problem \eqref{eq:fwm-vec} has a unique  mild  solution in class $\mathscr{H}(K)$ where $K$ is the compact trapezoid  base $K_0$ and height $T_0$.		
	\end{theorem}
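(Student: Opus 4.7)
The plan is to reduce the large-data local theorem to the small-data global Theorem~\ref{thm-global existence small data} by spatial localization, and then glue the pieces together using finite speed of propagation (Corollary~\ref{cor-finite speed of propagation}). Let $\eta > 0$ and $R > 0$ be the constants furnished by Theorem~\ref{thm-global existence small data}.

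I would first exploit absolute continuity of the Lebesgue integral. Since $Du_0 \pm v_0 \in L^1(K_0)$, there exists $\ell \in (0, L]$ such that
\begin{equation*}
\|Du_0 \pm v_0\|_{L^1(J)} \leq \eta
\end{equation*}
for every subinterval $J \subset K_0$ of length at most $2\ell$. Since $f \in L^1(\widetilde K)$, I could then pick $T_0 \in (0, \ell]$ so small that $\|f\|_{L^1(K')} \leq \eta$ for every compact trapezoid $K' \subset \widetilde K$ of height $T_0$ with base of length at most $2\ell$; this is possible because such trapezoids have measure at most $2\ell T_0$.

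Next I would cover $K_0$ by finitely many intervals $I_j \subset K_0$, each of length at most $2\ell$, with consecutive intervals overlapping on a sub-interval of length at least $2T_0$. Let $K^j$ denote the compact trapezoid with base $I_j$ and height $T_0$; then $K^j \subset K \subset \widetilde K$, where $K$ is the compact trapezoid with base $K_0$ and height $T_0$. On each $K^j$ the restricted data $(u_0|_{I_j}, v_0|_{I_j}, f|_{K^j})$ satisfies the smallness hypotheses \eqref{eqn-smallness of initial condition}--\eqref{bound-Forcing} by construction, so Theorem~\ref{thm-global existence small data} produces a unique mild solution $u^j \in \mathscr{H}(K^j)$ obeying \eqref{eqn-u-space}. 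Whenever $I_j \cap I_{j+1}$ has length at least $2T_0$, Corollary~\ref{cor-finite speed of propagation} applied with the sub-trapezoid of base $I_j \cap I_{j+1}$ and height $T_0$ forces $u^j$ and $u^{j+1}$ to coincide there. Gluing yields a continuous function $u$ on $K$ which, by the additive splitting of \eqref{eqn-fwm-mild-in K} on the pieces $K^j$, lies in $\mathscr{H}(K)$ and is a mild solution of \eqref{eq:fwm-vec}.

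For uniqueness in $\mathscr{H}(K)$, I would first observe that any $u \in \mathscr{H}(K)$ renders the nonlinearity $\sum_{j,k}\Gamma_{jk}(u)Q_{jk}(u,u) + P(u)f$ integrable on $K$, by boundedness of $\Gamma_{jk}, P$ (Assumption~\ref{assump-A1}) combined with Zhou's estimate \eqref{eqn-Q(u,u)-estimate} applied to the underlying linear wave representation of $u$. Hence any candidate solution restricts to a function on each $K^j$ that falls within the scope of Theorem~\ref{thm-global existence small data}, and after possibly shrinking $T_0$ further so that both candidates $u$ and $\tilde u$ lie in the ball of radius $R$ from the Banach fixed-point argument on every $K^j$, the small-data uniqueness forces $u|_{K^j} = \tilde u|_{K^j}$ for each $j$, hence $u = \tilde u$ on $K$. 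The main obstacle I anticipate is precisely this bookkeeping: choosing $T_0$ at the outset so that \emph{every} pair of candidate $\mathscr{H}(K)$-solutions automatically verifies \eqref{eqn-u-space} on each local piece, which requires a second appeal to absolute continuity combined with the Zhou-type bounds controlling the nonlinear right-hand side uniformly in the candidates.
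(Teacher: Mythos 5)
Your existence argument is essentially the paper's proof: localize the data to trapezoids with small base via absolute continuity of the integrals of $Du_0\pm v_0$ and $f$, invoke the small-data Theorem~\ref{thm-global existence small data} on each piece, and glue using Corollary~\ref{cor-finite speed of propagation}. The only cosmetic difference is that you use a finite cover with quantified overlaps while the paper takes the (uncountable) union of all translates $K_{x_0}$; both work, and your observation that the backward light triangle of any point of $K$ sits inside a single piece is the correct reason the glued function satisfies \eqref{eqn-fwm-mild-in K}.

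The uniqueness part, however, has a genuine gap, and it is exactly the one you flag at the end. The theorem asserts: there exists $T_0$ such that the mild solution is unique \emph{in the class} $\mathscr{H}(K)$ for that fixed $T_0$. Your plan is to ``possibly shrink $T_0$ further so that both candidates $u$ and $\tilde u$ lie in the ball of radius $R$'' on every local piece — but the competitor $\tilde u$ is only quantified \emph{after} $T_0$ has been fixed, and the amount of shrinking needed to force $\|\Gamma(\tilde u)Q(\tilde u,\tilde u)+P(\tilde u)f\|_{L^1}\leq R$ on the pieces depends on $\tilde u$ (via absolute continuity of its own nonlinearity, which is merely finite, not small). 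Shrinking $T_0$ after seeing $\tilde u$ only proves agreement on a competitor-dependent shorter time interval, not uniqueness in $\mathscr{H}(K)$. Note also that the self-referential Zhou bound $\|h\|_{L^1}\leq \gamma[(\eta+\|h\|_{L^1})^2+\eta]$ does not by itself force $\|h\|_{L^1}\leq R$: the quadratic inequality admits both small and large solutions. The paper closes this gap by invoking Theorem~\ref{thm-unique-strongest}, a large-data uniqueness statement requiring only \emph{finiteness} (not smallness) of the nonlinearity in $L^1$; its proof is a continuation argument in the style of Theorem~\ref{thm-uniqueness-wave-map}: given the fixed $T_0$ and an arbitrary competitor, one first obtains agreement on a thin initial strip (where the competitor's nonlinearity is small by absolute continuity), then shows the set of times up to which the solutions agree is closed (via the trace Theorem~\ref{thm-trace}) and open (by restarting from the common trace). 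Your argument needs this continuation step; without it the quantifiers do not line up.
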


	\begin{proof}[Proof of Theorem \ref{thm-nonlin-cauchy-bigData-bndTrap}] Let us  take $\eta>0$ and $R>0$ as in Theorem \ref{thm-global existence small data}.		
		Let us choose and fix a closed trapezoid $\tilde{K}$ and function $(u_0, v_0)$ and $f$ as in the statement of Theorem \ref{thm-nonlin-cauchy-bigData-bndTrap}. \\
		By \cite[Theorem 6.11]{Rudin74RCA}   we infer that there exist
		$\delta_1>0$ such that for every $x_0 \in K_0$,
		\begin{equation}\label{eqn-delta_1}
			\Vert D u_0+v_0\Vert_{L^1(K_0 \cap [x_0-\delta_1,x_0+\delta_1] )}  \leq \eta, \quad \Vert D u_0- v_0\Vert_{L^1(K_0  \cap [x_0-\delta_1,x_0+\delta_1])} \leq \eta.
		\end{equation}
		By the Lebesgue Dominated Convergence Theorem we can find $\delta_2>0$ such that
		\begin{equation}\label{eqn-delta_2}
			\Vert f\Vert_{L^1(\tilde{K} \cap( [0,\delta_2]\times K_0 )} \leq \eta.
		\end{equation}
		Set $\delta=\delta_1 \wedge \frac{2\sqrt{3}}{3}\delta_2 \wedge \frac{L}2$ and note that the conditions \eqref{eqn-delta_1} and \eqref{eqn-delta_2} are satisfied with $\delta$. Put temporarily $K_0=[a,b]$ and $K_0^\prime=[a+\delta,b-\delta]$. Since $\delta \leq \frac{L}2$, we infer that  $K_0^\prime\not= \emptyset$.
		Let us put
		\begin{equation}\label{eqn-K}
			K\coloneqq  \widetilde{K} \cap ([0,\delta]\times K_0)= \bigcup_{x_0 \in K_0^\prime } K_{x_0},
		\end{equation}
		where $K_{x_0}$ is a closed trapezoid  with base $ [x_0-\delta,x_0+\delta] $,  and height $\delta$. Note that $ [x_0-\delta,x_0+\delta] \subset K_0$, as $x_0 \in K_0^\prime$ and   $K$ is a closed trapezoid  with base $K_0$ and height $\delta$.
		Thus, due to  Theorem \ref{thm-global existence small data}, on each $K_{x_0}$ there exist a unique  mild  solution in class $\mathscr{H}(K_{x_0})$, denoted here by  $u_{x_0}$, to  problem \eqref{eq:fwm-vec} with data $(u_0|_{[x_0-\delta,x_0+\delta]},v_0|_{[x_0-\delta,x_0+\delta]}, f|_{K_{x_0}})$. Note that, due to  Corollary \ref{cor-finite speed of propagation},  $u_{x_1}=u_{x_2}$ in trapezoid $K_{x_1} \cap K_{x_2}$, for $x_1,x_2 \in K_0^\prime$.  This allow us to define a function $u$ by gluing together functions $u_{x_0}$, i.e.
		\begin{equation}
			u(t,x)=u_{x_0}(t,x) \mbox{ for }(t,x) \in K_{x_0}.
		\end{equation}
		Then, by employing the partition of unity we can show that  $u$ is the required mild solution of \eqref{eq:fwm-vec} in class $\mathscr{H}(K)$.
		Moreover, $u$ is unique by Theorem \ref{thm-unique-strongest}. Hence the theorem follows with $T_0=\delta$.
	\end{proof}

	Next we formulate a generalization of the previous local existence result to the closed and unbounded trapezoids.
	\begin{theorem}[\textbf{$\bR^n$-valued local solution for large data on an unbounded trapezoid}]
		\label{thm-nonlin-cauchy-bigData-UnBndTrap}
		Assume that $K$ is a closed and unbounded trapezoid with height $L \in (0,\infty]$ and base $K_0$. Assume that the  initial data $(u_0, v_0) \in L^{1,1}(K_0;\bR^n)$ and the forcing $f \in L^1(K; \bR^n)$. Then  there exists a $T_0  \in (0,L)]$ such that \eqref{eq:fwm-vec} has a unique  mild  solution in class $\mathscr{H}(\widetilde{K})$, where $\widetilde{K} := ([0,T_0] \times K_0) \cap K$.
	\end{theorem}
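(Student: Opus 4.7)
The plan is to reduce Theorem~\ref{thm-nonlin-cauchy-bigData-UnBndTrap} to the compact case (Theorem~\ref{thm-nonlin-cauchy-bigData-bndTrap}) by a covering argument: decompose $\widetilde{K}$ into compact sub-trapezoids on each of which the data are small enough for Theorem~\ref{thm-global existence small data} to apply, solve locally, and glue using finite speed of propagation (Corollary~\ref{cor-finite speed of propagation}). This parallels the scheme used to prove Theorem~\ref{thm-nonlin-cauchy-bigData-bndTrap}; the genuinely new point is verifying that the glued function lies in the global space $\mathscr{H}(\widetilde{K})$ when $K_0$ is unbounded.

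Let $\eta>0$ and $R>0$ be the constants from Theorem~\ref{thm-global existence small data}. By uniform absolute continuity of the Lebesgue integral applied to $Du_0\pm v_0\in L^1(K_0;\bR^n)$, fix $\delta_1>0$ so that $\|Du_0\pm v_0\|_{L^1(I)}\leq\eta$ for every interval $I\subset K_0$ of length at most $2\delta_1$. By the Lebesgue Dominated Convergence Theorem applied to $f\in L^1(K;\bR^n)$, choose $T_0\in(0,L\rb$ with $T_0\leq\delta_1/2$ and $\|f\|_{L^1(K\cap([0,T_0]\times K_0))}\leq\eta$. Cover $K_0$ by intervals $I_j$ of length $2\delta_1$ centred on a $\delta_1$-spaced grid inside $K_0$, and let $K_j$ denote the compact trapezoid of base $I_j$ and height $T_0$; by construction each $K_j\subset\widetilde{K}$, $\bigcup_j K_j=\widetilde{K}$, the cover has multiplicity at most two, and for every $(t,x)\in\widetilde{K}$ the region of dependence $T_{(t,x)}$ lies inside at least one $K_j$ (since its base has length $2t\leq\delta_1$). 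Theorem~\ref{thm-global existence small data} yields a unique mild solution $u_j\in\mathscr{H}(K_j)$ on each $K_j$; Corollary~\ref{cor-finite speed of propagation} forces $u_j=u_{j'}$ on $K_j\cap K_{j'}$, so we may define $u(t,x)\coloneqq u_j(t,x)$ for $(t,x)\in K_j$. Because $T_{(t,x)}$ is contained in a single $K_j$, the mild identity \eqref{eqn-fwm-mild-in K} for $u_j$ transfers to $u$, so $u$ is a mild solution of \eqref{eq:fwm-vec} on $\widetilde{K}$.

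The technical heart is showing $u\in\mathscr{H}(\widetilde{K})$. By Proposition~\ref{prop-W^1,1 estimates} this reduces to proving $h\coloneqq\sum_{j,k}\Gamma_{jk}(u)Q_{jk}(u,u)+P(u)f\in L^1(\widetilde{K})$. The contraction map $\Phi_j$ from the proof of Theorem~\ref{thm-global existence small data} has its fixed point $h_j$ on $K_j$ satisfying $\|h_j\|_{L^1(K_j)}\leq 2\|\Phi_j(0)\|_{L^1(K_j)}$; evaluating $\Phi_j(0)$ on the linear solution and applying Proposition~\ref{prop:zhou-estimates} gives
\[
\|h_j\|_{L^1(K_j)}\leq \gamma\,\|Du_0+v_0\|_{L^1(I_j)}\,\|Du_0-v_0\|_{L^1(I_j)}+2\gamma\,\|f\|_{L^1(K_j)}.
\]
Since $\|Du_0\pm v_0\|_{L^1(I_j)}\leq\eta$ uniformly in $j$ and the cover has bounded multiplicity, summing yields
\[
\|h\|_{L^1(\widetilde{K})}\leq\sum_j\|h_j\|_{L^1(K_j)}\leq \gamma\eta\bigl(\|Du_0+v_0\|_{L^1(K_0)}+\|Du_0-v_0\|_{L^1(K_0)}\bigr)+4\gamma\,\|f\|_{L^1(\widetilde{K})}<\infty,
\]
and Proposition~\ref{prop-W^1,1 estimates} now delivers $u\in\mathscr{H}(\widetilde{K})$. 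Uniqueness is a direct application of Theorem~\ref{thm-unique-strongest}, as the Zhou-type estimates of Section~\ref{sec:Zhou-trick} go through verbatim on all three possible shapes of unbounded trapezoid.

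The main obstacle is precisely this summation: the naive bound $\|h_j\|_{L^1(K_j)}\leq R$ produces $\sum_j R=\infty$ for an infinite cover, so one must extract the finer quadratic-in-local-data estimate above, in which one factor is replaced by $\eta$ uniformly while the other is summed against the global $L^1(K_0)$-norm thanks to the bounded multiplicity of the cover.
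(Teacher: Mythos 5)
Your proof is correct, but it takes a genuinely different route from the paper's. The paper decomposes $\widetilde{K}$ into exactly \emph{three} pieces: it picks a compact interval $[a,b]\subset K_0$ outside of which $\|Du_0\pm v_0\|_{L^1}$ and the relevant restriction of $\|f\|_{L^1}$ are already below $\eta$ (integrability at infinity), solves on the compact middle trapezoid by Theorem~\ref{thm-nonlin-cauchy-bigData-bndTrap}, solves on the two semi-bounded tail trapezoids $K^{\pm}$ directly by the small-data result Theorem~\ref{thm-global existence small data}, and glues. Because the union is finite, membership in $\mathscr{H}(\widetilde{K})$ is immediate and no summation is needed. You instead cover $K_0$ by an infinite family of uniformly small intervals and glue countably many small-data solutions; the price is exactly the obstacle you identify, namely that the naive per-piece bound $\|h_j\|_{L^1(K_j)}\le R$ does not sum, and you correctly repair this with the refined fixed-point estimate $\|h_j\|\le 2\|\Phi_j(0)\|$ combined with Proposition~\ref{prop:zhou-estimates}, bounded overlap of the cover, and Proposition~\ref{prop-W^1,1 estimates}. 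What your approach buys is uniformity (no case distinction between a compact core and integrable tails, and it would survive weakening the hypothesis at infinity to uniform local smallness rather than global integrability of the data); what the paper's approach buys is brevity, since the tails are handled in one stroke and the $\mathscr{H}$-membership is free. Two cosmetic points: the overlap multiplicity of intervals of length $2\delta_1$ on a $\delta_1$-grid is $3$ rather than $2$ (harmless, since only boundedness is used), and your appeal to Theorem~\ref{thm-unique-strongest} for uniqueness on a semi-bounded base requires the extension you mention in passing --- but the paper's own proofs invoke that theorem with exactly the same degree of informality.
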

	
	\begin{proof}[Proof of Theorem \ref{thm-nonlin-cauchy-bigData-UnBndTrap}]	Let us  take $\eta>0$ and $R>0$ as in Theorem \ref{thm-global existence small data}. Let us choose and fix and functions $(u_0, v_0)$ and $f$   as in the statement of Theorem \ref{thm-nonlin-cauchy-bigData-UnBndTrap}. \\
		We begin by observing that there exists a bounded interval $[a,b] \subset K_0$ and $\bar{L} \in (0,L)$ such that $a+\bar{L} < b-\bar{L}$ and
		\begin{equation}\label{eqn-[a,b]-rays-small-data}
			\Vert D u_0+v_0\Vert_{L^1(K_{a,b}^c )}  \leq \eta  \mbox{ and } \Vert D u_0- v_0\Vert_{L^1(K_{a,b}^c)} \leq \eta,
		\end{equation}
		where
		$$K_{a,b}^c \coloneqq  K_0 \cap \left( (-\infty,a+ \bar{L}] \cup [b-\bar{L},\infty) \right).$$
		Moreover, w.l.o.g., we can choose $\bar{L}$ such that together with \eqref{eqn-[a,b]-rays-small-data} the following estimate  hold
		\begin{equation}\label{eqn-[a,b]-rays-small-f}
			\Vert f\Vert_{L^1(K^+_{b,\bar{L}} )}  \leq \eta  \mbox{ and } \Vert f \Vert_{L^1(K^-_{a,\bar{L}})} \leq \eta,
		\end{equation}
		where $K^+_{b,\bar{L}}\coloneqq  K^+([b-\bar{L},\infty) \cap K_0, \bar{L})$ and $K^-_{a,\bar{L}}\coloneqq K^-((-\infty,a+\bar{L}] \cap K_0, \bar{L})$ are semi-bounded trapezoids, see Definition \ref{def-trapezoid semi bounded},  with  height $\bar{L}$ and bases $[b-\bar{L},\infty) \cap K_0$ and $(-\infty, a+\bar{L}] \cap K_0$, respectively.

		Let $K_{a,b,\bar{L}}$ be a compact trapezoid with base $[a,b]$  and height $\bar{L}$.  By Theorem \ref{thm-nonlin-cauchy-bigData-bndTrap} there exists $T_1 \in (0,\bar{L}]$ such that equation \eqref{eq:fwm-vec} has a unique  mild  solution $u^0$, with initial data and forcing $(u_0|_{[a,b]},v_0|_{[a,b]},f|_{K_{a,b,\bar{L}}})$,  in the compact trapezoid $K_{a,b,T_1}$ with base $[a,b]$ and height $T_1$.
		
		Next, due to smallness condition \eqref{eqn-[a,b]-rays-small-data} and \eqref{eqn-[a,b]-rays-small-f}, by the global existence result for small data, see Theorem \ref{thm-global existence small data}, we can find the unique  mild  solutions $u^+$ and $u^-$  for problem \eqref{eq:fwm-vec} on the  semi-bounded trapezoids  $K^+_{b,\bar{L}}$ and $K^-_{a,\bar{L}}$, respectively, with initial data $(u_0|_{[b-\bar{L},\infty) \cap K_0}, v_0|_{[b-\bar{L},\infty) \cap K_0})$ and $(u_0|_{(-\infty,a+\bar{L}] \cap K_0}, v_0|_{(-\infty,a+\bar{L}] \cap K_0})$, and forcing $f|_{K^+_{b,\bar{L}}}$ and $f_{K^-_{a,\bar{L}}}$, respectively.
		By gluing the solutions $u^0$, $u^+$ and $u^-$ we construct a function  $u \in $ defined in the trapezoid with base $K_0$ and height $T_1$, i.e., on $\widetilde{K}\coloneqq  ([0,T_1] \times K_0) \cap K$.
		
		From here we conclude as in the proof of Theorem  \ref{thm-nonlin-cauchy-bigData-bndTrap} and get that  $u \in \mathscr{H}(\widetilde{K})$ is the required unique mild solution of \eqref{eq:fwm-vec} on the trapezoid $\widetilde{K}$.
	\end{proof}

	\subsection{Scattering result}
	As it is well-known, we don't expect that the forced wave map equation \eqref{eq:fwm-vec} have a $\bR^n$-valued global solution theory in the framework of  Theorems \ref{thm-nonlin-cauchy-bigData-bndTrap} and \ref{thm-nonlin-cauchy-bigData-UnBndTrap}. However, our next result shows that if such a global solution exists then the scattering of solutions hold true.
	\begin{theorem}[\textbf{Scattering for $\bR^n$-valued solution}]
		\label{thm:nonlin-scattering}
		Assume that $K=[0,\infty) \times \bR$ is a given trapezoid and $u \in \mathscr{H}(K)$ be a unique  mild  solution to  \eqref{eq:fwm-vec} with initial data $(u_0,v_0) \in L^{1,1}(\bR;\bR^n)$ and forcing $f \in L^1(K;\bR^n)$. Then, there exist $(\bar{u}_0,\bar{v}_0) \in L^{1,1}(\bR; \bR^n)$ such that the corresponding solution $u\lin$ of the linear homogeneous wave equation
		\eqref{eqn-linear-wave-nonhom} with $h=0$ and initial data $(\bar{u}_0,\bar{v}_0)$ satisfies
		\begin{equation}\label{nonlin-scattering}
			\lim_{t \to \infty}\big(\|u(t) - u\lin(t)\|_{L^\infty(\bR)} + \|\partial_x u(t) - \partial_x u\lin(t)\|_{L^1(\bR)}
			+ \|\partial_t u(t) - \partial_t u\lin(t)\|_{L^1(\bR)}\big) = 0.
		\end{equation}
	\end{theorem}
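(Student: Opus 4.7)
The strategy is to use the null decomposition of the wave operator, to extract asymptotic profiles from the nonlinear and forcing contributions using Fubini, and to construct the scattering data $(\bar u_0, \bar v_0)$ explicitly. Introduce $v_\pm := \partial_t u \mp \partial_x u$ and set
\[
F := \sum_{j,k=1}^n \Gamma_{jk}(u)\,Q_{jk}(u, u) + P(u)\,f,
\]
so that $(\partial_t^2 - \partial_x^2) u = F$ in the mild sense on $K$ and $(\partial_t \pm \partial_x) v_\pm = F$ in the sense of Lemma~\ref{lem:transport}. The first preliminary, which is the principal technical obstacle, is to establish $F \in L^1(K;\bR^n)$: the identity $Q_{jk}(u,u) = \tfrac12(v_+^j v_-^k + v_-^j v_+^k)$ gives $|Q(u,u)| \le |v_+|\,|v_-|$ in the $\ell^1$-norm, and combining Lemma~\ref{lem:zhou-lemma} on $[0,T]\times \bR$ with the pointwise bound $|F| \le \gamma|v_+|\,|v_-| + \gamma|f|$ yields a quadratic inequality in $X(T) := \|F\|_{L^1([0,T]\times\bR)}$. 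Global membership $u \in \mathscr{H}(K)$ precludes blow-up of $X(T)$, so continuity in $T$ keeps $X(T)$ on the low branch of the quadratic for all $T$ and $\sup_T X(T) = \|F\|_{L^1(K)} < \infty$.

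Once $F \in L^1(K)$ is available, Lemma~\ref{lem:transport} gives
\[
v_\pm(t, x) = v_\pm(0, x\mp t) + \int_0^t F(\tau, x \mp t \pm \tau)\,\ud \tau,
\]
which suggests the scattering profiles
\[
g_\pm(y) := v_\pm(0, y) + \int_0^\infty F(\tau, y \pm \tau)\,\ud\tau \;\in\; L^1(\bR; \bR^n).
\]
I would then set
\[
\bar v_0 := \tfrac12(g_+ + g_-), \qquad \bar u_0(x) := u_0(x) - \tfrac12\int_0^\infty\!\int_{x-\tau}^{x+\tau} F(\tau, y)\,\ud y\,\ud\tau.
\]
A direct computation gives $D\bar u_0 = \tfrac12(g_- - g_+) \in L^1(\bR)$, while the Duhamel-type integral defining the correction to $u_0$ is pointwise bounded by $\tfrac12\|F\|_{L^1(K)}$, so $(\bar u_0, \bar v_0) \in L^{1,1}(\bR;\bR^n)$. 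The corresponding linear solution $u\lin$ is then characterised by $\partial_t u\lin \mp \partial_x u\lin = g_\pm(x\mp t)$.

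The $L^1$-convergence of the derivatives is immediate from the identities
\[
v_\pm(t, x) - g_\pm(x\mp t) = -\int_t^\infty F(\tau, x\mp t\pm\tau)\,\ud\tau,
\]
whose $L^1_x$-norms are each bounded by $\int_t^\infty\|F(\tau,\cdot)\|_{L^1(\bR)}\,\ud\tau \to 0$, combined with the fact that $\partial_t u - \partial_t u\lin$ and $\partial_x u - \partial_x u\lin$ are linear combinations of these. The $L^\infty$-convergence of $u - u\lin$ is the subtlest step and is where the precise choice of constant in $\bar u_0$ pays off: integrating $\partial_t(u - u\lin) = \tfrac12[(v_+ - v_+^L) + (v_- - v_-^L)]$ from $0$ to $t$, swapping the order of integration by Fubini, and explicitly computing the inner spatial integrals with a case-split on $\tau \le t$ versus $\tau > t$ lead to the clean cancellation
\[
u(t, x) - u\lin(t, x) = \tfrac12 \int_t^\infty\!\int_{x-\tau+t}^{x+\tau-t} F(\tau, y)\,\ud y\,\ud\tau,
\]
valid for all $t \ge 0$ and $x \in \bR$. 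Taking the supremum in $x$ bounds this by $\tfrac12 \int_t^\infty \|F(\tau,\cdot)\|_{L^1(\bR)}\,\ud\tau$, which tends to $0$ as $t\to\infty$ since $F \in L^1(K)$. The $L^\infty$-piece is delicate because $L^1$-decay of the derivatives only controls $u - u\lin$ modulo an additive function of $t$ alone, and the above formula for $\bar u_0$ is precisely what makes this function vanish uniformly in $x$.
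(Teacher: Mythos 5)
Your main argument is correct and is, in substance, the same proof as the paper's, written in explicit null-coordinate form rather than in semigroup language: your profiles $g_\pm$ and the pair $(\bar u_0,\bar v_0)$ are exactly the components of the limit $(u_0,v_0)+\int_0^\infty \tilde S(-\tau)(0,F(\tau))\,\ud\tau$ that the paper obtains by applying $\tilde S(-t)$ to the Duhamel identity and invoking absolute convergence of the Bochner integral. Your explicit cancellation formula for $u-u\lin$ is a correct (and arguably cleaner) substitute for the paper's appeal to the uniform boundedness of $\tilde S(t)$ on $L^{1,1}$, and you rightly note that the Duhamel source must be the full right-hand side $F=\Gamma(u)Q(u,u)+P(u)f$, not just $f$ (the paper's proof writes $f$ there, which is an abuse of notation).

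The gap is in your preliminary step establishing $F\in L^1(K)$. The inequality you get from Lemma \ref{lem:zhou-lemma} together with $|F|\le\gamma|v_+||v_-|+\gamma|f|$ has the form $X(T)\le\tfrac{\gamma}{2}(a+X(T))(b+X(T))+c$ with $a=\|v_0-Du_0\|_{L^1}$, $b=\|v_0+Du_0\|_{L^1}$, $c=\gamma\|f\|_{L^1}$. The continuity argument keeping $X(T)$ on the lower branch only closes when the quadratic $q(X)=\tfrac{\gamma}{2}(a+X)(b+X)+c-X$ has two real roots, i.e.\ when $a,b,c$ are small; for large data $q\ge 0$ everywhere and the inequality is vacuous, so no bound on $X(T)$ follows. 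Nor does $u\in\mathscr{H}(K)$ by itself ``preclude blow-up of $X(T)$'': membership in $\mathscr{H}(K)$ controls $\sup_t\big(\|v_+(t)\|_{L^1}+\|v_-(t)\|_{L^1}\big)$, which does not control $\int_0^\infty\int_\bR|v_+||v_-|\,\ud x\,\ud t$ (a product of two $L^1_x$ functions need not be integrable, let alone integrable in time over $[0,\infty)$). The correct reading is that $\|F\|_{L^1(K)}<\infty$ is part of the hypothesis: the theorem assumes $u$ is \emph{the unique} mild solution in $\mathscr{H}(K)$, and uniqueness in that class is only asserted under condition \eqref{eqn-u-space}, which is precisely the global integrability of $F$. (For $\cM$-valued solutions this integrability can be derived from the pointwise conservation law, Proposition \ref{prop-noncon} and Lemma \ref{lem:spacetime-int}, but that uses the orthogonality structure and is not available in the purely $\bR^n$-valued setting of this theorem.) With that hypothesis granted, the remainder of your proof is complete.
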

	\begin{proof}[Proof of Theorem \ref{thm:nonlin-scattering}]
		Let us  choose and fix initial data $(u_0,v_0) \in L^{1,1}(\bR;\bR^n) $.  Let us set, for $t \in [0,\infty)$,
		\begin{equation}\label{eqn-S(t)}
			(S(t)(u_0,v_0))(x) \coloneqq  \frac{1}{2} (u_0(x+t) + u_0(x-t)) + \frac{1}{2} \int_{x-t}^{x+t} v_0(y) \ud y,\;\; x\in \bR.
		\end{equation}
		Denote $\tilde{S}(t) \coloneqq  (S(t), \partial_t S(t))$ the linear group on Banach space $L^{1,1}(\bR;\bR^n)$ associated with the \eqref{eqn-linear-wave-nonhom} with $h=0$, that is the operator associating to given initial data the solution of the free wave equation and its time derivative  evaluated at time $t$.
		
		Let $u\in  L^\infty([0,\infty);L^\infty \cap \dot{W}^{1,1})$ be the  unique  mild  solution to  \eqref{eq:fwm-vec} with  data $(u_0,v_0)$ and such that $\partial_t u \in L^\infty([0,\infty);L^1(K_\cdot))$.
		
		Then by  the Duhamel formula, we have  the following identity
		\begin{equation}\label{duhamel}
			(u(t), \partial_t u(t)) = \tilde{S}(t) (u_0,v_0) + \int_{0}^{t} \tilde{S}(t-\tau) (0, f(\tau)) \ud\tau.
		\end{equation}
		By  the formula \eqref{eqn-S(t)} defining $S(t)$ and the assumption that $(u_0,v_0) \in L^{1,1}(\bR;\bR^n)$ we deduce that, for each $t$,
		\begin{equation}
			\label{est-duhamel-L^infty}
			\| S(t) (u_0,v_0)\|_{L^\infty(\bR)} \leq \| u_0 \|_{L^\infty(\bR)} + \frac{1}{2} \| v_0 \|_{L^1(\bR)}.
		\end{equation}
		Moreover, since $D u_0 \in L^1(\bR; \bR^n)$ and $v_0 \in L^1(\bR; \bR^n)$, we have
		\begin{equation}
			\label{est-duhamel-L^1}
			\begin{aligned}
				& \| \partial_x (S(t) (u_0,v_0))\|_{L^1(\bR)} \leq \| D u_0 \|_{L^1(\bR)} + \| v_0 \|_{L^1(\bR)}, \\
				& \| \partial_t (S(t) (u_0,v_0))\|_{L^1(\bR)} \leq \| D u_0 \|_{L^1(\bR)} + \| v_0 \|_{L^1(\bR)}.
			\end{aligned}
		\end{equation}		
		So, for every $t$, $\tilde{S}(t)$ is a bounded linear operator in  $L^{1,1}(\bR)$. By standard methods we can show that  for every $t$, $\tilde{S}(t)$ is an isomorphism of the Banach space  $L^{1,1}(\bR)$. In particular, $\tilde{S}(-t):=[\tilde{S}(t)]^{-1}$ exists.

		Next, applying $\tilde{S}(-t)$  to both sides of \eqref{duhamel}, we obtain
		\begin{equation}\label{duhamel-negTimes}
			\tilde{S}(-t)(u(t), \partial_t u(t)) =(u_0,v_0) + \int_{0}^{t} \tilde{S}(-\tau) (0, f(\tau)) \ud\tau.
		\end{equation}
		By the above bounds \eqref{est-duhamel-L^infty}-\eqref{est-duhamel-L^1} we infer that
		\begin{equation}
			\| \tilde{S}(-\tau) (0, f(\tau)) \|_{L^{1,1}(\bR)} \lesssim \| f(\tau)\|_{L^1(\bR)}.
		\end{equation}
		Since $f \in L^1([0,\infty) \times \bR)$, we can pass to $t \to \infty$ in \eqref{duhamel-negTimes}, \begin{equation}
			\lim\limits_{t \to \infty} \tilde{S}(-t)(u(t), \partial_t u(t))  = (u_0,v_0) + \int_{0}^{\infty} \tilde{S}(-\tau) (0, f(\tau)) \ud\tau,
		\end{equation}
		strongly  in $L^{1,1}(\bR)$,  where the integral  $\int_{0}^{\infty} \tilde{S}(-\tau) (0, f(\tau)) \ud\tau$ is understood in the Bochner sense. Let us call this limit $(\bar{u}_0, \bar{v}_0)$ and define $u\lin(t,x) \coloneqq  S(t)(\bar{u}_0, \bar{v}_0)(x)$. Using again the bounds \eqref{est-duhamel-L^infty}-\eqref{est-duhamel-L^1}, we obtain \eqref{nonlin-scattering}. Indeed,
		\begin{align*}
			& \| (u(t), \partial_t u(t)) - \tilde{S}(t)(\bar{u}_0, \bar{v}_0) \|_{L^{1,1}(\bR)}  = \| \tilde{S}(t)[\tilde{S}(-t)(u(t), \partial_t u(t)) - (\bar{u}_0, \bar{v}_0)] \|_{L^{1,1}(\bR)} \\
			&  \lesssim  \| \tilde{S}(-t)(u(t), \partial_t u(t)) - (\bar{u}_0, \bar{v}_0) \|_{L^{1,1}(\bR)} \to 0 \textrm{ as } t \to \infty.
		\end{align*}
	\end{proof}

	\section{Cauchy theory for manifold valued solutions to wave map equation}\label{sec:M valued cauchy}
	This section is about the global in time solution theory to wave map problem \eqref{eq:fwm-vec}, defined on an arbitrary trapezoid, taking values on manifold $\cM$ with suitable initial data and forcing. As a consequence we also prove a scattering result when the domain trapezoid is $[0,\infty) \times \bR$.
	
	\subsection{Existence of global solution}\label{sec:wave-map-M-global-soln-existence}
	Two main results of this section  are Theorems \ref{thm-M-globalwave-BndTrap} and  \ref{thm-M-global wave-unBndTrap} about the global existence of $\cM$-valued mild solutions to the wave map equation \eqref{eq:fwm-vec}
	with external data defined respectively on bounded or general closed trapezoids. We always consider initial data $(u_0,v_0)$ taking values in the ``tangent bundle'' $T\cM$, i.e. satisfying the compatibility condition \eqref{eqn-compatibility}.

	We begin this section by proving the existence of global solutions to problem \eqref{eq:fwm-vec} for small data $(u_0,v_0,f)$. To shorten the notation we set, for any interval $I \subset \bR$,
	\begin{equation}\label{eqn-space-L11-M}
		L^{1,1}(I; \cM) \coloneqq  \bigl(C(I;\cM)\cap \dot{W}^{1,1}(I;\bR^n)\bigr)\times L^1(I;\bR^n),
	\end{equation}
	and
	\begin{equation}\label{eqn-space-L11-M-norm}
		\| (f,g)\|_{L^{1,1}(I)}\coloneqq  \| f\|_{L^\infty(I)} + \| f' \|_{L^1(I)} + \| g\|_{L^1(I)}.
	\end{equation}
	\begin{theorem}[\textbf{$\cM$-valued global solution for small data on an arbitrary trapezoid}]
		\label{thm-nonlin-cauchy-L1}
		There exists $\eta > 0$ such that the following is true. For every trapezoid $K$ with base $K_0$, for  every  initial  data $(u_0,v_0) \in L^{1,1}(K_0; \cM)$
		satisfying the compatibility condition \eqref{eqn-compatibility},
		and every external forcing $f \in L^1(K;\bR^n)$, if
		the  smallness conditions  \eqref{eqn-smallness of initial condition}-\ref{bound-Forcing}) hold, then there exists a unique  mild  $\cM$-valued solution of \eqref{eq:fwm-vec} on $K$ in the sense of Definition \ref{def-sol-mild}.
		This solution is well-posed in the Hadamard sense as in Theorem \ref{thm-global existence small data}.
	\end{theorem}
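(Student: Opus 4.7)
The natural strategy is to lift Theorem \ref{thm-global existence small data}, which furnishes a unique $\bR^n$-valued mild solution $u\in\mathscr{H}(K)$ under the smallness bounds \eqref{eqn-smallness of initial condition}--\eqref{bound-Forcing} together with the Hadamard continuity \eqref{eqn-continuity}, to the manifold setting. The first step is thus to apply that theorem and obtain the unique $u\in\mathscr{H}(K)$ solving \eqref{eq:fwm-vec} in the sense of Definition \ref{def-sol-mild-Rn}. Once this is in hand the remaining content is entirely geometric: (i) $u(K)\subset\cM$; (ii) $\partial_t u(t,x)\in T_{u(t,x)}\cM$ for almost every $x\in K_t$ and every $t$; and (iii) Hadamard well-posedness restricted to $\cM$-valued, compatible data, which will follow automatically from \eqref{eqn-continuity} once (i)--(ii) are proved, since the subset of $\mathscr{O}_\eta$ consisting of such data is closed and the continuous $\bR^n$-valued solution map then sends it into $\cM$-valued solutions.

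For (i) I would employ a density and approximation argument based on Theorem \ref{lem-density} from Appendix \ref{app-geom}: approximate $(u_0,v_0)$ in $L^{1,1}(K_0;\cM)$ by smooth, compatible, $T\cM$-valued data $(u_0^{(m)},v_0^{(m)})$, and $f$ in $L^1(K;\bR^n)$ by smooth $f^{(m)}$, all while staying inside the same $\eta$-ball so that Theorem \ref{thm-global existence small data} applies uniformly and produces $u^{(m)}\in\mathscr{H}(K)$. For smooth data, the classical wave-map theory together with the defining structural property captured by Assumption \ref{assump-A1} --- at any $p\in\cM$ the map $P(p)$ is the tangential projector and the quadratic form $\sum_{j,k}\Gamma_{jk}(p)Q_{jk}$ lies in the normal direction --- yields a smooth $\cM$-valued classical solution, which by the uniqueness part of Theorem \ref{thm-global existence small data} must coincide with $u^{(m)}$ on all of $K$. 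Passing to the limit via \eqref{eqn-continuity}, which in particular delivers uniform convergence on $K$, transfers $u^{(m)}(K)\subset\cM$ to $u(K)\subset\cM$ because $\cM$ is closed in $\bR^n$. Property (ii) is then obtained by passing to the limit in the equivalent integral formulation \eqref{eqn-compatibility-t-2}, using that $\Upsilon^M_{\varphi}$ is continuous on $\mathscr{H}(K)$ via Theorem \ref{thm-trace} and the smoothness of $\pi^\perp$.

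The principal obstacle I anticipate is running the classical smooth $\cM$-valued theory on the entire trapezoid $K$ rather than just on some subinterval: one must rule out any finite-time degeneration of the classical solution, which is delicate even for smooth data. A cleaner parallel route, which I would also pursue, is to bypass classical solutions altogether by working directly with a normal-component function $\psi(t,x):=u(t,x)-\Pi(u(t,x))$ in a tubular neighbourhood of $\cM$, where $\Pi$ denotes the nearest-point projection, together with $\pi^\perp_{u}(\partial_t u)$. The Zhou-type bounds of Proposition \ref{prop:zhou-estimates} and the transport identities \eqref{eqn-u_t}--\eqref{eqn-u_x} let one derive a closed Gronwall-type inequality for $\|\psi\|_{L^\infty}+\|\partial_t\psi\|_{L^1}+\|\partial_x\psi\|_{L^1}$; combined with the vanishing initial data $\psi(0)=0$ and $\partial_t\psi(0)=\pi^\perp_{u_0}v_0=0$ from the compatibility \eqref{eqn-compatibility}, and the smallness \eqref{eqn-smallness of initial condition}--\eqref{bound-Forcing}, this forces $\psi\equiv 0$, yielding (i) directly and propagating (ii) as a by-product.
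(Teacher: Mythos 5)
Your main route is essentially the paper's own proof: apply Theorem \ref{thm-global existence small data} to get the unique $\bR^n$-valued solution, approximate the data by smooth compatible $T\cM$-valued data via Lemma \ref{lem-projm-3}, invoke the classical smooth global theory for $\cM$-valued solutions (the paper cites \cite[Theorem 4.1]{BGOR22}, which disposes of the ``finite-time degeneration'' obstacle you flag), identify these with the $\bR^n$-valued solutions by uniqueness, and pass to the limit using Hadamard continuity, the closedness of $\cM$, and the continuity of $\Upsilon^M_{\varphi}$ from Theorem \ref{thm-trace} to recover the tangency condition. The alternative Gronwall argument for the normal component is not needed and is not pursued in the paper.
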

	\begin{proof}[Proof of Theorem \ref{thm-nonlin-cauchy-L1}] We assume that assumptions of Theorem \ref{thm-nonlin-cauchy-L1} hold. By Theorem \ref{thm-global existence small data}
		there exists a unique $\bR^n$-valued solution $u$ of  \eqref{eq:fwm-vec} on $K$ with initial  data
		$(u_0,v_0)$ and external forcing $f$. Put $v=\partial_t u$. We need to prove that  $(u(t),v(t))$ satisfies \eqref{eqn-compatibility} for every $t\geq 0$.
		
		By Lemma \ref{lem-projm-3} we can find a sequence $(u_0^k,v_0^k)$ taking values in $\bigl[ C(K_0;M) \cap  \dot{W}^{2,2} (K_0;\bR^n) \bigr] \times \bigl[ C_0(K_0;TM) \cap \dot{W}^{1,2} (K_0;\bR^n) \bigr]$ such that
		\begin{equation}\label{eqn-convergence-2}
			\lim_{k\to\infty}\vert (u_0^k,v_0^k) -(u_0,v_0)\vert_{ \bigl( C(K_0) \cap \dot{W}^{1,1}(K_0) \bigr)  \times L^1(K_0)}=0.
		\end{equation}
		In particular, every $(u_0^k,v_0^k)$ satisfies  the compatibility condition \eqref{eqn-compatibility}.
		
		Moreover, we consider a natural extension of $f$, which we denote by $\hat{f}$, to $[0,\infty) \times \bR$ by setting $\hat{f} =0$ on $([0,\infty) \times \bR) \setminus K$. Then we can find a sequence $\hat{f}^k \in C_0^\infty ([0,\infty) \times \bR; \bR^n) $  such that
		$$\lim_{k \to \infty} \Vert \hat{f}^k - \hat{f} \Vert_{L^1([0,\infty) \times \bR)} =0.$$
		Thus, the sequence  $f^k\coloneqq  \hat{f}^k|_K $ approximates $f$ in $L^1(K;\bR^n)$.

		Next,  by \cite[Theorem 4.1]{BGOR22}, see also \cite{BO07}, for each $k$ there exists a unique global  $\cTM$-valued  solution $u^k$
		to the geometric wave map equation \eqref{eq:fwm-vec}  with initial  data
		$(u_0^k,v_0^k)$ and the external forcing $f^k$.  Let us denote $v^k=\partial_t u^k$. In particular, for every $t\in [0,\infty)$, each $(u^k(t),v^k(t))$ satisfies the compatibility condition \eqref{eqn-compatibility}. By the continuous dependence of solutions on the initial data, see last part of Theorem \ref{thm-global existence small data},
		$u^k$ converges to $u$ in $\mathscr{H}(K)$. Since $\cM$ is a closed  subspace of $\bR^n$ and $u^k$ is $\cM$-valued,   we infer that $u$ is $\cM$ valued as well. Moreover, the compatibility condition \eqref{eqn-compatibility-t} at every $t\geq 0$ is also satisfied by the last part of Theorem \ref{thm-trace}. Indeed,
		let $\phi \in C_0^\infty( \textrm{int}(K_0))$ and $t \geq 0$. Since  $(u^k(t),v^k(t))$ satisfies \eqref{eqn-compatibility-t}, from \eqref{eqn-compatibility-2} we infer that
		$\Upsilon^M_{\phi}(u^k(t),v^k(t))=0$,  where $\Upsilon^M_{\phi}$  is defined in \eqref{eqn-compatibility-t-2}. Since $\Upsilon^M_{\phi}$ is continuous in appropriate topologies, see
		Theorem \ref{thm-trace}, and $(u^k,v^k) \to (u,v)$ in the right topology, we infer that
		$\Upsilon^M_{\phi}(u^k(t),v^k(t))=0$. Because $\phi \in C_0^\infty( \textrm{int}(K_0))$  was chosen in an arbitrary way, we deduce  that $(u(t),v(t))$ satisfies \eqref{eqn-compatibility} for every $t\geq 0$. The proof of Theorem \ref{thm-nonlin-cauchy-L1} is complete.
	\end{proof}

	If  in the above proof  instead of using Theorem \ref{thm-global existence small data} we used Theorems \ref{thm-nonlin-cauchy-bigData-bndTrap} and \ref{thm-nonlin-cauchy-bigData-UnBndTrap}
	then we would get the following local existence result for the wave map equation \eqref{eq:fwm-vec} for arbitrary data.

	\begin{theorem}[\textbf{$\cM$-valued local solution for arbitrary data on an arbitrary trapezoid}]
		\label{thm-nonlin-cauchy-local}
		If $K$ is a  trapezoid  with base $K_0$, the   initial  data
		$(u_0,v_0) \in L^{1,1}(K_0; \cM) $   satisfy the compatibility condition \eqref{eqn-compatibility},
		and the external forcing $f \in L^1(K;\bR^n)$,
		then there exists   $T_0  \in (0,L)]$, where  $L \in (0,\infty]$ is the  height of $K$, such that \eqref{eq:fwm-vec} has a unique  mild  solution in the  trapezoid   with   base $K_0$ and height $T_0$.
	\end{theorem}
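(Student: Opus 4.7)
The plan is to mimic the proof of Theorem~\ref{thm-nonlin-cauchy-L1} verbatim, replacing the small-data global existence step (which used Theorem~\ref{thm-global existence small data}) with its large-data local counterpart. Concretely, given $(u_0,v_0)$ and $f$ as in the hypotheses, I would first apply Theorem~\ref{thm-nonlin-cauchy-bigData-bndTrap} (when $K$ is compact) or Theorem~\ref{thm-nonlin-cauchy-bigData-UnBndTrap} (when $K$ is unbounded/semi-bounded) to produce some $T_0 \in (0,L]$ and a unique $\bR^n$-valued mild solution $u \in \mathscr{H}(\widetilde{K})$ on $\widetilde{K}=([0,T_0]\times K_0)\cap K$. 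What must then be verified is the two geometric constraints from Definition~\ref{def-sol-mild}: that $u$ actually takes values in $\cM$, and that the compatibility condition \eqref{eqn-compatibility-t} is preserved for every $t \in [0,T_0]$.

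Next I would approximate the data. Using Lemma~\ref{lem-projm-3} (or Theorem~\ref{lem-density} in the appendix), choose a sequence $(u_0^k,v_0^k)$ of smooth $\cTM$-valued data satisfying \eqref{eqn-compatibility} with $(u_0^k,v_0^k) \to (u_0,v_0)$ in $(C(K_0)\cap \dot{W}^{1,1}(K_0))\times L^1(K_0)$, and mollify $f$ (after extending by zero outside $K$) to obtain $f^k \in C_0^\infty$ with $f^k \to f$ in $L^1(K;\bR^n)$. For each $k$, the classical $\cM$-valued global existence theory of \cite[Theorem 4.1]{BGOR22} (see also \cite{BO07}) produces a global $\cM$-valued classical solution $u^k$ with $\partial_t u^k(t,x) \in T_{u^k(t,x)}\cM$; restrict each $u^k$ to $\widetilde{K}$.

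The key closing step is passage to the limit. Since $(u_0^k,v_0^k,f^k)\to(u_0,v_0,f)$ in $L^{1,1}(K_0)\times L^1(\widetilde{K})$, the continuous-dependence statement inherited from Theorem~\ref{thm-global existence small data} (and propagated through the localization/gluing proofs of Theorems~\ref{thm-nonlin-cauchy-bigData-bndTrap} and~\ref{thm-nonlin-cauchy-bigData-UnBndTrap}) yields $u^k \to u$ in $\mathscr{H}(\widetilde{K})$. Closedness of $\cM \subset \bR^n$ then forces $u$ to be $\cM$-valued. Finally, the compatibility condition transfers to the limit: for every $\phi \in C_0^\infty(\textrm{int}(K_0))$ and every $t \in [0,T_0]$, the functional $\Upsilon^M_\phi(u^k(t),\partial_t u^k(t))=0$ (as each $u^k$ is a classical $\cTM$-valued solution), and Theorem~\ref{thm-trace} gives the continuity of $(u,\partial_t u)\mapsto \Upsilon^M_\phi(u(t),\partial_t u(t))$ in the topology of $\mathscr{H}(\widetilde{K})$, so $\Upsilon^M_\phi(u(t),\partial_t u(t))=0$ as well; uniqueness of $u$ in the $\bR^n$-valued class already follows from Theorem~\ref{thm-unique-strongest}.

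The main obstacle is the continuous dependence in the large-data regime, which is not explicitly stated in Theorems~\ref{thm-nonlin-cauchy-bigData-bndTrap} and~\ref{thm-nonlin-cauchy-bigData-UnBndTrap}. One has to observe that their proofs decompose $\widetilde{K}$ into finitely many sub-trapezoids on which the data are small, so that Hadamard continuity from Theorem~\ref{thm-global existence small data} applies on each piece, and then the pieces are glued together by the finite-speed-of-propagation Corollary~\ref{cor-finite speed of propagation}; one also needs to verify that the same decomposition works uniformly for all sufficiently large $k$, which follows from the $L^1$-convergence of $(Du_0^k \pm v_0^k)$ and $f^k$ via a standard uniform integrability argument (cf. the use of \cite[Theorem 6.11]{Rudin74RCA} in the proof of Theorem~\ref{thm-nonlin-cauchy-bigData-bndTrap}).
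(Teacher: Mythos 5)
Your proposal is correct and follows essentially the same route as the paper: the paper's entire "proof" of Theorem~\ref{thm-nonlin-cauchy-local} is the remark that one repeats the argument of Theorem~\ref{thm-nonlin-cauchy-L1} verbatim with Theorem~\ref{thm-global existence small data} replaced by Theorems~\ref{thm-nonlin-cauchy-bigData-bndTrap} and~\ref{thm-nonlin-cauchy-bigData-UnBndTrap}, which is exactly what you do (approximation via Lemma~\ref{lem-projm-3}, invocation of \cite{BGOR22}, closedness of $\cM$, and the trace functional $\Upsilon^M_\phi$). Your closing observation about propagating Hadamard continuity to the large-data regime by decomposing into sub-trapezoids with small data is precisely the mechanism the paper itself uses later in Corollary~\ref{cor-continuous dependence-nonlin-we}.
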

	
	\begin{remark}\label{rem-relaxed assumptions on F}
		If we were only interested in ther local existence and uniqueness, we could somewhat relax the conditions on non-linearity in equation \eqref{eq:fwm-vec}.
		For instance if the non-linearity, as defined in \eqref{eqn-Phi-def},  could be written in the form
		\begin{align*}
			F(t, x, u, \partial_t u, \partial_x u) & = g\big(t, x, u, (\partial_t - \partial_x) u, (\partial_t + \partial_x) u\big)\\
			& =: g\big(t, x, u, v_+, v_-\big)
		\end{align*}
		with $g$ satisfying the following condition, then for every $C > 0$ there exist positive constants $a, b_+, b_-, c$ such that
		if $|u|, |\sh u| \leq C$, $|v_+|, |v_-|$ have bounded integrals on the rays transversal to the appropriate light rays
		(bounds depending on $C$)
		and $\|v_+ v_-\|_{L^1} \leq C$, then
		\begin{equation}
			|g(t, x, u, v_+, v_-)| \leq a(t, x) + b_+(t, x)|v_+| + b_-(t, x)|v_-| + c(t, x)|v_+v_-|,
		\end{equation}
		and
		\begin{equation}
			\begin{aligned}
				&|g(t, x, u, v_+, v_-) - g(t, x, \sh u, \sh v_+, \sh v_-)| \\
				&\leq a(t, x)|u - \sh u| + b_+(t, x)|v_+ - \sh v_+| + b_-(t, x)|v_- - \sh v_-| + c(t, x)|v_+v_- - \sh v_+\sh v_-|,
			\end{aligned}
		\end{equation}
		with $a, b_+, b_-, c$ satisfying $a \in L^1$, $c \in L^\infty$, $b_+, b_-$ of bounded integrals on the light rays.	
	\end{remark}
	
	The following two results are concerned with the existence of global $\cM$-valued solutions. The first one, see Theorem \ref{thm-M-globalwave-BndTrap}, is for closed  bounded trapezoid and the second one, see Theorem \ref{thm-M-global wave-unBndTrap},  is for closed unbounded trapezoids.
	\begin{theorem}[\textbf{$\cM$-valued global solution for arbitrary data on a compact trapezoid}]
		\label{thm-M-globalwave-BndTrap}
		Let $L>0$, $x_0 \in \bR$ and assume that  $K$ is a compact trapezoid with base $K_0=[x_0-L,x_0+L]$ and height $T \in (0,L]$. We assume that $u$ is a local in time mild solution to  the wave map equation \eqref{eq:fwm-vec} with initial data $(u_0, v_0) \in L^{1,1}(K_0;\cM)$, which satisfies the compatibility condition \eqref{eqn-compatibility} and forcing $f \in L^1(K_0;\bR^n)$.
		Then $u$ is a global mild solution,  i.e. defined on the whole $K$, to the wave map equation \eqref{eq:fwm-vec}.
	\end{theorem}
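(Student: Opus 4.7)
The natural strategy is a maximality/continuation argument. Define
\[
T^\ast := \sup\{ T_1 \in (0, T] : u \text{ extends to a mild solution on the compact trapezoid with base } K_0 \text{ and height } T_1\},
\]
which is positive by the standing local existence hypothesis. I will argue by contradiction that $T^\ast = T$. Assume instead $T^\ast < T$. The plan is to reach a contradiction by producing an extension beyond $T^\ast$ via the small-data theorem applied locally in the space variable.

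The first ingredient is a uniform bound on the slice $L^{1,1}$-norm of $(u(t), \partial_t u(t))$ for $t \nearrow T^\ast$. Since $\cM$ is compact, $\|u(t)\|_{L^\infty(K_t)} \leq \mathrm{diam}(\cM)$ automatically. For the derivatives, I will use the mild formula \eqref{eqn-fwm-mild-in K}, the trace Theorem \ref{thm-trace}, and Proposition \ref{prop:zhou-estimates} to control $\|\partial_x u(t)\|_{L^1(K_t)} + \|\partial_t u(t)\|_{L^1(K_t)}$ by initial data plus $\|h\|_{L^1(K^t)}$ where $h = \Gamma(u)Q(u,u) + P(u)f$. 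Combined with Assumption \ref{assump-A1} and the anticipated non-concentration result Proposition \ref{prop-noncon}, which is the only place where the $\cM$-valued compatibility condition really bites (the pure $\bR^n$ Zhou inequality would give only a quadratic self-referential bound on $\|Q(u,u)\|_{L^1}$ that may blow up for large data), this yields a uniform ceiling for $(u(t), \partial_t u(t))$ up to $T^\ast$.

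The second ingredient is that, by this non-concentration, for every $\varepsilon > 0$ there exists $\delta > 0$ such that for every $x_\ast \in K_{T^\ast}$ the restrictions of $\partial_x u(T^\ast, \cdot) \pm \partial_t u(T^\ast, \cdot)$ to $[x_\ast - \delta, x_\ast + \delta] \cap K_{T^\ast}$ have $L^1$-norm $\leq \varepsilon$, and analogously the forcing $f$ has small $L^1$-norm on the corresponding forward light cone of height $\delta$. Here I will also need to check that the trace $(u(T^\ast, \cdot), \partial_t u(T^\ast, \cdot))$ genuinely lies in $L^{1,1}(K_{T^\ast}; \cM)$ and satisfies the compatibility condition \eqref{eqn-compatibility-t-2}, which follows from Theorem \ref{thm-trace} together with the continuity of the projection $\pi^\perp$ (Assumption \ref{assump-A1}) applied to $(u, \partial_t u)$ approaching the slice $t = T^\ast$.

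Picking $\varepsilon$ equal to the small-data threshold $\eta$ of Theorem \ref{thm-nonlin-cauchy-L1}, the compactness of $K_{T^\ast}$ (since $K$ is bounded) gives a finite cover of $K_{T^\ast}$ by intervals $[x_j - \delta, x_j + \delta]$ on which the small-data smallness conditions \eqref{eqn-smallness of initial condition}--\eqref{bound-Forcing} hold for the shifted Cauchy problem starting at time $T^\ast$. The small-data global existence Theorem \ref{thm-nonlin-cauchy-L1} produces an $\cM$-valued mild solution on each associated forward trapezoid of height $\delta$. Finite speed of propagation (Corollary \ref{cor-finite speed of propagation}) guarantees that these pieces coincide on overlaps and agree with $u$ below level $T^\ast$, so they glue to a mild extension of $u$ onto the full trapezoid of base $K_0$ and height $T^\ast + \delta \wedge (T - T^\ast)$, contradicting the definition of $T^\ast$. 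The main obstacle I anticipate is the non-concentration step, since the tangent-bundle condition and the compactness of $\cM$ must conspire to prevent the $L^1$ bound on $(\partial_x u, \partial_t u)$ from concentrating at the maximal time, in a regime where the Zhou bilinear estimate alone is insufficient.
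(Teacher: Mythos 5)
Your proposal follows essentially the same route as the paper: a continuation argument in which the non-concentration estimates of Proposition \ref{prop-noncon} (the only place the $\cM$-valued structure enters), absolute continuity of the integral, the small-data theorem, and finite speed of propagation combine to extend the solution past a putative maximal time. The only substantive execution difference is that you restart the Cauchy problem at $T^\ast$ itself, which forces you to construct the trace $(u(T^\ast,\cdot),\partial_t u(T^\ast,\cdot))$ in $L^{1,1}(K_{T^\ast};\cM)$ as a limit from below (doable, via the argument in Step 3 of the proof of Theorem \ref{thm-uniqueness-wave-map}, but not automatic from Theorem \ref{thm-trace} alone since the solution is a priori only defined for $t<T^\ast$); the paper sidesteps this by propagating smallness from $t=0$ to some $T_0<T_+$ and restarting there on a strip of height $\eps_0$ with $T_0+\eps_0>T_+$, and moreover obtains full-slice smallness so that no spatial covering and gluing is needed at the restart.
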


	To prove Theorem \ref{thm-M-globalwave-BndTrap} we need the following fundamental result about $L^1$-estimates for solutions to the wave map equation \eqref{eq:fwm-vec}.
	\begin{proposition}\label{prop-noncon}
		Let the assumptions of Theorem \ref{thm-M-globalwave-BndTrap} are satisfied.
		Then,  for every $t_0 \in [0,T]$  we have
		
		\begin{equation}\label{prop-noncon-est1}
			\begin{split}
				\int_{x_0-L+t_0}^{x_0+L-t_0} |(\partial_t-\partial_x) u(t_0,x)| \ud x &\leq \int_{x_0-L}^{x_0+L-2t_0} |(\partial_t-\partial_x) u(0,x)| \ud x \\
				&+ \int_{0}^{t_0} \int_{x_0-L+t}^{x_0+L-2t_0+t} |f(t,x)| \ud x \ud t,
			\end{split}
		\end{equation}
		and
		\begin{equation}\label{prop-noncon-est}
			\begin{split}
				\int_{x_0-L+t_0}^{x_0+L-t_0} |(\partial_t+\partial_x) u(t_0,x)| \ud x &\leq \int_{x_0-L+2t_0}^{x_0+L} |(\partial_t+\partial_x) u(0,x)| \ud x
				\\ &+ \int_{0}^{t_0} \int_{x_0-L+2t_0-t}^{x_0+L-t} |f(t,x)| \ud x \ud t. 
			\end{split}
		\end{equation}
	\end{proposition}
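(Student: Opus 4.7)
My plan is to derive both \eqref{prop-noncon-est1} and \eqref{prop-noncon-est} from the sharp transport estimate \eqref{eq:transport-bound-stronger} applied to the null derivatives $v_\pm:=\partial_t u\mp\partial_x u$, combined with the geometric orthogonality specific to wave maps. I will present only \eqref{prop-noncon-est1} in detail; \eqref{prop-noncon-est} follows identically after exchanging the roles of $v_+$ and $v_-$, of the $(+)$- and $(-)$-characteristics, and of the left and right ends of the base segment. Since $u$ is a mild solution of \eqref{eq:fwm-vec}, the function $v_+$ satisfies, in the distributional sense, the transport equation
\begin{equation*}
(\partial_t+\partial_x)v_+ \;=\; \partial_t^2 u-\partial_x^2 u \;=\; \sum_{j,k}\Gamma_{jk}(u)Q_{jk}(u,u)+P(u)f \;=:\;F.
\end{equation*}
A direct application of \eqref{eq:transport-bound-stronger} to this transport equation would already yield \eqref{prop-noncon-est1} with $|F|$ in place of $|f|$; the substance of the proposition is the reduction from $|F|$ to $|f|$.

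The reduction uses the defining geometric structure of a wave map. Since $u$ takes values in $\cM$, the vector $v_+(t,x)$ lies in $T_{u(t,x)}\cM$ for almost every $(t,x)$, whereas by Assumption \ref{assump-A1} (reflecting the ambient form \eqref{eqn-wave map classical forced} of the wave map equation) the quadratic term $\sum_{j,k}\Gamma_{jk}(u)Q_{jk}(u,u)$ lies in $(T_{u(t,x)}\cM)^\perp$ and $P(u)$ is the orthogonal projection onto $T_{u(t,x)}\cM$. Hence $v_+\cdot\sum_{j,k}\Gamma_{jk}(u)Q_{jk}(u,u)=0$ pointwise a.e., so that
\begin{equation*}
\tfrac12(\partial_t+\partial_x)|v_+|^2 \;=\; v_+\cdot F \;=\; v_+\cdot P(u)f \;\le\; |v_+|\,|f|,
\end{equation*}
by Cauchy--Schwarz and the non-expansiveness of $P(u)$. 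Integrating along a fixed $(+)$-characteristic $\{x-t=\alpha\}$ between $t=0$ and $t=t_0$ and absorbing the $|v_+|$ factor by a standard Gronwall step (rigorized by replacing $|v_+|^2$ with $|v_+|^2+\varepsilon$ and letting $\varepsilon\downarrow 0$) yields the pointwise characteristic bound
\begin{equation*}
|v_+(t_0,\alpha+t_0)| \;\le\; |v_+(0,\alpha)| + \int_0^{t_0}|f(\tau,\alpha+\tau)|\,\ud\tau.
\end{equation*}

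To conclude, I would integrate this pointwise inequality in $\alpha$ over $[x_0-L,\,x_0+L-2t_0]$ and apply Fubini, together with the changes of variable $x=\alpha+t_0$ on the left-hand side and $x=\alpha+\tau$ in the forcing double integral. The $(+)$-characteristics for these values of $\alpha$ sweep out exactly the parallelogram with vertices $(0,x_0-L)$, $(0,x_0+L-2t_0)$, $(t_0,x_0-L+t_0)$, $(t_0,x_0+L-t_0)$, whose lower edge is the initial segment in the first right-hand integral of \eqref{prop-noncon-est1} and whose upper edge is the segment on the left; the arithmetic is the same as in \eqref{eq:transport-bound-stronger} and produces precisely \eqref{prop-noncon-est1}.

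The main technical obstacle is justifying the pointwise chain-rule identity $(\partial_t+\partial_x)|v_+|^2=2v_+\cdot F$ and the ensuing characteristic inequality rigorously at the regularity of the proposition, since $v_+$ only belongs to $C_b([0,T];L^1(K_\cdot))$ and $u$ is merely a mild solution. I would handle this by the smoothing procedure already used in the proof of Theorem \ref{thm-nonlin-cauchy-L1}: approximate $(u_0,v_0,f)$ by smooth, $T\cM$-compatible data for which \cite{BGOR22} delivers classical $\cM$-valued solutions $u^k$ with smooth null derivatives $v_+^k$, establish the integrated inequality \eqref{prop-noncon-est1} for each $u^k$ where every step in the argument above is classical, and then pass to the limit using the continuous dependence on data granted by Theorem \ref{thm-nonlin-cauchy-L1} and the transport continuity built into Lemma \ref{lem:transport}.
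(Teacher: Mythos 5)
Your proposal is correct and follows essentially the same route as the paper: reduce to smooth data via the approximation of Lemma \ref{lem-projm-3}, observe that $v_+=(\partial_t-\partial_x)u$ is differentiated along the $(+)$-characteristic into $(\partial_t^2-\partial_x^2)u$, use $v_+\in T_u\cM$ together with $(\partial_t^2-\partial_x^2)u-f\perp T_u\cM$ to get $\tfrac{\ud}{\ud\ut}|v_+|^2=2f\cdot v_+$, deduce the pointwise characteristic bound, and integrate over the base of the parallelogram. The only (immaterial) difference is the device for passing from $|v_+|^2$ to $|v_+|$: you regularize with $\sqrt{|v_+|^2+\varepsilon}$, while the paper argues from the last zero of $v_+$ before $t_0$.
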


	\begin{proof}[Proof of Proposition \ref{prop-noncon}]
		We will only prove inequality \eqref{prop-noncon-est1} as the proof of \eqref{prop-noncon-est} is similar.
		
		By employing Lemma \ref{lem-projm-3} as we did in the proof of the invariance of $\cM$, see the proof of the last part of Theorem \ref{thm-nonlin-cauchy-L1},
		we can assume that $u$ and $f$ are of $C^1$-class.
		
		Note that it is sufficient to show that for all $y \in [x_0-L, x_0+L-2t_0]$
		\begin{equation}\label{prop-noncon-est2}
			(\partial_t-\partial_x) u(t_0,y+t_0)| \leq (\partial_t-\partial_x) u(0,x)| + \int_{0}^{t_0} |f(t,y+t)| \ud t,
		\end{equation}
		because by integrating above in $y$, with the change of variable $x=y+t_0$,  we get \eqref{prop-noncon-est1}. Let us fix $y \in [x_0-L, x_0+L-2t_0]$ and set
		\begin{equation}\label{prop-noncon-exp1}
			v_+(t) \coloneqq  (\partial_t-\partial_x) u(t,y+t), \textrm{ and } f_+ (t)\coloneqq  f(t,y+t).
		\end{equation}
		We then have $v_+'(t) = (\partial_t^2-\partial_x^2) u(t,y+t)$. Recall that $(\partial_t^2-\partial_x^2) u(t,y+t) - f(t,y+t) \perp v_+(t)$, since $v_+(t) \in T_{u(t,y+t)}\cM$.  Thus, we have
		$$v_+'(t) - f_+(t) \perp v_+(t),$$
		which yields
		\begin{equation}\label{prop-noncon-eq1}
			\frac{\ud}{\ud \ut} |v_+(t)\vert^2 = 2 v_+'(t) \cdot v_+(t) = 2 f_+(t) \cdot v_+(t).
		\end{equation}
		Due to notation \eqref{prop-noncon-exp1}, our aim \eqref{prop-noncon-est2} is to prove that
		\begin{equation}\label{prop-noncon-est3}
			|v_+(t_0)| \leq |v_+(0)| + \int_{0}^{t_0} |f_+(t)|\ud t.
		\end{equation}
		If $v_+(t_0)=0$, then we are done. So, assume that $v_+(t_0) \neq 0$ and let $\tilde{T} \geq 0$ be the smallest positive number such that $v_+(t) \neq 0$ for all $t \in (\tilde{T}, t_0]$. Such $\tilde{T}$ exists due to continuity of $v_+$. Thus, either $v_+(\tilde{T}) =0$ or $\tilde{T}=0$, in any case we get $|v_+(\tilde{T})| \leq |v_+(0)|$. Next, since $v_+(t) \neq 0$ for all $t \in (\tilde{T}, t_0]$, as an application of the Cauchy-Schwarz inequality the expression  \eqref{prop-noncon-eq1} yields
		\begin{equation*}
			\frac{\ud}{\ud \ut} |v_+(t)| \leq   f_+(t), \qquad  \forall t \in (\tilde{T}, t_0].
		\end{equation*}
		Consequently, the fundamental theorem gives, since $|v_+(\tilde{T})| \leq |v_+(0)|$,
		\begin{equation*}
			|v_+(t_0)| \leq |v_+(\tilde{T})| + \int_{\tilde{T}}^{t_0} |f_+(t)|\ud t \leq |v_+(0)| + \int_{0}^{t_0} |f_+(t)|\ud t .
		\end{equation*}
	\end{proof}

	\begin{proof}[Proof of Theorem \ref{thm-M-globalwave-BndTrap}]
		Let us introduce a family $\mathscr{U}$ consisting of solutions to the wave map equation \eqref{eq:fwm-vec}.  By Theorem \ref{thm-nonlin-cauchy-local} this family is not empty. For  each $u \in \mathscr{U}$, let us denote its compact domain trapezoid by $\tilde{K}(u)$ with base $K_0=[x_0-L,x_0+L]$ and height $\tau=\tau(u) \leq T$. Moreover, by the finite speed of propagation proven in Corollary \ref{cor-finite speed of propagation}, we deduce that if $u,v \in \mathscr{U}$, either $u \subset v$ or $v \subset u$, where by $u \subset v$ we mean that  $\tilde{K}(u) \subset \tilde{K}(v)$ and  $v|_{\tilde{K}(u)} = u$. Let $T_+$ be the supremum of the bounded set $\{ \tau(u): u \in \mathscr{U}\}$.
		We will prove that $T_+=T$.
		Suppose by contradiction that $T_+<T$.

		The method of obtaining a contradiction here is taken from \cite{KT98}. Let $K_+$ be the compact trapezoid with base $K_0$ and height $T_+$, see  figure \ref{K-K_+}.
		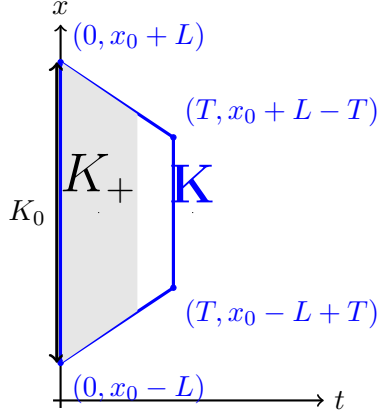
\begin{figure}
			\begin{tikzpicture}[scale=0.50]
				\draw[->,  thick] (-0.2,0)--(7,0) node[right]{$t$};	
				\draw[->, thick] (0,-0.2)--(0,10) node[above]{$x$};
				\draw[blue,very thick] (0,1) node[anchor=north west]{$(0,x_0-L)$}
				-- (0,9) node[anchor=south west]{$(0,x_0+L)$}
				-- (3,7) node[anchor=south west]{$(T,x_0+L-T)$}
				-- (3,3) node[anchor=north west]{$(T,x_0-L+T)$}
				-- cycle;
				\draw[dashed]  (3.5,5)  -- (3.5,5)node[anchor=south]{\textbf{\textcolor{blue}{\huge{K}}}};
				\filldraw[blue] (0,1) circle (2pt);
				\filldraw[blue] (0,9) circle (2pt);
				\filldraw[blue] (3,7) circle (2pt);
				\filldraw[blue] (3,3) circle (2pt);
				\coordinate (v1) at (-0.1,1);
				\coordinate (v2) at (-0.1,9); \draw[<->,very thick] (v1) -- node[left] {$K_0$} (v2);
				\filldraw[gray!20, very thick] (0.1,1.1) node[anchor=north west]{}
				-- (0.1,8.9) node[anchor=south west]{}
				-- (2,7.6) node[anchor=south west]{}
				-- (2,2.4) node[anchor=north west]{}
				-- cycle;
				\draw[dashed]  (1,5)  -- (1,5)node[anchor=south]{\textbf{\textcolor{black}{\huge{$K_+$}}}};
			\end{tikzpicture}
			\caption{Bounded  trapezoids $K$  and $K_+$.}
			\label{K-K_+}
		\end{figure}
		
		Let us fix $\eta$ as in Theorem \ref{thm-global existence small data}. Then clearly by the Lebesgue Dominated Convergence Theorem  there exists $\eps_0 >0$ such that, $\eps_0 \leq T_0 < T_+ + \eps_0 \leq T$  and
		\begin{equation}\label{cor-no-blowup-1}
			\| f\|_{L^1\left( K_{[T_+-\eps_0, T_++\eps_0]} \right) } \leq \eta,
		\end{equation}
		where $K_{[T_+-\eps_0, T_++\eps_0]}=\bigl\{ (t,x) \in K: t \in [T_+-\eps_0, T_++\eps_0]\bigr\}$.

		Note that $K_{[T_+-\eps_0, T_++\eps_0]}$ is a trapezoid with vertices $(T_+-\eps_0, x_0-L+T_+-\eps_0)$,   $(T_+-\eps_0, x_0+L-T_++\eps_0)$, $(T_++\eps_0, x_0+L-T_+-\eps_0)$  and $(T_++\eps_0, x_0-L+T_++\eps_0)$.
		
		Next, we define a continuous and increasing function $\phi: K_0=[x_0-L, x_0+L] \to \bR$ by
		\begin{equation*}
			\phi: [x_0-L, x_0+L] \ni w \mapsto 		  \int\int_{A_w} |f(s,y)| \, \ud s \, \ud y \in \bR,
		\end{equation*}
		where $A_w =\{ (t,x) \in K: x-t \leq w\}$. 		
		Since the set $K_0$ is compact, $\phi$ is uniformly continuous and thus, we can  choose $\eps_1 >0$ such that
		\begin{equation*}
			\phi(w+2\eps_1) - \phi(w) \leq \frac{\eta}{4}, \qquad \forall w \in [x_0-L, x_0+L-2\eps_1].
		\end{equation*}
		\begin{figure}
			\begin{tikzpicture}[scale=0.50]
				\draw[->,  thick] (-0.2,0)--(7,0) node[right]{$t$};	
				\draw[->, thick] (0,-0.2)--(0,10) node[above]{$x$};
				\draw[red,very thick] (0,1) node[anchor=north west]{$(0,x_0-L)$}
				-- (0,9) node[anchor=south west]{$(0,x_0+L)$}
				-- (3,7) node[anchor=south west]{$(T_+,x_0+L-T_+)$}
				-- (3,3) node[anchor=north west]{$(T_+,x_0-L+T_+)$}
				-- cycle;
				\draw[dashed]  (4,5)  -- (4,5)node[anchor=south]{\textbf{\textcolor{red}{\huge{$K_+$}}}};
				\filldraw[red] (0,1) circle (2pt);
				\filldraw[red] (0,9) circle (2pt);
				\filldraw[red] (3,7) circle (2pt);
				\filldraw[red] (3,3) circle (2pt);
				\filldraw[teal] (0,7) circle (2pt);
				\filldraw[teal!70, very thick] (0.05,1.1) node[anchor=north west]{}
				-- (0.05,7) node[anchor=north east]{$w$}
				-- (1.46,7.97) node[anchor=south west]{}
				-- (2.95,6.97) node[anchor=south west]{}
				-- (2.95,3.03) node[anchor=north west]{}
				-- cycle;
				\draw[dashed]  (1,5)  -- (1,5)node[anchor=south]{\textbf{\textcolor{black}{\huge{$A_w$}}}};
			\end{tikzpicture}
			\caption{$A_w$.}
			\label{K_+w}
		\end{figure}
		Further, by \cite[Theorem 6.11]{Rudin74RCA} we can choose $\eps_2>0$ such that
		\begin{equation}
			\begin{split}
				\int_{y-\eps_2}^{y+\eps_2} |(\partial_t-\partial_x)u(0,x)| \ud t &\leq \frac{\eta}{4}, \qquad \forall y \in (x_0-L+\eps_2, x_0+L-\eps_2),
				\\
				\int_{y-\eps_2}^{y+\eps_2} |(\partial_t+\partial_x)u(0,x)| \ud t &\leq \frac{\eta}{4}, \qquad \forall y \in (x_0-L+\eps_2, x_0+L-\eps_2).
			\end{split}	
		\end{equation}
		To move further, we set $\eps_3 = \min\{\eps_1,\eps_2\}$.  Thus, by using \eqref{prop-noncon-est1} and  \eqref{prop-noncon-est} from
		Proposition \ref{prop-noncon}, we deduce that  for every $T_0 < T_+$ we get
		\begin{align}\label{cor-no-blowup-3}
			\int_{y-\eps_3+T_0}^{y+\eps_3+T_0} |(\partial_t-\partial_x) u(T_0,x)| \ud x  &\leq \frac{\eta}{2}, \qquad \forall y \in (x_0-L+\eps_3, x_0+L-2T_0-\eps_3),
			\\		
			\label{cor-no-blowup-6}
			\int_{y-\eps_3-T_0}^{y+\eps_3-T_0} |(\partial_t+\partial_x) u(T_0,x)| \ud x  &\leq \frac{\eta}{2}, \qquad \forall y \in (x_0-L+2T_0 +\eps_3, x_0+L-\eps_3).
		\end{align}
		Now we set $\eps = \min\{ \eps_0, \eps_3 \}$ and get that
		\begin{equation}\label{cor-no-blowup-7}
			\int_{y-\eps-T_0}^{y+\eps+T_0} |\partial_t u(T_0,x)\pm\partial_x u(T_0,x)| \ud x  \leq \eta,
		\end{equation}
		for every $T_0 <T_+$  and $y \in (x_0-L+2T_0 + \eps , x_0+L-2 T_0-\eps)$. This implies that
		\begin{equation}\label{cor-no-blowup-8}
			\int_{x_0-L+T_0}^{x_0+L-T_0} |\partial_t u(T_0,x)\pm\partial_x u(T_0,x)| \ud x  \leq \eta, \quad \forall T_0 <T_+.
		\end{equation}
		Next, let us take an arbitrary $T_0 \in (T_+-\eps,T_+)$ and consider the trapezoid $K_\eps$ with vertices $(T_0, x_0-L+T_0)$,   $(T_0, x_0+L-T_0)$, $(T_0+\eps_0, x_0+L-T_+-\eps_0)$  and $(T_0+\eps_0, x_0-L+T_++\eps_0)$, i.e.  $K_\eps=K_{[T_0, T_0+\eps_0]}$.
		
		Since  \eqref{cor-no-blowup-8} holds and \eqref{cor-no-blowup-1} implies that $\|f\|_{L^1(K_\eps)} \leq \eta$, by Theorem  \ref{thm-global existence small data} there exists  a unique  mild  solution $u_{\eps}$ to  problem \eqref{eq:fwm-vec}  on  trapezoid $K_\eps$, with initial data $(u(T_0),\partial_t u(T_0))$.
		
		Since $u$ is also a unique  mild  solution to problem \eqref{eq:fwm-vec} on $K_+$, by the uniqueness of mild solutions we infer that $u=u_\eps$ on $K_+ \cap K_\eps$. But, since $T_0+\eps_0>T_+$, this implies that the solution $u$ can be extended beyond $T_+$ what contradicts the definition of $T_+$. Hence Theorem \ref{thm-M-globalwave-BndTrap} follows.
	\end{proof}

	\begin{theorem}[\textbf{$\cM$-valued global solution for arbitrary data on an unbounded trapezoid}]
		\label{thm-M-global wave-unBndTrap}
		Let us assume that $\widetilde{K}$ is any closed and unbounded trapezoid with height $L \in (0,\infty]$ and base $\widetilde{K}_0$ which is a connected subset of $\bR$. Assume that the  initial data $(u_0, v_0) \in L^{1,1}(\widetilde{K}_0;\bR^n)$ and satisfy the compatibility condition \eqref{eqn-compatibility}. Let us further assume that the forcing $f \in L^1(\widetilde{K}; \bR^n)$.  Then there exists a global, i.e. on the whole $\widetilde{K}$,    solution $u$ is global and satisfy the wave map equation \eqref{eq:fwm-vec}.
		Finally, the solution $u$ depends continuously on the initial data, where $u$ is considered in the space $\mathscr{H}(\widetilde{K})$.
	\end{theorem}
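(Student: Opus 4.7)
The plan is to reduce the unbounded trapezoid case to the compact case handled in Theorem~\ref{thm-M-globalwave-BndTrap} by exhausting $\widetilde{K}$ with compact subtrapezoids and gluing via uniqueness. I would fix an increasing sequence of compact trapezoids $K^{(n)} \subset \widetilde{K}$ with bases $K_0^{(n)} \subset \widetilde{K}_0$ of half-length $L_n \to \infty$ and heights $T_n \nearrow L$ (adapting the placement to whether $\widetilde{K}_0$ is $\bR$, a half-line, or a finite interval), so that $\bigcup_n K^{(n)} = \widetilde{K}$. Restricting the data to each $(K_0^{(n)}, K^{(n)})$, Theorem~\ref{thm-nonlin-cauchy-local} supplies a local mild $\cM$-valued solution, which Theorem~\ref{thm-M-globalwave-BndTrap} then extends to a global mild solution $u^{(n)}$ on the whole $K^{(n)}$. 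Uniqueness (Theorem~\ref{thm-unique-strongest}, applicable because an $\cM$-valued mild solution is in particular $\bR^n$-valued) together with Proposition~\ref{prop-restriction-mildSoln} forces $u^{(n)}|_{K^{(m)}} = u^{(m)}$ for $m \leq n$, so that $u(t,x) \coloneqq u^{(n)}(t,x)$ for any $n$ with $(t,x) \in K^{(n)}$ defines an $\cM$-valued mild solution of \eqref{eq:fwm-vec} on all of $\widetilde{K}$.

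Next, I would verify that $u \in \mathscr{H}(\widetilde{K})$. Boundedness of $u$ is immediate from compactness of $\cM$. For the derivative norms on each slice $\widetilde{K}_t$ the key input is Proposition~\ref{prop-noncon}, which applied on $K^{(n)}$ bounds $\int_{K^{(n)}_t}|(\partial_t \pm \partial_x)u(t,x)|\,\ud x$ by $\|v_0 \pm Du_0\|_{L^1}$ on the relevant piece of the base plus $\|f\|_{L^1}$ over the intermediate truncated light cone. Because $v_0 \pm Du_0 \in L^1(\widetilde{K}_0)$ and $f \in L^1(\widetilde{K})$, letting $L_n \to \infty$ yields uniform bounds for $\|(\partial_t \pm \partial_x)u(t)\|_{L^1(\widetilde{K}_t)}$, hence for $\|\partial_t u(t)\|_{L^1}$ and $\|\partial_x u(t)\|_{L^1}$ by taking half-sums and half-differences. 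The continuity of these slice norms in $t$ then follows from the already established continuity on each compact $K^{(n)}$ combined with dominated convergence for the spatial tails.

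For the continuous dependence in $\mathscr{H}(\widetilde{K})$, given a sequence $(u_0^k,v_0^k,f^k) \to (u_0,v_0,f)$, I would combine the Hadamard continuity of the small-data theory (Theorems~\ref{thm-global existence small data} and~\ref{thm-nonlin-cauchy-L1}) with the subdivision strategy used to prove Theorem~\ref{thm-M-globalwave-BndTrap}. On each compact $K^{(n)}$ one partitions the time interval into short steps on which, thanks to the slice bounds of Proposition~\ref{prop-noncon}, the relevant data are small uniformly in $k$; the small-data continuity then transfers step by step and gives $u^k \to u$ in $\mathscr{H}(K^{(n)})$. A diagonal argument coupled with the uniform slice estimates finally upgrades this to convergence in $\mathscr{H}(\widetilde{K})$.

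The main obstacle I foresee is precisely this last step: promoting Hadamard continuity from the small-data regime to arbitrary data over an unbounded spacetime while keeping the $L^1$ slice norms under control as the spatial domain grows. Proposition~\ref{prop-noncon} is the decisive a priori ingredient, and the subdivision technique borrowed from the proof of Theorem~\ref{thm-M-globalwave-BndTrap} is the mechanism that converts local small-data continuity into global large-data continuity.
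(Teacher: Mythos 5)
Your proposal is correct in substance but follows a genuinely different decomposition from the paper's. The paper disposes of this theorem in one line by pointing to the proof of Theorem \ref{thm-nonlin-cauchy-bigData-UnBndTrap}: one chooses a single compact interval $[a,b]$ outside of which $\|Du_0\pm v_0\|_{L^1}$ and the relevant $L^1$ norms of $f$ are below the small-data threshold $\eta$, solves once on the compact trapezoid over $[a,b]$ via Theorem \ref{thm-M-globalwave-BndTrap}, solves once on each of the two semi-bounded tail trapezoids via the small-data global theory (Theorem \ref{thm-nonlin-cauchy-L1}), and glues the three pieces. You instead exhaust $\widetilde K$ by a nested sequence of compact trapezoids, apply Theorem \ref{thm-M-globalwave-BndTrap} on each, glue by uniqueness, and recover the global $\mathscr{H}(\widetilde K)$ bounds a posteriori from Proposition \ref{prop-noncon}. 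Both routes work and rest on the same ingredients. The paper's split has the advantage that membership in $\mathscr{H}(\widetilde K)$ and continuous dependence come for free, because the tails are covered in one stroke by the small-data theorem, which already controls the full semi-infinite trapezoids; your exhaustion defers that control to a separate a priori estimate. Two small corrections: the uniqueness you need to glue the $u^{(n)}$ is the compact-trapezoid statement (Theorem \ref{thm-uniqueness-wave-map} together with Corollary \ref{cor-finiteSOP}), not Theorem \ref{thm-unique-strongest}, which is formulated on full strips $[0,T_0\rb\times\bR$; and for finite height $L$ you should take $T_n=L$ once $L_n\ge L$ so the exhaustion reaches the top slice.

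The two places where your argument is thin are exactly where the paper's three-piece split does the work for you. First, continuity of $t\mapsto(\partial_t u(t,\cdot),\partial_x u(t,\cdot))$ in $L^1(\widetilde K_t)$ does not follow from continuity on each compact piece plus ``dominated convergence for the spatial tails'' as stated; you need the tail contributions to be small \emph{uniformly in $t$}, which you get by applying Proposition \ref{prop-noncon} (or Lemma \ref{lem:transport}) on trapezoids based on the tails, using absolute continuity of the integrals of $Du_0\pm v_0$ and $f$ -- i.e.\ precisely the smallness conditions \eqref{eqn-[a,b]-rays-small-data}--\eqref{eqn-[a,b]-rays-small-f}. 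Second, for continuous dependence, a diagonal argument over the $K^{(n)}$ only yields convergence in $\mathscr{H}(K^{(n)})$ for each fixed $n$; to upgrade to $\mathscr{H}(\widetilde K)$ you must make the tail smallness uniform in the perturbation index $k$, which holds because $(u_0^k,v_0^k,f^k)\to(u_0,v_0,f)$ in $L^{1,1}\times L^1$, but this should be said explicitly. With these two points supplied, your proof closes.
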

	\begin{proof}[Proof of Theorem \ref{thm-M-global wave-unBndTrap}] The proof of this result is based on Theorem \ref{thm-M-globalwave-BndTrap} in a similar way the proof of
		Theorem \ref{thm-nonlin-cauchy-bigData-UnBndTrap} was based on the proof of Theorem \ref{thm-nonlin-cauchy-bigData-bndTrap}.
		
	\end{proof}
	
	\begin{remark}\label{rem-KT98-global-soln}
		As noticed by Keel and Tao in \cite{KT98}, one can prove global existence using the \emph{pointwise conservation laws}.
		It seems it suffices to assume that there exists an integrable function $a$ and functions $b_\pm$ with bounded integrals over the light rays such that
		\begin{equation}
			|g(t, x, u, v_+, v_-)^\perp| \leq a(t, x)(1 + |u|) + b_+(t, x)|v_+| + b_-(t, x)|v_-|,
		\end{equation}
		where $\perp$ denotes the projection on the plane spanned by $v_+$ and $v_-$.
		
		Global existence can also be deduced from boundedness of the energy,
		but this would probably require stronger assumptions on the forcing term.
	\end{remark}

	\subsection{Uniqueness of global solutions}
	\label{sec:wave-map-M-global-soln-uniq}
	The idea is to prove the uniqueness result first for compact trapezoid domain, then extend the result to unbounded trapezoid case along the lines of proof of Theorem \ref{thm-nonlin-cauchy-bigData-UnBndTrap}.
	\begin{theorem}
		\label{thm-uniqueness-wave-map}
		Assume that $L>0$ and $K$ is a compact trapezoid with base $K_0$ and height $T_0 \in (0, L]$.  Assume that the
		initial data $(u_0, v_0) \in L^{1,1}(K_0;\cM)$, satisfies the compatibility condition \eqref{eqn-compatibility},   and the forcing $f \in L^1(K;\bR^n)$.
		Let $u^1,u^2 \in \mathscr{H}(K)$ be two $\cM$-valued  mild  solutions in $K$  to problem \eqref{eq:fwm-vec}, in the sense of Definition \ref{def-sol-mild}, with the same forcing $f$ and the initial condition \eqref{eqr-IC} with the same data $(u_0,v_0)$.
		Then $u^1=u^2$.
	\end{theorem}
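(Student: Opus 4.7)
The plan is to form the difference $w := u^1 - u^2$, which (as the difference of two bounded $\cM$-valued functions in $\mathscr{H}(K)$) is itself in $\mathscr{H}(K)$, and to show $w \equiv 0$ by a Zhou-type self-bootstrap estimate on small triangles. Since both solutions share the same initial data and forcing, $w$ is a mild solution of the linear wave equation $\partial_t^2 w - \partial_x^2 w = g$ with vanishing initial data, where
\begin{equation*}
g = \sum_{jk}\bigl[\Gamma_{jk}(u^1) - \Gamma_{jk}(u^2)\bigr] Q_{jk}(u^1, u^1) + \sum_{jk}\Gamma_{jk}(u^2)\bigl[Q_{jk}(u^1, u^1) - Q_{jk}(u^2, u^2)\bigr] + [P(u^1) - P(u^2)] f.
\end{equation*}
The bilinearity identity $Q(u^1, u^1) - Q(u^2, u^2) = Q(w, u^1) + Q(u^2, w)$, combined with the global Lipschitz and boundedness conditions of Assumption~\ref{assump-A1}, then gives the pointwise bound $|g| \leq L|w|(|Q(u^1, u^1)| + |f|) + \gamma(|Q(w, u^1)| + |Q(u^2, w)|)$.

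Fix $(t_0, x_\ast) \in K$ and write $T := T_{(t_0, x_\ast)}$, $T_0 := [x_\ast - t_0, x_\ast + t_0]$. Since $w$ has zero traces at $t = 0$, the mild formula \eqref{eq:wave-sol-mild} gives $\|w\|_{L^\infty(T)} \leq \tfrac12 \iint_T |g|$, and Proposition~\ref{prop:zhou-estimates} applied to the pair $(w, u^j)$ has the factors corresponding to $w$'s initial data vanish, leaving
\begin{equation*}
\iint_T |Q(w, u^j)| \;\leq\; \tfrac12 (a + b + 2M_j)\iint_T |g|,
\end{equation*}
where $a := \int_{T_0}|Du_0 - v_0|\,\ud x$, $b := \int_{T_0}|Du_0 + v_0|\,\ud x$, and $M_j := \iint_T |h_{u^j}|$ denotes the $L^1$-norm on $T$ of the nonlinear right-hand side of \eqref{eq:fwm-vec} evaluated at $u^j$. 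The same proposition applied to $u^j$ itself, together with $|h_{u^j}| \leq \gamma(|Q(u^j, u^j)| + |f|)$, gives the quadratic self-bound $M_j \leq \gamma(a + M_j)(b + M_j) + \gamma \iint_T|f|$, so $M_j$ is small whenever $a$, $b$ and $\iint_T|f|$ are.

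Substituting all of the above into the pointwise bound for $|g|$ and integrating over $T$ yields the key self-referential inequality
\begin{equation*}
\iint_T |g| \;\leq\; \iint_T |g|\cdot \Bigl[\tfrac{L}{2}\bigl((a + M_1)(b + M_1) + \iint_T|f|\bigr) + \gamma(a + b + M_1 + M_2)\Bigr].
\end{equation*}
By absolute continuity of the integrals defining $a$, $b$ and $\iint_T|f|$, the bracketed quantity tends to $0$ as $t_0 \to 0$, uniformly in $x_\ast$. Hence there exists $\delta > 0$ such that the bracket is $\leq \tfrac12$ on every triangle $T_{(t_0, x_\ast)} \subset K$ with $t_0 \leq \delta$, which forces $\iint_T |g| = 0$ and therefore $w \equiv 0$ on $K \cap ([0, \delta] \times \bR)$.

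Iterating the construction from successive times $0, \delta, 2\delta, \ldots$ (at each restart $u^1$ and $u^2$ share the same $\mathscr{H}$-traces by the previous step, so the same argument applies) covers the compact trapezoid $K$ in finitely many steps. The delicate point, and the place demanding the most care, is obtaining the uniform lower bound $\delta > 0$ over all restart times $t_\ast \in [0, T_0]$: this uses the fact that $u^j \in \mathscr{H}(K)$ implies $t \mapsto (\partial_t u^j(t, \cdot), \partial_x u^j(t, \cdot))$ is continuous into $L^1(K_t)$, so its image is compact and therefore equi-integrable, together with the uniform absolute continuity of the integral of $f \in L^1(K)$ over triangles of small area.
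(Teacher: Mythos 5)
Your argument is correct in its architecture and reaches the conclusion, but it follows a noticeably different route from the paper. The paper first localises in \emph{space and time}: it covers a thin slab $K^\natural$ by small maximal trapezoids $K(y_0,\delta)$ on whose bases $\|Du_0\pm v_0\|_{L^1}$ and $\|f\|_{L^1}$ are below the threshold $\eta$ of the small-data theorem, invokes the fixed-point \emph{uniqueness} of Theorem \ref{thm-nonlin-cauchy-L1} on each piece, and then continues by a $\sup$/connectedness argument ($T_\ast=\sup\mathbf{S}$, closedness via the trace theorem, contradiction if $T_\ast<T_0$). You instead work directly with $w=u^1-u^2$ and re-derive the contraction for this specific pair: the self-referential inequality $\iint_T|g|\le \theta(T)\iint_T|g|$ with $\theta(T)\to 0$ on small triangles is exactly the $\tfrac12$-contraction of the map $\Phi$ from the proof of Theorem \ref{thm-global existence small data}, unfolded and applied to the two fixed points $h_{u^1},h_{u^2}$; and you replace the $\sup$ argument by a finite iteration with a uniform step $\delta$, correctly justified by equi-integrability of the compact family $\{(\partial_t u^j(t,\cdot),\partial_x u^j(t,\cdot)):t\in[0,T_0]\}\subset L^1$. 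What your route buys is that it bypasses the restriction of the fixed-point uniqueness to the ball $\bB(0;R)$ (condition \eqref{eqn-u-space}), so you never need to check that the restrictions of $u^1,u^2$ land in that ball; what the paper's route buys is that all the hard estimates are already packaged in Theorems \ref{thm-global existence small data} and \ref{thm-nonlin-cauchy-L1}.

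One step does not hold as written: from the quadratic self-bound $M_j\le \gamma(a+M_j)(b+M_j)+\gamma\iint_T|f|$ you cannot conclude that $M_j$ is small when $a$, $b$ and $\iint_T|f|$ are, since the inequality is also satisfied by large $M_j$ (e.g.\ $M\le\gamma M^2$ holds for all $M\ge 1/\gamma$); you would need a separate continuity-in-$t_0$ bootstrap to exclude the large root. Fortunately the conclusion you need is immediate by a cheaper argument you already use elsewhere: since $u^j$ is a mild (hence, by Remark \ref{rem-M-mild=weak}, weak) solution on the compact trapezoid $K$, condition (iii) of Definition \ref{def-sol-weak} gives $h_{u^j}\in L^1(K)$, and absolute continuity of the integral (\cite[Theorem 6.11]{Rudin74RCA}) makes $M_j=\iint_T|h_{u^j}|$ small, uniformly over all triangles of sufficiently small height. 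The same remark guarantees a priori that $\iint_T|g|<\infty$, which you need in order to absorb it from the right-hand side. With that repair the proof is complete.
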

	
	\begin{proof}[Proof of Theorem \ref{thm-uniqueness-wave-map}]
		\textbf{Step 1.} 		
		Let us fix $K_0\subset \bR$, $T_0$ and $L>0$ as in the assumptions. Let us take $\eta>0$ as in Theorem \ref{thm-nonlin-cauchy-L1} and first observe that by the Lebesgue Dominated Convergence Theorem we can find $T_1  \leq T_0$ such that if $K^\ast$ is the  trapezoid with base $K_0$ and the height $T_1$, i.e.
		\[
		K^\ast=K \cap ([0,T_1]\times \bR)
		\]
		then
		\begin{equation}\label{eqn-f in L^1}
			\Vert f \Vert_{L^1(K^\ast)} < \eta.
		\end{equation}

		Since by assumptions $Du_0, v_0  \in L^1(K_0; \bR^n)$,
		by \cite[Theorem 6.11]{Rudin74RCA}   we can find $L_1>0$ such that  for every closed interval $I \subset K_0$, the following holds
		\begin{equation}\label{eqn-small initial data}
			\Leb(I)\leq 2 L_1 \then 		\|D u_0 + v_0\|_{L^1(I)} \leq \eta, \;\; \|D u_0 - v_0\|_{L^1(I)} \leq \eta.
		\end{equation}
		Put
		\begin{equation}\label{eqn-delta}
			\begin{split}
				\delta&\coloneqq \min\{ L_1, T_1\} \mbox{ and }
				\\
				K^\natural&\coloneqq  K^\ast \cap ([0, \delta/2 ]\times \bR)=K \cap ([0, \delta/2 ]\times \bR).
			\end{split}
		\end{equation}
		Let us note that $K^\natural$ is simply the  trapezoid with base $K_0$ and the height $\delta/2$.
		
		Let us denote by $ K(y_0,\delta)$ the maximal compact trapezoid with base $K_0(y_0,\delta)=[y_0-\delta,y_0+\delta] \cap K_0$ and the height $ \delta $. Then we have
		\begin{equation}\label{eqn-K natural}
			K^\natural= \bigcup_{y_0 \in K_0} K(y_0,\delta).
		\end{equation}

		\textbf{Step 2.}
		First we will show that  $u^1=u^2$ in $K^\natural$. In view of equality \eqref{eqn-K natural} is  sufficient to prove that
		$u^1=u^2$ in $K(y_0,\delta)$, for every $y_0\in K_0$.
		
		For this aim let us choose and fix $y_0\in K_0$. We observe that in view of \eqref{eqn-f in L^1}, \eqref{eqn-delta} and \eqref{eqn-small initial data}, due to Theorem  \ref{thm-nonlin-cauchy-L1}, there exists a \textbf{unique} $\cM$-valued mild solutions $u \in \mathscr{H}(K(y_0,\delta);\bR^n)$
		to problem \eqref{eq:fwm-vec}-\eqref{eqr-IC} in  the trapezoid  $K(y_0,\delta)$ with the forcing $f|_{K(y_0,\delta)}$ and  the initial data $(u_0|_{K_0(y_0,\delta)},v_0|_{K_0(y_0,\delta)})$. But, since the restrictions of $u^i$, $i=1,2$ on $K(y_0,\delta)$ are also mild solutions to problem \eqref{eq:fwm-vec}-\eqref{eqr-IC} with  forcing $f$ and  initial condition
		$(u_0,v_0)$  restricted to  $K(y_0,\delta)$ and $K_0(y_0,\delta)$ respectively,  we infer that
		$u^1=u^2 = u$ on $K(y_0,\delta)$. Hence, $u^1=u^2$ on $K^\natural$ as required.

		\textbf{Step 3.}
		In this step we define
		\begin{align}\label{eqn-T_ast}
			\begin{split}
				& \mathbf{S}  \coloneqq  \bigl\{T \in [0,T_0]: u^1=u^2 \mbox{ in } K \cap ([0,T]\times \bR)\bigr\}, \\
				& T_\ast \coloneqq \sup \mathbf{S}.				
			\end{split}
		\end{align}
		
		In \textbf{Step 2}  we proved that the set $\mathbf{S}$ defined in \eqref{eqn-T_ast} is non-empty and that $\delta/2 \in S$. Thus
		$T_\ast \in [\delta/2,L]$. It's obvious that either $\mathbf{S}=[0,T_\ast)$ or  $\mathbf{S}=[0,T_\ast]$.\\
	We aim now to prove that $\mathbf{S}=[0,T_\ast]$.
		Suppose by contradiction that $\mathbf{S}=[0,T_\ast)$.	Therefore,
		\[
		K \cap ([0,T_\ast)\times \bR)= \bigcup_{T \in \mathbf{S}} K \cap ([0,T]\times \bR).
		\]
		By the definition of the set $\mathbf{S}$  we infer that
		\begin{align}\label{eqn-u^1=u^2 on T_ast}
			u^1&=u^2 \mbox{ on } K \cap ([0,T_\ast)\times \bR).
		\end{align}
		Let us recall that $T_\ast \in [\delta/2,L]$. Hence, since by assumptions
		$u^1,u^2 \in \mathscr{H}(K)$,
		by the Trace Theorem  \ref{thm-trace} the functions
		$\bigl( \partial_x u^i(T_\ast, \cdot),\partial_t u^i(T_\ast,\cdot)\bigr)$, $i=1,2$,   belong to   $(L^\infty \cap \dot{W}^{1,1}(K_{T_\ast}))\times L^1(K_{T_\ast})$.
		Moreover, by the definition of the space $\mathscr{H}(K)$,
		if  $(T_n)$ is an sequence  such that $T_n \toup T_\ast$, then
		these two functions are limits in the space $(L^\infty \cap \dot{W}^{1,1}(K_{T_\ast}))\times L^1(K_{T_\ast})$ of
		the sequences  $\bigl( \partial_x u^i(T_n, \cdot),\partial_t u^i(T_n,\cdot)\bigr)$.
		Hence, by \eqref{eqn-u^1=u^2 on T_ast}, we infer that
		\begin{align}\label{eqn-u at T_ast}
			\bigl( \partial_x u^1(T_\ast, \cdot),\partial_t u^1(T_\ast,\cdot)\bigr)=  \bigl( \partial_x u^2(T_\ast, \cdot),\partial_t u^2(T_\ast,\cdot)\bigr).
		\end{align}
		Therefore, $T_\ast \in \mathbf{S}$, contradiction. Thus we infer that   $\mathbf{S}=[0,T_\ast]$.

		\textbf{Step 4.} Finally we are going to prove that $T_\ast=T_0$. Suppose by contradiction that $T_\ast<T_0$.
		Then, both functions $u^i$, $i=1,2$, or more precisely their appropriate restrictions,  belong to the space
		$u^1,u^2 \in \mathscr{H}(K\cap  ([T_\ast,T_0]\times \bR); \bR^n)$. Moreover, by \eqref{eqn-u at T_ast}, the traces of these functions at the initial time $T_\ast$ are equal. Furthermore, due to Remark \ref{rem-M-mild=weak} and \eqref{eqn-fwm-vector-in K-vector}, it is easy to see that each $u^i,i=1,2$ satisfy
		\begin{equation}%\label{eqn-fwm-vector-in K-vector-T-T*}
			\begin{split}
				&\int_{K_{T_0}} u^i(T_0,x)\partial_t \varphi(T_0,x)\ud x - \int_{K_{T_0}} \partial_t u^i(T_0,x)\varphi(T_0,x)\ud x
				\\		&+\iint_{K \cap ([T^\ast,T_0] \times \bR)}	u^i(\partial_t^2 \varphi - \partial_x^2 \varphi)\ud x\ud t =
				\int_{K_{T^\ast}} u^i(T^\ast,x)\partial_t \varphi(T^\ast,x)\ud x - \int_{K_{T^\ast}} \partial_t u^i(T^\ast,x)\varphi(T^\ast,x)\ud x
				\\
				&+  \iint_{K \cap ([T^\ast,T_0] \times \bR) } \bigl[ \sum_{j,k=1}^n \Gamma_{jk}(u^i)(\partial_t u_j^i\partial_t u_k^i - \partial_x u_j^i \partial_x u_k^i)
				+ \sum_{j=1}^nP_{j}(u^i)f_{j}\Bigr]  \varphi(t,x) \ud x\ud t .
		\end{split}\end{equation}
		Thus, we infer that $u^1,u^2$ are the weak (and so mild) solutions  to problem \eqref{eq:fwm-vec} with the same forcing $f_{|K\cap  ([T_\ast,T_0]\times \bR)}\in L^1(K\cap  ([T_\ast,T_0]\times \bR); \bR^n)$ and the initial data \eqref{eqr-IC} at time $T_\ast$   with the same data initial data \eqref{eqn-u at T_ast}  in the compact trapezoid  $K\cap  ([T_\ast,T_0] \times \bR)$.

		Then, by \textbf{Step 1} above, there exists $\delta_\ast \in (0,T_0-T_\ast]$, such that
		\begin{align}\label{eqn-u^1=u^2 on T_ast T_ast+delta}
			u^1&=u^2 \mbox{ on } K \cap ([T_\ast,T_\ast+\delta_\ast])\times \bR).
		\end{align}
		Combining the last equality \eqref{eqn-u^1=u^2 on T_ast T_ast+delta} with equality \eqref{eqn-u^1=u^2 on T_ast} we deduce that
		\begin{align}\label{eqn-u^1=u^2 on 0 T_ast+delta}
			u^1&=u^2 \mbox{ on } K \cap ([0,T_\ast+\delta_\ast])\times \bR).
		\end{align}
		This contradicts the definition of $T_\ast$.
		The proof of Theorem \ref{thm-uniqueness-wave-map} is now complete.
	\end{proof}
	
	Now we extend the above uniqueness result to a closed and unbounded trapezoid.
	\begin{theorem}
		\label{thm-uniqueness-wave-map-unBndTrap}
		Let $K$ be any closed and unbounded trapezoid with base $K_0$ and height $L \in (0,\infty]$. Let the
		initial data $(u_0, v_0) \in L^{1,1}(K_0;\cM)$ and the forcing $f \in L^1(K;\bR^n)$ satisfy the assumption as in Theorem \ref{thm-uniqueness-wave-map}.
		Let $u^1,u^2 \in \mathscr{H}(K)$ be two $\cM$-valued  mild  solutions in $K$  to problem \eqref{eq:fwm-vec}, in the sense Definition \ref{def-sol-mild} and whose existence is shown in Theorem\ref{thm-M-global wave-unBndTrap}, with the same forcing $f$ and the initial data $(u_0,v_0)$.
		Then $u^1=u^2$.
	\end{theorem}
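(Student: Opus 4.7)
The plan is to reduce the unbounded case to the compact case already handled by Theorem \ref{thm-uniqueness-wave-map}, exploiting the finite-speed-of-propagation structure that is built into the definition of mild solution via the representation formula \eqref{eqn-fwm-mild-in K}. Concretely, I would fix an arbitrary point $(t,x) \in K$ and consider the closed triangle $T_{(t,x)}$ with vertices $(t,x)$, $(0, x-t)$, $(0, x+t)$. By the defining property of a trapezoid (Definition \ref{def-abstract trapezoid} and Proposition \ref{prop-trapezoid}), $T_{(t,x)}$ is a compact (maximal) trapezoid contained in $K$, with base $[x-t, x+t] \subset K_0$ and height $t$.

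The next step is to verify that the restrictions $u^i|_{T_{(t,x)}}$, $i=1,2$, are $\cM$-valued mild solutions on $T_{(t,x)}$ with initial data $(u_0, v_0)|_{[x-t,x+t]}$ and forcing $f|_{T_{(t,x)}}$. This is precisely Proposition \ref{prop-restriction-mildSoln}. One also needs to check that the restrictions lie in $\mathscr{H}(T_{(t,x)})$, which follows immediately from $u^i \in \mathscr{H}(K)$ because the seminorms in \eqref{eqn-H(K)-norm} restricted to any sub-trapezoid are dominated by the corresponding seminorms on $K$. The compatibility condition \eqref{eqn-compatibility} persists on $[x-t,x+t] \subset K_0$ since it holds a.e., and the integrability $f|_{T_{(t,x)}} \in L^1(T_{(t,x)})$ is automatic since $T_{(t,x)}$ is bounded and $f \in L^1_{\mathrm{loc}}$ at the very least (indeed $L^1(K)$ by assumption).

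Having set up the compact case, I would invoke Theorem \ref{thm-uniqueness-wave-map} on the compact trapezoid $T_{(t,x)}$, with base $K_0^\prime = [x-t, x+t]$ and height $T_0 = t \leq L$, to conclude that $u^1 = u^2$ on $T_{(t,x)}$, and in particular $u^1(t,x) = u^2(t,x)$. Since $(t,x) \in K$ was arbitrary, we obtain $u^1 = u^2$ on the whole $K$.

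I do not expect any serious obstacle in this argument; the only mildly subtle point is confirming that the restriction of a function in $\mathscr{H}(K)$ to a sub-trapezoid lies in $\mathscr{H}$ of that sub-trapezoid, but this is transparent from Definition \ref{def-space H(K)}. The entire extension is therefore a short ``localization in the light cone'' argument, enabled by the fact that both the existence (Theorem \ref{thm-M-global wave-unBndTrap}) and the uniqueness (Theorem \ref{thm-uniqueness-wave-map}) on compact trapezoids are already in place, together with the trapezoidal restriction principle.
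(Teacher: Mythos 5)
Your argument is correct, and it is a genuinely different (and more economical) route than the one the paper takes. The paper's proof mirrors the structure of its existence proof for unbounded trapezoids: it chooses a bounded interval $[a,b]$ and a height $\bar L$ so that the data are $\eta$-small on the two tails $K^{\pm}$, applies the small-data uniqueness of Theorem \ref{thm-nonlin-cauchy-L1} there, applies the compact-trapezoid uniqueness of Theorem \ref{thm-uniqueness-wave-map} on the middle piece $K_{a,b,\bar L}$, glues to get $u^1=u^2$ on a slab of height $\bar L$, and then repeats the continuation-in-time argument of Steps 3--4 of that proof to reach the full height. You instead localize to the backward light cone $T_{(t,x)}$ of an arbitrary point, observe via Proposition \ref{prop-restriction-mildSoln} that the restrictions are mild solutions on this compact maximal trapezoid with the restricted data (which trivially inherit the $L^{1,1}$ regularity, the compatibility condition, and the $L^1$ forcing bound), and invoke Theorem \ref{thm-uniqueness-wave-map} directly, since that theorem requires no smallness. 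This avoids both the spatial decomposition and the time-continuation step entirely. The only point that deserves the care you give it is that $u^i|_{T_{(t,x)}}\in\mathscr{H}(T_{(t,x)})$: the sections $(T_{(t,x)})_s=[x-t+s,\,x+t-s]$ vary continuously, so continuity of $s\mapsto \mathbf{1}_{(T_{(t,x)})_s}\partial u^i(s,\cdot)$ in $L^1$ follows from $u^i\in\mathscr{H}(K)$ together with absolute continuity of the integral, exactly as in the paper's own continuity argument for mild solutions. Your approach buys brevity and makes the finite-speed-of-propagation mechanism explicit; the paper's approach keeps the proof structurally parallel to Theorems \ref{thm-nonlin-cauchy-bigData-UnBndTrap} and \ref{thm-M-global wave-unBndTrap} and yields equality on full time slabs at each stage, but both conclusions coincide since the union of the backward cones is all of $K$.
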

	\begin{proof}[Proof of Theorem \ref{thm-uniqueness-wave-map-unBndTrap}]
		The idea of the proof here is similar to the proof of Theorem \ref{thm-nonlin-cauchy-bigData-UnBndTrap}. But we include the details here for completion. Let $\eta>0$ be as in Theorem \ref{thm-nonlin-cauchy-L1}.
		Since $(u_0,v_0) \in L^{1,1}(K_0;M)$ and $f \in L^1(K_0;\bR^n)$, there exists a bounded interval $[a,b] \subset K_0$  and $\bar{L} \in (0,L)$ such that $a+\bar{L} < b-\bar{L}$ and
		\begin{equation}\label{eqn-[a,b]-rays-small-data-uniq}
			\Vert D u_0+v_0\Vert_{L^1(K_{a,b}^c )}  \leq \eta  \mbox{ and } \Vert D u_0- v_0\Vert_{L^1(K_{a,b}^c)} \leq \eta,
		\end{equation}
		and
		\begin{equation}\label{eqn-[a,b]-rays-small-f-uniq}
			\Vert f\Vert_{L^1(K^+_{b,\bar{L}} )}  \leq \eta  \mbox{ and } \Vert f \Vert_{L^1(K^-_{a,\bar{L}})} \leq \eta,
		\end{equation}
		where $K_{a,b}^c $ and the semi-bounded trapezoids $K^+_{b,\bar{L}}$ and $K^-_{a,\bar{L}}$ are defined in the proof of Theorem \ref{thm-nonlin-cauchy-bigData-UnBndTrap}.

		Let $K_{a,b,\bar{L}}$ is the maximal compact trapezoid $K_{0,\bar{L}}$ with base $[a,b]$  and height $\bar{L}$.  Then due to Theorems \ref{thm-nonlin-cauchy-local}, \ref{thm-M-globalwave-BndTrap} and \ref{thm-uniqueness-wave-map} there exists a \textbf{unique} $\cM$-valued mild global solution $u^0$ on $K_{a,b,\bar{L}}$ to equation \eqref{eq:fwm-vec} with initial data and forcing $(u_0|_{[a,b]},v_0|_{[a,b]},f|_{K_{a,b,\bar{L}}})$. But since the restrictions of $u^1,u^2 \in \mathscr{H}(K)$ to $K_{0,\bar{L}}$ are also solution to problem \eqref{eq:fwm-vec} with initial data and forcing $(u_0|_{[a,b]},v_0|_{[a,b]},f|_{K_{a,b,\bar{L}}})$, by we get that $u^0=u^1=u^2$ on $K_{a,b,\bar{L}}$.
		
		Next, due to the smallness conditions \eqref{eqn-[a,b]-rays-small-data-uniq} and \eqref{eqn-[a,b]-rays-small-f-uniq}, by the existence of a \textbf{unique} global result for small data, see Theorem \ref{thm-nonlin-cauchy-L1}, we can find the unique  mild  solutions $u^+$ and $u^-$  for problem \eqref{eq:fwm-vec} on the  semi-bounded trapezoids  $K^+_{b,\bar{L}}$ and $K^-_{a,\bar{L}}$, respectively, with initial data $(u_0|_{[b-\bar{L},\infty) \cap  K_0}, v_0|_{[b-\bar{L},\infty) \cap  K_0})$ and $(u_0|_{(-\infty,a+\bar{L}] \cap  K_0}, v_0|_{(-\infty,a+\bar{L}] \cap  K_0})$, and forcing $f|_{K^+_{b,\bar{L}}}$ and $f_{K^-_{a,\bar{L}}}$, respectively. Similarly, as in last paragraph, we conclude that $u^1 = u^2=u^+$ on $K^+_{b,\bar{L}}$ and $u^1 = u^2=u^-$ on $K^-_{a,\bar{L}}$.
		
		Hence, the last two paragraph together imply that $u^1=u^2$ on $([0,\bar{L}] \times  K_0) \cap K$.
		
		By employing the arguments very similar to \textbf{Step 3} and \textbf{Step 4} in the proof of Theorem \ref{thm-uniqueness-wave-map}, we infer that $u^1=u^2$ on $K$. Hence we complete the proof of Theorem \ref{thm-uniqueness-wave-map-unBndTrap}.
	\end{proof}

	\subsection{Finite speed of propagation and continuous dependence}
	We have the following two corollaries. 	The first result is about the finite speed of propagation for large data. The second one is about continuity of the solution with respect to large data.
	\begin{corollary}\label{cor-finiteSOP}
		Let the assumption of Theorem \ref{thm-M-globalwave-BndTrap} or Theorem \ref{thm-nonlin-cauchy-bigData-UnBndTrap} be satisfied. Let $\widetilde{K} \subset K$ is another trapezoid, and $(\tilde{u}_0,\tilde{v}_0)$ is another choice of initial data in $L^{1,1}(\widetilde{K}_0)$  and let forcing $\tilde{f} \in L^1(\widetilde{K})$, which coincides on $\widetilde{K}_0$ and $\widetilde{K}$, respectively, with the initial data and forcing $(u_0,v_0,f)$. Then the corresponding  mild  solutions $u$ and $\tilde{u}$ to problem \eqref{eq:fwm-vec}, which exist by  Theorem \ref{thm-nonlin-cauchy-bigData-bndTrap}, are equal on $\widetilde{K}$.
	\end{corollary}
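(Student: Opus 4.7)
The plan is to deduce Corollary~\ref{cor-finiteSOP} directly from the uniqueness results already established in Theorems~\ref{thm-uniqueness-wave-map} and~\ref{thm-uniqueness-wave-map-unBndTrap}, together with the elementary restriction property from Proposition~\ref{prop-restriction-mildSoln}. The core idea is that, unlike the small-data setting of Corollary~\ref{cor-finite speed of propagation} where finite speed of propagation was proved by re-running the Banach fixed-point argument on the sub-trapezoid, here uniqueness is available unconditionally on any trapezoid, so we no longer need a fresh contraction argument.

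First I would take the global $\cM$-valued mild solution $u \in \mathscr{H}(K)$ to problem \eqref{eq:fwm-vec} with data $(u_0, v_0, f)$, whose existence is guaranteed by Theorem~\ref{thm-M-globalwave-BndTrap} in the compact case and by Theorem~\ref{thm-M-global wave-unBndTrap} in the unbounded case. Next, I would apply Proposition~\ref{prop-restriction-mildSoln} to conclude that the restriction $u|_{\widetilde{K}}$ is a mild solution on $\widetilde{K}$ of \eqref{eq:fwm-vec} with the data $(u_0|_{\widetilde{K}_0}, v_0|_{\widetilde{K}_0}, f|_{\widetilde{K}})$. By hypothesis these restrictions coincide with $(\tilde{u}_0, \tilde{v}_0, \tilde{f})$, so $u|_{\widetilde{K}}$ is a mild solution on $\widetilde{K}$ with initial data $(\tilde{u}_0, \tilde{v}_0)$ and forcing $\tilde{f}$; moreover $u|_{\widetilde{K}} \in \mathscr{H}(\widetilde{K})$ since $u \in \mathscr{H}(K)$ and the norm on $\mathscr{H}(\widetilde{K})$ is dominated by the norm on $\mathscr{H}(K)$.

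Simultaneously, $\tilde{u} \in \mathscr{H}(\widetilde{K})$ is, by assumption, another $\cM$-valued mild solution on $\widetilde{K}$ with the same data $(\tilde{u}_0, \tilde{v}_0, \tilde{f})$. The conclusion $u|_{\widetilde{K}} = \tilde{u}$ now follows from the uniqueness statement, applied in the appropriate case: Theorem~\ref{thm-uniqueness-wave-map} when $\widetilde{K}$ is a compact trapezoid, and Theorem~\ref{thm-uniqueness-wave-map-unBndTrap} when $\widetilde{K}$ is closed and unbounded.

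The only mild obstacle is bookkeeping the compatibility condition \eqref{eqn-compatibility} for the restricted data: since $\tilde{u}_0 = u_0|_{\widetilde{K}_0}$ and $\tilde{v}_0 = v_0|_{\widetilde{K}_0}$ almost everywhere, condition \eqref{eqn-compatibility} on $\widetilde{K}_0$ is inherited from its validity on $K_0$, so the hypotheses of the uniqueness theorems are genuinely satisfied. No further regularity or smallness assumptions are needed because the uniqueness theorems themselves hold for arbitrary data in $L^{1,1}$, which is precisely what makes the argument short relative to the small-data version in Corollary~\ref{cor-finite speed of propagation}.
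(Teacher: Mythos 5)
Your proposal is correct and is essentially identical to the paper's own (very short) proof: the authors likewise invoke Proposition~\ref{prop-restriction-mildSoln} to see that both $\tilde{u}$ and $u|_{\widetilde{K}}$ solve \eqref{eq:fwm-vec} on $\widetilde{K}$ with the same data, and then conclude by Theorem~\ref{thm-uniqueness-wave-map} or Theorem~\ref{thm-uniqueness-wave-map-unBndTrap}. Your extra bookkeeping (inheritance of the compatibility condition and of membership in $\mathscr{H}(\widetilde{K})$) is a harmless elaboration of the same argument.
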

	\begin{proof}[Proof of Corollary \ref{cor-finiteSOP}] Using the notation from this Corollary we notice that by Proposition \ref{prop-restriction-mildSoln}  not only  $\tilde{u}$ but also  the restriction of $u$ to $\widetilde{K}$,  are  solutions   to problem \eqref{eq:fwm-vec} on $\widetilde{K}$ with data  $(\tilde{u}_0,\tilde{v}_0)$ and $\tilde{f}$ and then apply previous uniqueness result, i.e. Theorem \ref{thm-uniqueness-wave-map} or Theorem \ref{thm-uniqueness-wave-map-unBndTrap}.
	\end{proof}
	
	We  prove the following continuous dependence on initial data and forcing result only for compact trapezoid which can be extended to unbounded trapezoid setting by working on similar lines as the proof of Theorem \ref{thm-uniqueness-wave-map-unBndTrap}.
	\begin{corollary}
		\label{cor-continuous dependence-nonlin-we}
		Let us assume that $L>0$, $T_0 \in (0,L]$, trapezoid $K$ with base $K_0$ and height $T_0$ and initial data  $(u_0, v_0)$ and forcing $f$ satisfy the assumptions of Theorem \ref{thm-uniqueness-wave-map}.
		Assume also that $(u_0^n, v_0^n) \in L^{1,1}(K_0;\cM)$ is a sequence of compatible initial data which satisfy \eqref{eqn-compatibility}
		and  $f^n \in L^1(K;\bR^n)$ is a sequence of the forces.
		Let  $u \in \mathscr{H}(K)$, resp. $u^n \in \mathscr{H}(K)$ be $\cM$-valued unique global mild  solutions in   $K$  to problem \eqref{eq:fwm-vec} with the same forcing $f$ and the initial  data $(u_0,v_0)$, resp. $f^n$ and $(u_0^n, v_0^n)$. Such mild solutions exist because of previous results proved in Sections \ref{sec:wave-map-M-global-soln-existence} and \ref{sec:wave-map-M-global-soln-uniq}.
		Assume that
		\begin{equation}\label{eqn-ID-n}
			u_0^n \to u_0 \in C(K_0;\cM) \quad \mbox{ and   } \quad Du_0^n \to D u_0,  v_0^n \to v_0  \mbox{ in } L^1(K_0;\bR^n),
		\end{equation}
		and
		\begin{equation}\label{eqn-forcing}
			f^n \to f \mbox{ in } L^1(K;\bR^n).
		\end{equation}
		Then $u^n \to u$ in $\mathscr{H}(K)$.
	\end{corollary}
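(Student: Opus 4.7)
The plan is to reduce the large-data continuous dependence to the small-data continuous dependence established in the final part of Theorem \ref{thm-nonlin-cauchy-L1} (which is inherited from Theorem \ref{thm-global existence small data}). The reduction proceeds by a finite partition of $K$ into thin horizontal slabs $S_k = K \cap ([t_k, t_{k+1}] \times \bR)$, each of which is further covered by small trapezoids where the ``initial data'' at time $t_k$ and the forcing satisfy the smallness conditions \eqref{eqn-smallness of initial condition}--\eqref{bound-Forcing} uniformly in $n$. On each such small sub-trapezoid the small-data continuity yields convergence of $u^n \to u$ in $\mathscr{H}$, and these local convergences are glued together slab-by-slab, with the trace theorem (Theorem \ref{thm-trace}) providing the inductive step between consecutive slabs.

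\textbf{Uniform smallness.} Let $\eta > 0$ be as in Theorem \ref{thm-nonlin-cauchy-L1}. Since $Du_0^n \pm v_0^n \to Du_0 \pm v_0$ in $L^1(K_0;\bR^n)$ and $f^n \to f$ in $L^1(K;\bR^n)$, these sequences are uniformly integrable; by the absolute continuity of the $L^1$-norm, there exist $\delta > 0$ and $\tau > 0$ (independent of $n$) such that for every interval $I \subset \bR$ with $|I| \leq 2\delta$ and every subset $A \subset K$ contained in a slab of vertical width $\tau$ and horizontal width $2\delta$,
\[
\|Du_0^n \pm v_0^n\|_{L^1(I \cap K_0)} \leq \eta/4, \qquad \|f^n\|_{L^1(A)} \leq \eta/4,
\]
with the analogous bounds for the limit data. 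The core technical ingredient for the inductive step is Proposition \ref{prop-noncon}, which transports these integral bounds in time: the quantities $\int_I |(\partial_t \pm \partial_x) u^n(t_k, \cdot)|$ over short intervals $I$ at any later time $t_k$ are controlled by the corresponding initial integrals plus the forcing integrals on the triangle resting on a slightly enlarged base. Hence the smallness persists, with $\eta/2$ in place of $\eta/4$, uniformly in $n$ and in $k \in \{0,\dots,N\}$.

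\textbf{Induction on slabs.} Partition $[0,T_0]$ by $0 = t_0 < t_1 < \cdots < t_N = T_0$ with $t_{k+1} - t_k \leq \tau$ and cover each slab $S_k$ by finitely many compact trapezoids $K_i^k$ with base length $\leq 2\delta$ and height $\leq \tau$. Inductively, assume that at time $t_k$ the traces $(u^n(t_k,\cdot), \partial_t u^n(t_k,\cdot))$ converge to $(u(t_k,\cdot), \partial_t u(t_k,\cdot))$ in $L^{1,1}$ on $K_{t_k}$, which holds at $k=0$ by hypothesis and will follow at $k+1$ from Theorem \ref{thm-trace} applied on $S_k$. Thanks to the uniform smallness guaranteed above (via Proposition \ref{prop-noncon} for $k\geq 1$, and by the absolute continuity of the initial data for $k=0$), the continuous dependence statement of Theorem \ref{thm-nonlin-cauchy-L1} applied on each $K_i^k$ yields $u^n|_{K_i^k} \to u|_{K_i^k}$ in $\mathscr{H}(K_i^k)$; patching over the finite cover of $S_k$ gives $u^n \to u$ in $\mathscr{H}(S_k)$, which closes the induction. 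After $N$ steps, we conclude $u^n \to u$ in $\mathscr{H}(K)$.

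\textbf{Main obstacle.} The delicate point is the uniform-in-$n$ smallness at intermediate times: a naive argument would only control the traces of $u^n$ at $t_k$ via their convergence to those of $u$, which does not readily yield the ``absolute continuity type'' uniformity needed to apply the small-data theorem again on $[t_k, t_{k+1}]$. This is precisely where Proposition \ref{prop-noncon} rescues the argument, furnishing an $n$-uniform transport estimate for $(\partial_t \pm \partial_x)u^n$ in terms of the initial data and forcing of $u^n$, both of which are uniformly integrable by hypothesis. Consequently the same $\delta, \tau$ chosen at time $0$ work at every intermediate $t_k$, and the induction closes in finitely many steps.
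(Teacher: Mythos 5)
Your proposal is correct, and its core reduction is the same as the paper's: cover the domain by small compact trapezoids on which the data are uniformly small, invoke the Hadamard continuity of the small-data theory (Theorem \ref{thm-global existence small data} / \ref{thm-nonlin-cauchy-L1}) on each piece, and use the trace theorem \ref{thm-trace} to restart at the next time level. Where you diverge is in how the argument is globalized in time: the paper defines $T_\ast=\sup\{T: u^n\to u \text{ in } \mathscr{H}(K\cap([0,T]\times\bR))\}$, shows the supremum is attained via the trace theorem, and rules out $T_\ast<T_0$ by rerunning the local step at $T_\ast$ with a \emph{fresh} $\delta_\ast$ obtained from the absolute continuity of the trace of the limit solution $u$ at $T_\ast$; you instead fix a single partition $0=t_0<\dots<t_N=T_0$ in advance and justify a $k$-uniform choice of $(\delta,\tau)$ by transporting the smallness from $t=0$ with Proposition \ref{prop-noncon}. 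Your variant is closer in spirit to the paper's proof of global existence (Theorem \ref{thm-M-globalwave-BndTrap}), buys an explicit, quantitative partition, and avoids the supremum argument; the paper's continuation argument avoids having to prove uniformity in $k$ at all. One imprecision to fix: Proposition \ref{prop-noncon} controls $\int_I|(\partial_t\mp\partial_x)u^n(t_k,\cdot)|$ by the initial integral over a translate of $I$ plus the forcing integral over a thin \emph{characteristic strip} of width $|I|$ and area $O(\delta\, T_0)$ — not, as you write, over ``the triangle resting on a slightly enlarged base'': the full backward triangle has area bounded below independently of $\delta$, so its forcing integral is not small, and the literal statement would break the induction. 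Since you cite Proposition \ref{prop-noncon}, the correct strip estimate is what you actually use, and with that reading the argument closes.
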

	\begin{proof}[Proof of Corollary \ref{cor-continuous dependence-nonlin-we}]
		Since the approach here is similar to the proof of Theorem \ref{thm-uniqueness-wave-map}, we will not define the notation again.
		We define the set $K^\natural$ as in  proof of Theorem \ref{thm-uniqueness-wave-map} using only the limiting objects, i.e. $u_0$, $v_0$, $f$ and $u$.
		
		\textbf{Step 1.}		First we claim that  $u^n\to u$, as $n \to \infty$,  in $\mathscr{H}(K^\natural)$. For this purpose let us observe that we can strengthen equality \eqref{eqn-K natural} by observing that the sum can be taken finite.  To be precise, we claim that there exists a finite set $\{y_1,\ldots, y_N\} \subset K_0$ such that
		\begin{equation}\label{eqn-K natural-2}
			K^\natural= \bigcup_{i=1}^N K(y_i,\delta).
		\end{equation}
		Indeed, the length of top side of the   trapezoid $K(y,\delta)$ is equal to $\delta>0$. This is the reason why we defined the   trapezoid $K(y,\delta)$  in our way. In the proof of the previous result Theorem \ref{thm-uniqueness-wave-map} we could have taken as $K(y,\delta)$  the trapezoid of the same base but of height $\delta$ instead of $ \frac\delta2 $.

		Because of the above equality \eqref{eqn-K natural-2} it is  sufficient to prove that, for every $y_0\in K_0$, $u^n\to u$, as $n \to \infty$,  in $\mathscr{H}(K(y_0,\delta))$.
		For this aim let us choose and fix $y_0\in K_0$. We observe that in view of the assumptions here the functions $u_0$, $v_0$, $u_0^n$'s and $v_0^n$'s restricted to $K_0(y_0,\delta)$, satisfy
		\begin{align}\label{eqn-ID-n-res}
			& u_0^n|_{K_0(y_0,\delta)} \to u_0|_{K_0(y_0,\delta)} \in C(K_0(y_0,\delta),M) \\\nonumber
			& \mbox{ and   }Du_0^n|_{K_0(y_0,\delta)} \to D u_0|_{K_0(y_0,\delta)},  v_0^n|_{K_0(y_0,\delta)} \to v_0|_{K_0(y_0,\delta)}  \mbox{ in } L^1(K_0(y_0,\delta)),
		\end{align}
		and
		the functions  functions $f$ and $f^n$'s restricted to  $K(y_0,\delta)$, satisfy
		\begin{equation}\label{eqn-forcing-res}
			f^n|_{K(y_0,\delta)} \to f|_{K(y_0,\delta)} \mbox{ in } L^1(K(y_0,\delta)).
		\end{equation}
		Therefore, by the continuous dependence part of the Theorem  \ref{thm-global existence small data}, $u^n\to u$, as $n \to \infty$,  in $\mathscr{H}(K(y_0,\delta))$. Since the point $y_0$ is arbitrary, the claim in Step 1 follows.
		
		\textbf{Step 2.}		To prove that the claim holds on whole $K$, let us define
		\begin{align}\label{eqn-T_ast-cor}
			\begin{split}
				T_\ast&\coloneqq \sup \mathbf{S}, \mbox{ where }\\
				\mathbf{S} &\coloneqq \bigl\{T \in [0,T_0]: u^n \to u \mbox{ in } \mathscr{H}(K \cap ([0,T]\times \bR) ) \bigr\}.
			\end{split}
		\end{align}
		
		From our first claim of this proof we know that the set $\mathbf{S}$ is non-empty because $ \frac\delta2 $ is an element of it. Thus $T_\ast \in [ \frac\delta2 ,L]$. It's obvious that either $\mathbf{S}=[0,T_\ast)$ or  $\mathbf{S}=[0,T_\ast]$.

		We aim now to prove that $\mathbf{S}=[0,T_\ast]$.
		Suppose by contradiction that $\mathbf{S}= [0,T_\ast)$.
		But since
		\[
		K \cap ([0,T_\ast)\times \bR)= \bigcup_{T \in \mathbf{S}} K \cap ([0,T]\times \bR),
		\]
		by the definition of the set $\mathbf{S}$  we infer that
		\begin{align}\label{eqn-un-to-u on T_ast}
			u^n \to u \mbox{ as } n \to \infty \mbox{ in } \mathscr{H}(K \cap ([0,T_\ast)\times \bR) ) .
		\end{align}
		Thus, since by assumptions
		$u^n,  u \in \mathscr{H}(K)$,
		by the Trace Theorem  \ref{thm-trace} the functions
		$$ \bigl( \partial_x u^n(T_\ast, \cdot),\partial_t u^n(T_\ast,\cdot)\bigr), \bigl( \partial_x u(T_\ast, \cdot),\partial_t u(T_\ast,\cdot)\bigr) \in (L^\infty \cap \dot{W}^{1,1}(K_{T_\ast}))\times L^1(K_{T_\ast}), $$
		for every $n$.
		Moreover, by the Definition of the space $\mathscr{H}(K)$,
		if  $(T_m)$ is an sequence  such that $T_m \toup T_\ast$, then
		these all these functions are limits in the space $(L^\infty \cap \dot{W}^{1,1}(K_{T_\ast}))\times L^1(K_{T_\ast})$ of
		the sequences (in $m$) $\bigl( \partial_x u^n(T_m, \cdot),\partial_t u^n(T_m,\cdot)\bigr), \bigl( \partial_x u(T_m, \cdot),\partial_t u(T_m,\cdot)\bigr). $
		Hence, by \eqref{eqn-un-to-u on T_ast}, we infer that
		\begin{align}\label{eqn-n^n-to-u at T_ast}
			\partial_x u^n(T_\ast, \cdot) \to  \partial_x u(T_\ast, \cdot) \mbox{ and } \partial_t u^n(T_\ast,\cdot)  \to \partial_t u(T_\ast,\cdot).
		\end{align}
		Therefore, $T_\ast \in \mathbf{S}$, contradiction. Thus we infer that   $\mathbf{S}=[0,T_\ast]$.

		It only remains to prove that $T_\ast=T_0$.  Suppose by contradiction that $T_\ast<T_0$.
		Then, all functions $u^n, u$, $n \in \mathbb{N}$, or more precisely their appropriate restrictions,  belong to the space
		$\mathscr{H}(K\cap  ([T_\ast,T_0]\times \bR))$. By \eqref{eqn-n^n-to-u at T_ast}, the
		traces of these functions at the initial time $T_\ast$ are equal.
		
		Finally, as we did in \textbf{Step 4} in the proof of Theorem \ref{thm-uniqueness-wave-map} we can prove that
		$u^n,u$ are the weak solutions  to problem \eqref{eq:fwm-vec}, respectively with the forcing $f^n_{|K\cap  ([T_\ast,T_0]\times \bR)}, f_{|K\cap  ([T_\ast,T_0]\times \bR)}\in L^1(K\cap  ([T_\ast,T_0]\times \bR))$ and the initial condition \eqref{eqr-IC} at time $T_\ast$   with the same data initial data \eqref{eqn-n^n-to-u at T_ast}  in the compact trapezoid  $K\cap  ([T_\ast,T_0] \times \bR)$.

		Then, by Step 1 of the current proof, there exists $\delta_\ast \in (0,T_0-T_\ast]$, such that
		\begin{align}\label{eqn-u^n-to-u on T_ast T_ast+delta}
			u^n\to  u \mbox{ as } n \to \infty \mbox{ in } \mathscr{H}(K \cap ([T_\ast,T_\ast+\delta_\ast]\times \bR) ) .
		\end{align}
		Combining the last equality with equality \eqref{eqn-un-to-u on T_ast} we deduce that
		\begin{align}\label{eqn-u^n-to-u on 0 T_ast+delta}
			u^n \to u  \mbox{ as } n \to \infty  \mbox{ in } \mathscr{H}(K \cap ([0,T_\ast+\delta_\ast]\times \bR) ).
		\end{align}
		This contradicts the definition of $T_\ast$.
		The proof of Corollary \ref{cor-continuous dependence-nonlin-we} is now complete.		
	\end{proof}

	\section{Scattering result}
	Following is the main result of this section.
	\begin{theorem}[Scattering]
		\label{thm:scat-largedata}
		Assume that $K = [0, \infty) \times \bR$ with base $K_0=\bR$.  Assume that  $(u_0, v_0) \in L^{1,1}(K_0;\cM)$ and $f \in L^1(K;\bR^n)$.  For every $\cM$-valued global mild solution $u$ of problem \eqref{eqn-wave map forced}   there exists a new initial data $(\bar{u}_0,\bar{v}_0) \in L^{1,1}(\bR; \bR^n)$ such that if $u\lin$ is the corresponding unique solution to homogeneous linear wave \eqref{eqn-linear-wave-nonhom} with $h=0$ and initial data $(\bar{u}_0,\bar{v}_0)$,  then
		\begin{equation}
			\label{scattering-L11}
			\lim_{t \to \infty}\big(\|u(t) - u\lin(t)\|_{L^\infty(\bR)} + \|\partial_t u(t) - \partial_t u\lin(t)\|_{L^1(\bR)}
			+ \|\partial_x u(t) - \partial_x u\lin(t)\|_{L^1(\bR)}\big) = 0.
		\end{equation}
	\end{theorem}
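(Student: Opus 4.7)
The plan is to follow the conformal compactification argument of \cite[Section 9.4]{KT98}, reducing the scattering claim on the infinite strip $K = [0,\infty) \times \bR$ to a trace question on a compact domain, which is then handled by the global existence result Theorem \ref{thm-M-globalwave-BndTrap} and the trace Theorem \ref{thm-trace} already established.

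First, I would introduce null coordinates $(\xi,\eta) := (t+x, t-x)$ and apply the compactifying diffeomorphism $(\xi,\eta) \mapsto (\tilde\xi,\tilde\eta) := (\arctan\xi, \arctan\eta)$, which sends $K$ onto a bounded region $\tilde K \subset (-\pi/2, \pi/2)^2$ whose upper null boundary $\{\tilde\xi = \pi/2\} \cup \{\tilde\eta = \pi/2\}$ corresponds to future null infinity $\mathcal I^+$. Since the wave map equation \eqref{eq:fwm-vec} is conformally invariant in $1+1$ dimensions---both $\partial_t^2 - \partial_x^2$ and the null form $Q_{jk}(u,u)$ pick up the same conformal factor under a rescaling of the Minkowski metric---the pull-back $\tilde u(\tilde\xi,\tilde\eta) := u(t,x)$ satisfies an equation of the same form on $\tilde K$, with external force multiplied by $\Omega^2 = \sec^2\tilde\xi \cdot \sec^2\tilde\eta$. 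A direct change of variables shows that the $L^1$-norm of the forcing and the $L^{1,1}$-norms of the initial data are preserved under the transformation, essentially because the conformal weight $\Omega^2$ exactly cancels the Jacobian $d\xi\,d\eta = \Omega^2\,d\tilde\xi\,d\tilde\eta$. Next, I would embed $\tilde K$ into a compact trapezoid $\hat K$ (in the sense of Definition \ref{def-trapezoid}) strictly containing $\overline{\tilde K}$, extending the initial data to $\widehat K_0$ so as to remain in $\cM$ (via a nearest-point projection onto $\cM$ applied to a Lipschitz extension) and extending the forcing by zero.

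Then, Theorem \ref{thm-M-globalwave-BndTrap} yields a global $\cM$-valued mild solution $\hat u \in \mathscr H(\hat K)$ of the transformed problem, and Corollary \ref{cor-finiteSOP} together with Theorem \ref{thm-uniqueness-wave-map} imply that $\hat u$ coincides with $\tilde u$ on $\tilde K$. In particular $\tilde u$ extends continuously up to the null boundary $\mathcal I^+$, and Theorem \ref{thm-trace} produces traces there in $(L^\infty \cap \dot W^{1,1}) \times L^1$. Pulling these traces back through the monotone maps $\tilde\xi \mapsto \tan\tilde\xi$ and $\tilde\eta \mapsto \tan\tilde\eta$ produces profiles $g_+, g_- : \bR \to \bR^n$ in the class $L^\infty \cap \dot W^{1,1}(\bR; \bR^n)$, and one defines the scattering data by
\[
\bar u_0(x) := \tfrac 12 \bigl( g_+(x) + g_-(-x) \bigr), \qquad \bar v_0(x) := \tfrac 12 \bigl( g_+'(x) + g_-'(-x) \bigr),
\]
so that $u\lin(t,x) = \tfrac 12 g_+(t+x) + \tfrac 12 g_-(-(t-x))$ is the d'Alembert solution of the free wave equation with data $(\bar u_0,\bar v_0)$.

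Finally, to obtain the limits \eqref{scattering-L11}, I would use that in null coordinates $u$ satisfies the system of transport-type equations of Remark \ref{rem-v+ and v- for wave maps} and that Proposition \ref{prop-noncon} furnishes uniform $L^1$-bounds on $\partial_\xi u(\cdot,\eta)$ and $\partial_\eta u(\xi,\cdot)$ transverse to the respective characteristics. Combining these bounds with $\hat u \in \mathscr H(\hat K)$ and the trace theorem applied on the compactified domain, one obtains $\partial_\xi u(\cdot,\eta) \to \tfrac 12 g_+'(\cdot)$ in $L^1(\bR)$ as $\eta \to \infty$ and symmetrically for $\partial_\eta u$; reassembling yields the $L^1$-convergence of $\partial_t u \pm \partial_x u$ to $\partial_t u\lin \pm \partial_x u\lin$, and the $L^\infty$-convergence of $u - u\lin$ follows by integrating the derivative estimates along a fixed light ray. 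The step I expect to be the main obstacle is the conformal bookkeeping: making precise how the $L^{1,1}$ initial-data class and the $L^1$ forcing class survive the transformation $(t,x)\mapsto(\tilde\xi,\tilde\eta)$, how the manifold constraint is preserved by the nearest-point extension procedure, and how the boundary traces on $\mathcal I^+$ reassemble into scattering profiles $(\bar u_0,\bar v_0)$ expressed back in the original coordinates.
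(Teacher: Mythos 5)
Your overall architecture is exactly the paper's: pass to null coordinates, compactify via $\arctan$ onto the Einstein diamond, use conformal invariance of the null form and the invariance of the $L^{1,1}$ and $L^1$ norms, extend the data off the diamond, solve globally there by Theorems \ref{thm-M-globalwave-BndTrap}/\ref{thm-M-global wave-unBndTrap} with uniqueness and finite speed of propagation, and read the scattering profile off the behaviour near the null boundary. Up to minor variations (you extend the data by a nearest-point projection of a Lipschitz extension and work on a slightly larger compact trapezoid, the paper extends by constants/zero to the whole line and uses the unbounded-trapezoid existence theorem; both are fine), this is the intended proof.

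The genuine gap is in the last step, where you claim that Proposition \ref{prop-noncon} together with $\hat u \in \mathscr H(\hat K)$ and Theorem \ref{thm-trace} yields $\partial_\xi u(\cdot,\eta) \to \tfrac12 g_+'$ in $L^1$. Two problems. First, membership in $\mathscr H(\hat K)$ gives $L^1$-continuity of the derivative traces along the \emph{straight} slices $T=\mathrm{const}$ of the compactified domain, whereas the physical slices $t=T^*$ become the curved sets $B=\tan^{-1}(\tan A - 2T^*)$ sweeping toward the lower null boundary non-uniformly; Theorem \ref{thm-trace} says nothing about traces on the null boundary $\mathcal I^+$ (the top slice of the compactified triangle degenerates to a point), so neither tool delivers the claimed $L^1$-limit along physical time slices. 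Second, if you instead integrate the transport equations of Remark \ref{rem-v+ and v- for wave maps} along characteristics, convergence of $v_+(t,y+t)$ as $t\to\infty$ requires the \emph{space-time} integrability $\iint |v_+||v_-|\,\ud x\,\ud t < \infty$ of the null form over the whole (compactified) domain; Proposition \ref{prop-noncon} only gives uniform-in-time $L^1_x$ bounds on $v_+$ and $v_-$ separately, which does not control the product. The missing ingredient is precisely Lemma \ref{lem:spacetime-int} (resting on the bilinear Zhou estimate, Lemma \ref{lem:zhou-lemma}): it makes $\iint_{\bD_+^\ast}|\partial_A\partial_B \bU^*|\,\ud A\,\ud B$ finite, after which the difference $\partial_a u(T^*)-\partial_a u_-(T^*)$ is written as the integral of $\partial_A\partial_B(\bU^*-\bU_-^*)$ over the region between the physical slice and the lower boundary, and dominated convergence finishes the argument. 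Once you insert that lemma, your proof closes; without it, the final limit is unjustified.
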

	The proof of Theorem \ref{thm:scat-largedata} is based on the following observation, which is somehow a combination
	of inequality \eqref{prop-noncon-est3} with the ``Zhou estimates'' from subsection \ref{sec:Zhou-trick}.
	\begin{lemma}
		\label{lem:spacetime-int}
		Assume that $K$ is a trapezoid with base $[x_0 - L, x_0 + L]$ and height $T$, where $0 \leq T \leq L \leq +\infty$. Assume that $u$ is an $\cM$-valued mild  solution of problem \eqref{eqn-wave map forced}
		with initial data $(u_0, v_0)$ and forcing term $f$ satisfying the assumptions of Theorem \ref{thm:scat-largedata}. Then
		\begin{equation}\label{eq:lem-spacetime-int-ineq}
			\int_K |(\partial_t u(t, x))^2 - (\partial_x u(t, x))^2|\ud x \ud t \leq
			\bigg(\int_{x_0-L}^{x_0+L}(|D u_0(x)| + |v_0(x)|)\ud x + \int_K |f(t, x)|\ud x\ud t \bigg)^2.
		\end{equation}
	\end{lemma}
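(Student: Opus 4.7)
The plan is to combine the pointwise propagation bounds for the null derivatives (established inside the proof of Proposition \ref{prop-noncon}) with the bilinear ``Zhou estimate'' of Lemma \ref{lem:zhou-lemma}. Writing $v_+ = \partial_t u - \partial_x u$ and $v_- = \partial_t u + \partial_x u$ as in \eqref{eqn-v+}--\eqref{eqn-v-}, we have the algebraic identity $(\partial_t u)^2 - (\partial_x u)^2 = v_+ \cdot v_-$ in $\bR^n$, so $|(\partial_t u)^2 - (\partial_x u)^2| \leq |v_+|\,|v_-|$ and it suffices to estimate $\iint_K |v_+|\,|v_-|\,\ud x\,\ud t$.

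First I would extract from the proof of Proposition \ref{prop-noncon} the \emph{pointwise} bounds that were actually established there (and not merely their integrated forms): for every $(t_0, x) \in K$,
\begin{align*}
|v_+(t_0, x)| &\leq |v_+(0, x - t_0)| + \int_0^{t_0} |f(s, x - t_0 + s)|\,\ud s, \\
|v_-(t_0, x)| &\leq |v_-(0, x + t_0)| + \int_0^{t_0} |f(s, x + t_0 - s)|\,\ud s.
\end{align*}
Recall that the derivation uses the manifold constraint $v_\pm \in T_u\cM$ combined with the normality $(\partial_t^2 - \partial_x^2)u - f \perp T_u\cM$, which via Cauchy--Schwarz yields $\tfrac{\ud}{\ud t}|v_+(t, y+t)| \leq |f(t, y+t)|$ along the right-moving characteristic $x = y + t$ (and analogously for $v_-$ along the left-moving ones). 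For merely mild solutions this is justified exactly as in Proposition \ref{prop-noncon}, by smoothing $(u_0, v_0, f)$ via Lemma \ref{lem-projm-3} and passing to the limit.

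Next, extending $v_\pm(0, \cdot)$ by zero outside the base $K_0 = [x_0 - L, x_0 + L]$ and $f$ by zero outside $K$, the pointwise estimates above are precisely the hypothesis \eqref{eq:zhou-lemma-hyp-triangle} required to apply Lemma \ref{lem:zhou-lemma} with $g_\pm := |v_0 \mp D u_0|$ and $f_+ = f_- := |f|$. Taking the parameter $t_0$ in that lemma to be the height $T$ of $K$ yields
\begin{equation*}
\iint_K |v_+|\,|v_-|\,\ud x\,\ud t \leq
\tfrac{1}{2}\Bigl(\int_{K_0} |v_0 - Du_0|\,\ud x + \iint_K |f|\,\ud x\,\ud t\Bigr)
\Bigl(\int_{K_0} |v_0 + Du_0|\,\ud x + \iint_K |f|\,\ud x\,\ud t\Bigr).
\end{equation*}
The triangle inequality $|v_0 \mp D u_0| \leq |v_0| + |D u_0|$ bounds each of the two factors by the single quantity appearing on the right-hand side of \eqref{eq:lem-spacetime-int-ineq}, and since $\tfrac{1}{2}A^2 \leq A^2$ the claimed estimate follows.

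The only delicate point is establishing the pointwise characteristic inequality for non-smooth $u$, but this is precisely the same issue already resolved by smooth approximation inside Proposition \ref{prop-noncon}, so no genuinely new ingredient is needed; everything else reduces to the bookkeeping of applying Lemma \ref{lem:zhou-lemma}.
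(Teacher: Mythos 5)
Your proposal is correct and follows essentially the same route as the paper: reduce to $|v_+|\,|v_-|$ via the null decomposition, invoke the pointwise characteristic bounds established inside the proof of Proposition \ref{prop-noncon} (justified for mild solutions by the same smoothing argument), and feed them into Lemma \ref{lem:zhou-lemma} with $g_\pm = |v_0 \mp Du_0|$ and $f_\pm = |f|$. The paper's proof is just a terser version of yours; your extra care in noting that only the inequality form \eqref{eq:zhou-lemma-hyp-triangle} is available (and suffices, since the proof of Lemma \ref{lem:zhou-lemma} passes to absolute values immediately) is a point the paper glosses over but relies on in exactly the same way.
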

	\begin{proof}[Proof of Lemma \ref{lem:spacetime-int}]
		It suffices to consider finite $L$ and $T$, because the constant in inequality \eqref{eq:lem-spacetime-int-ineq} is $1$,  and eventually pass to the limit.
		
		Changing $T$ to $t$, $t$ to $s$ and $y+T$ to $x$ in inequality \eqref{prop-noncon-est3}, and recalling the definitions
		of $v_+$ and $f_+$ in \eqref{prop-noncon-exp1}, we obtain, for all $(t, x) \in K$,
		\begin{equation}
			\label{eq:pointwise-1}
			|(\partial_t - \partial_x)u(t, x)| \leq |v_0(x-t) - D u_0(x-t)| + \int_0^t|f(s, x-t + s)|\ud s.
		\end{equation}
		Analogously, for all $(t, x) \in K$,
		\begin{equation}
			\label{eq:pointwise-2}
			|(\partial_t + \partial_x)u(t, x)| \leq |v_0(x+t) + D u_0(x+t)| + \int_0^t|f(s, x+t - s)|\ud s.
		\end{equation}
		Then it suffices to apply Lemma~\ref{lem:zhou-lemma} with $g_+ = |v_0 - D u_0|$,
		$g_- = |v_0 + D u_0|$ and $f_+ = f_- = |f|$.
	\end{proof}

	\begin{proof}[Proof of Theorem~\ref{thm:scat-largedata}] The idea of the proof is borrowed from \cite[Section 9.4]{KT98}. As in the proof of Proposition \ref{prop-noncon} we do not directly not weak solutions with weak solutions but with suitable approximations. By applying Lemma \ref{lem-projm-3},  our present argument implies its validity also in the realm of weak solutions.
		
		Let us choose and fix 	the initial data  $ u_0 \in L^\infty(K_0)$,  $v_0, D u_0 \in L^1(K_0)$   and the forcing $f \in L^1(K)$.
		Let us choose and fix the mild solution $u: \bR_+ \times \bR \to M$ of problem \eqref{eqn-wave map forced}.
		Let us define the null coordinates
		\[
		a\coloneqq x+t,\;\;\; b\coloneqq x-t. \]
		Note that if $(t,x) \in [0,\infty) \times \bR$, then  $(a,b)\in K^\ast_+$, where  $K^\ast_+$ is the half-plane
		\[
		K^\ast_+\coloneqq \bigl\{(a,b) \in \bR^{2}: a \geq b  \bigr\}.
		\]
		Then, we denote the function  $u$ in the null coordinates
		by symbol $u^\ast$, i.e., $u^\ast$ is defined by
		\begin{align}\label{eqn-u^ast}
			u^\ast(a,b)&\coloneqq u(x+t,x-t), \quad \textrm{ or equivalently, } \\
			u(t,x) & = u^\ast((a-b)/2,(a+b)/2).
		\end{align}
		Similarly, we set
		\begin{align}\label{eqn-f^ast}
			f^\ast(a,b)\coloneqq f(x+t,x-t).
		\end{align}
		Let us first recall the definition of  the ``positive'' Einstein diamond
		\begin{align}\label{eqn-Einstein diamond}
			\bD_+^\ast\coloneqq  \{(A,B)\in \bR^2:  |A|, |B| < \pi/2, A \geq B\}.
		\end{align}
		Note that $\bD_+^*$ is a triangle  with the following three vertices $(\frac{\pi}2,\frac{\pi}2)$, $(\frac{\pi}2,-\frac{\pi}2)$ and $(-\frac{\pi}2,-\frac{\pi}2)$ and the function
		\[
		\bD_+^\ast \ni (A,B) \mapsto (a,b)\coloneqq (\tan A, \tan B) \in K^\ast_+
		\]	
		is a bijection.
		Let us also introduce the   ``conformal compactification transformation'' $Lu^\ast$ of the function $u^\ast$ by the following formula
		\begin{align}\label{eqn-conformal compactification}
			(Lu^\ast)(A,B) \coloneqq  u^\ast(\tan A, \tan B), \;\; (A,B) \in \bD_+^*.
		\end{align}
		Note that the transformation $L$ maps functions defined on $K_+^\ast$ to functions defined on $\bD_+^\ast$.
		
		Let the ``new'' physical variables $(T, X)$ for $L u^\ast$ be defined by
		\begin{align} A&=X+T, \quad \;\; B=X-T.
		\end{align}
		Observe that the set $\bK \coloneqq \{ (T,X): (A,B) \in \bD_+^*\}$ is also a triangle with vertices:
		\[
		\mbox{ $(0,\frac{\pi}2)$, $(\frac{\pi}2,0)$ and $(0,-\frac{\pi}2).$}
		\]
		So the set $\bK$ can be described  by
		\[
		\bK = \left\{(T,X): T \in [0,\frac{\pi}2), \;\; -\frac{\pi}2+T \leq X \leq \frac{\pi}2-T \right\}.
		\]
		This allows us to define  a transformation, denoted by $\lL$,  by the following formula, for $(T,X) \in \bK$,
		\begin{align}
			(\lL u)(T,X)\coloneqq  (Lu^\ast)(A,B)=u\left( \frac{\tan(X+T) - \tan(X-T)}{2}, \frac{\tan(X+T) + \tan(X-T)}{2} \right).
		\end{align}
		Let us observe that if $(t,x) \in \{0\}\times \bR$, then $X=\tan^{-1}x \in (-\frac{\pi}{2},\frac{\pi}{2})$ and  $T=0$.
		Thus
		\begin{align}
			(\lL u)[0](X) & \coloneqq  ((\lL u)(0,X),( \partial_T (\lL u))(0,X) ) \nonumber\\
			& = \left( u_0(\tan X) , \sec^2(X) v_0(\tan X)  \right).
		\end{align}
		It will be convenient to introduce the following additional auxiliary notation
		\begin{align*}
			U_0(X)&\coloneqq  u_0(\tan X), \;\; X \in (-\frac{\pi}{2},\frac{\pi}{2}),  \nonumber \\
			V_0(X)&\coloneqq  \sec^2(X) v_0(\tan X),  \;\; X \in (-\frac{\pi}{2},\frac{\pi}{2}), \nonumber
		\end{align*}
		and
		\begin{equation*}%\label{eqn-F-def}
			F(T,X)  \coloneqq  \sec^2(X+T)\sec^2(X-T) f\left( \frac{\tan(X+T) - \tan(X-T)}{2},  \frac{\tan(X+T) + \tan(X-T)}{2} \right).
		\end{equation*}
		A straightforward computation gives that
		\begin{align}\label{eqn-L^1 norm of LL u}
			\|(\lL u)[0]\|_{L_X^{1,1}}= \|u[0]\|_{L_x^{1,1}},
		\end{align}
		and
		\begin{align}\label{eqn-L^1 norm of F}
			\|F\|_{L^1(\bK)}= \|f\|_{L^1(K)},
		\end{align}
		where $\|\cdot\|_{L^{1,1}}$-norm is defined in \eqref{eqn-space-L11-norm}.
		Moreover,  we easily see that the function $U$ defined by
		\[U\coloneqq \lL u\]
		satisfies the wave map equation \eqref{eqn-wave map forced}, on $(T,X) \in [0,\infty) \times (-\frac{\pi}{2},\frac{\pi}{2})$,  with the forcing $F$, i.e.
		\begin{equation}
			\label{eq:fwm-capital}
			\partial_T^2 U - \partial_X^2 U = \sum_{1 \leq j,k\leq n} \Gamma_{jk}(U)(\partial_T U_j\partial_T U_k - \partial_X U_j \partial_X U_k)
			+ \sum_{1\leq j\leq n} P_{j}(U)F_{j}
		\end{equation}
		with the  initial data $(U_0,V_0)$.

		Let us now define a continuous   extension  of the initial data $(\lL u)[0] = (U_0,V_0)$, which is initially defined on the open interval $(-\frac{\pi}{2},\frac{\pi}{2})$.
		For this purpose we observe that, since by \eqref{eqn-L^1 norm of LL u} the weak derivative of $U_0$, i.e. the  first component of $(\lL u)[0]$, belongs to $L^1(-\frac{\pi}{2},\frac{\pi}{2})$, function $U_0$  extends to a continuous, in fact absolutely continuous, see \cite[Theorem 6.11]{Rudin74RCA},  function  defined on the closed interval
		$[-\frac{\pi}{2},\frac{\pi}{2}]$.  Then we can extend the initial position $U_0$ to whole $\bR$, we will continue to denote this extension as $\bU_0$, by
		\begin{equation}
			\bU_0(X) = \threepartdef
			{U_0(X),}      {X \in [-\frac{\pi}{2},\frac{\pi}{2}], }
			{U_0(\frac{\pi}{2}), }      {X \geq \frac{\pi}{2}, }
			{U_0(-\frac{\pi}{2}), }      {X \leq -\frac{\pi}{2}. }
		\end{equation}
		Analogously, we extend the initial velocity $V_0$, call the extension $\bV_0$, by zero on the intervals $(-\infty,-\pi/2)$ and $(\pi/2,\infty)$.
		Note that the weak derivative of the former function is the latter, i.e.
		\[
		D \mathbf{U}_0=\mathbf{V}_0 \quad \mathrm{ on } \quad \bR.
		\]
		We also extend $F$ to   $(T,X) \in [0,\infty) \times \bR$ by $0$ and this extension we will denote by $\mathbf{F}$.
		Thus, by Theorems \ref{thm-M-global wave-unBndTrap} and \ref{thm-uniqueness-wave-map} there exists a unique global solution $\mathbf{U}$ to equation
		\begin{equation}
			\label{eq:fwm-capital-bf}
			\partial_T^2 \mathbf{U} - \partial_X^2 \mathbf{U} = \sum_{1 \leq j,k\leq n}\Gamma_{ijk}(\mathbf{U})(\partial_T \mathbf{U}_j\partial_T \mathbf{U}_k - \partial_X \mathbf{U}_j \partial_X \mathbf{U}_k)
			+ \sum_{1\leq j\leq n}P_{j}(\mathbf{U})\mathbf{F}_{j},
		\end{equation}
		on $(T,X) \in [0,\infty) \times \bR$,  with the  initial data $(\bU_0,\bV_0)$.
		By Corollary \ref{cor-finiteSOP}, we infer that  $\mathbf{U}$ is an extension of  $U$.  Denoting by $\mathbf{U}^\ast$ the map $\mathbf{U}$ in the $(A,B)$-coordinates, we observe that  $\mathbf{U}^\ast$ satisfies
		\begin{equation}
			\label{eq:fwm-ast}
			\partial_A \partial_B \bU^*  = \sum_{1 \leq j,k\leq n}\Gamma_{jk}(\bU^\ast)(\partial_A \bU_j^\ast \partial_B \bU_k^\ast) + \sum_{1\leq j\leq n}P_{j}(\bU^\ast) \bF_{j}^\ast,
		\end{equation}
		where $\bF^\ast = (\bF_{1}^\ast, \ldots, \bF_{n}^\ast): (A,B) \to \bR^n$.

		Next, by Lemma \ref{lem-scatter-aux} below for $S=\pi/2$, we see that map $\bU$ also solves the free wave equation with initial data $(\bU_0, \bV_0) \in L^{1,1}(\bR)$ and forcing $\bF$ such that its null coordinate representation $\bU^\ast$ satisfies the following three conditions.  \begin{enumerate}
			\item $\bU^\ast$ is constant in the region
			$ \{ A \geq B\} \cap  \cR_C$ where
			$$ \cR_C \coloneqq  \bigl( [\pi/2,\infty) \times [\pi/2,\infty)  \bigr) \cup  \bigl( [\pi/2,\infty) \times (-\infty,-\pi/2]  \bigr) \cup \bigl( (-\infty,-\pi/2] \times (-\infty,-\pi/2] \bigr),$$
			\item $\bU^\ast$ is constant in the $B$ direction in the region $\{ A \geq B\} \cap \bigl( [-\pi/2,\pi/2] \times (-\infty,-\pi/2]  \bigr) $,
			\item $\bU^\ast$ is constant in the $A$ direction in the region $\{ A \geq B\} \cap \bigl( [\pi/2,\infty) \times [-\pi/2,\pi/2]  \bigr)$.
		\end{enumerate}
		Note that $\bU$ also solves the free wave equation with initial data $(\bU_0, \bV_0)$ in all the corresponding regions in $(T,X)$ mentioned above in points (1), (2) and (3).  Let us denote by $\bU_+^{*}$, resp. $\bU_{-}^{*}$,    the restriction of the function $\bU^*$  to the sets
		\begin{align*}
			& \bE_+^\ast \coloneqq  \{ A \geq B\} \cap \bigl( \{A \geq \pi/2\} \cup \{B \geq \pi/2\} \bigr), \\
			\textrm{resp.} \quad  & \bE_-^\ast \coloneqq  \{ A \geq B\} \cap \bigl( \{A \leq -\pi/2\} \cup \{B \leq -\pi/2\} \bigr).
		\end{align*}
		By using the above properties (1), (2) and (3), we can extend $\bU_{\pm}^*$, still denoting these extensions by the same symbol to be the solution of the free wave solution on $\{(A,B) \in \bR^2: A \geq B\}$ with initial data $(\bU_0, \bV_0)$. We denote the corresponding extensions to $\{(T,X) \in [0,\infty) \times \bR \}$ by $\bU_{\pm}$.  Thus the whole above construction gives us the following well-defined invertible four maps
		\begin{equation}
			M_{\pm}: u \mapsto \bU_{\pm},  \qquad M_{\pm}^\ast: u^\ast \mapsto \bU_{\pm}^\ast.
		\end{equation}
		We now can define the scattering maps $W_\pm: u \to u_{\pm}$ by defining
		\begin{equation}\label{def_W}
			u_{\pm}(t,x) \coloneqq   \bU_\pm \left( \frac{\tan^{-1}(x+t) - \tan^{-1}(x-t)}{2}, \frac{\tan^{-1}(x+t) + \tan^{-1}(x-t)}{2}  \right), \, (t,x) \in [0,\infty) \times \bR.
		\end{equation}
		To complete the proof of  property \eqref{scattering-L11},   it is sufficient to show that
		\begin{equation}\label{eq:L11-scatter}
			\|u(T^*) - u_- (T^*)\|_{L_x^{1,1}} \to 0 \quad \textrm{as} \quad T^* \to  \infty.
		\end{equation}
		Indeed, then  $u\lin \coloneqq  u_-$  is a solution to homogeneous linear wave \eqref{eqn-linear-wave-nonhom} with $h=0$ and initial data $(\bar{u}_0(x),\bar{v}_0(x))\coloneqq  (\bU_0(\tan^{-1}(x)), \bV_0(\tan^{-1}(x)))$,  	satisfies  \eqref{scattering-L11}.

		Note that, since $\bU$ and  $\bU_-$ are continuous functions,  for any $x \in \bR$,  \eqref{def_W} implies
		\begin{align}
			\lim_{T^* \to \infty}&  | u(T^*,x) - u_-(T^*,x)|  = | \bU (\pi/2,0) - \bU_- (\pi/2,0)| \nonumber\\
			& = | \bU^* (\pi/2,-\pi/2) - \bU_-^* (\pi/2,-\pi/2)|   =0, \nonumber
		\end{align}
		where the last equality follow because $\bU_-^*$ and $\bU^*$ agree on the set $\bE_-^\ast$.  This implies that  $\lim_{T^* \to \infty} \| u(T^*) - u_-(T^*)\|_{L_x^\infty}=0$.
		Thus, to get \eqref{eq:L11-scatter} it only remains to show that
		$$\lim\limits_{T^* \to \infty} \| \partial_x u(T^*) - \partial_x u_-(T^*) \|_{L_x^1} +  \| \partial_t u(T^*) - \partial_t u_-(T^*) \|_{L_x^1} = 0. $$
		But since $\partial_x u= \frac{1}{2} (\partial_a u + \partial_b u)$ and $\partial_t u= \frac{1}{2} (\partial_a u - \partial_b u)$, it is enough to show that
		\begin{equation}\label{eq:L11-1}
			\lim\limits_{T^* \to \infty} \left[\| \partial_a u(T^*) - \partial_a u_-(T^*) \|_{L_x^1} +  \| \partial_b u(T^*) - \partial_b u_-(T^*) \|_{L_x^1} \right] = 0.
		\end{equation}
		We show  only $\lim\limits_{T^* \to \infty} \| \partial_a u(T^*) - \partial_a u_-(T^*) \|_{L_x^1} $, as the argument for the second
		is analogous.  Note that we have
		\begin{align}\label{twopage}
			& \| \partial_a u(T^*) - \partial_a u_-(T^*) \|_{L^1_x} =  \int_{\bR} | \partial_a u^*(a,a-2T^*) - \partial_a u_-^*(a,a-2T^*)| \, \ud a  ,
		\end{align}
		where we have used that for fixed $a$ and $T^*$, we have $ b=a-2T^*$.

		Next, by substituting $a =\tan A$ along with using $\partial_A \bU^* (A,B) = \partial_a u^* (\tan A,\tan B) \sec^2 (A)$,  we rewrite  \eqref{twopage} in  $(A,B)$-coordinates as
		\begin{align}\label{twopage-1}
			& \| \partial_a u(T^*) - \partial_a u_-(T^*) \|_{L^1_x}  = \int_{-\pi/2}^{\pi/2}  |  (\partial_A \bU^* - \partial_A \bU_-^*)(A,\tan^{-1} (\tan (A)-2T^*))|  \,  \ud A .
		\end{align}
		Note that, for fix $A \in [-\pi/2, \pi/2]$,  by the Fundamental Theorem of Calculus, for any suitable $f$ we have
		$$ \int_{-\pi/2}^{\tan^{-1} (\tan (A)-2T^*)} \partial_B f(A,B) \, dB =   f(A, \tan^{-1} (\tan (A)-2T^*))-f(A, -\pi/2). $$
		Using above for $f=\partial_A\bU^*-\partial_A\bU_-^*$,  we infer that $\| \partial_a u(T^*) - \partial_a u_-(T^*) \|_{L^1_x}$ is bounded from above by
		\begin{align}\label{twopage-2}
			\int_{A=-\pi/2}^{\pi/2}  \int_{B=-\pi/2}^{\tan^{-1} (\tan (A)-2T^*)}   |  (\partial_A \partial_B \bU^*)(A,B) - (\partial_A \partial_B \bU_-^*)(A, B)|  \,  dB \, dA.
		\end{align}
		Indeed, $\bU^*-\bU_-^*$ vanishes at the lower boundary of the $\bD_+^\ast$.
		Hence, since $A \geq \tan^{-1} (\tan (A)-2T^*)$,  if we prove that
		\begin{align}\label{twopage-3}
			\int\int_{|A|,|B| \leq \pi/2, A \geq B} |(\partial_A \partial_B \bU^*-\partial_A \partial_B \bU_-^*)(A,B)| \,  dA \, dB < \infty,
		\end{align}
		then, due to the Lebesgue monotone convergence theorem, the expression \eqref{twopage-2} will converge to zero as $T^* \to \infty$ and we will be done.

		But this is true because of Lemma \ref{lem:spacetime-int},  in the null coordinates $(A,B)$ with $L=\infty$,  since $\bU_-^*$ is a solution to free wave equation, i.e, $\partial_A \partial_B \bU_-^* = 0$. Hence the proof of Theorem \ref{thm:scat-largedata} is complete.
	\end{proof}

	\begin{lemma}\label{lem-scatter-aux}  Suppose that $L^{1,1}$ data $(u_0, v_0)$  and forcing $f$ are given such that $D u_0,v_0$ and  $f$ are supported, respectively,  in the interval
		$[-S,S]$ and in the set
		$$\bK_S \coloneqq  \{ (T,X) \in [0,S] \times  \{-S+T \leq X \leq S-T \} \}. $$
		Let $\Phi(t,x)$ be the unique global solution to \eqref{eq:fwm-vec}  with initial data $(u_0,v_0)$ and forcing $f$. Let $\Phi^\ast(a,b)$ be the representation of $\Phi(t,x)$ in null coordinates. Then function $\Phi^\ast$ is
		\begin{enumerate}
			\item constant in the region
			$ \{ a \geq b\} \cap  \cR_C^\ast$ where
			$$ \cR_C^\ast \coloneqq  \bigl( [S,\infty) \times [S,\infty)  \bigr) \cup  \bigl( [S,\infty) \times (-\infty,-S]  \bigr) \cup \bigl( (-\infty,-S] \times (-\infty,-S] \bigr),$$
			\item constant in the $b$ direction in the region $\{ a \geq b \} \cap \bigl( [-S,S] \times (-\infty,-S]  \bigr) $,
			\item constant in the $a$ direction in the region $\{ a \geq b\} \cap \bigl( [S,\infty) \times [-S,S]  \bigr)$.
		\end{enumerate}
	\end{lemma}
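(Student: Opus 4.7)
I will work in the null coordinates $(a,b)=(x+t,x-t)$, in which the wave map equation \eqref{eq:fwm-vec} takes the form
\[
\partial_a\partial_b\Phi^* \;=\; \mathcal{N}(\Phi^*)\bigl(\partial_a\Phi^*,\partial_b\Phi^*\bigr) \;+\; \mathcal{F}(\Phi^*)f^*,
\]
with $\mathcal{N}(\Phi^*)(\cdot,\cdot)$ bilinear in the two null derivatives and $\mathcal{F}(\Phi^*)$ bounded by Assumption \ref{assump-A1}. Set $v_-:=\partial_t\Phi+\partial_x\Phi=2\partial_a\Phi^*$ and $v_+:=\partial_t\Phi-\partial_x\Phi=-2\partial_b\Phi^*$. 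Differentiating the displayed equation and substituting the identities $\partial_a\Phi^*=v_-/2$, $\partial_b\Phi^*=-v_+/2$ yields
\[
\partial_b v_- \;=\; B(\Phi^*,v_+)\,v_- + c_1\,\mathcal{F}(\Phi^*)f^*,
\qquad
\partial_a v_+ \;=\; C(\Phi^*,v_-)\,v_+ + c_2\,\mathcal{F}(\Phi^*)f^*,
\]
with bounded coefficients $B,C$; these should be read as \emph{linear} ODEs for $v_-$ along the characteristics $\{a=\mathrm{const}\}$ (with respect to $b$) and for $v_+$ along $\{b=\mathrm{const}\}$ (with respect to $a$).

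The heart of the argument is the following vanishing statement built from the support hypotheses. For any $a_0$ with $|a_0|>S$, the inhomogeneity $\mathcal{F}(\Phi^*)f^*$ vanishes along the entire line $\{a=a_0\}$ because $\supp f^*\subset\{-S\le b\le a\le S\}$; moreover the value at the diagonal $b=a_0$ (which sits on the initial line $t=0$) is
\[
v_-(a_0,a_0) \;=\; v_0(a_0)+Du_0(a_0) \;=\; 0,
\]
again since $|a_0|>S$ lies outside the support of $v_0$ and $Du_0$. Uniqueness for the linear homogeneous ODE $\partial_b v_-=B\,v_-$ then forces $v_-\equiv 0$ on $\{a=a_0,\,b\le a_0\}$. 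Taking the union over $|a_0|>S$ gives $v_-\equiv 0$ on $\{|a|>S,\,b\le a\}$; the symmetric argument (using $v_+(b_0,b_0)=v_0(b_0)-Du_0(b_0)=0$ for $|b_0|>S$) yields $v_+\equiv 0$ on $\{|b|>S,\,a\ge b\}$.

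The three conclusions of the lemma are then read off immediately. On each of the three components of $\{a\ge b\}\cap\cR_C^*$ one has both $|a|>S$ and $|b|>S$, so both $v_-$ and $v_+$ vanish, i.e.\ $\partial_a\Phi^*=\partial_b\Phi^*=0$; the continuity of $\Phi^*$ built into $\mathscr{H}$ then forces $\Phi^*$ to be constant on each connected component. On region $(2)=\{-S\le a\le S,\,b<-S\}$ only the $v_+$-vanishing applies, giving constancy of $\Phi^*$ in $b$; on region $(3)=\{a>S,\,-S\le b\le S\}$ only the $v_-$-vanishing applies, giving constancy of $\Phi^*$ in $a$.

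The main obstacle is that for a mild $\mathscr{H}$-solution the null derivatives $v_\pm$ are only $L^1$-regular, so the pointwise linear-ODE uniqueness argument has to be justified. I would proceed exactly as in the proof of Proposition \ref{prop-noncon}: approximate $(u_0,v_0)$ by smooth $\cM$-valued data via Lemma \ref{lem-projm-3} and $f$ by a standard mollification, choosing the approximants so that their supports are contained in $[-S_k,S_k]$ and $\bK_{S_k}$ for some $S_k\downarrow S$; on each such approximation the classical $\cTM$-valued solutions furnished by \cite{BGOR22} admit the pointwise linear-ODE argument verbatim. Passing to the limit via the $\mathscr{H}$-continuous dependence of the solution map (Corollary \ref{cor-continuous dependence-nonlin-we}) then transfers the a.e.\ vanishing of $v_\pm$ on the relevant half-planes, and hence the desired constancy of $\Phi^*$ on each region, to the original mild solution.
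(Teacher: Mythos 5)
Your proposal is correct, and it reaches the conclusion by a genuinely different mechanism than the paper. The paper's proof simply applies Proposition \ref{prop-noncon} to a thin trapezoid over $[x_0-\eps,x_0+\eps]$ for each point $(t_0,x_0)\notin\bK_S$: the right-hand side of \eqref{prop-noncon-est1}--\eqref{prop-noncon-est} vanishes by the support hypotheses, so $\int|(\partial_t\mp\partial_x)\Phi(t_0,x)|\ud x=0$ on that interval, and absolute continuity of $\Phi(t_0,\cdot)$ gives local constancy. The engine there is the \emph{orthogonality} structure: in the proof of Proposition \ref{prop-noncon} the nonlinearity disappears from $\frac{\ud}{\ud t}|v_\pm|^2$ because $\partial_t^2u-\partial_x^2u-f\perp T_u\cM$ while $v_\pm\in T_u\cM$, yielding $|v_\pm(t)|\le|v_\pm(0)|+\int|f_\pm|$ with no Gronwall factor. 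You instead exploit the \emph{null-form} structure, cf.\ \eqref{eqn-transport equation-nonlinear}--\eqref{eqn-R_jk}: since $Q_{jk}(u,u)=R_{jk}(v_+,v_-)$ is linear in $v_-$, the equation $\partial_bv_-=Bv_-$ along a leftward characteristic outside $\supp f^*$ is a homogeneous linear ODE with zero data at the diagonal, and Gronwall forces $v_-\equiv0$. Both routes require the same smoothing step (Lemma \ref{lem-projm-3} plus continuous dependence), and your region-by-region bookkeeping of which of $v_\pm$ vanishes where is actually more explicit than the paper's. What each buys: your argument never uses the manifold constraint, so it applies verbatim to the $\bR^n$-valued setting of Section \ref{sec:Rn-valued-WE-Cauchy} and to the generalised nonlinearities of Remark \ref{rem-relaxed assumptions on F}; the paper's avoids Gronwall entirely and hence does not need the (true, but not free) fact that the coefficient $B$ --- which contains $v_+$ restricted to a null ray transversal to its own transport direction --- is integrable along $\{a=a_0\}$. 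You should make that integrability explicit (it follows from the mild representation \eqref{eq:zhou-lemma-hyp}, or is automatic for the smooth approximants on compact pieces), but this is a presentational point, not a gap.
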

	\begin{proof}[Proof of Lemma \ref{lem-scatter-aux}]
		Let us fix an $S>0$ and take $(t_0,x_0) \in ([0,\infty) \times \bR) \setminus \bK_S$. Let $\eps>0$ and set $L\coloneqq t_0+\eps$ such that $x_0-t_0 -\eps\geq S$. Proposition \ref{prop-noncon}, with $T\coloneqq t_0$ and $x_0\coloneqq x_0$, gives
		\begin{equation}\label{lem-scatter-est1}
			\int_{x_0-\eps}^{x_0+\eps} |(\partial_t-\partial_x) u(t_0,x)| \ud x \leq \int_{x_0-t_0-\eps}^{x_0-t_0+\eps} | v_0(x) - D u_0(x)| \ud x + \int_{0}^{t_0} \int_{x_0-t_0-\eps+t}^{x_0-t_0+\eps+t} |f(t,x)| \ud x \ud t.
		\end{equation}
		Due to the assumptions on the supports of $(u_0, v_0)$  and  $f$,  \eqref{lem-scatter-est1} implies that  
        \[\int_{x_0-\eps}^{x_0+\eps} |(\partial_t-\partial_x) u(t_0,x)| \ud x=0. \]
        Thus, $(\partial_t-\partial_x) \Phi(t_0,x)=0$ for almost every $x \in [x_0-\eps, x_0+\eps]$. Similarly, we can prove that $(\partial_t+\partial_x) \Phi(t_0,x)=0$ for almost every $x \in [x_0-\eps, x_0+\eps]$.
		
		Moreover, since $(\partial_t\pm\partial_x) \Phi(t_0,\cdot)$ are integrable, $\Phi(t_0,\cdot)$ is an absolutely continuous function and hence $\Phi(t_0,\cdot)$ is constant for  every $x \in [x_0-\eps, x_0+\eps]$.   Since $(t_0,x_0) \in ([0,\infty) \times \bR) \setminus \bK_S$ and $\eps>0$ are arbitrary and the solution $\Phi$ is unique, we can perform the computation on overlapping regions to deduce that $(\partial_t\pm\partial_x) \Phi(t_0,x_0)=0$ for $(t_0,x_0) \in ([0,\infty) \times \bR) \setminus \bK_S$. In particular, $\Phi(t,x)$ is constant in the region $(t,x) \in ([0,\infty) \times \bR) \setminus \bK_S$ and thus assertion (1) is proved.
		
		In a similar way one can prove assertions (2) and (3).
	\end{proof}
	In particular, we see that $\Phi$ is a unique solution to free wave equation when $t > S$  and we denote this solution by $\Phi_+$. Similarly, $\Phi$ is also a  solution to  free wave equation, when $t <-S$, and we denote this by $\Phi_-$, see the regions in \cite[Fig. 1]{KT98}.

	\section{Generalisations to the $L_{\loc}^1$ data}\label{sec:generalisation-L1loc}
	
	In our global existence results, see Theorems \ref{thm-M-global wave-unBndTrap} and \ref{thm-uniqueness-wave-map},  we have assumed that $Du_0$ and $v_0$ belong to the space $L^1(\bR;\bR^n)$. It is natural to ask if  these results remain true under the following weaker assumptions of the initial data
	\begin{align}\label{eqn-weak-init-data}
		u_0 \in C(\bR;\cM) \mbox{ and }  Du_0, v_0\in  L^1_{\loc}(\bR;\bR^n) \mbox{ and } f \in L^1_{\loc}(K;\bR^n),
	\end{align}
	where $K = [0,\infty) \times \bR$.
	In the  following  paragraph we answer positively to this question.
	
	Let us begin with the sequence $(K^n)_{n \in \bN}$ where each $K^n$ is a triangle with vertices $(0,-n)$, $(0,n)$ and $(n,0)$. Then, by using the theory developed in Theorem \ref{thm-M-globalwave-BndTrap}, we obtain a sequence $(u^n)_{n \in \bN}$, where, for each $n \in \bN$,
	\begin{equation}\label{eqn-u^n}
		u^n:K^n \to M
	\end{equation}
	is a global solution to wave map equation \eqref{eq:fwm-vec}	with initial data $u_{0,n}$ and $v_{0,n}$ being the restrictions of $u_0$ and $v_0$ respectively, to the base $[-n,n]$ of $K^n$ and external forcing being the restriction of $f$ to $K^n$. By the uniqueness result, Theorem \ref{thm-uniqueness-wave-map} and Corollary \ref{cor-finiteSOP}, $u^n$ is equal to the restriction of $u^{n+1}$ to $K^n$. Hence we can define a function
	\begin{equation}\label{eqn-u}
		u:K \to M
	\end{equation}
	by concatenating of functions $u^n$, see \cite[Theorem 4.4]{BR20} for similar construction.  Since the weak solution, see Definition \ref{def-sol-weak}, to problem \eqref{eq:fwm-vec} is defined in a local sense,  the function $u$  is a weak solution to the equation \eqref{eq:fwm-vec} on $[0,\infty) \times \bR$ with initial data $u_0$ and $v_0$ and external forcing $f$ as in \eqref{eqn-weak-init-data}.
	
	Next, we sketch the proof of uniqueness. Let us take $u^1$ and $u^2$ as two $\cM$-valued weak/mild solutions to equation \eqref{eq:fwm-vec} on $[0,\infty) \times \bR$ with  data $(u_0,v_0,f)$  satisfying \eqref{eqn-weak-init-data}. Since, due to Corollary \ref{cor-finiteSOP},  the restriction of $u^1, u^2$ to $K^n$ are solutions to \eqref{eq:fwm-vec} with data $(u_0|_{K_0^n}, v_0|_{K_0^n}, f|_{K_0^n})$. But then Theorem \ref{thm-uniqueness-wave-map} implies that $u^1 = u^2$ on $K^n$. Sine this holds for every $n \in \bN$, we infer $u^1=u^2$ on $K$.

	\section{Discussion of the uniqueness result by Zhou}\label{sec-Zhou}	
	In this section we deduce the Zhou uniqueness result \cite[Theorem 1.3]{Zhou99} from Theorems~ \ref{thm-M-global wave-unBndTrap} and \ref{thm-uniqueness-wave-map-unBndTrap} in the following way.
	
	Let us recall the Zhou definition of weak solution and the related uniqueness result first.
	\begin{definition}\cite[Definition 1.1]{Zhou99}\label{defn:zhou-weak-soln}
		Let $K= [0,T] \times \bR$ be a trapezoid   with base $K_0$.		Let us assume that the initial data  $(u_0, v_0) \in H^1(\bR) \times L^2(\bR)$. A function $u: K \to \bR^n$  is called a \emph{weak} solution of equation \eqref{eq:fwm-vec},    with the initial data $(u_0,v_0)$ if and only if  the following conditions are satisfied.
		\begin{trivlist}
			\item[(i)]   $u \in L^\infty([0,T]; H^1(\bR)) \cap W^{1,\infty}([0,T]; L^2(\bR))$;
			\item[(ii)]  for every  $\varphi \in  C_0^\infty((K))$, we have
			\begin{equation}\label{eqn-fwm-vector-zhou}
				\begin{split}
					& \iint_{K}	u(\partial_t^2 \varphi - \partial_x^2 \varphi)\ud x\ud t =  \int_{K_0} u_{0}(x)\partial_t \varphi(0,x)\ud x - \int_{K_0} v_{0}(x)\varphi(0,x)\ud x\\
					&+  \iint_{K } \bigl[ \sum_{j,k=1}^n \Gamma_{jk}(u)(\partial_t u_j\partial_t u_k - \partial_x u_j \partial_x u_k)
					\bigr]  \varphi(t,x) \ud x\ud t .
			\end{split}\end{equation}			
		\end{trivlist}
	\end{definition}
	The statement of main result of \cite{Zhou99} also require the following notion of a strong solution.
	\begin{definition}\cite[Definition 1.2]{Zhou99}\label{defn:zhou-strong-soln}
		Let $K= [0,T] \times \bR$ be a trapezoid   with base $K_0$.		Let us assume that the initial data  $(u_0, v_0) \in H^1(\bR) \times L^2(\bR)$. We say a function $u: K \to \bR^n$  is called a \emph{strong} solution of equation \eqref{eq:fwm-vec},    with the initial data $(u_0,v_0)$ if and only if  there exists a sequence of solution $\{u^n\}_{n \in \bN} \subset C^\infty(K; \bR^n)$ to equation  \eqref{eq:fwm-vec} on the time interval $[0,T]$ such that $u^n \to u$ strongly in $C([0,T]; H^1(\bR)) \cap C^1([0,T]; L^2(\bR))$ and if $(u^n(0,x), \partial_t u^n(0,x)) \to (u_0,v_0)$ strongly in $H^1(\bR) \times L^2(\bR)$.
	\end{definition}
	\begin{theorem}\cite[Theorem 1.3]{Zhou99}\label{thm-zhou-uniq}
		A weak solution to the Cauchy problem \ref{eq:fwm-vec} with initial data $(u_0,v_0) \in H^1(\bR) \times L^2(\bR)$ is unique and is in fact a strong solution.
	\end{theorem}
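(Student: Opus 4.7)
The plan is to reduce Zhou's theorem to the mild solution theory of this paper by localizing to bounded trapezoids, since Zhou's data $(u_0,v_0)\in H^1(\bR)\times L^2(\bR)$ does not globally live in $L^{1,1}(\bR;\cM)$ (as $L^2(\bR)\not\subset L^1(\bR)$), but on any bounded interval $I\subset\bR$ one has $(u_0|_I,v_0|_I)\in W^{1,1}(I)\times L^1(I)$ by Cauchy--Schwarz. Similarly, $u\in L^\infty([0,T];H^1(\bR))\cap W^{1,\infty}([0,T];L^2(\bR))$ yields $u|_{\widetilde K}\in\mathscr H(\widetilde K)$ for every compact trapezoid $\widetilde K\subset[0,T]\times\bR$. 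The unforced case $f=0$ covered by Zhou is a special case of Theorem \ref{thm-M-global wave-unBndTrap} and Theorem \ref{thm-uniqueness-wave-map-unBndTrap}.

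For the uniqueness assertion, let $u^1,u^2$ be two Zhou weak solutions sharing the initial data $(u_0,v_0)$. Fix a compact trapezoid $\widetilde K$ with base $\widetilde K_0$. The test-function identity \eqref{eqn-fwm-vector-zhou} of Definition \ref{defn:zhou-weak-soln}, when applied to $\varphi\in C_0^\infty((\widetilde K))$, reproduces \eqref{eqn-fwm-vector-in K-vector} with $f=0$ (after integrating by parts at the top time slice, which is legitimate since $u^i\in\mathscr H(\widetilde K)$ by the first paragraph and Theorem \ref{thm-trace} supplies the traces). The compatibility condition \eqref{eqn-compatibility-t} for $\partial_t u^i$ holds automatically: $u^i$ takes values in $\cM$, and the weak time derivative of an $H^1$-curve into the embedded submanifold $\cM$ is tangent to $\cM$ a.e., which is exactly \eqref{eqn-compatibility-t-2}. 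Hence $u^i|_{\widetilde K}$ are two $\cM$-valued weak (equivalently, by Remark \ref{rem-M-mild=weak}, mild) solutions on $\widetilde K$ with the same data $(u_0|_{\widetilde K_0},v_0|_{\widetilde K_0},0)\in L^{1,1}(\widetilde K_0;\cM)\times L^1(\widetilde K;\bR^n)$. Theorem \ref{thm-uniqueness-wave-map} then gives $u^1|_{\widetilde K}=u^2|_{\widetilde K}$. Exhausting $[0,T]\times\bR$ by such trapezoids with bases $[-n,n]$ yields $u^1=u^2$.

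For the strong solution conclusion, I would approximate $(u_0,v_0)\in H^1(\bR;\cM)\times L^2(\bR;T\cM)$ by compactly supported smooth $(u_0^k,v_0^k)$ taking values in $\cM$ and in $T\cM$ respectively (the density step from Lemma \ref{lem-projm-3} used in the proof of Theorem \ref{thm-nonlin-cauchy-L1}, arranged so that convergence holds not only in $\mathscr H$-sense but in $H^1\times L^2$). Classical smooth wave-map theory (e.g.\ \cite[Theorem 4.1]{BGOR22}) produces global smooth solutions $u^k$, which by the uniqueness part above agree with the $\mathscr H$-mild solutions of this paper and are therefore the candidates for the strong approximation of $u$.

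The main obstacle will be upgrading convergence $u^k\to u$ from the $\mathscr H(\widetilde K)$-topology supplied by Corollary \ref{cor-continuous dependence-nonlin-we} (which is based on $L^\infty L^1$) to the $C H^1\cap C^1L^2$-topology required by Definition \ref{defn:zhou-strong-soln}. The natural remedy is to run the Duhamel scheme of Theorem \ref{thm-global existence small data} in $H^1\times L^2$ as well, using the null structure of $Q_{jk}(u,u)=\partial_t u_j\partial_t u_k-\partial_x u_j\partial_x u_k$ and the $L^1$ spacetime estimate of Lemma \ref{lem:spacetime-int} (which is exactly Zhou's key identity) to control the $L^1_t L^2_x$ norm of the Duhamel source by the $L^1_x$ energy flux. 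The a priori $H^1\times L^2$ bound needed to close the argument is provided by energy conservation, which follows directly from the pointwise identity $(\partial_t^2-\partial_x^2)u\perp T_u\cM$ used already in the proof of Proposition \ref{prop-noncon}. This is the only ingredient not literally present in the paper, and once it is in place both uniqueness and the smooth-approximation property of Theorem \ref{thm-zhou-uniq} follow.
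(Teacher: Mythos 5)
Your uniqueness argument is essentially the paper's own: Section~\ref{sec-Zhou} deduces Theorem~\ref{thm-zhou-uniq} by observing that a Zhou weak solution is a mild solution in the sense of Definition~\ref{def-sol-mild} and invoking Theorem~\ref{thm-uniqueness-wave-map-unBndTrap}. If anything, you are more careful than the paper on two points it glosses over: (a) $H^1(\bR)\times L^2(\bR)$ data is only \emph{locally} in $L^{1,1}$, so one must either localize to compact trapezoids and exhaust (as you do, and as the paper does separately in Section~\ref{sec:generalisation-L1loc}) or invoke the $L^1_{\loc}$ generalisation; and (b) the compatibility condition \eqref{eqn-compatibility-t}, which Definition~\ref{def-sol-mild} requires but Zhou's Definition~\ref{defn:zhou-weak-soln} does not state, has to be verified for a Zhou solution — your tangency-of-the-weak-derivative argument supplies this. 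So the uniqueness half of your proposal is sound and matches the paper's route.

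On the ``strong solution'' half, be aware that the paper does \emph{not} prove it: the final sentence of Section~\ref{sec-Zhou} explicitly states that Theorem~\ref{thm-M-global wave-unBndTrap} does not imply that the mild solutions of Definition~\ref{def-sol-mild} are strong solutions in the sense of Definition~\ref{defn:zhou-strong-soln}, precisely because strong convergence in the energy space $C([0,T];H^1)\cap C^1([0,T];L^2)$ is required while the paper's continuous-dependence results (Corollary~\ref{cor-continuous dependence-nonlin-we}) live in the $\mathscr H$-topology built on $L^\infty_t L^1_x$. The obstacle you identify is therefore genuine and is not resolved anywhere in the paper; your sketch (running Duhamel in $H^1\times L^2$ using the null form, Lemma~\ref{lem:spacetime-int}, and energy conservation) is the plausible route — it is essentially what Zhou himself does — but as written it remains a programme, not a proof. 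For the purposes of reproducing what this paper establishes, you should either restrict the claim to uniqueness, as the paper does, or supply the missing $H^1\times L^2$ perturbation estimate in full.
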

	Observe that the definition of a weak solution used by Zhou, i.e., Definition \ref{defn:zhou-weak-soln}, requires  \emph{apriori} that $u, \partial_x u, \partial_t u \in L^\infty([0,\infty); L^2(\bR))$. This, in turn, implies that a weak solution in the sense of Zhou is a mild  solution in the sense of Definition~\ref{def-sol-mild}
	and therefore Theorem \ref{thm-zhou-uniq}  follows from Theorem~\ref{thm-uniqueness-wave-map-unBndTrap}.
	
	Note that Theorem~\ref{thm-M-global wave-unBndTrap} does not imply that the integral solutions in the sense of Definition~\ref{def-sol-mild}
	are strong solutions in the sense of Definition \ref{defn:zhou-strong-soln}, where strong convergence in the energy space is required.

	\vspace{1 cm}
	\textbf{Acknowledgement:} The authors acknowledge email discussion with Prof. Yi Zhou regarding  inequality (2.40) on page 716 in the proof of Proposition 2.4 of his paper \cite{Zhou99}.
	JJ is supported by the ANR grant 23-ERCB-0002-01 ``INSOLIT''.
	NR acknowledges the financial support by the Royal Society research professorship of Prof. Martin Hairer, RP$\backslash$R1$\backslash$191065 which allowed him to visit the University of York to have discussions on this work during his post-doctoral tenure at Imperial College London. ZB is partially supported by National Natural Science Foundation of China (No. 12071433).

	\begin{appendices}
		
		\section{Continuous dependence of fixed points}\label{sec:continuity-lem}
		\begin{lemma}\label{lem-continuity}
			Let $(X,d)$ be a complete metric space. Let $\alpha < 1$ and $T_n: X \to X$ for $n \in \bN \cup \{\infty\}$ be maps satisfying
			\begin{align}
				& d(T_n x_1, T_n x_2) \leq \alpha d(x_1,x_2), \quad \forall n \in \bN, \label{continuity1}\\
				& \lim\limits_{n \to \infty} T_n x = T_\infty x \textrm{ in } X, \quad \forall x \in X. \label{continuity3}
				&
			\end{align}
			Then for every $n \in \bN \cup \{\infty\}$ there exists a unique $a_n \in X$ such that $T_n a_n = a_n$, and
			\begin{equation}
				\lim\limits_{n \to \infty} a_n  =a_\infty \textrm{ in } X.
			\end{equation}
		\end{lemma}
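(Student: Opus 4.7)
The plan is to use the Banach Fixed Point Theorem to obtain the existence and uniqueness of each $a_n$ and then exploit the uniform contraction constant $\alpha$ to pass to the limit. Existence and uniqueness of $a_n$ for each $n \in \bN$ is immediate from \eqref{continuity1}. To handle $n = \infty$, I will first observe that $T_\infty$ is itself an $\alpha$-contraction: for any $x_1, x_2 \in X$, passing to the limit in
\begin{equation*}
d(T_n x_1, T_n x_2) \leq \alpha d(x_1, x_2)
\end{equation*}
and using \eqref{continuity3} together with continuity of $d$ yields $d(T_\infty x_1, T_\infty x_2) \leq \alpha d(x_1, x_2)$. Hence $T_\infty$ also has a unique fixed point $a_\infty \in X$.

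For the convergence $a_n \to a_\infty$, the idea is to estimate $d(a_n, a_\infty)$ by inserting $T_n a_\infty$ and applying the triangle inequality:
\begin{equation*}
d(a_n, a_\infty) = d(T_n a_n, T_\infty a_\infty) \leq d(T_n a_n, T_n a_\infty) + d(T_n a_\infty, T_\infty a_\infty) \leq \alpha\, d(a_n, a_\infty) + d(T_n a_\infty, T_\infty a_\infty).
\end{equation*}
Since $\alpha < 1$, rearranging gives
\begin{equation*}
d(a_n, a_\infty) \leq \frac{1}{1-\alpha}\, d(T_n a_\infty, T_\infty a_\infty).
\end{equation*}
By hypothesis \eqref{continuity3} applied at the fixed point $x = a_\infty$, the right-hand side tends to $0$ as $n \to \infty$, and so $a_n \to a_\infty$ in $X$.

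There is essentially no serious obstacle here: the proof is a clean application of the uniform contraction property, and the only subtle point is that the contraction assumption \eqref{continuity1} is stated only for $n \in \bN$, so one must verify that $T_\infty$ is a contraction before asserting existence and uniqueness of $a_\infty$. This is handled in one line by passing to the pointwise limit. The key mechanism — that a uniform contraction constant turns pointwise convergence of the maps into convergence of fixed points — is exactly what makes the triangle inequality trick work.
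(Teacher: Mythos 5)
Your proof is correct and follows exactly the same route as the paper's: pass to the limit in \eqref{continuity1} to see that $T_\infty$ is an $\alpha$-contraction, apply the Banach Fixed Point Theorem to each $T_n$, and then use the triangle inequality via $T_n a_\infty$ together with \eqref{continuity3} to get $d(a_n,a_\infty) \leq \frac{1}{1-\alpha}\, d(T_n a_\infty, T_\infty a_\infty) \to 0$. No gaps.
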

		\begin{proof}[Proof of Lemma \ref{lem-continuity}] From	\eqref{continuity1} and \eqref{continuity3}, we obtain
			\begin{equation*}
				d(T_\infty x_1, T_\infty x_2) \leq \alpha d(x_1,x_2).
			\end{equation*}
			Thus, existence and uniqueness of $a_n$ for all $n \in \bN \cup \{\infty\}$ follows from the Banach Fixed Point Theorem.
			We have \begin{align*}
				d(a_n,a_\infty ) & = d(T_n a_n, T_\infty a_\infty) \leq d(T_n a_n, T_na_\infty)+ d(T_n a_\infty, T_\infty a_\infty) \\
				& \leq \alpha d(a_n,a_\infty) + d(T_n a_\infty, T_\infty a_\infty).
			\end{align*}
			Thus together with  \eqref{continuity3} we get
			\begin{equation*}
				d(a_n,a_\infty)  \leq  \frac{1}{1-\alpha} d(T_n a_\infty, T_\infty a_\infty) \to 0 \textrm{ as } n \to 0.
			\end{equation*}
		\end{proof}
		
		\section{Some useful facts about Riemannian geometry}\label{app-geom}
		This section is a slightly expanded Appendix A from \cite{BO11}. In what follows we assume that $\cM$ is a compact $m$-dimensional Riemannian manifold embedded isometrically in $\bR^n$.
		
		\begin{lemma}\label{lem-projm} There exist a  $C^\infty_0$-class function
			\[P:\bR^n\to\bR^n\]
			and a neighbourhood $\UP$ of $\cM$ such that $P(\UP)=M$ and, for $p\in\UP$,
			\[
			P(p)=p \mbox{  if and only if  } p\in M.
			\]
			
		\end{lemma}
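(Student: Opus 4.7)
The plan is to construct $P$ as a smooth cutoff of the nearest-point projection onto $\cM$, which is well defined on a tubular neighbourhood since $\cM$ is a compact embedded submanifold of $\bR^n$.

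First I would invoke the tubular neighbourhood theorem. Consider the normal bundle $N\cM \to \cM$, which embeds as a subbundle of the trivial bundle $\cM \times \bR^n$; write $N_p\cM \subset \bR^n$ for the fibre. The map
\[
\Phi: N\cM \to \bR^n, \qquad (p, v) \mapsto p + v,
\]
is smooth, and at every point $(p, 0)$ of the zero section its differential splits as $\id_{T_p\cM} \oplus \id_{N_p\cM}$, hence is a linear isomorphism from $T_{(p,0)}(N\cM) \cong T_p\cM \oplus N_p\cM$ onto $\bR^n$. By the inverse function theorem, $\Phi$ is a local diffeomorphism near each $(p, 0)$. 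Compactness of $\cM$, together with a standard covering/Lebesgue number argument, then yields some $\eps > 0$ such that the restriction of $\Phi$ to the $\eps$-disk bundle $N^\eps \cM := \{(p,v) \in N\cM : |v| < \eps\}$ is a diffeomorphism onto its image $\cU_\eps := \Phi(N^\eps\cM)$, an open bounded neighbourhood of $\cM$ in $\bR^n$.

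Next I would define the projection $\pi : \cU_\eps \to \cM$ by $\pi(q) = p$ where $(p,v) = \Phi^{-1}(q)$; this is smooth as a composition of $\Phi^{-1}$ with the bundle projection. By construction $\pi(q) = q$ iff $(p,v) = \Phi^{-1}(q)$ has $v = 0$, i.e.\ iff $q \in \cM$; and $\pi(\cU_\eps) = \cM$ since $\pi$ restricts to the identity on $\cM$. A brief verification shows that for each $q \in \cU_\eps$, $\pi(q)$ coincides with the (unique) nearest point in $\cM$ to $q$.

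Finally, to achieve the compact-support requirement I would cut off. Choose $\chi \in C_0^\infty(\bR^n; [0,1])$ with $\chi \equiv 1$ on $\overline{\cU_{\eps/3}}$ and $\supp \chi \subset \cU_{\eps/2}$, which is possible since $\overline{\cU_{\eps/3}} \subset \cU_{\eps/2}$ and both are bounded (here compactness of $\cM$ is used again). Define
\[
P(q) := \begin{cases} \chi(q)\,\pi(q) & \text{if } q \in \cU_{\eps/2},\\ 0 & \text{otherwise}. \end{cases}
\]
Smoothness is clear away from $\partial \cU_{\eps/2}$, and near that boundary $\chi$ vanishes so $P$ extends smoothly by zero; compactness of $\supp P \subset \overline{\cU_{\eps/2}}$ gives $P \in C_0^\infty(\bR^n; \bR^n)$. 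Taking $\UP := \cU_{\eps/3}$, on $\UP$ we have $\chi \equiv 1$, hence $P = \pi$ there, and the conclusions $P(\UP) = \cM$ and $\{P(p) = p\} \cap \UP = \cM \cap \UP$ follow from the properties of $\pi$ established above.

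The main obstacle is the tubular neighbourhood construction itself, i.e.\ upgrading the pointwise local diffeomorphism property of $\Phi$ to a uniform $\eps$ valid on all of $\cM$; this is where the compactness of $\cM$ is essential, but otherwise the argument is routine.
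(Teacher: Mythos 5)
Your proof is correct and follows essentially the same route as the paper: a tubular neighbourhood of the compact submanifold $\cM$ (which the paper simply cites from O'Neill, Proposition 7.26, while you derive it via the inverse function theorem and compactness), the induced smooth projection onto $\cM$, and a smooth cutoff to obtain a $C_0^\infty$ map. The aside about $\pi(q)$ being the nearest point is not needed and would require $\eps$ small enough, but nothing in the argument depends on it.
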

		\begin{proof}[Proof of Lemma \ref{lem-projm}] We will use a suitable smooth projection of the ambient space $\bR^n$ on the manifold. By Proposition 7.26, p. 200 in \cite{ONeill83}, there exists an open neighbourhood $\tilde V$ of the set $\{(p,0):p\in M\}$ in the normal bundle $NM$ and an open set $O\subseteq\bR^n $  such that the function $\mathcal E:\tilde V\ni (p,\xi) \mapsto p+\xi $ is a diffeomorphism. Let us  define a smooth map $\tilde P:O\to M$ as the composition of the natural projection  map $NM\to M:(p,\xi)\mapsto p\in M$ and $\mathcal E^{-1}$. Employing a partition of unity we can find  a  $C^\infty_0$-class function $P:\bR^n\to\bR^n$ and a  neighbourhood $V\subseteq O$ of $\cM$ such that
			$P$ has the claimed properties.
		\end{proof}

		\begin{lemma}\label{lem-projm-2}
			Let, for $p\in M$,    $\pi_p:\bR^n\to T_pM$ be the orthogonal projection.  Then the corresponding map
			\begin{equation}\label{eqn-pi}
			\pi: M \ni p \mapsto \pi_p \in \cL(\bR^n)  
			\end{equation}			
			is of $C^\infty$ class. In particular, it is Lipschitz continuous and bounded, i.e. there exists $C_2=C_2(M)$:
			\begin{equation}\label{eqn-pi_p-uniform}
				\Vert \pi_p \Vert_{\cL(\bR^n)} \leq C_2, \;\; p \in M.
			\end{equation}
		\end{lemma}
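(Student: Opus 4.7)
The plan is to establish smoothness of $\pi$ by writing down an explicit local formula via the Gram matrix of a smooth parametrization, and then deduce the uniform bound and Lipschitz continuity from smoothness together with the compactness of $M$.

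First, I would fix an arbitrary $p_0 \in M$ and choose a smooth local parametrization $\phi:U\to M\subset \bR^n$, defined on an open set $U\subset \bR^m$, with $\phi(0)=p_0$. Since $\phi$ is an immersion, the vectors $\partial_1\phi(u),\ldots,\partial_m\phi(u)$ form a smoothly varying basis of $T_{\phi(u)}M$, and their Gram matrix $G(u)=[\langle \partial_i\phi(u),\partial_j\phi(u)\rangle_{\bR^n}]_{i,j=1}^m$ is symmetric positive definite. Writing $G^{ij}(u)$ for the entries of $G(u)^{-1}$, the orthogonal projection admits the explicit expression
\begin{equation}
\pi_{\phi(u)}(v)=\sum_{i,j=1}^m G^{ij}(u)\,\langle v,\partial_i\phi(u)\rangle_{\bR^n}\,\partial_j\phi(u),\qquad v\in\bR^n.
\end{equation}
Because $\phi$ and hence $G$ are of class $C^\infty$, and because matrix inversion is smooth on the open set of invertible matrices, this formula shows that $u\mapsto \pi_{\phi(u)}$ is a $C^\infty$ map into the finite-dimensional space $\cL(\bR^n)$. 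Transferring back along the local diffeomorphism $\phi$, we obtain that $\pi$ is of class $C^\infty$ in a neighbourhood of $p_0$ in $M$.

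Since $p_0$ was arbitrary and $C^\infty$-regularity is a local property, we conclude that $\pi\in C^\infty(M;\cL(\bR^n))$. Note that no gluing argument between different charts is necessary: the orthogonal projection is intrinsically defined, so although formulas in overlapping charts look different, they agree as operators.

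Finally, the uniform bound and the Lipschitz property follow at once from compactness of $M$. The continuous function $p\mapsto \Vert \pi_p\Vert_{\cL(\bR^n)}$ attains its maximum on the compact set $M$, which provides the constant $C_2=C_2(M)$ in \eqref{eqn-pi_p-uniform}. Similarly, because $\pi$ is $C^1$ on the compact manifold $M$, its differential is bounded, and connecting any two points by a geodesic of bounded length yields a global Lipschitz estimate. The only minor subtlety in the whole argument is the choice of norm on $\cL(\bR^n)$, but since all norms on this finite-dimensional space are equivalent, the particular choice (e.g.\ the operator norm induced by the $\ell^1$-norm on $\bR^n$) plays no role.
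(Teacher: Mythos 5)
The paper itself omits the proof of this lemma (it states ``The proof of this Lemma is omitted''), so there is no argument to compare against; your proposal supplies the standard proof one would expect, and it is correct. The local Gram-matrix formula $\pi_{\phi(u)}(v)=\sum_{i,j}G^{ij}(u)\langle v,\partial_i\phi(u)\rangle\,\partial_j\phi(u)$ does define a self-adjoint idempotent with range $T_{\phi(u)}M$, hence the orthogonal projection, and smoothness follows from smoothness of $\phi$, of the Gram matrix, and of matrix inversion on the invertible matrices; since orthogonal projection is chart-independent, the local pieces automatically agree. The bound \eqref{eqn-pi_p-uniform} is in fact immediate without compactness: $\pi_p$ has Euclidean operator norm $1$, and equivalence of norms on $\cL(\bR^n)$ gives the constant for the $\ell^1$-induced norm.

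Two small points on the Lipschitz step. First, your geodesic argument bounds $\Vert\pi_p-\pi_q\Vert$ by a constant times the \emph{intrinsic} distance $d_M(p,q)$; to get Lipschitz continuity with respect to the ambient Euclidean distance (which is what is used elsewhere in the paper, e.g.\ in the proof of \eqref{eqn-convergence-v_k}) you should also invoke the standard fact that for a compact embedded submanifold $d_M(p,q)\leq C\,|p-q|$, or alternatively extend $\pi$ smoothly to a tubular neighbourhood of $M$ in $\bR^n$ (e.g.\ as $x\mapsto\pi_{P(x)}$ with $P$ from Lemma \ref{lem-projm}) and apply the mean value inequality in the ambient space. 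Second, if $M$ is disconnected the geodesic argument only applies within components, but distinct components of a compact manifold are at positive mutual distance, so boundedness covers the remaining pairs. Both are routine repairs and do not affect the correctness of the approach.
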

		The proof of this Lemma is omitted.

		By $C_{b,uc}(\bR;\bR^n)$ we denote a separable Banach space of all functions $u:\bR \to \bR^n$ which are uniformly continuous and bounded, see Example 1 in \cite[section IX.1]{Yosida95}, endowed with usual ``sup'' norm. By $C_{b,uc}(\bR;M)$ we denote the closed subset of $C_{b,uc}(\bR;\bR^n)$ consisting of all elements $u$ of the latter such that $u(x) \in M$, for all $x\in \bR$, endowed with the trace distance.

		\begin{lemma}\label{lem-density} There exist  Borel measurable mappings
			\begin{align}\label{eqn-Theta_k}
				\Theta_k &: \bigl[ C_{b,uc}(\bR;M) \cap  \dot{W}^{1,1} (\bR;\bR^n) \bigr] \times L^1(\bR; TM)
				\\& \to
				\bigl[ C_{00}(\bR;M) \cap  \dot{W}^{3,2} (\bR;\bR^n) \bigr] \times \bigl[ C_0(\bR;TM) \cap \dot{W}^{1,2} (\bR;\bR^n) \bigr]
			\end{align}
			such that for every $z\in H^1_{\textrm{loc}}\times L^2_{\textrm{loc}}(TM)$,
			\begin{equation}\label{eqn-convergence}
				\lim_{k\to\infty}\vert \Theta_k(z)-z\vert_{\dot{W}^{1,1} \times L^1}=0.
			\end{equation}
		\end{lemma}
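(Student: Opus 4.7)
The plan is to construct the Borel measurable mappings $\Theta_k$ explicitly by combining three standard procedures — mollification, smooth cutoff to compactify the support, and pointwise projection onto $\cM$ (respectively $T\cM$) — and then verify the convergence \eqref{eqn-convergence} in the $\dot{W}^{1,1} \times L^1$ norm. I write $\Theta_k = (\Theta_k^1, \Theta_k^2)$.

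Preliminary reduction and setup: because $Du_0 \in L^1(\bR;\bR^n)$, the Cauchy criterion for the antiderivative (as in the Remark following Definition \ref{def-W11-notation}) yields limits $p_\pm := \lim_{x\to\pm\infty} u_0(x) \in \cM$, using closedness of $\cM$. By compactness of $\cM$ and Lemma \ref{lem-projm}, fix $\epsilon_0>0$ so that the $\epsilon_0$-tubular neighbourhood of $\cM$ in $\bR^n$ is contained in the open set $V$ on which $P$ is smooth with $P|_{\cM}=\id$. Fix a symmetric mollifier $\rho\in C_0^\infty(\bR)$ with $\int\rho=1$, set $\rho_k(x):=k\rho(kx)$, and fix $\chi\in C_0^\infty(\bR)$ with $\chi\equiv 1$ on $[-1,1]$ and $\supp\chi\subset[-2,2]$; put $\chi_k(x):=\chi(x/k)$. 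Choose a smooth function $a_k:\bR\to\bR^n$ with $a_k\equiv p_+$ on $[2k,\infty)$ and $a_k\equiv p_-$ on $(-\infty,-2k]$.

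Construction: define
\[
\bar u_k := \chi_{2k}(\rho_k\ast u_0)+(1-\chi_{2k})\,a_k,\qquad u_k:=P(\bar u_k)=:\Theta_k^1(u_0,v_0),
\]
and
\[
v_k(x):=\chi_{2k}(x)\,\pi_{u_k(x)}\!\bigl((\rho_k\ast v_0)(x)\bigr)=:\Theta_k^2(u_0,v_0)(x).
\]
For $k$ sufficiently large (depending on the rate at which $u_0$ approaches $p_\pm$), $\bar u_k(x)\in V$ for every $x$: on $\{|x|\leq 2k\}$, $\bar u_k=\rho_k\ast u_0$, which is uniformly close to $u_0(x)\in\cM$ by uniform continuity; on $\{|x|\geq 4k\}$, $\bar u_k\in\{p_-,p_+\}\subset\cM$; on the transition annulus both $\rho_k\ast u_0$ and $a_k$ lie within $\epsilon_0/2$ of the same point $p_\pm$, so their convex combination lies within $\epsilon_0$ of $\cM$. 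Hence $u_k=P(\bar u_k)$ is well-defined, smooth, $\cM$-valued, and constantly equal to $p_\pm$ outside $[-4k,4k]$; in particular $u_k$ lies in $\dot{W}^{3,2}$ and in the target class for the first coordinate. Similarly, by Lemma \ref{lem-projm-2}, $v_k$ is smooth, compactly supported, and satisfies $v_k(x)\in T_{u_k(x)}\cM$ for every $x$, so $v_k$ lies in $C_0(\bR;T\cM)\cap\dot{W}^{1,2}$.

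Convergence and measurability: for $\|u_k-u_0\|_{\dot{W}^{1,1}}\to 0$, split $\bR$ into $\{|x|\leq k\}$ and $\{|x|>k\}$. On the inner region, where $\chi_{2k}\equiv 1$, the chain rule gives $Du_k=DP(\rho_k\ast u_0)(\rho_k\ast Du_0)$; using boundedness of $DP$ on $V$, the identity $P|_{\cM}=\id$ (so $DP(p)$ acts as the identity on $T_p\cM$ for $p\in\cM$), and the $L^1$-convergence $\rho_k\ast Du_0\to Du_0$, this restricted error tends to zero. On the outer region, $\|Du_0\|_{L^1(\{|x|>k\})}\to 0$ by absolute continuity of the integral, and the contributions involving $\chi_{2k}'$ and $a_k'$ (supported in $\{2k\leq|x|\leq 4k\}$) are controlled by $|p_+-p_-|$ together with the tail $\sup_{|x|\geq 2k}|u_0(x)-p_\pm|\to 0$. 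For $v_k$, the compatibility identity $v_0(x)=\pi_{u_0(x)}(v_0(x))$ allows the decomposition
\[
v_k(x)-v_0(x)=\chi_{2k}(x)\bigl[\pi_{u_k(x)}((\rho_k\ast v_0)(x))-\pi_{u_0(x)}(v_0(x))\bigr]+(\chi_{2k}(x)-1)v_0(x);
\]
the second term tends to zero in $L^1$ by dominated convergence, while the first tends to zero using uniform convergence $u_k\to u_0$, continuity and uniform boundedness of $\pi$ (Lemma \ref{lem-projm-2}), and $\|\rho_k\ast v_0-v_0\|_{L^1}\to 0$. Borel measurability (in fact continuity) of each $\Theta_k$ follows because every constituent operation—convolution with a fixed smooth kernel, multiplication by a smooth cutoff, composition with the smooth maps $P$ and $\pi$—is continuous between the appropriate function spaces.

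The \textbf{main obstacle} is the compactification step: in order to apply $P$ one must keep $\bar u_k$ uniformly inside $V$, and this crucially relies on the existence of the limits $p_\pm$ of $u_0$ at $\pm\infty$, which in turn rests on the \emph{global} hypothesis $Du_0\in L^1(\bR;\bR^n)$ (an $L^1_{\loc}$ bound would not suffice for this argument). A secondary delicate point is the careful tail bookkeeping in the $\dot{W}^{1,1}$-convergence of $u_k$, where one must simultaneously control the mollification error, the cutoff-derivative error carried by $\chi_{2k}'$, and the anchor-derivative error carried by $a_k'$, so that each contribution vanishes as $k\to\infty$.
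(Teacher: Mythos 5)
Your construction follows the same three-step scheme as the paper's proof (compactify the data, mollify, then project back onto $\cM$ via the maps $P$ of Lemma \ref{lem-projm} and $\pi$ of Lemma \ref{lem-projm-2}), and the convergence arguments are in substance identical, so the proof is correct. The one place where you deviate is the compactification: you blend $\rho_k\ast u_0$ with an anchor $a_k$ tending to the asymptotic limits $p_\pm$, using a fixed mollification scale $1/k$, whereas the paper first freezes $u$ at $\pm k$ (so the truncated function $\bar u_k(x)=u(\operatorname{sgn}(x)\min\{k,|x|\})$ stays $\cM$-valued) and then chooses the mollification parameter $m_k$ \emph{adaptively} from the modulus of uniform continuity of $\bar u_k$, which guarantees $\bar{\bar u}_k$ lies in the tubular neighbourhood $V$ for \emph{every} $k$. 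With your fixed scale, the containment $\bar u_k(\bR)\subset V$ holds only for $k$ large depending on $z$, so strictly speaking $\Theta_k(z)$ is not defined for all $k$ and all $z$ as the statement requires; this is easily repaired (define $\Theta_k(z)$ to be a constant pair for the finitely many bad $k$, or adopt the paper's adaptive choice of mollification scale), and it does not affect the limit \eqref{eqn-convergence}, but you should close that loophole explicitly. A second cosmetic difference: the paper's frozen endpoints are $u(\pm k)$ rather than $p_\pm$, which spares you the tail estimate $\sup_{|x|\ge 2k}|u_0(x)-p_\pm|\to 0$ in the $\dot W^{1,1}$ bookkeeping.
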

		In the above, by $C_{00}(\bR;M)$ we understand the subset of functions which are elements of the space  $C_{b,uc}(\bR;M)$ and which are constants in the two disjoint
		intervals whose union is a complement of some compact interval.

		\begin{proof}[Proof of Lemma \ref{lem-density}]   Let $\UP$ be the neighbourhood of $\cM$  introduced  Lemma \ref{lem-projm}. Let us choose and fix  $\eps_0 >0$ such that
			\begin{equation}\label{eqn-tubular neighbourhood}
				M+\overline{B_{\eps_0}} \subset V.
			\end{equation}
			Let $(b_j)_{j=1}^\infty$ be an approximation of identity on $\bR^n$ of the form
			\begin{equation}
				\label{eqn-aoi}
				\begin{split}
					b_j(x)&\coloneqq  j^{-\frac{n}2}b(jx), \;\; x\in \bR^n, \\
					b&\in C_0^\infty(\bR^n), 1_{B(0,\frac12)} \leq b \leq 1_{B(0,1)}, \;\; \int_{\bR^n} b(x)\, \ud x=1.
				\end{split}
			\end{equation}
			Note that $b_1=b$ and $\supp (b_j) \subset B(0,\frac1{j})$.

			Let us choose and fix $k\in \mathbb{N}$. We will construct
			the map $\Theta_k$ as follows. \\
			Let $z=(u,v)\in  \bigl[ C_{b,uc}(\bR;M) \cap  \dot{W}^{1,1} (\bR;\bR^n) \bigr] \times L^1(\bR; TM) $.
			Note that this means in particular, that $v(x) \in T_{u(x)} M$ for almost all $x \in \bR$.
			Also, that the following limits (in $\cM$) exists
			\begin{align}
				u(-\infty)&\coloneqq  \lim_{x \to -\infty} u(x), \label{eqn-u(-infty)}\\
				u(+\infty)&\coloneqq  \lim_{x \to \infty} u(x). \label{eqn-u(infty)}
			\end{align}
			We put
			\begin{align}\label{eqn-u_k bar}
				\bar u_k(x)&=u(\operatorname{sgn}(x)\min\,\{k,|x|\}), \;\; x \in \bR,
				\\
				\label{eqn-v_k bar}
				\bar v_k(x)&=\mathbf 1_{(-k,k)}(x) v(x), \;\; x \in \bR.
			\end{align}
			Obviously,  $\bar u_k\in C_{b,uc}(\bR;M) \cap  \dot{W}^{1,1} (\bR;\bR^n)$ and
			\begin{equation}\label{eqn-Du_k bar-Du} D \bar u_k=\mathbf 1_{(-k,k)} D u
				\mbox{ in the weak sense}.
			\end{equation}
			Thus,  since $\bar{u}_k$ is uniformly continuous, for every $\eps_0>0$, we can find $\delta_0>0$ such that
			\begin{equation}\label{eqn-u_k abs cont}
				\vert  \bar u_k(x)-\bar u_k(y) \vert  \leq  \frac{\eps_0}{3} \mbox{ provided that } x,y\in \bR, \;\; \vert y-x\vert \leq \delta_0.
			\end{equation}
			Set \begin{equation}\label{eqn-m_k}
				m_k=\min\,\bigl\{j \in \mathbb{N}: j \geq k \mbox{ and }  \supp (b_j) \subset [-\frac{\delta_0}{2}, \frac{\delta_0}{2}] \bigr\}.
			\end{equation}
			Next, we define
			\begin{equation}\label{eqn-w_k}
				\bar{\bar{u}}_{k}=\bar u_k\ast b_{m_k} \mbox{ and }   \bar{\bar{v}}_{k}= \bar v_k\ast b_{m_k}.
			\end{equation}
			Note that $\bar u_k$ is $\cM$-valued but $\bar{\bar{u}}_{k}$ is $V$-valued.
			By \eqref{eqn-u_k abs cont}, \eqref{eqn-m_k}  and  \eqref{eqn-w_k}
			we infer that
			\[ \sup_{x\in\bR}|\bar{\bar{u}}_{k}(x)-\bar u_k(x)|\leq \frac{\eps_0}{3}< \eps_0,
			\]
			and $\bar{\bar{u}}_{k}$ is a  $C^\infty$-class function.
			Therefore  we deduce  that
			\begin{trivlist}
				\item[(i)] taking values in a compact  set $M+\overline{B_{\eps_0}}\subseteq \UP$,
				\item[(ii)] and such that it is constant on both interval $[k+\delta_0,\infty)$ and
				$(-\infty, -k-\delta_0]$. More precisely,
				\begin{equation}
					\bar{\bar{u}}_{k}(x) =
					\begin{cases}
						u(-k),  & \mbox{ for } x\in (-\infty, -k-\delta_0], \\
						u(k),  & \mbox{  for } x\in [k+\delta_0,\infty).
					\end{cases}
				\end{equation}
			\end{trivlist}
			Similarly, $\bar{\bar{v}}_{k}$ is a  $C^\infty_0$-class function taking values in $\bR^n$
			and  $ \supp(\bar{\bar{v}}_{k}) \subset [-k-\delta_0,k+\delta_0]$.
			In particular,
			\begin{equation}\label{eqn-w_k-regularity}
				\bar{\bar{u}}_{k} \in \dot{W}^{3,2} (\bR;\bR^n).
			\end{equation}
			We set
			\begin{align}
				u_k&=P \circ \bar{\bar{u}}_{k}, \\
				v_k(x)&=\pi_{u_k(x)}\big( \bar{\bar{v}}_{k}(x)\big), \;\; x\in\bR^n,
			\end{align}
			and
			\begin{align}
				\Theta_k(u)&=(u_k,v_k).
			\end{align}
			It is easy to  see that
			\begin{equation}\label{eqn-u_k-regularity}
				u_k \in \dot{W}^{3,2} (\bR;\bR^n) \cap  C_{00}(\bR;M),
			\end{equation}
			\begin{equation}\label{eqn-v_k-regularity}
				v_k \in C_0(\bR;TM) \cap \dot{W}^{1,2} (\bR;\bR^n),
			\end{equation}
			and
			\begin{equation}\label{eqn-v_k in T_u_k}
				v_k(x) \in T_{u_k(x)}M, \;\; x\in\bR^n.
			\end{equation}
			Thus   $\Theta_k$ maps the space
			\[ \bigl[ C_{b,uc}(\bR;M) \cap  \dot{W}^{1,1} (\bR;\bR^n) \bigr] \times L^1(\bR; TM)
			\]
			to
			\[
			\bigl[ C_{00}(\bR;M) \cap  \dot{W}^{3,2} (\bR;\bR^n) \bigr] \times \bigl[ C_0(\bR;TM) \cap \dot{W}^{1,2} (\bR;\bR^n) \bigr]
			\]
			as requested.
			Moreover, it  is easy to  show that the map $\Theta_k$ is measurable in the above sense. This completes the first part of the proof.
			
			Now we can consider sequences $\{u_k\}_{k \in \bN}$ and $\{v_k\}_{k \in \bN}$ constructed above.  Note that by the construction $m_k \to \infty$ as $k \to \infty$.
			To conclude the proof let us observe that the convergence \eqref{eqn-convergence} is equivalent to
			\begin{align}
				& u_k \to  u \mbox{ in } C_{b,uc}(\bR;M) \mbox{ or, equivalently,  in } C_{b,uc}(\bR;\bR^n),
				\label{eqn-convergence-u_k-L^infty}\\
				&\lim_{k\to\infty}\vert Du_k- Du\vert_{L^1}=0, \label{eqn-convergence-u_k-L^1}\\
				&\lim_{k\to\infty}\vert v_k- v\vert_{L^1}=0. \label{eqn-convergence-v_k}
			\end{align}

			\begin{proof}[Proof of \eqref{eqn-convergence-u_k-L^infty}]
				Since the limits \eqref{eqn-u(-infty)} and \eqref{eqn-u(infty)} exists,
				we infer that $\bar{u}_k \to u$  in $C_{b,uc}(\bR;\bR^n)$. Since by standard arguments $u \ast b_{m_k} \to u $   in  $C_{b,uc}(\bR;\bR^n)$ and
				\[
				\vert \bar u_k \ast b_{m_k} - u \ast b_{m_k} \vert_{C_{b,uc}}\leq  \vert \bar u_k  - u  \vert_{C_{b,uc}} \vert  b_{m_k}\vert_{L^1}= \vert \bar u_k  - u  \vert_{C_{b,uc}} \to 0,
				\]
				we deduce that $u_k \to  u$  in   $C_{b,uc}(\bR;\bR^n)$.
			\end{proof}

			\begin{proof}[Proof of \eqref{eqn-convergence-v_k}.]
				
				Firstly we can prove by a standard argument that    $\vert \bar{\bar{v}}_{k}  - v\vert_{L^1} \to 0$.
				Secondly, by the assumptions of $u$ and $v$ we infer that $\pi_{u(x)}v(x)=v(x)$ for almost all $x \in \bR$. Hence
				\begin{align}
					\vert v_k-  v\vert_{L^1}&=\int_{\bR} \vert v_k(x)-  v \vert \, \ud  x
					=\int_{\bR} \vert \pi_{u_k(x)}\big( \bar{\bar{v}}_{k}(x)\big) - \pi_{u(x)}v(x)  \vert \, \ud  x
					\\
					&\leq \int_{\bR} \vert \pi_{u_k(x)}\big( \bar{\bar{v}}_{k}(x)-v(x) \big)\vert \, \ud  x   + \int_{\bR} \vert \bigl(\pi_{u_k(x)}v(x)  - \pi_{u(x)}v(x) \bigr) \vert \, \ud  x
					\\
					&\leq C_2 \int_{\bR} \vert  \bar{\bar{v}}_{k}(x)-v(x) \vert \, \ud  x +
					\int_{\bR} \vert \bigl(\pi_{u_k(x)}v(x)  - \pi_{u(x)}v(x) \bigr) \vert \, \ud  x,
				\end{align}
				where the last inequality is a consequence of bound \eqref{eqn-pi_p-uniform} in Lemma \ref{lem-projm-2}. \\
				By the first claim made in the current proof we infer that the 1st term on the RHS above converges to $0$.
				The second term on the RHS above also converges to $0$ because the map $\pi$ is Lipschitz by Lemma \ref{lem-projm-2}.
				This completes the proof of \eqref{eqn-convergence-v_k}.
			\end{proof}

			\begin{proof}[Proof of \eqref{eqn-convergence-u_k-L^1}]
				Since $u(x) \in \cM$, $u(x)=P(u(x))$. Consequently we have
				\begin{align}\label{eqn-convergence-u_k-L^1-1}
					|Du_k - Du|_{L^1} & = |(d_{\bar{\bar u}_k(x)} P) (D \bar{\bar u}_k) - (d_{u(x)} P) (D u_k)|_{L^1}\\
					& \leq C_1 | D \bar{\bar u}_k - D u|_{L^1} + |Du|_{L^1} \sup_{x \in \bR}|d_{\bar{\bar u}_k} P  - d_{u(x)} P|,
				\end{align}
				where $C_1$ is the bound on $\sup_{u \in \bR^n}|d_u P|$ that we get from Lemma \ref{lem-projm}. The second term in the RHS above goes to $0$ as $k \to \infty$ because due to Lemma \ref{lem-projm} there exists $\tilde{C}_1>0$ such that
				\begin{align*}
					\sup_{x \in \bR}|d_{\bar{\bar u}_k} P  - d_{u(x)} P| \leq \tilde{C}_1 \sup_{x \in \bR} |\bar{\bar u}_k(x) - u(x)|,
				\end{align*}
				but this converges to $0$, as $k \to \infty$, see the proof of \eqref{eqn-convergence-u_k-L^infty}. To prove the convergence of the first term in the RHS of \eqref{eqn-convergence-u_k-L^1-1} we observe that the Young convolution inequality gives
				\begin{align}
					| D \bar{\bar u}_k - D u|_{L^1} & = | (D \bar u_k) \ast b_{m_k} - D u |_{L^1} \leq | (D \bar u_k) \ast b_{m_k} - (D u) \ast b_{m_k} |_{L^1} + | (D u) \ast b_{m_k} - D u |_{L^1}\\
					& \leq | D \bar u_k - D u |_{L^1}+ | (D u) \ast b_{m_k} - D u |_{L^1}.
				\end{align}
				Here the second term in RHS converges to $0$, as $k \to \infty$, since $Du \in L^1$ and $b_{m_k}$ is the approximation of identity. The first term in RHS above also converges to $0$, as $k \to \infty$, because $D u \in L^1$ and by definition of $\bar u_k$
				$$ | D \bar u_k - D u |_{L^1} = \int_{|x|\geq k} |Du (x)| \, \ud  x.$$
			\end{proof}
			Hence, the proof of Lemma \ref{lem-density} is complete.
		\end{proof}

		By an inspection of the proof of Lemma \ref{lem-density} we deduce that the following extension/modification of it holds as well.
		\begin{lemma}\label{lem-projm-3} Assume that  $K_0$ is a compact interval.
			There exist a  Borel measurable map
			\begin{align}\label{eqn-Theta_k-bnd}
				\Theta_k &: \bigl[ C(K_0;M) \cap  \dot{W}^{1,1} (K_0;\bR^n) \bigr] \times L^1(K_0; TM)
				\\& \to
				\bigl[ C(K_0;M) \cap  \dot{W}^{3,2} (K_0;\bR^n) \bigr] \times \bigl[ C_0(K_0;TM) \cap \dot{W}^{1,2} (K_0;\bR^n) \bigr]
			\end{align}
			such that for every $z\in  \bigl[ C(K_0;M) \cap  \dot{W}^{1,1} (K_0;\bR^n) \bigr] \times L^1(K_0;TM)$,
			\begin{equation}\label{eqn-convergence-12}
				\lim_{k\to\infty}\vert \Theta_k(z)-z\vert_{ \bigl( C(K_0;M) \cap \dot{W}^{1,1}(K_0) \bigr)  \times L^1(K_0)}=0.
			\end{equation}
		\end{lemma}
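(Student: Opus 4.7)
The plan is to mimic the construction of $\Theta_k$ from the proof of Lemma~\ref{lem-density}, modifying only the preliminary extension step so that the convolution-based smoothing is well-defined near the boundary of $K_0$. Write $K_0=[a,b]$ and fix $z=(u,v)\in \bigl[C(K_0;M)\cap \dot W^{1,1}(K_0;\bR^n)\bigr]\times L^1(K_0;TM)$. Since $u$ is locally absolutely continuous on $K_0$ (because $Du\in L^1(K_0)$), the boundary values $u(a)$ and $u(b)$ are well defined and lie in $M$.

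First, extend $(u,v)$ to the whole of $\bR$: define $\bar u\in C_{b,uc}(\bR;M)\cap \dot W^{1,1}(\bR;\bR^n)$ by setting $\bar u(x)=u(a)$ for $x\le a$, $\bar u(x)=u(b)$ for $x\ge b$, and $\bar u=u$ on $K_0$, and extend $v$ by zero outside $K_0$ to obtain $\bar v\in L^1(\bR;\bR^n)$ with $\bar v(x)\in T_{\bar u(x)}M$ for a.e.\ $x\in\bR$ (trivially true where $\bar v=0$). Choose the approximation of identity $(b_j)$ as in \eqref{eqn-aoi} and, for $k\in \mathbb N$, choose an index $m_k\ge k$ such that $\operatorname{supp}(b_{m_k})\subset [-\delta_0/2,\delta_0/2]$, where $\delta_0>0$ is a modulus of (uniform) continuity of $\bar u$ small enough that $|\bar u(x)-\bar u(y)|\le \eps_0/3$ whenever $|x-y|\le\delta_0$; this guarantees that $\bar u\ast b_{m_k}$ takes values in the tubular neighbourhood $V\supset M$ provided by Lemma~\ref{lem-projm}. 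Also, pick a smooth cutoff $\chi_k\in C_0^\infty(\operatorname{int}(K_0))$ with $0\le \chi_k\le 1$ and $\chi_k=1$ on $[a+2/k,b-2/k]$, say, so that $\chi_k\to 1$ pointwise on $\operatorname{int}(K_0)$.

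Then define, as in the proof of Lemma~\ref{lem-density},
\begin{equation}
u_k\coloneqq P\circ(\bar u\ast b_{m_k})\big|_{K_0},\qquad v_k(x)\coloneqq \pi_{u_k(x)}\bigl((\chi_k\bar v)\ast b_{m_k}\bigr)(x),\ x\in K_0,
\end{equation}
and set $\Theta_k(z)=(u_k,v_k)$. Smoothness of $P$, $\pi$, and $b_{m_k}$ together with the regularity of $\bar u,\chi_k\bar v$ gives $u_k\in C(K_0;M)\cap \dot W^{3,2}(K_0;\bR^n)$ and $v_k\in \dot W^{1,2}(K_0;\bR^n)$; the cutoff $\chi_k$ together with the choice of $m_k$ ensures $\operatorname{supp}(v_k)\subset \operatorname{int}(K_0)$, so $v_k\in C_0(K_0;TM)$; and $v_k(x)\in T_{u_k(x)}M$ by the very definition of $\pi_{u_k(x)}$. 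Borel measurability of $\Theta_k$ follows because each of the building blocks (extension, convolution, multiplication by $\chi_k$, and pointwise composition with the smooth maps $P$, $\pi$) is continuous in the relevant topology.

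The convergence \eqref{eqn-convergence-12} is then checked exactly as for the three limits \eqref{eqn-convergence-u_k-L^infty}--\eqref{eqn-convergence-v_k} in the proof of Lemma~\ref{lem-density}. The sup-norm convergence $u_k\to u$ on $K_0$ is immediate since $\bar u\ast b_{m_k}\to \bar u$ uniformly on $\bR$ (using uniform continuity of $\bar u$) and $P$ is Lipschitz on $V$; the $\dot W^{1,1}$-convergence follows from the chain rule $Du_k=(d_{\bar u\ast b_{m_k}}P)\bigl(D\bar u\ast b_{m_k}\bigr)|_{K_0}$ combined with $D\bar u\ast b_{m_k}\to D\bar u$ in $L^1(\bR)$ and uniform convergence of the derivative factor; finally the $L^1$-convergence of $v_k$ to $v$ uses $(\chi_k\bar v)\ast b_{m_k}\to \bar v$ in $L^1(\bR)$ together with $\pi_{u_k(\cdot)}v(\cdot)=v(\cdot)$ a.e.\ on $K_0$ and Lipschitz continuity of $p\mapsto\pi_p$ from Lemma~\ref{lem-projm-2}, which allows estimating $\|\pi_{u_k}(\cdot)-\pi_u(\cdot)\|$ by $\|u_k-u\|_{\infty}$. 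The main — though still routine — point to control is the role of the cutoff $\chi_k$ in the last step: one uses dominated convergence on $\chi_k\bar v\to \bar v$ in $L^1(\bR)$ together with the Young inequality for convolutions, exactly as in the $\bR$-case but now on a bounded interval.
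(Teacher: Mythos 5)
Your proposal is correct and is essentially the argument the paper intends: the paper proves Lemma~\ref{lem-projm-3} simply by saying it follows "by inspection" of the proof of Lemma~\ref{lem-density}, and your extension of $u$ by its boundary values, of $v$ by zero, followed by the same mollify--project construction and restriction to $K_0$, is exactly that adaptation. The only genuine addition is the interior cutoff $\chi_k$ guaranteeing $\supp(v_k)\subset\operatorname{int}(K_0)$ (one should just make sure $\supp\chi_k+\supp b_{m_k}\subset\operatorname{int}(K_0)$), which is a harmless strengthening compatible with the target space $C_0(K_0;TM)$.
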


	\end{appendices}

	\bibliographystyle{plain}
	\bibliography{forced-wave-maps}

\providecommand{\noopsort}[1]{}
\begin{thebibliography}{10}

\bibitem{Ball77}
J.~M. Ball.
\newblock Strongly continuous semigroups, weak solutions, and the variation of
  constants formula.
\newblock {\em Proc. Amer. Math. Soc.}, 63(2):370--373, 1977.

\bibitem{B23}
B.~Bringmann.
\newblock Invariant {G}ibbs measures for $(1+1)$-dimensional wave maps into
  {L}ie groups.
\newblock {\em Preprint}, arXiv:2309.06261, 2023.

\bibitem{BLS24}
B.~Bringmann, J.~L\"{u}hrmann, and G.~Staffilani.
\newblock The {W}ave {M}aps {E}quation and {B}rownian {P}aths.
\newblock {\em Comm. Math. Phys.}, 405(3):Paper No. 60, 2024.

\bibitem{BGOR22}
Z.~Brze\'{z}niak, B.~Go{\l}dys, M.~Ondrej\'{a}t, and N.~Rana.
\newblock Large deviations for {$(1 + 1)$}-dimensional stochastic geometric
  wave equation.
\newblock {\em J. Differential Equations}, 325:1--69, 2022.

\bibitem{BJ22}
Z.~Brze\'{z}niak and J.~Jendrej.
\newblock Statistical mechanics of the wave maps equation in dimension $1+1$.
\newblock {\em Preprint}, arXiv:2206.13605, 2022.

\bibitem{BO07}
Z.~Brze\'{z}niak and M.~Ondrej\'{a}t.
\newblock Strong solutions to stochastic wave equations with values in
  {R}iemannian manifolds.
\newblock {\em J. Funct. Anal.}, 253(2):449--481, 2007.

\bibitem{BO11}
Z.~Brze\'{z}niak and M.~Ondrej\'{a}t.
\newblock Weak solutions to stochastic wave equations with values in
  {R}iemannian manifolds.
\newblock {\em Comm. Partial Differential Equations}, 36(9):1624--1653, 2011.

\bibitem{BR20}
Z.~Brze\'{z}niak and N.~Rana.
\newblock Low regularity solutions to the stochastic geometric wave equation
  driven by a fractional {B}rownian sheet.
\newblock {\em C. R. Math. Acad. Sci. Paris}, 358(6):633--639, 2020.

\bibitem{BD00}
A.~Budhiraja and P.~Dupuis.
\newblock A variational representation for positive functionals of infinite
  dimensional {B}rownian motion.
\newblock {\em Probab. Math. Statist.}, 20(1, Acta Univ. Wratislav. No.
  2246):39--61, 2000.

\bibitem{BDM08}
A.~Budhiraja, P.~Dupuis, and V.~Maroulas.
\newblock Large deviations for infinite dimensional stochastic dynamical
  systems.
\newblock {\em Ann. Probab.}, 36(4):1390--1420, 2008.

\bibitem{CSTZ98}
T.~Cazenave, J.~Shatah, and A.~S. Tahvildar-Zadeh.
\newblock Harmonic maps of the hyperbolic space and development of
  singularities in wave maps and {Y}ang-{M}ills fields.
\newblock {\em Ann. Inst. H. Poincar\'{e} Phys. Th\'{e}or.}, 68(3):315--349,
  1998.

\bibitem{Chiang13}
Y.-J. Chiang.
\newblock {\em Developments of harmonic maps, wave maps and {Y}ang-{M}ills
  fields into biharmonic maps, biwave maps and bi-{Y}ang-{M}ills fields}.
\newblock Frontiers in Mathematics. Birkh\"{a}user/Springer, Basel, 2013.

\bibitem{Evans10}
L.~C. Evans.
\newblock {\em Partial differential equations}, volume~19 of {\em Graduate
  Studies in Mathematics}.
\newblock American Mathematical Society, Providence, RI, second edition, 2010.

\bibitem{KT98}
M.~Keel and T.~Tao.
\newblock Local and global well-posedness of wave maps on {$\bold R^{1+1}$} for
  rough data.
\newblock {\em Internat. Math. Res. Notices}, (21):1117--1156, 1998.

\bibitem{MNT10}
S.~Machihara, K.~Nakanishi, and K.~Tsugawa.
\newblock Well-posedness for nonlinear {D}irac equations in one dimension.
\newblock {\em Kyoto J. Math.}, 50(2):403--451, 2010.

\bibitem{Nash56}
J.~Nash.
\newblock The imbedding problem for {R}iemannian manifolds.
\newblock {\em Ann. of Math. (2)}, 63:20--63, 1956.

\bibitem{Norris95}
J.~R. Norris.
\newblock Twisted sheets.
\newblock {\em J. Funct. Anal.}, 132(2):273--334, 1995.

\bibitem{ONeill83}
B.~O'Neill.
\newblock {\em Semi-{R}iemannian geometry}, volume 103 of {\em Pure and Applied
  Mathematics}.
\newblock Academic Press, Inc. [Harcourt Brace Jovanovich, Publishers], New
  York, 1983.
\newblock With applications to relativity.

\bibitem{Rudin74RCA}
W.~Rudin.
\newblock {\em Real and complex analysis}.
\newblock McGraw-Hill Series in Higher Mathematics. McGraw-Hill Book Co., New
  York-D\"{u}sseldorf-Johannesburg, second edition, 1974.

\bibitem{Rudin91FA}
W.~Rudin.
\newblock {\em Functional analysis}.
\newblock International Series in Pure and Applied Mathematics. McGraw-Hill,
  Inc., New York, second edition, 1991.

\bibitem{ShSt00}
J.~Shatah and M.~Struwe.
\newblock {\em {Geometric Wave Equations}}, volume~2 of {\em Courant Lecture
  Notes in Mathematics}.
\newblock AMS, 2000.

\bibitem{Strauss89}
W.~A. Strauss.
\newblock {\em Nonlinear wave equations}, volume~73 of {\em CBMS Regional
  Conference Series in Mathematics}.
\newblock Published for the Conference Board of the Mathematical Sciences,
  Washington, DC; by the American Mathematical Society, Providence, RI, 1989.

\bibitem{Tao00}
T.~Tao.
\newblock Ill-posedness for one-dimensional wave maps at the critical
  regularity.
\newblock {\em Amer. J. Math.}, 122(3):451--463, 2000.

\bibitem{Tataru04}
D.~Tataru.
\newblock The wave maps equation.
\newblock {\em Bull. Amer. Math. Soc. (N.S.)}, 41(2):185--204, 2004.

\bibitem{Temam79}
R.~Temam.
\newblock {\em Navier-{S}tokes equations}, volume~2 of {\em Studies in
  Mathematics and its Applications}.
\newblock North-Holland Publishing Co., Amsterdam-New York, revised edition,
  1979.
\newblock Theory and numerical analysis, With an appendix by F. Thomasset.

\bibitem{Yosida95}
K.~Yosida.
\newblock {\em Functional Analysis}.
\newblock Classics in Mathematics. Springer Berlin Heidelberg, 6 edition, 1995.

\bibitem{Zhou99}
Y.~Zhou.
\newblock Uniqueness of weak solutions of {$1+1$} dimensional wave maps.
\newblock {\em Math. Z.}, 232(4):707--719, 1999.

\end{thebibliography}

\end{document}